\newtheorem{Theorem}{Theorem}[section]
\newtheorem{Definition}[Theorem]{Definition}
\newtheorem{Proposition}[Theorem]{Proposition}
\newtheorem{Lemma}[Theorem]{Lemma}
\newtheorem{Corollary}[Theorem]{Corollary}
\newtheorem{Remark}[Theorem]{Remark}
\newtheorem{Example}[Theorem]{Example}
\newtheorem{Assumption}[Theorem]{Assumption}
\newenvironment{manualassumption}[1]{%
  \manualassumptioninner
}{\endmanualassumptioninner}
\newcommand{\N}{\mathbb{N}}
\newcommand{\Z}{\mathbb{Z}}
\newcommand{\R}{\mathbb{R}}
\newcommand{\C}{\mathbb{C}}
\newcommand{\norm}[1]{\left\lVert #1 \right\rVert}	
\newcommand{\normabs}[1]{\left| #1 \right|}	
\newcommand{\abs}[1]{\left| #1 \right|}	
\newcommand{\ra}{\rightarrow}
\newcommand{\Lnorm}[1]{{\left\lVert #1 \right\rVert}_{L^2}}
\newcommand\restr[2]{{% we make the whole thing an ordinary symbol
  \left.\kern-\nulldelimiterspace % automatically resize the bar with \right
  #1 % the function
  \vphantom{\big|} % pretend it's a little taller at normal size
  \right|_{#2} % this is the delimiter
  }}
\newcommand{\mylabel}[2]{#2\def\@currentlabel{#2}\label{#1}}
\DeclareMathOperator*{\argmin}{arg\,min}
\begin{document}

\title{Statistical inference for continuous-time locally stationary processes using stationary approximations% with application to time-varying L\'evy-driven state space models\\
}
\author{Bennet Ströh\footnote{Ulm University, Institute of Mathematical Finance, Helmholtzstra\ss e 18, 89069 Ulm, Germany. Email: bennet.stroeh@uni-ulm.de. }}

%\date{}
\maketitle

\textwidth=160mm \textheight=225mm \parindent=8mm \frenchspacing
\vspace{3mm}

\begin{abstract}
We establish asymptotic properties of $M$-estimators, defined in terms of a contrast function and observations from a continuous-time locally stationary process. Using the stationary approximation of the sequence, $\theta$-weak dependence, and hereditary properties, we give sufficient conditions on the contrast function that ensure consistency and asymptotic normality of the $M$-estimator.\\
As an example, we obtain consistency and asymptotic normality of a localized least squares estimator for observations from a sequence of time-varying L\'evy-driven Ornstein-Uhlenbeck processes. Furthermore, for a sequence of time-varying L\'evy-driven state space models, we show consistency of a localized Whittle estimator and an $M$-estimator that is based on a quasi maximum likelihood contrast.
Simulation studies show the applicability of the estimation procedures.
\end{abstract} 

{\it MSC 2020: primary 62F10, 62F12; secondary 60G51, 62M09}  
\\
\\
{\it Keywords: asymptotic normality, CARMA processes, consistency, L\'evy-driven state space models, locally stationary, $M$-estimation, stationary approximations, weak dependence.} 

\section{Introduction}
Various powerful inference methodologies for continuous-time stochastic processes are based on stationarity. One reason for this is that, in many cases, stationarity is essential to derive asymptotic results. For instance, based on stationarity arguments, different estimation procedures have been successfully applied to flexible and widely used continuous-time models in \cite{BDY2011, C2018, FHM2020, HKLZ2007} and \cite{SS2012b}.\\
However, numerous established models, including processes used in the references above, are inappropriate for modeling data that shows non-stationary behavior. To overcome this issue, \cite{SS2021} recently introduced a general theory on stationary approximations for non-stationary continuous-time processes that allows modeling non-stationary data. Heuristically, this approach follows the intuitive idea of local stationarity as discussed in \cite{D2012, DRW2019, DSR2006, V2012}, and assumes that a sequence of non-stationary processes can be locally approximated by a stationary process. Noticeable examples of time-series models discussed in \cite{SS2021} come from the class of time-varying L\'evy-driven Ornstein-Uhlenbeck processes and time-varying L\'evy-driven state space models.
Since such processes are non-stationary, classical methods used for statistical inference in a stationary setting cannot be applied, and novel estimation procedures are needed.\\
In the present work, we address this issue and provide inference methodologies for sequences of non-parametric non-stationary continuous-time processes that possess a locally stationary approximation. Noteworthy, our results are established in a model-free setting using limit theorems from \cite{SS2021}. We apply these results to study concrete estimators for several well-known non-stationary time series models and analyze their asymptotic properties.\\
To the best of our knowledge, the only comparable results can be found in \cite{KL2012}, where the authors investigate time-varying Gaussian-driven diffusion models and provide asymptotic results of a proposed estimator. Different from \cite{KL2012}, our theory also encompasses non-Gaussian and non-linear time series models. In the discrete-time setting, results similar to our theory have been obtained in \cite{BDW2020} and \cite{DRW2019}, where the authors derive a remarkably versatile theory including various analytical and statistical results for locally stationary processes. \\ 
More precisely, we introduce a class of kernel-based $M$-estimators whose objective function is a contrast that depends on observations sampled from a sequence of non-stationary processes. To establish consistency and asymptotic normality in a general setting, we impose conditions on the stationary approximation of the sequence and the contrast function. Specifically, the stationary approximation is assumed to be $\theta$-weakly dependent as introduced in \cite{DD2003} and the contrast function is assumed to satisfy identifiability and regularity conditions. In particular, these conditions ensure the existence of a $\theta$-weakly dependent stationary approximation of the contrast. The relative simplicity of the conditions allows us to readily derive asymptotic results for different contrast functions of finite and infinite memory.\\
For instance, we consider a sequence of time-varying L\'evy-driven Ornstein-Uhlenbeck processes and obtain, based on a least squares contrast, a consistent and asymptotically normally distributed $M$-estimator of the underlying coefficient function. The estimator's good performance is demonstrated through a simulation study in a finite sample for different coefficient functions.\\ 
Moreover, we consider a sequence of time-varying L\'evy-driven state space models, whose locally stationary approximation is a time-invariant L\'evy-driven state space model. The latter processes build a flexible class of continuous-time models that encompasses the well-known class of CARMA processes (see \cite{B2014,MS2007} for an introduction) and allow modeling high-frequency and irregularly spaced data occurring, for example, in finance and turbulence. Recently, a quasi-maximum likelihood and a Whittle estimator for L\'evy-driven state space models sampled at low frequencies have been discussed and compared in \cite{FHM2020}, and \cite{SS2012b}. We use results from these works and establish consistency results for two novel estimators, a localized quasi-maximum likelihood and a localized Whittle estimator. While the localized quasi-maximum likelihood estimator is a time domain $M$-estimator that is based on a log-likelihood contrast, the localized Whittle estimator is a frequency domain estimator constructed from a consistent estimator of the sample autocovariance. We compare both estimators in a simulation study, where their finite sample performances and convergence behaviors are studied.\\
The paper is structured as follows. In Section \ref{sec2}, all technical results needed throughout this work are presented. We review locally stationary approximations, introduce the sampling schemes in use, discuss $\theta$-weak dependence and outline hereditary properties of this measure of dependence and the stationary approximations.
In Section \ref{sec3}, we discuss the aforementioned class of $M$-estimators and establish consistency and asymptotic normality. %where the observations are sampled from a sequence of non-stationary processes.\\ %All results are given in terms of conditions on the stationary approximation and the contrast function underlying the $M$-estimator. \\
In Section \ref{sec4}, we first review elementary properties of L\'evy processes, stochastic integration with respect to them, and time-varying Ornstein-Uhlenbeck processes. % In addition, we discuss sufficient conditions on the existence of a stationary approximation of a sequence of time-varying Ornstein-Uhlenbeck processes. 
We then apply our results to a least squares contrast and obtain asymptotic results of the corresponding $M$-estimator, where the observations are sampled from a sequence of time-varying Ornstein-Uhlenbeck processes.\\
In Section \ref{sec5}, we first review time-varying L\'evy-driven state space models. Then, for observations sampled from a sequence of such processes, we propose a localized quasi-maximum likelihood estimator in Section \ref{sec5-2} and a localized Whittle estimator in Section \ref{sec5-4}. We show consistency of both estimators and present a truncated version of the localized quasi-maximum likelihood estimator in Section \ref{sec5-3}.\\The outcomes of the simulation study are discussed in Section \ref{sec6} and the proofs of most results are given in Section \ref{sec7}.

\subsection{Notation}
\label{sec1-1}
In this paper, we denote the set of positive integers by $\N$, non-negative integers by $\N_0$, positive real numbers by $\R^+$, non-negative real numbers by $\R_+^0$, the set of $m\times n$ matrices over a ring $R$ by $M_{m\times n}(R)$, and $\mathbf{1}_n$ stands for the $n\times n $ identity matrix. The real part of a complex number $z\in\C$ is written as $\mathfrak{Re}(z)$. For square matrices $A,B\in M_{n\times n}(R)$, $[A,B]=AB-BA$ denotes the commutator of $A$ and $B$. We shortly write the transpose of a matrix $A \in M_{m\times n}(\R)$ as $A'$, and norms of matrices and vectors are denoted by $\norm{\cdot}$. If the norm is not further specified, we take the Euclidean norm or its induced operator norm, respectively. For a bounded function $h$, $\norm{h}_\infty$ denotes the uniform norm of $h$. In the following Lipschitz continuous is understood to mean globally Lipschitz. For $u,n\in\N$, let $\mathcal{G}_u^*$ be the class of bounded functions from $(\R^n)^u$ to $\R$ and $\mathcal{G}_u$ be the class of bounded, Lipschitz continuous functions from $(\R^n)^u$ to $\R$ with respect to the distance $\sum_{i=1}^{u}\norm{x_i-y_i}$, where $x,y\in(\R^n)^u$. For $G\in\mathcal{G}_u$ we define 
\begin{align*}
Lip(G)=\sup_{x\neq y}\tfrac{|G(x)-G(y)|}{\norm{x_1-y_1}+\ldots+\norm{x_u-y_u}}.
\end{align*}%The ring of continuous functions in $t$ from $\R$ to $\R$ is denoted by $\mathcal{R}(t)$. 
The Borel $\sigma$-algebras are denoted by $\mathcal{B}(\cdot)$ and $\lambda$ stands for the Lebesgue measure, at least in the context of measures. For a normed vector space $W$, we denote by $\ell^\infty(W)$ the space of all bounded sequences in $W$. 
In the following, we will assume all stochastic processes and random variables to be defined on a common complete probability space $(\Omega,\mathcal{F},P)$ equipped with an appropriate filtration if necessary. 
Finally, we simply write $L^p$ to denote the space $L^p(\Omega,\mathcal{F},P)$ and $L^p(\R)$ to denote the space $L^p(\R,\mathcal{B}(\R),\lambda)$ with corresponding norms $\norm{\cdot}_{L^p}$. %Finally, if not otherwise stated, Lipschitz continuous means globally Lipschitz.

\section{Locally stationary approximations and $\theta$-weak dependence}
\label{sec2}

\subsection{Locally stationary approximations}
\label{sec2-1}
Throughout this paper, we consider sequences of processes that can be locally approximated in $L^p$ by a stationary process. This concept is a non-parametric approach to express the intuitive idea of local stationarity, as discussed by Dahlhaus and others (see e.g. \cite{D2012,V2012}). In this paper, we consider locally stationary approximations defined as follows.

\begin{Definition}[{\cite[Definition 2.1]{SS2021}}]\label{definition:statapproxconttime}
Let $Y_N=\{Y_N(t),t\in\R\}_{N\in\N}$ be a sequence of real-valued stochastic processes and $\tilde{Y}=\{\tilde{Y}_u(t),t\in\R\}_{u\in\R^+}$ a family of real-valued stationary processes. We assume that the process $\tilde{Y}_u$ is ergodic for all $u\in\R^+$ and $\sup_{u\in \R^+} \norm{\tilde{Y}_u(0)}_{L^p}<\infty$ for some $p\geq1$. If there exists a constant $C>0$, such that uniformly in $t\in \R$ and $u,v\in\R^+$
\begin{gather}
\lVert\tilde{Y}_u(t)-\tilde{Y}_v(t)\rVert_{L^p}\leq C \abs{u-v} \text{ and }\qquad \lVert Y_N(t)-\tilde{Y}_{t}(Nt)\rVert _{L^p} \leq C \frac{1}{N},\label{assumption:LS}\tag{LS}
\end{gather}
then we call $\tilde{Y}_u$ a \emph{locally stationary approximation} of the sequence $Y_N$ \emph{for} $p$. %Whenever we investigate $\tilde{Y}_u$ for $u\in\R^+$ fixed, we will suppress $u$.
\end{Definition}

If $\tilde{Y}_u$ is a locally stationary approximation of $Y_N$ for $p$, then it is also a locally stationary approximation for $p'$, where $1\leq p'\leq p$. \\
Whenever we investigate estimators based on observations from a sequence of processes, we assume the observations to be sampled according to one of the following schemes.

\begin{Assumption}\label{assumption:observations}
For fixed $N\in\N$ and $u\in\R^+$ we assume $Y_N$ to be equidistantly observed at times $\tau_i^N=u+i\delta_N$ with grid size $\delta_N=|\tau_i^N-\tau_{i-1}^N|$ such that $\delta_N\downarrow 0$ for $N\rightarrow\infty$. For a sequence $b_N\downarrow0$ we consider the observation window $[u-b_N,u+b_N]$ and set $m_N=\lfloor b_N/ \delta_N \rfloor$. Thus, the number of observations is given by $2m_N+1=|\{i\in\Z: \tau_i^N\in [u-b_N,u+b_N]\}|$. We require $b_N/\delta_N\rightarrow\infty$ as $N\rightarrow\infty$ and either
\begin{enumerate}[label=\textbf{(O\arabic*)}]
\item $N\delta_N=\delta>0$ for all $N\in\N$ or \label{observations:O1}
\item $N\delta_N\rightarrow\infty$ as $N\rightarrow\infty$.\label{observations:O2}
\end{enumerate}
\end{Assumption}

Note that these conditions on $N$, $b_N$ and $\delta_N$ immediately imply that $Nb_N\rightarrow\infty$ as $N\rightarrow\infty$. For a comprehensive discussion on the above approximations and observations, including examples of sequences that satisfy Definition \ref{definition:statapproxconttime}, we refer to \cite{SS2021}.

\subsection{$\theta$-weak dependence and hereditary properties}
\label{sec2-2}
In this section we summarize results that are needed throughout the paper. We start with a brief review of the concept of $\theta$-weak dependence.
%This section contains all the necessary results and insights enabling us to establish asymptotic results for the estimators we investigate in the next sections. We start with a brief review of the concept of $\theta$-weak dependence. \\
%For $v,n\in\N$, let $\mathcal{F}_v^*$ be the class of bounded functions from $(\R^n)^v$ to $\R$ and $\mathcal{F}_v$ be the class of bounded, Lipschitz functions from $(\R^n)^v$ to $\R$ with respect to the norm $\norm{x}_1=\sum_{i=1}^v\norm{x_i}_1$. Now define 
%\begin{gather*}
%\mathcal{F}=\bigcup_{v\in\N}\mathcal{F}_v, \ \  \mathcal{F^*}=\bigcup_{v\in\N}\mathcal{F}_v^*, 
%\end{gather*}
%and for $G\in\mathcal{F}$
%\begin{gather*}
%Lip(G)=\sup_{x\neq y}\frac{|G(x)-G(y)|}{\norm{x-y}_1}.
%\end{gather*}

\begin{Definition}[{\cite{DD2003}}]\label{thetaweaklydependent}
Let $X=\{X(t)\}_{t\in\R}$ be an $\R^n$-valued stochastic process. Then, $X$ is called $\theta$-weakly dependent if
\begin{gather*}
\theta(h)=\sup_{v\in\N}\theta_{v}(h) \underset{h\ra\infty}{\longrightarrow} 0,
\end{gather*} 
where
\begin{align*}
\theta_{v}(h)\!=\!\sup\bigg\{\frac{|Cov(F(X(i_1),\ldots,X(i_v)),G(X(j)))|}{\norm{F}_{\infty}Lip(G)}, F\in\mathcal{G}_u^*,G\in\mathcal{G}_1, i_1\leq\ldots\leq i_v\leq i_v+h\leq j \bigg\}.
\end{align*}
We call $(\theta(h))_{h\in\R_0^+}$ the $\theta$-coefficients. 
\end{Definition}

Next, we summarize hereditary properties of locally stationary approximations and $\theta$-weak dependence under transformations (see \cite[Section 2.3 and 2.4]{SS2021} for a comprehensive discussion).\\
Let $Y_N$ be a sequence of stochastic processes with locally stationary approximation $\tilde{Y}_u$ for some $p\geq1$. For $k\in\N_0$ we define the infinite and finite memory vectors
\begin{alignat*}{3}
Z_N(t)&=\Big(Y_N(t),Y_N\Big(t-\tfrac{1}{N}\Big),\ldots\Big)&\text{ and } &\tilde{Z}_u(t)=(\tilde{Y}_u(t),\tilde{Y}_u(t-1),\ldots),\text{ as well as}\\
Z_N^{(k)}(t)&=\Big(Y_N(t),Y_N\Big(t-\tfrac{1}{N}\Big),\ldots,Y_N\Big(t-\tfrac{k}{N}\Big)\Big)&\text{ and } &\tilde{Z}_u^{(k)}(t)=(\tilde{Y}_u(t),\tilde{Y}_u(t-1),\ldots,\tilde{Y}_u(t-k)).
\end{alignat*}

For functions from the following two classes we obtain hereditary properties.
%preserve locally stationary approximations as well as $\theta$-weak dependence. 
%It turns out that functions applied on vectors of the above forms from the following two classes (see \cite[Definition 2.4]{DRW2019} and \cite[Definition 2.10]{SS2021}) preserve locally stationary approximations as well as $\theta$-weak dependence. 

\begin{Definition}[{\cite[Definition 2.4]{DRW2019}}]
A measurable function $g:\R^{k+1}\rightarrow\R$ is said to be in the class $\mathcal{L}_{k+1}(M,C)$ for $M\geq0$ and $C\in[0,\infty]$, if 
\begin{align*}
\sup_{x\neq y}\frac{|g(x)-g(y)|}{\norm{x-y}_1(1+\norm{x}_1^M+\norm{y}_1^M)}\leq C.
\end{align*}
\end{Definition}

\begin{Definition}[{\cite[Definition 2.10]{SS2021}}]\label{definition:functionclassinfinite}
A measurable function $h:\R^\infty\rightarrow\R^n$ is said to belong to the class $\mathcal{L}_\infty^{p,q}(\alpha)$ for $p,q \geq1$, if there exists a sequence $\alpha=(\alpha_k)_{k\in\N_0}\subset\R_0$ satisfying $\sum_{k=0}^\infty\alpha_k<\infty$ and a function $f:\R_0^+\rightarrow\R_0^+$ such that for all sequences $X=(X_k)_{k\in\N_0}\in \ell^\infty(L^q)$ and $Y=(Y_k)_{k\in\N_0}\in\ell^\infty(L^q)$ it holds
\begin{align*}
\norm{h(X)-h(Y)}_{L^p}\leq f\Big(\sup_{k\in\N_0}\{\norm{X_k}_{L^q}\vee \norm{Y_k}_{L^q}\}\Big) \sum_{k=0}^\infty \alpha_k\norm{X_k-Y_k}_{L^q}.
\end{align*}
\end{Definition}

The next proposition is a combination of Proposition 2.7 and 2.11 from \cite{SS2021}.

\begin{Proposition}\label{proposition:inheritanceproperties}
Let $Y_N$ be a sequence of stochastic processes with locally stationary approximation $\tilde{Y}_u$ for some $q\geq1$. Then, for $g\in\mathcal{L}_{k+1}(M,C)$ and a real-valued function $h\in \mathcal{L}_{\infty}^{p,q}(\alpha)$, where $M\geq0$, $C\in [0,\infty)$, $p\geq1$ and $\sum_{k=0}^\infty k\alpha_k<\infty$, it holds:
\begin{enumerate}[label={(\alph*)}]
\item $g(\tilde{Z}_u^{(k)}(t))$ is a locally stationary approximation of the sequence $g(Z_N^{(k)}(t))$ for $\tilde{p}=\frac{q}{M+1}$.
%\item If $\tilde{Y}(t)$ is $\theta$-weakly dependent with $\theta$-coefficients $\theta_{\tilde{Y}}(h)$, then $\tilde{Z}_u^{(k)}(t)$ is $\theta$-weakly dependent with $\theta$-coefficients $\theta_{\tilde{Z}_u^k}(h)\leq (k+1) \theta_{\tilde{Y}}(h-(k+1))$, for $h\geq (k+1)$.
\item If $\tilde{Y}_u$ is $\theta$-weakly dependent with $\theta$-coefficients $\theta_{\tilde{Y}_u}(h)$, then $\tilde{Z}_u^{(k)}$ is $\theta$-weakly dependent with $\theta$-coefficients $\theta_{\tilde{Z}_u^{(k)}}(h)\leq (k+1) \theta_{\tilde{Y}_u}(h-(k+1))$ for $h\geq (k+1)$.
\item If $\tilde{Y}_u$ is $\theta$-weakly dependent with $\theta$-coefficients $\theta_{\tilde{Y}_u}(h)$, satisfies $E[|\tilde{Y}_u(t)|^{(1+M+\gamma)}]<\infty$ for some $\gamma>0$ and additionally $|g(x)|\leq \tilde{C} \norm{x}_1^{M+1}$ for a constant $\tilde{C}>0$, then $g(\tilde{Z}_u^{(k)}(t))$ is $\theta$-weakly dependent with $\theta$-coefficients $\theta_{g(\tilde{Z}_u^{(k)})}(h)=\mathcal{O}\left(\theta_{\tilde{Y}_u}(h)^{\frac{\gamma}{M+\gamma}}\right)$.
\item $h(\tilde{Z}_u(t))$ is a locally stationary approximation of $h(Z_N(t))$ for $p$.
\end{enumerate}
\end{Proposition}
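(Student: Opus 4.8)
My plan separates the two locally stationary approximation claims, (a) and (d), which reduce to $L^p$-distance estimates via the defining Lipschitz-type bounds of the function classes, from the two $\theta$-weak dependence claims, (b) and (c), which reduce to covariance estimates built on the single-future-point coefficient of $\tilde{Y}_u$. For part (a) I would verify the two (LS) conditions for the pair $(g(Z_N^{(k)}(t)),g(\tilde{Z}_u^{(k)}(t)))$ at the reduced exponent $\tilde p = q/(M+1)$, together with ergodicity and the uniform moment bound. The key estimate is the coordinate-wise bound $\norm{Y_N(t-j/N)-\tilde{Y}_t(Nt-j)}_{L^q}\le C(1+j)/N$: insert $\tilde{Y}_{t-j/N}(Nt-j)$, use the second (LS) bound at time $t-j/N$ for the $C/N$ piece and the first (LS) bound (index-Lipschitzness) for the $Cj/N$ piece. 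Summing over $j=0,\ldots,k$ controls $\norm{\norm{Z_N^{(k)}(t)-\tilde{Z}_t^{(k)}(Nt)}_1}_{L^q}$ by $C/N$, and analogously $\norm{\norm{\tilde{Z}_u^{(k)}(t)-\tilde{Z}_v^{(k)}(t)}_1}_{L^q}\le C(k+1)\abs{u-v}$. Inserting these into the $\mathcal{L}_{k+1}(M,C)$-inequality and applying Hölder with exponents $q$ and $q/M$ (so that $1/q+M/q=1/\tilde p$) isolates the increment, which carries the $1/N$ or $\abs{u-v}$, from the polynomial factor, which stays bounded because $\sup_u\norm{\tilde{Y}_u(0)}_{L^q}<\infty$ (hence $\sup_N\norm{Y_N}_{L^q}<\infty$ by (LS)). The growth bound $\abs{g(x)}\le\abs{g(0)}+C\norm{x}_1+C\norm{x}_1^{M+1}$ supplies the uniform $L^{\tilde p}$-moment, and ergodicity passes through since $g(\tilde{Z}_u^{(k)}(\cdot))$ is a measurable functional of the ergodic process $\tilde{Y}_u$.

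Part (d) runs along the same lines, but the finite sum over coordinates is replaced by the weighted sum built into the class $\mathcal{L}_\infty^{p,q}(\alpha)$. Applying the defining inequality with $X=Z_N(t)$, $Y=\tilde{Z}_t(Nt)$ gives $\norm{h(Z_N(t))-h(\tilde{Z}_t(Nt))}_{L^p}\le f(\cdot)\sum_{k\ge0}\alpha_k\norm{Y_N(t-k/N)-\tilde{Y}_t(Nt-k)}_{L^q}$, and the coordinate bound $C(1+k)/N$ from part (a) turns the series into $\tfrac{C}{N}\big(\sum_k\alpha_k+\sum_k k\alpha_k\big)$; this is exactly where the hypothesis $\sum_k k\alpha_k<\infty$ (rather than mere summability) is needed, to absorb the linear-in-$k$ index-shift error. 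The index-Lipschitz condition is analogous, now producing $\sum_k\alpha_k\abs{u-v}$, and the prefactor $f(\sup_k\{\cdots\})$ stays bounded by the uniform $L^q$-moments. Ergodicity and the uniform $L^p$-moment follow as in (a), the latter by comparing $h(\tilde{Z}_u(t))$ with $h$ evaluated at a fixed reference sequence.

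Parts (b) and (c) contain the real work. In (b) I would first re-express the block functionals of $\tilde{Z}_u^{(k)}$ as functionals of $\tilde{Y}_u$: a bounded past functional $F$ of $v$ blocks becomes a bounded functional of the same sup-norm of the scalar values $\{\tilde{Y}_u(i_\ell-m)\}$ at times $\le i_v$, while the single future block $G(\tilde{Z}_u^{(k)}(j))$ becomes a bounded $Lip(G)$-Lipschitz functional of the $k+1$ consecutive values $\tilde{Y}_u(j-k),\ldots,\tilde{Y}_u(j)$. The crux, and the step I expect to be the main obstacle, is that the $\theta$-coefficient of $\tilde{Y}_u$ only controls covariances against a \emph{single} future observation, whereas the future functional here depends on $k+1$ of them. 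I would bridge this by a telescoping argument over the $k+1$ future coordinates, replacing them one at a time by independent copies and estimating each swap by $\norm{F}_\infty Lip(G)\,\theta_{\tilde{Y}_u}(\text{gap})$ via the Lipschitz property; since the nearest future time satisfies $j-k\ge i_v+h-k$, monotonicity of $\theta$ bounds each of the $k+1$ terms by $\theta_{\tilde{Y}_u}(h-(k+1))$, which delivers the factor $(k+1)$ and the shift. Keeping the single-future-point structure honest through this telescoping is the delicate point.

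For part (c) the obstruction is that $g$ is only locally Lipschitz with polynomial growth, so $G\circ g$ is not globally Lipschitz and (b) cannot be invoked directly. I would truncate each coordinate at level $T$ through a map $\phi_T$ and split the covariance into a main part, using the truncated composition $g\circ\phi_T$, and a tail. On the truncated cube the $\mathcal{L}_{k+1}(M,C)$-bound gives a global Lipschitz constant $L_T=\mathcal{O}(T^M)$ for $g\circ\phi_T$, so $G\circ g\circ\phi_T$ is Lipschitz with constant $\mathcal{O}(T^M)Lip(G)$ and part (b) bounds the main part by $\mathcal{O}(T^M\theta_{\tilde{Y}_u}(h))$. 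The tail uses the growth bound $\abs{g(x)}\le\tilde{C}\norm{x}_1^{M+1}$ together with $E[\abs{\tilde{Y}_u(t)}^{1+M+\gamma}]<\infty$: by Hölder and Markov, $E\big[\norm{\tilde{Z}_u^{(k)}}_1^{M+1}\mathbf{1}_{\{\norm{\cdot}_\infty>T\}}\big]=\mathcal{O}(T^{-\gamma})$. Optimizing $T^M\theta_{\tilde{Y}_u}(h)+T^{-\gamma}$ over $T$, i.e. choosing $T\sim\theta_{\tilde{Y}_u}(h)^{-1/(M+\gamma)}$, balances both contributions at order $\theta_{\tilde{Y}_u}(h)^{\gamma/(M+\gamma)}$, the asserted rate. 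The two genuinely nontrivial ingredients are thus the single-to-multivariate future reduction in (b) and this truncation-and-optimization in (c); everything else is triangle inequality, Hölder, and the uniform moment bounds.
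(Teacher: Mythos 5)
First, a remark on the comparison itself: the paper offers no proof of this proposition — it is stated as a combination of Propositions 2.7 and 2.11 of the cited preprint [SS2021] — so there is no in-paper argument to measure you against. Your parts (a) and (d) are correct and are surely the intended arguments: the coordinate bound $\lVert Y_N(t-j/N)-\tilde{Y}_t(Nt-j)\rVert_{L^q}\leq C(1+j)/N$ obtained by inserting $\tilde{Y}_{t-j/N}(Nt-j)$, the Hölder split with exponents $q$ and $q/M$ producing $\tilde{p}=q/(M+1)$, and the identification of $\sum_k k\alpha_k<\infty$ as exactly what absorbs the linear index-shift error in (d) are all right. The truncation-and-optimization in (c), balancing $T^M\theta(h)$ against $T^{-\gamma}$ at $T\sim\theta(h)^{-1/(M+\gamma)}$, is also the standard route and yields the stated exponent.

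The genuine gap is in (b), precisely at the step you flag as delicate, and it cannot be repaired along the lines you propose. Swapping the $k+1$ future coordinates one at a time for independent copies requires, for the final covariance to vanish, a single copy of the whole block $(\tilde{Y}_u(j-k),\ldots,\tilde{Y}_u(j))$ that is simultaneously independent of the past, equal in joint law to the original block, and coordinatewise $L^1$-close to it — and that is exactly the block coupling coefficient you are trying to bound, not something the coordinatewise single-point coefficients provide. The implicit subadditivity over future coordinates is in fact false: with $(\epsilon_t)$ i.i.d.\ Rademacher, set $Y_{2m}=\epsilon_{2m}$ and $Y_{2m+1}=\epsilon_{2m}\epsilon_{2m-100}$; every single $Y_j$ is independent of $\sigma(Y_s:s\leq j-2)$, so the single-future-point coefficient vanishes for all gaps $\geq 2$, yet $Y_{2m}Y_{2m+1}=Y_{2m-100}$, so a bounded past functional at lag about $100$ has covariance bounded away from zero with a bounded Lipschitz function of the future block. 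No ordering of the telescoping cures this, because the pathology is joint, not coordinatewise. The result as stated is true because the $\theta$-coefficient of Dedecker and Doukhan is defined on blocks of future time points with a normalization by the block length — the factor $(k+1)$ and the shift to $h-(k+1)$ are the signature of that block coefficient — and part (b) is essentially immediate from that definition; it does not follow from the single-future-point formulation displayed in this paper. Your part (c) inherits the same gap, since it invokes (b) for the composite $G\circ g\circ\phi_T$, although the truncation argument itself is sound.
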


If $\tilde{Y}_u(t)$ is a L\'evy-driven moving average processes (see Section \ref{sec4-1}) we give sufficient conditions for $h(\tilde{Z}_u(t))$ to be $\theta$-weak dependence in Proposition \ref{proposition:infinitememorymovingaverage}.

\section{$M$-estimation of contrast functions based on locally stationary approximations}
\label{sec3}
%In this section, we establish asymptotic results for $M$-estimators of contrast functions that are based on observations from a sequence of processes that possess a locally stationary approximation in the sense of Definition \ref{definition:statapproxconttime}. \\
Let $Y_N$ be a sequence of stochastic processes with locally stationary approximation $\tilde{Y}_u$ as described in Definition \ref{definition:statapproxconttime}. % Now, assume that $\tilde{Y}_u(t)$ can be parameterized in the sense that $\tilde{Y}_u(t)=\tilde{Y}_u^{\theta^*}(t)$ for some parameter $\theta^*\in\Theta$, where $\Theta\subset\R^d$ is a parameter space. The goal of this section is to estimate $\theta^*$ for fixed $u\in\R^+$ from observations of $Y_N(t)$.\\ 
In this section, we study localized $M$-estimators of contrast functions based on observations of $Y_N$. In a discrete-time setting, such an estimation procedure has recently been investigated in \cite{BDW2020} and \cite{DRW2019}. The contrast functions we investigate are assumed to be of the form 
\begin{gather}\label{equation:contrastfunctioninfinitememory}
\Phi\Big(\Big(\tilde{Y}_u(\Delta (1-k))\Big)_{k\in\N_0},\vartheta\Big),
\end{gather}
where $\tilde{Y}=(\tilde{Y}_u(\Delta (1-k)))_{k\in\N_0}$ is a sequence in $L^p$ for $p\geq1, \Delta>0$ and $\vartheta\in\Theta$, where $\Theta\subset\R^d$ is a parameter space. We assume that the true parameter $\vartheta^*$ is identifiable from the contrast, i.e.

\begin{manualassumption}{(M1)}\label{assumption:M1}
Assume that $\Phi(\tilde{Y},\vartheta)\in L^1$ for all $\vartheta\in\Theta$ and that $\vartheta^*$ is the unique minimum in $\Theta$ of the function $\vartheta\mapsto E[\Phi(\tilde{Y},\vartheta)]=M(\vartheta)$.
\end{manualassumption}

\noindent In a stationary setting, the natural choice of an $M$-estimator for $\vartheta^*$ is
\begin{gather}\label{equation:naturalMestimator}
\argmin_{\vartheta\in\Theta}\frac{1}{n}\sum_{i=1}^{n}\Phi\left((\tilde{Y}_u(i+\Delta (1-k)))_{k\in\N_0},\vartheta\right).
\end{gather}
For processes that possess a locally stationary approximation, a localized law of large numbers has recently been proven in \cite[Theorem 3.5 and 3.6]{SS2021}. There, the localization is achieved by using a localizing kernel of the following type.

\begin{Definition}\label{definition:localizingkernel}
Let $K:\R\rightarrow \R$ be a bounded function. If $K$ is of bounded variation, has compact support $[-1,1]$ and satisfies $\int_\R K(x)dx=1$, then we call $K$ a localizing kernel.
\end{Definition}

\noindent From now on, if not otherwise stated, $K$ always denotes a localizing kernel.\\
Following this approach we replace the observations of $\tilde{Y}_u$ in (\ref{equation:naturalMestimator}) by observations of the sequence $Y_N$ as defined in Assumption \ref{assumption:observations}, leading to the localized estimator
\begin{align}\label{eq:M_N}
\begin{aligned}
\hat{\vartheta}_N&=\argmin_{\vartheta\in\Theta}M_N(\vartheta), \text{where}\\
M_N(\vartheta)&=\frac{\delta_N}{b_N}\sum_{i=-m_N}^{m_N}K\left(\frac{\tau_i^N-u}{b_N} \right)\Phi\bigg(\bigg(Y_N\bigg(\tau_i^N+\frac{\Delta (1-k)}{N}\bigg)\bigg)_{k\in\N_0},\vartheta\bigg).
\end{aligned}
\end{align}

\noindent In the next two sections, we derive sufficient conditions on the contrast $\Phi$ that ensure consistency and asymptotic normality of $\hat{\vartheta}_N$. As first step, we give conditions ensuring that $\Phi$ is integrable.   %To this end, we follow two major steps. 
%First, it is crucial to show that $\Phi(\tilde{Y},\theta)$ (C) and $\frac{\partial}{\partial\theta_i}\Phi(\tilde{Y},\theta)$ (AN) are locally stationary approximations of the sequences 
%\begin{gather*}
%\Phi\left(\left(Y_N\left(t+\frac{\Delta (1-k)}{N}\right)\right)_{k\in\N_0},\theta\right) \text{ (C) and }\quad
%\frac{\partial}{\partial\theta_i}\Phi\left(\left(Y_N\left(t+\frac{\Delta (1-k)}{N}\right)\right)_{k\in\N_0},\theta\right)\text{ (AN)}
%\end{gather*}
%for all $i=1,\ldots,d$, respectively. Second, it is necessary to establish a law of large numbers (C) and a central limit theorem (AN) for partial sums based on the above sequences. 

\begin{Lemma}\label{lemma:integrabilityphi}
Let $\Phi:\R^\infty\times\R^d\rightarrow\R$ be a measurable function. If $\Phi(\cdot,\vartheta)\in \mathcal{L}_\infty^{p,q}(\alpha)$ for all $\vartheta\in\Theta$ and $\sup_{\vartheta\in\Theta}\norm{\Phi(0,\vartheta)}<\infty$, then $\Phi(X,\vartheta)\in L^1$ for all $X=(X_k)_{k\in\N_0}\in \ell^\infty(L^q)$ and $\vartheta\in\Theta$. Moreover, if $X=(X_t)_{t\in\R}$ is a stationary integrable ergodic process, then $\left(\Phi\left((X_{t-k})_{k\in\N_0},\vartheta\right)\right)_{t\in\R}$ is again a stationary integrable ergodic process for all $\vartheta\in\Theta$.
\end{Lemma}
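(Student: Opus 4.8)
The plan is to prove the two assertions separately. For the first (integrability) claim, the key observation is that membership in $\mathcal{L}_\infty^{p,q}(\alpha)$ gives a Lipschitz-type bound that lets me compare $\Phi(X,\vartheta)$ to the baseline value $\Phi(0,\vartheta)$. Applying the defining inequality of Definition \ref{definition:functionclassinfinite} with the second argument being the zero sequence $Y=(0)_{k\in\N_0}$, I would write
\begin{align*}
\norm{\Phi(X,\vartheta)}_{L^p}\leq \norm{\Phi(0,\vartheta)}+f\Big(\sup_{k\in\N_0}\norm{X_k}_{L^q}\Big)\sum_{k=0}^\infty\alpha_k\norm{X_k}_{L^q}.
\end{align*}
Since $X\in\ell^\infty(L^q)$, the supremum $S:=\sup_{k}\norm{X_k}_{L^q}$ is finite, so $f(S)$ is finite, and $\sum_k\alpha_k\norm{X_k}_{L^q}\leq S\sum_k\alpha_k<\infty$ because $\alpha$ is summable by definition of the class. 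Together with $\sup_\vartheta\norm{\Phi(0,\vartheta)}<\infty$, this bounds $\norm{\Phi(X,\vartheta)}_{L^p}$, and since $p\geq1$ implies $L^p\subset L^1$ (on a probability space), integrability $\Phi(X,\vartheta)\in L^1$ follows.

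For the second assertion I must show that feeding a stationary ergodic process through $\Phi$ yields another stationary ergodic process. First I would verify that the new process is well-defined and integrable: stationarity of $(X_t)$ means $\norm{X_{t-k}}_{L^q}=\norm{X_0}_{L^q}$ for every $k$, so the sequence $(X_{t-k})_{k\in\N_0}$ lies in $\ell^\infty(L^q)$ with uniform bound $\norm{X_0}_{L^q}$, and the first part of the lemma applies pointwise in $t$ to give $\Phi((X_{t-k})_{k},\vartheta)\in L^1$. Stationarity of the image process is then inherited directly from the shift-invariance of the law of $(X_t)$: the finite-dimensional distributions of $(\Phi((X_{t-k})_k,\vartheta))_t$ depend only on the joint law of the shifted coordinates, which is invariant under a common time shift.

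The ergodicity claim is the part I expect to be the main obstacle, since measurable functionals of ergodic processes need not in general be ergodic unless one works through the shift dynamics carefully. The clean way to handle this is to realize the image process as a factor of the original one. I would regard $(X_t)_{t\in\R}$, or rather the two-sided trajectory, as generated by an ergodic measure-preserving flow $(T_s)$ on an appropriate path space, with $X_t=X_0\circ T_t$. The map $\omega\mapsto\Phi((X_{-k}(\omega))_{k\in\N_0},\vartheta)$ is a measurable function $H$ on that space, and the image process equals $H\circ T_t$. Since a factor (measurable image under an equivariant map) of an ergodic system is again ergodic, the process $(\Phi((X_{t-k})_k,\vartheta))_t$ is ergodic. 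The one technical point to check is the measurability of $H$ with respect to the flow's $\sigma$-algebra, which follows because $\Phi$ is measurable and each coordinate projection $(X_{t-k})$ is measurable; I would also note that the $\mathcal{L}_\infty^{p,q}(\alpha)$ bound guarantees the infinite-dimensional argument is handled as an $L^q$-convergent object, so $H$ is a genuine measurable functional rather than merely a formal expression.
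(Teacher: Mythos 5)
Your proof is correct and follows essentially the same route as the paper: integrability comes from the $\mathcal{L}_\infty^{p,q}(\alpha)$ inequality applied against the zero sequence together with $\sup_{\vartheta\in\Theta}\norm{\Phi(0,\vartheta)}<\infty$, and the stationarity and ergodicity of the image process follow from the heredity of these properties under measurable functionals of the trajectory, which is exactly the factor argument the paper invokes by citing Krengel's Proposition 4.3. The only cosmetic difference is that the paper first establishes integrability for the finite truncations $\Phi(X_t,\ldots,X_{t-m},0,\ldots,\vartheta)$ and then passes to the limit via an $L^1$-Cauchy argument, whereas you apply the defining inequality to the full sequence in one step.
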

\begin{proof}
For $t\in\R$ and $m\in\N$ we have that $\phi_{t,m}=\Phi(X_{t},\ldots,X_{t-m},0,\ldots,\vartheta)\in L^1$, since $\sup_{\vartheta\in\Theta}\norm{\Phi(0,\vartheta)}<\infty$ and $\Phi(\cdot,\vartheta)\in \mathcal{L}_\infty^{p,q}(\alpha)$. Then, similar to \cite[Lemma 3.1]{BDW2020}, one can show that $\phi_{t,m}$ is a Cauchy sequence in $L^1$. Noting that $\Phi$ is measurable, we conclude analogous to \cite[Proposition 4.3]{K1985}.
\end{proof}

\noindent Note that Lemma \ref{lemma:integrabilityphi} is often used implicitly in the following.% without explicitly mentioning.

%In the next sections we investigate contrasts of finite (lag ) and infinite memory, i.e. 
%\begin{gather*}
%\Phi\left(\tilde{Y}_u(\Delta),\ldots,\tilde{Y}_u(\Delta (1-p))\theta\right) \text{ and }\Phi\left(\left(\tilde{Y}_u(\Delta (1-k))\right)_{k\in\N_0},\theta\right),
%\end{gather*}
%for some $p$
\subsection{Consistency}
\label{sec3-1}
We now show pointwise convergence of $M_N(\vartheta)$, i.e. $M_N(\vartheta)\overset{P}{\rightarrow} M(\vartheta)$ for all $\vartheta\in\Theta$ as $N\rightarrow\infty$ and stochastic equicontinuity of the sequence $\{M_N(\vartheta)\}_{N\in\N}$. Together, these properties imply $\hat{\vartheta}_N\overset{P}{\longrightarrow}\vartheta^*$ as $N\rightarrow\infty$ along usual lines.
%Usual arguments ensure $\hat{\vartheta}_N\underset{N\rightarrow\infty}{\overset{P}{\longrightarrow}}\vartheta^*$, if we show pointwise convergence of $M_N(\vartheta)$, i.e. $M_N(\vartheta)\overset{P}{\rightarrow} M(\vartheta)$ for all $\vartheta\in\Theta$ as $N\rightarrow\infty$ and stochastic equicontinuity of the sequence $\{M_N(\vartheta)\}_{N\in\N}$. % (see e.g. \cite[Theorem 5.7]{V1998}).\\
To show pointwise convergence, we use the localized law of large numbers from \cite{SS2021}. To this end, it is necessary to impose regularity conditions on $\Phi$. We demand that $\Phi$ belongs to $\mathcal{L}_{\infty}^{p,q}$ for each $\vartheta \in \Theta$. Moreover, if \hyperref[observations:O2]{(O2)} holds, it is clear that $\Phi(\tilde{Y},\vartheta)$ has to be $\theta$-weakly dependent for all $\vartheta\in\Theta$ (see \cite[Theorem 3.6]{SS2021}). Besides this, $\Phi$ has to additionally belong to $\mathcal{L}_d$ with respect to the parameter space to ensure stochastic equicontinuity.
%
%
%
%While imposing regularity conditions on $\Phi$ which are of the type (\ref{equation:functionclassinfinite}) and hold pointwise in $\Theta$ ensure that $\Phi\left((\tilde{Y}_u(t+\Delta (1-k)))_{k\in\N_0},\theta\right)$ is a locally stationary approximation of the sequence 
%\begin{gather*}
%\Phi\left(\left(Y_N\left(t+\frac{\Delta (1-k)}{N}\right)\right)_{k\in\N_0},\theta\right),
%\end{gather*}
%we 
\begin{Theorem}\label{theorem:consistency}
Let $Y_N$ be a sequence of stochastic processes with locally stationary approximation $\tilde{Y}_u$ for some $q\geq1$ such that either \hyperref[observations:O1]{(O1)} or \hyperref[observations:O2]{(O2)} holds. Besides, for some $p\geq1$ and a compact set $\Theta\subset\R^d$, we assume that 
\begin{enumerate}[label={(\alph*)}]
\item $\Phi(\cdot,\vartheta)\in \mathcal{L}_\infty^{p,q}(\alpha)$ for all $\vartheta\in\Theta$, such that $\sum_{k=0}^\infty k \alpha_k<\infty$,
\item $\Phi(x,\cdot)\in \mathcal{L}_d\left(0,D_1(1+\sum_{k=0}^\infty \beta_k|x_k|^q) \right)$ for all real sequences $x=(x_k)_{k\in\N_0}$ and some $D_1\geq0$, where $(\beta_k)_{k\in\N_0}\subset\R^+$ is a sequence such that $\sum_{k=0}^\infty k\beta_k<\infty$,
\item if \hyperref[observations:O2]{(O2)} holds, both $\Phi((\tilde{Y}_u(t+\Delta (1-k)))_{k\in\N_0},\vartheta)$ as well as $g((\tilde{Y}_u(t+\Delta (1-k)))_{k\in\N_0})$ are $\theta$-weakly dependent for all $\vartheta\in\Theta$, where $g(x)=\sum_{k=0}^\infty \beta_k|x_k|^q$,
\item $\sup_{\vartheta\in\Theta}\norm{\Phi(0,\vartheta)}<\infty$ and
\item the identifiability condition \hyperref[assumption:M1]{(M1)} holds.
\end{enumerate}
Then, $\hat{\vartheta}_N$ is consistent, i.e. $\hat{\vartheta}_N\overset{P}{\longrightarrow}\vartheta^*$ as $N\rightarrow\infty$.
\end{Theorem}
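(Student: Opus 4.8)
The plan is to follow the classical two-step route to consistency of an $M$-estimator. First I would establish pointwise convergence $M_N(\vartheta)\overset{P}{\rightarrow}M(\vartheta)$ for each fixed $\vartheta\in\Theta$; then I would prove stochastic equicontinuity of the family $\{M_N(\vartheta)\}_{N\in\N}$ on the parameter space. Since $\Theta$ is compact, these two facts upgrade pointwise to uniform convergence, $\sup_{\vartheta\in\Theta}\abs{M_N(\vartheta)-M(\vartheta)}\overset{P}{\rightarrow}0$, and the identifiability condition \hyperref[assumption:M1]{(M1)} then yields $\hat\vartheta_N\overset{P}{\rightarrow}\vartheta^*$ by the standard argmin argument.

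For the pointwise statement I would fix $\vartheta$ and apply the localized law of large numbers of \cite[Theorem 3.5 and 3.6]{SS2021} to the kernel-weighted average $M_N(\vartheta)$. The hypotheses of that theorem are exactly what assumptions (a)--(d) provide: by (a), $\Phi(\cdot,\vartheta)\in\mathcal{L}_\infty^{p,q}(\alpha)$, so Proposition \ref{proposition:inheritanceproperties}(d) shows that $\Phi((\tilde Y_u(t+\Delta(1-k)))_{k\in\N_0},\vartheta)$ is a locally stationary approximation of the summand process $\Phi((Y_N(\tau+\tfrac{\Delta(1-k)}{N}))_{k\in\N_0},\vartheta)$ entering $M_N$; combining (a) and (d) with Lemma \ref{lemma:integrabilityphi} shows this approximation is stationary, integrable and ergodic. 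Under \hyperref[observations:O1]{(O1)} ergodicity alone suffices for the localized LLN, whereas under \hyperref[observations:O2]{(O2)} one additionally needs $\theta$-weak dependence of $\Phi((\tilde Y_u(t+\Delta(1-k)))_{k\in\N_0},\vartheta)$, which is precisely what (c) supplies. In either case the localized LLN gives $M_N(\vartheta)\overset{P}{\rightarrow}E[\Phi(\tilde Y,\vartheta)]=M(\vartheta)$.

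For equicontinuity the decisive input is (b). Because $\Phi(x,\cdot)\in\mathcal{L}_d(0,D_1(1+\sum_k\beta_k\abs{x_k}^q))$, the map $\vartheta\mapsto\Phi(x,\vartheta)$ is Lipschitz with a random Lipschitz constant that is affine in $g(x)=\sum_k\beta_k\abs{x_k}^q$, so that
\begin{align*}
\abs{M_N(\vartheta_1)-M_N(\vartheta_2)}\leq L_N\,\norm{\vartheta_1-\vartheta_2}, \qquad L_N=\frac{\delta_N}{b_N}\sum_{i=-m_N}^{m_N}\Big|K\Big(\tfrac{\tau_i^N-u}{b_N}\Big)\Big|\,3D_1\Big(1+g\big((Y_N(\cdot))_{k\in\N_0}\big)\Big).
\end{align*}
It then remains to show that the random factor $L_N$ is bounded in probability, and I would obtain this from a second application of the localized LLN, now to $g$. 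The summability $\sum_k k\beta_k<\infty$ in (b) places $g$ in a function class to which Proposition \ref{proposition:inheritanceproperties} applies, so $g((\tilde Y_u(\cdot))_{k\in\N_0})$ is a locally stationary approximation of $g((Y_N(\cdot))_{k\in\N_0})$; under \hyperref[observations:O2]{(O2)} its $\theta$-weak dependence is again guaranteed by (c). Hence $L_N$ converges in probability to a finite limit, is in particular tight, and equicontinuity follows: for every $\eta>0$ there is $\rho>0$ making $\limsup_N P\big(\sup_{\norm{\vartheta_1-\vartheta_2}<\rho}\abs{M_N(\vartheta_1)-M_N(\vartheta_2)}>\eta\big)$ as small as desired.

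Combining the two steps gives uniform convergence on $\Theta$. The same Lipschitz bound from (b) shows that $M(\vartheta)=E[\Phi(\tilde Y,\vartheta)]$ is itself Lipschitz, hence continuous, so by \hyperref[assumption:M1]{(M1)} and compactness $\vartheta^*$ is a well-separated minimum, i.e. $\inf_{\norm{\vartheta-\vartheta^*}\geq\epsilon}M(\vartheta)>M(\vartheta^*)$ for each $\epsilon>0$. On the event $\{\sup_\vartheta\abs{M_N-M}<\eta/2\}$, minimality of $\hat\vartheta_N$ forces $\norm{\hat\vartheta_N-\vartheta^*}<\epsilon$ once $\eta$ is chosen below the separation gap, which proves $\hat\vartheta_N\overset{P}{\rightarrow}\vartheta^*$. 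I expect the genuine work to lie in the equicontinuity step, specifically in controlling the random Lipschitz factor $L_N$: this is where one must verify that the auxiliary function $g$ inherits both the locally stationary approximation property and, under \hyperref[observations:O2]{(O2)}, $\theta$-weak dependence, and it is here that the moment bookkeeping together with the summability conditions $\sum_k k\alpha_k<\infty$ and $\sum_k k\beta_k<\infty$ is actually used.
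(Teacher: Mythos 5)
Your proposal is correct and follows essentially the same route as the paper's proof: pointwise convergence of $M_N$ via the localized law of large numbers of \cite{SS2021} applied to $\Phi$ (using Proposition \ref{proposition:inheritanceproperties} and Lemma \ref{lemma:integrabilityphi}), stochastic equicontinuity via the Lipschitz bound in $\vartheta$ from (b) with the random factor controlled by a second application of the localized LLN to $g(x)=\sum_k\beta_k|x_k|^q$, and then the standard well-separated-minimum argument (the paper invokes \cite[Theorem 5.7]{V1998} and \cite{N1991} for exactly these steps). The only detail the paper makes explicit that you gloss over is the verification that $g\in\mathcal{L}_\infty^{1,q}(\beta)$, done via the mean value theorem and H\"older's inequality.
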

\begin{proof}
See Section \ref{sec7-1}.
\end{proof}

\begin{Remark}\label{remark:consistencyfinitememory}
In the case where \hyperref[observations:O2]{(O2)} holds and the contrast function $\Phi$ is of finite memory, i.e. there exists $n\in\N_0$ such that $\Phi((\tilde{Y}_u(\Delta (1-k)))_{k\in\N_0},\vartheta)=\Phi(\tilde{Y}_u(\Delta),\ldots,\tilde{Y}_u(\Delta(1-n)),\vartheta)$, Proposition \ref{proposition:inheritanceproperties} shows that the conditions (c) and (d) of Theorem \ref{theorem:consistency} are implied by the condition that
\begin{enumerate}[label={(\alph*)}]
\item[(c$^*$)] $\Phi(x,\vartheta)\leq C\norm{x}_1^{M+1}$ and $\Phi(\cdot,\vartheta)\in\mathcal{L}_{n+1}(M,C)$ for some $C,M\geq0$ and all $x\in\R^{n+1}$, $\vartheta\in\Theta$. Moreover, $\tilde{Y}_u$ is $\theta$-weakly dependent and $E[|\tilde{Y}_u|^{(q\vee (M+1))+\gamma}]<\infty$ for some $\gamma>0$.
\end{enumerate}
For contrast functions that are of infinite memory, we give sufficient conditions for (c) in Corollary \ref{corollary:thetaweakdependenceinfinitememory}, where we investigate processes, whose locally stationary approximation $\tilde{Y}_u$ is a L\'evy-driven moving average. 
\end{Remark}

\subsection{Asymptotic normality}
\label{sec3-2}
To establish asymptotic normality of $\hat{\vartheta}_N$ we follow the classical approach (see e.g. \cite[Section 5.3]{V1998}) to show asymptotic normality of an $M$-estimator. We impose conditions on the first and second order partial derivatives of the contrast $\Phi$ and investigate the Taylor expansion of $\nabla_\vartheta M_N$ at $\vartheta^*$. The individual components of the expansion are then shown to either converge to $0$ or to be asymptotically normal. The localization is achieved by using the rectangular kernel
\begin{gather}\label{equation:rectangularkernel}
K_{rect}(x)=\frac{1}{2}\mathbb{1}_{\{x\in[-1,1]\}}.
\end{gather}
It is easy to see that $K_{rect}$ is a localizing kernel. Depending on whether \hyperref[observations:O1]{(O1)} or \hyperref[observations:O2]{(O2)} holds, we obtain different asymptotic variances.\\
To establish asymptotic normality of the components of the Taylor expansion we use results from \cite{SS2021}. There, the authors derived central limit type results under the following condition on the $\theta$-coefficient $\theta(h)$ of the locally stationary approximation
\begin{align*}%\label{equation:thetadecaydemand}
\hypertarget{DD}{DD(\varepsilon):} \qquad\sum_{h=1}^\infty\theta(h)h^{\frac{1}{\varepsilon}}<\infty \quad \text{for some }\varepsilon>0.
\end{align*}
Sufficient conditions for \hyperlink{DD}{DD($\varepsilon$)} to hold are for instance $\theta(h)\in\mathcal{O}(h^{-\alpha})$ for some $\alpha>(1+\frac{1}{\varepsilon})$ or $\theta(h)\in\mathcal{O}\big(\big(h\ln(h)\big)^{-1-\frac{1}{\varepsilon}}\big)$.

%As matter of fact, using Proposition \ref{proposition:inheritanceproperties}, $\theta$-weak dependence is just inherited to the components of the Taylor expansion under finite memory transformations. Therefore, we restrict ourselves to contrast functions that are of finite memory, i.e. contrast functions $\Phi$ that can, for some $l\in\N$, be written as 
%\begin{gather}\label{eq:finitecontrastfunction}
%\Phi\left(\tilde{Y},\theta\right)=\Phi\left(\left(\tilde{Y}_u(\Delta (1-k))\right)_{k=0,\ldots,l},\theta\right).
%\end{gather}

\begin{Theorem}\label{theorem:asymptoticnormality}
Let $q,\tilde{q},\bar{q}\geq1$ and $Y_N$ be a sequence of stochastic processes with locally stationary approximation $\tilde{Y}_u$ for some $s\geq\max\{q,\tilde{q},\bar{q}\}$ such that either \hyperref[observations:O1]{(O1)} or \hyperref[observations:O2]{(O2)} holds. The contrast function $\Phi$ is assumed to be of the form (\ref{equation:contrastfunctioninfinitememory}) such that the Hessian matrix $\nabla_\vartheta^2 \Phi$ of $\Phi$ with respect to $\vartheta$ exists. % and is continuous in $\vartheta$. 
Moreover, assume that
\begin{enumerate}[label={(\alph*)}]
\item the parameter space $\Theta\subset\R^d$ is compact, \hyperref[assumption:M1]{(M1)} holds and the unique minimum $\vartheta^*$ is located in the interior of $\Theta$.
\item $\sqrt{m_N}b_N\rightarrow0$ as $N\rightarrow\infty$ and the localizing kernel is given by (\ref{equation:rectangularkernel}).
\item $\Phi(x,\cdot)\in \mathcal{L}_d\left(0,D_0(1+\sum_{k=0}^\infty \beta_k|x_k|^q) \right)$ for all real sequences $x=(x_k)_{k\in\N_0}$ and some $D_0\geq0$, where $(\beta_k)_{k\in\N_0}\subset\R^+$ is a sequence such that $\sum_{k=0}^\infty k\beta_k<\infty$ and $\Phi(\cdot,\vartheta)\in\mathcal{L}_{\infty}^{p,q}(\alpha)$ for all $\vartheta\in\Theta$, where $p\geq1$ and $\sum_{k=0}^\infty k \alpha_k<\infty$.
\item $\frac{\partial}{\partial\vartheta_i}\Phi(\cdot,\vartheta^*)\in\mathcal{L}_{\infty}^{\tilde{p},\tilde{q}}(\tilde{\alpha})$ for all $i=1,\ldots,d$, where $\tilde{p}\geq2$ and $\sum_{k=0}^\infty k \tilde{\alpha}_k<\infty$. 
\item the stationary process $\nabla_\vartheta \Phi(t):=\nabla_\vartheta \Phi\left( \left(\tilde{Y}_u(t+\Delta(1-k))\right)_{k\in\N_0},\vartheta^*\right)\in L^{2+\gamma_1}$ for some $\gamma_1>0$. Moreover, $\nabla_\vartheta \Phi(t)$ is $\theta$-weakly dependent with $\theta$-coefficients $\theta(h)$ satisfying \hyperlink{DD}{DD($\gamma_1$)}.
\item $\frac{\partial^2}{\partial\vartheta_i\partial\vartheta_j}\Phi(x,\cdot)\in \mathcal{L}_d\left(0,D_1(1+\sum_{k=0}^\infty \bar{\beta}_k|x_k|^{\bar{q}}) \right)$ for all real sequences $x=(x_k)_{k\in\N_0}$, $i,j=1,\ldots,d$ and some $D_1\geq0$, where $(\bar{\beta}_k)_{k\in\N_0}\subset\R^+$ is a sequence such that $\sum_{k=0}^\infty k\bar{\beta}_k<\infty$ and $\frac{\partial^2}{\partial\vartheta_i\partial\vartheta_j}\Phi(\cdot,\vartheta)\in\mathcal{L}_{\infty}^{\bar{p},\bar{q}}(\bar{\alpha})$ for all $\vartheta\in\Theta$, $i,j=1,\ldots,d$, where $\bar{p}\geq1$ and $\sum_{k=0}^\infty k \bar{\alpha}_k<\infty$.
\item if \hyperref[observations:O2]{(O2)} holds, the following conditions are satisfied:
\begin{enumerate}[label={(\alph*)}]
\item[(g1)] the processes $\Phi\left( \left(\tilde{Y}_u(t+\Delta(1-k))\right)_{k\in\N_0},\vartheta\right)$ and $g\left(\left(\tilde{Y}_u(t+\Delta(1-k))\right)_{k\in\N_0}\right)$ are $\theta$-weakly dependent for all $\vartheta\in\Theta$, where $g((x_k)_{k\in\N_0})=\sum_{k=0}^\infty\beta_k\abs{x_k}^{\bar{q}}$.
\item[(g2)] the  processes $\frac{\partial^2}{\partial\vartheta_i\partial\vartheta_j}\Phi\left( \left(\tilde{Y}_u(t+\Delta(1-k))\right)_{k\in\N_0},\vartheta\right)$ and $\bar{g}\left(\left(\tilde{Y}_u(t+\Delta(1-k))\right)_{k\in\N_0}\right)$ are $\theta$-weakly dependent for all $\vartheta\in\Theta$ and $i,j=1,\ldots,d$, where $\bar{g}((x_k)_{k\in\N_0})=\sum_{k=0}^\infty\Bar{\beta}_k\abs{x_k}^{\bar{q}}$.
\end{enumerate}
\item $\sup_{\vartheta\in\Theta}\norm{\Phi(0,\vartheta)}<\infty$ and $\sup_{\vartheta\in\Theta}\norm{\frac{\partial^2}{\partial\vartheta_i\partial\vartheta_j}\Phi(0,\vartheta)}<\infty$ for all $i,j=1,\ldots, d$.
\item the matrices 
\begin{align*}
I(u)&=\begin{cases}\frac{1}{2}I(u,0)+\sum_{k=1}^\infty I(u,k), &\text{if \hyperref[observations:O1]{(O1)} holds,}\\
\frac{1}{2}I(u,0), &\text{if \hyperref[observations:O2]{(O2)} holds}\end{cases}\text{ and}\\
V(u)&=E\left[ \nabla_\vartheta^2 \Phi\left(\left(\tilde{Y}_u(\Delta(1-k))\right)_{k\in\N_0},\vartheta^* \right)\right]
\end{align*}
are positive definite, where
\begin{gather*}
I(u,k)=E\left[ \nabla_\vartheta \Phi\left(\left(\tilde{Y}_u(\Delta(1-k))\right)_{k\in\N_0},\vartheta^* \right)\nabla_\vartheta 
\Phi\left(\left(\tilde{Y}_u(k\delta+\Delta(1-k))\right)_{k\in\N_0},\vartheta^* \right)'\right].
\end{gather*}
%that theta^*(u) is a local minimum just implies that it is semi-definite!
\end{enumerate}
Then, it holds
\begin{gather}\label{eq:asymptoticnormalitythetahat}
\sqrt{\frac{b_N}{\delta_N}} \left(\hat{\vartheta}_N-\vartheta^* \right)\overset{d}{\underset{N\rightarrow\infty}{\longrightarrow}} \mathcal{N}\left(0, V(u)^{-1}I(u)V(u)^{-1}\right).
\end{gather}
\end{Theorem}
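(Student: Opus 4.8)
The plan is to follow the classical three-step route for asymptotic normality of $M$-estimators (as in \cite[Section 5.3]{V1998}). Since $\hat{\vartheta}_N$ is consistent by Theorem~\ref{theorem:consistency} (whose hypotheses follow from (a), (c), (g1) and (h) here) and $\vartheta^*$ lies in the interior of $\Theta$ by (a), the first-order condition $\nabla_\vartheta M_N(\hat{\vartheta}_N)=0$ holds with probability tending to one. A componentwise mean-value expansion of $\nabla_\vartheta M_N$ around $\vartheta^*$ then yields
\begin{gather*}
0=\nabla_\vartheta M_N(\vartheta^*)+\nabla_\vartheta^2 M_N(\bar{\vartheta}_N)\big(\hat{\vartheta}_N-\vartheta^*\big),
\end{gather*}
where each row uses an intermediate point $\bar{\vartheta}_N$ on the segment joining $\hat{\vartheta}_N$ and $\vartheta^*$, so that $\bar{\vartheta}_N\overset{P}{\to}\vartheta^*$. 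Rearranging gives
\begin{gather*}
\sqrt{\tfrac{b_N}{\delta_N}}\big(\hat{\vartheta}_N-\vartheta^*\big)=-\big[\nabla_\vartheta^2 M_N(\bar{\vartheta}_N)\big]^{-1}\sqrt{\tfrac{b_N}{\delta_N}}\,\nabla_\vartheta M_N(\vartheta^*),
\end{gather*}
and the conclusion will follow by combining the Hessian limit $\nabla_\vartheta^2 M_N(\bar{\vartheta}_N)\overset{P}{\to}V(u)$, the gradient CLT $\sqrt{b_N/\delta_N}\,\nabla_\vartheta M_N(\vartheta^*)\overset{d}{\to}\mathcal{N}(0,I(u))$, and Slutsky's theorem, where $V(u)$ is invertible by condition (i).

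For the Hessian I would establish a \emph{uniform} localized law of large numbers. Conditions (f), (g2) and (h) let me apply the hereditary Proposition~\ref{proposition:inheritanceproperties} to each entry $\frac{\partial^2}{\partial\vartheta_i\partial\vartheta_j}\Phi$, so that the corresponding $g$-type majorant is $\theta$-weakly dependent and the localized LLN of \cite{SS2021} applies pointwise in $\vartheta$. The $\mathcal{L}_d$-bound in (f), together with the moment control from $\mathcal{L}_\infty^{\bar p,\bar q}$, furnishes stochastic equicontinuity in $\vartheta$, upgrading pointwise to uniform convergence of $\nabla_\vartheta^2 M_N(\vartheta)$ to $E[\nabla_\vartheta^2\Phi((\tilde{Y}_u(\Delta(1-k)))_{k\in\N_0},\vartheta)]$. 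Continuity of this limit in $\vartheta$ and $\bar{\vartheta}_N\overset{P}{\to}\vartheta^*$ then give $\nabla_\vartheta^2 M_N(\bar{\vartheta}_N)\overset{P}{\to}V(u)$.

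The main work, and the main obstacle, is the gradient CLT. By (d) and (e) each component $\frac{\partial}{\partial\vartheta_i}\Phi(\cdot,\vartheta^*)$ lies in $\mathcal{L}_\infty^{\tilde{p},\tilde{q}}(\tilde{\alpha})$ and the stationary process $\nabla_\vartheta\Phi(t)$ is $\theta$-weakly dependent in $L^{2+\gamma_1}$ with coefficients satisfying \hyperlink{DD}{DD($\gamma_1$)}; Proposition~\ref{proposition:inheritanceproperties}(d) then identifies $\nabla_\vartheta\Phi(t)$ as the locally stationary approximation of the sequence appearing in $\nabla_\vartheta M_N(\vartheta^*)$. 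I would replace the $Y_N$-observations by their stationary approximation and invoke the central limit theorem for kernel-weighted sums of $\theta$-weakly dependent locally stationary processes from \cite{SS2021}. Two points need care. First, the local-stationarity bias: replacing $\tilde{Y}_{\tau_i^N}$ by $\tilde{Y}_u$ across the window $[u-b_N,u+b_N]$ incurs, via the Lipschitz bound (\ref{assumption:LS}), an error of order $b_N$ per summand together with the $1/N$ error of (\ref{assumption:LS}); after the $\sqrt{b_N/\delta_N}=\sqrt{m_N}$ scaling this contributes a term of order $\sqrt{m_N}\,b_N$, which vanishes precisely by assumption (b). Second, the identification of $I(u)$: with the rectangular kernel the factor $\int K_{rect}^2=\tfrac12$ produces the $\tfrac12 I(u,0)$ term, while the serial-dependence structure distinguishes the two regimes. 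Under \hyperref[observations:O1]{(O1)} the rescaled spacing $N\delta_N=\delta$ is fixed, so observations $i$ and $i+k$ correspond to the stationary lag $k\delta$, their covariances are the $I(u,k)$, and all lags accumulate into $\tfrac12 I(u,0)+\sum_{k\geq1}I(u,k)$. Under \hyperref[observations:O2]{(O2)} the spacing $N\delta_N\to\infty$, so the lag between successive observations diverges and $\theta$-weak dependence forces every cross-covariance to vanish, leaving only $\tfrac12 I(u,0)$.

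Finally, Slutsky's theorem combines the Hessian limit and the gradient CLT to yield (\ref{eq:asymptoticnormalitythetahat}) with asymptotic covariance $V(u)^{-1}I(u)V(u)^{-1}$. The delicate steps are controlling the local-stationarity bias at the exact rate dictated by (b) and verifying that the \cite{SS2021} central limit theorem delivers the regime-dependent variance $I(u)$; the remaining ingredients are the standard $M$-estimation machinery.
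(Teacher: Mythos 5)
Your proposal follows essentially the same route as the paper's proof: a mean-value/Taylor expansion of $\nabla_\vartheta M_N$ at $\vartheta^*$, a decomposition of the scaled gradient into a centered part (handled by the central limit theorem of \cite{SS2021} via Cram\'er--Wold, using (d) and (e)) and a bias part of order $\sqrt{m_N}(b_N+1/N)$ killed by (b), plus a uniform law of large numbers for the Hessian obtained by rerunning the consistency argument on $\frac{\partial^2}{\partial\vartheta_i\partial\vartheta_j}\Phi$, finished with Slutsky. The steps, the role of each hypothesis, and the source of the regime-dependent $I(u)$ all match the paper's argument.
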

\begin{proof}
See Section \ref{sec7-2}.
\end{proof}

\begin{Remark}\label{remark:asymptoticnormalityfinitememory}
If the contrast function $\Phi$ is of finite memory (see Remark \ref{remark:consistencyfinitememory}), Proposition \ref{proposition:inheritanceproperties} and the obvious analog of \cite[Proposition 3.4]{CS2018} for our $\theta$-weak dependence coefficient, show that condition (e) is implied by
\begin{enumerate}[label={(\alph*)}]
\item[(e$^*$)] $\nabla_\vartheta \Phi(0,\vartheta^*)=0$, $\frac{\partial}{\partial\vartheta_i} \Phi(\cdot,\vartheta^*)\in\mathcal{L}_{n+1}(M_1,C_1)$ for some $C_1,M_1\geq0$ and $\tilde{Y}_u$ is $\theta$-weakly dependent with $\theta$-coefficients $\theta_{\tilde{Y}_u}(h)\in\mathcal{O}(h^{-\alpha})$ for some $\alpha>(1+\frac{M_1+1}{\gamma_1})(\frac{1+2M_1+\gamma_1}{1+M_1+\gamma_1})$, where $\gamma_1>0$ such that $\tilde{Y}_u(0)\in L^{2(M_1+1)+\gamma_1}$.
\end{enumerate}
If, in addition, \hyperref[observations:O2]{(O2)} holds, condition (g) is implied by
\ref{theorem:consistency} by
\begin{enumerate}[label={(\alph*)}]
\item[(g1$^*$)] $\Phi(x,\vartheta)\leq C_2\norm{x}^{M_2+1}$ and $\Phi(\cdot,\vartheta)\in\mathcal{L}_{n+1}(M_2,C_2)$ for some $M_2,C_2\geq0$ and all $x\in\R^{n+1}$, $\vartheta\in\Theta$. Moreover, $\tilde{Y}_u$ is $\theta$-weakly dependent and $\tilde{Y}_u\in L^{(q\vee (M_2+1))+\gamma_2}$ for some $\gamma_2>0$.
\item[(g2$^*$)] $\frac{\partial^2}{\partial\vartheta_i\partial\vartheta_j}\Phi(x,\vartheta)\leq C_3\norm{x}^{M_3+1}$ and $\frac{\partial^2}{\partial\vartheta_i\partial\vartheta_j}\Phi(\cdot,\vartheta)\in\mathcal{L}_{n+1}(M_3,C_3)$ for some $M_3,C_3\geq0$ and all $x\in\R^{n+1}$, $i,j=1,\ldots,d$ and $\vartheta\in\Theta$. Moreover, $\tilde{Y}_u$ is $\theta$-weakly dependent and $\tilde{Y}_u\in L^{(\bar{q}\vee (M_3+1))+\gamma_3}$ for some $\gamma_3>0$.
\end{enumerate}
%For contrast functions that are of infinite memory, we give sufficient conditions for (b) in {\color{red}genauer Verweis auf späteres Resultat} Section \ref{sec4}, where we investigate processes, which locally stationary approximation $\tilde{Y}_u$ is a L\'evy-driven moving average. 
\end{Remark}

\section{Least squares estimation for time-varying L\'evy-driven Orn\-stein-Uhlenbeck processes}
\label{sec4}
In this section, we establish consistency and asymptotic normality of an $M$-estimator using results from Section \ref{sec3} for a least squares contrast. The observations are assumed to be sampled according to Assumption \ref{assumption:observations} from a sequence of time-varying L\'evy-driven Ornstein-Uhlenbeck processes, which possesses a locally stationary approximation.\\ %, according to the sampling schemes discusses in Assumption \ref{assumption:observations}.
%In this section, we establish asymptotic results for least square estimation of time-varying L\'evy-driven Ornstein-Uhlenbeck processes, including results on consistency and asymptotic normality. All results are derived by investigating the least square contrast for L\'evy-driven Ornstein-Uhlenbeck processes and applying results from Section \ref{sec3}.\\
Before we give the definition of (time-varying) L\'evy-driven Ornstein-Uhlenbeck processes, we review L\'evy processes and discuss basic results including stochastic integration with respect to them. For further insight we refer to \cite{A2009} and \cite{S2014}.

\subsection{L\'evy processes and stochastic integration}
\label{sec4-1}

\begin{Definition}
A real-valued stochastic process $L=\{L(t),t\in \R_0^+\}$ is called L\'evy process if
\begin{enumerate}[label={(\alph*)}]
\item $L(0)=0$ almost surely,
\item the random variables $(L(t_0),L(t_1)-L(t_0),\dots,L(t_n)-L(t_{n-1}))$ are independent for any $n\in\N$ and $t_0<t_1<t_2<\dots<t_n$, 
\item for all $s,t \geq 0$, the distribution of $L(s+t)-L(s)$ does not depend on $s$ and
\item $L$ is stochastically continuous.
\end{enumerate}
Without loss of generality we additionally consider $L$ to be c\`adl\`ag, i.e. right continuous with finite left limits.
\end{Definition}

Let $L=\{L(t),t\in \R_0^+\}$ be a real-valued L\'evy process. Then, $L(1)$ is an infinitely divisible real-valued random variable with characteristic triplet $(\gamma,\Sigma,\nu)$, where $\gamma \in \R$, $\Sigma>0$ and $\nu$ is a L\'evy measure on $\R$, i.e. $\nu(0)=0$ and $\int_{\R}\left(1\wedge\normabs{x}^2\right)\nu(dx)<\infty$. The characteristic function of $L(t)$ is given by $\varphi_{L(t)}(z)=E[e^{izL(t)}]=e^{t \Psi_{L}(z)}$ with characteristic exponent $\Psi_L(z)=( i\gamma z -\frac{\Sigma z^2}{2}+\int_{\R } (e^{i z x}-1-i z x \mathbb{1}_{Z}(x))\nu(dx))$, $z \in \R$ and $Z=\{x \in \R, \normabs{x}\leq 1\}$. If $\nu$ has finite second moment, i.e.
\begin{gather}\label{eq:condlevyproc}
%E[L(1)]=0\in\R^d \text{ and } 
\int_{\normabs{x}>1}\normabs{x}^2\nu(dx)<\infty \left(\iff\int_{\R}\normabs{x}^2\nu(dx)<\infty\right),
\end{gather}
then $L(t)\in L^2$ for all $t\geq0$ and we have $E[L(t)]=t \left(\gamma+\int_{\normabs{x}>1}x\nu(dx)\right)<\infty$ and $Var(L(t))=t \left(\Sigma +\int_{\R}x^2\nu(dx)\right)<\infty$. %From now on, when working with a L\'evy-process with characteristic triplet $(\gamma,\Sigma,\nu)$, we will assume that (\ref{eq:condlevyproc}) holds.\\
In the remainder we work with two-sided L\'evy processes, i.e. $L(t)=L_1(t)\mathbb{1}_{\{t\geq0\}} - L_2(-t)\mathbb{1}_{\{t<0\}}$, where $L_1$ and $L_2$ are independent copies of a one-sided L\'evy process. Consider
\begin{gather}\label{eq:stochint}
X(t)=\int_\R f(t,s) L(ds),
\end{gather}
where $t\in\R$ and $f:\R\times\R\mapsto\R^n$ is $\mathcal{B}(\R\times\R)-\mathcal{B}(\R^n)$ measurable.
Necessary and sufficient conditions for the stochastic integral (\ref{eq:stochint}) to exist are given in \cite[Theorem 3.3]{S2014}, namely if 
\begin{itemize}
\item $\Sigma\int_\R \norm{f(t,s)f(t,s)'}ds<\infty$,
\item $\int_\R \int_{\R}\left((\norm{f(t,s)}x)^2\wedge 1 \right)\nu(dx)ds<\infty$ and
\item $\int_\R \norm{f(t,s)\left( \gamma+\int_{\R}x\left(\mathbb{1}_{[0,1]} \left(\norm{f(t,s)x}\right)- \mathbb{1}_{[0,1]}\left( \normabs{x}\right)\nu(dx)\right) \right)}ds<\infty$
\end{itemize}
are satisfied, then (\ref{eq:stochint}) is well-defined. If $L$ satisfies (\ref{eq:condlevyproc}) and $f(t,\cdot)\in L^1(\R)\cap L^2(\R)$, then the above conditions are satisfied and the integral $X(t)=\int_\R f(t,s)L(ds)$ exists in $L^2$. If $X=\{X(t), t\in\R\}$ with $X(t)$ as in (\ref{eq:stochint}) is well-defined, $X(t)$ is infinitely divisible with characteristic triplet $(\gamma_{int},\Sigma_{int},\nu_{int})$, where
\begin{itemize}
\item $\gamma_{int}=\int_{\R}(f(t,s)\gamma ds+ \int_\R\int_{\R}f(t,s)x(\mathbb{1}_{[0,1]}(\norm{f(t,s)x})-\mathbb{1}_{[0,1]}(\normabs{x}))\nu(dx))ds$,
\item $\Sigma_{int}=\Sigma\!\int_{\R}f(t,s)f(t,s)'ds$ and
\item $\nu_{int}(B)= \int_{\R}\int_{\R} \mathbb{1}_B(f(t,s)x)\nu(dx) ds$,\qquad $B\in\mathcal{B}(\R^n)$.
\end{itemize}

The following proposition shows that infinite memory transformations of L\'evy-driven moving average processes, i.e. processes of the form (\ref{eq:stochint}) for which $g(u,t,s)=g(u,t-s)$, are $\theta$-weakly dependent. 

\begin{Proposition}\label{proposition:infinitememorymovingaverage}
Let $L$ be a two-sided L\'evy process satisfying (\ref{eq:condlevyproc}), $\mu+\int_{\abs{x}>1}x\nu(dx)=0$ and $g:\R^+\times\R\mapsto\R$ a function such that $g(u,\cdot)\in L^1(\R)\cap L^q(\R)$ for all $u\in\R^+$ and some $q\in\{2,4\}$. For fixed $u\in\R^+$ we define the process $X_u=\{X_u(t),t\in\R\}$ as
\begin{align}\label{equation:infinitememorylevydrivenmovingaverage}
X_u(t)=\int_{-\infty}^t g(u,t-s)L(ds).
\end{align}
Consider an $\R^n$-valued function $\varphi\in\mathcal{L}_\infty^{p,q}(\alpha)$, where $p\geq1$ and for some $\Delta>0$ the infinite memory vector $Z_u(t)=\left(X_u(t+\Delta),X_u(t),X_u(t-\Delta),\ldots\right)$. Then, the process $\varphi(Z_u(t))$ is $\theta$-weakly dependent with $\theta$-coefficients satisfying 
\begin{align}\label{equation:thetacoefficientinequality}
\begin{aligned}
q&=2:\quad\theta_{\varphi(Z_u)}(h)\leq C\sum_{k=\left\lfloor\frac{h}{2\Delta} \right\rfloor}^\infty \alpha_k+C\left(\Sigma_L\int_{-\infty}^{-\frac{h}{2}}g(u,-s)^2ds\right)^{\frac{1}{2}} =\hat{\theta}_{\varphi(Z_u)}(h)\\
q&=4:\quad \theta_{\varphi(Z_u)}(h)\leq C\sum_{k=\left\lfloor\frac{h}{2\Delta} \right\rfloor}^\infty \alpha_k+C\Bigg(\int_{-\infty}^{-\frac{h}{2}}g(u,-s)^4ds\left(\int_{\R}x^4\nu(dx)\right)\\
&\qquad\qquad\qquad\qquad\qquad\qquad\quad\ \ \ + 3\Sigma_L^2\bigg(\int_{-\infty}^{-\frac{h}{2}}g(u,-s)^2ds\bigg)^2\Bigg)^{\frac{1}{4}} =\hat{\theta}_{\varphi(Z_u)}(h)
\end{aligned}
\end{align}
for a constant $C\geq0$ and all $h\geq1$.
\end{Proposition}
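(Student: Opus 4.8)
The plan is to establish the covariance bound defining $\theta$-weak dependence through a coupling argument in which the ``future'' vector $Z_u(j)$ is replaced by a truncated version depending on the driving Lévy process only through increments lying strictly to the right of the dependence region of the ``past'' block. Fix a configuration $i_1\leq\cdots\leq i_v\leq i_v+h\leq j$, write $A=F(\varphi(Z_u(i_1)),\ldots,\varphi(Z_u(i_v)))$ and $B=G(\varphi(Z_u(j)))$ for $F\in\mathcal{G}_u^*$, $G\in\mathcal{G}_1$, and set $B^\ast=G(\varphi(Z_u^\ast(j)))$ for a coupled vector $Z_u^\ast(j)$ to be constructed. Once $Z_u^\ast(j)$ is independent of $(\varphi(Z_u(i_1)),\ldots,\varphi(Z_u(i_v)))$ we have $Cov(A,B^\ast)=0$, and the elementary inequality $|Cov(A,B-B^\ast)|\leq 2\norm{A}_\infty\norm{B-B^\ast}_{L^1}$ together with the Lipschitz property of $G$ and $\norm{\cdot}_{L^1}\leq\norm{\cdot}_{L^p}$ yields
\begin{align*}
\frac{|Cov(A,B)|}{\norm{F}_\infty Lip(G)}\leq 2\norm{\varphi(Z_u(j))-\varphi(Z_u^\ast(j))}_{L^p}.
\end{align*}
Applying the defining inequality of $\mathcal{L}_\infty^{p,q}(\alpha)$ (Definition \ref{definition:functionclassinfinite}) reduces everything to controlling the coupling error $\sum_{k=0}^\infty\alpha_k\norm{X_u(j+(1-k)\Delta)-X_u^\ast(j+(1-k)\Delta)}_{L^q}$ componentwise; the prefactor $f(\cdot)$ is bounded uniformly, since stationarity and $g(u,\cdot)\in L^1(\R)\cap L^q(\R)$ make $\sup_k\norm{X_u(j+(1-k)\Delta)}_{L^q}$ a finite constant.

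The coupling $Z_u^\ast(j)$ is built by a double truncation. First (depth truncation) I would discard all components of index $k\geq K$ with $K=\lfloor h/(2\Delta)\rfloor$, setting them to zero; by stationarity these contribute $C\sum_{k\geq K}\alpha_k$ to the error. Second (lag truncation), for each retained component I replace the full integral by $X_u^\ast(T_k)=\int_{T_k-h/2}^{T_k}g(u,T_k-s)\,L(ds)$ with $T_k=j+(1-k)\Delta$, so that the retained components depend only on the increments of $L$ over $\bigcup_{k<K}(T_k-h/2,T_k]$. The key bookkeeping step is to verify that this union lies strictly to the right of $i_v+\Delta$, the largest time the past block can see: using $j\geq i_v+h$ and $K\Delta\leq h/2$ one finds that even the deepest retained component satisfies $T_{K-1}-h/2\geq i_v+2\Delta$, so independence of $Z_u^\ast(j)$ from the past block follows from the independence of Lévy increments over disjoint intervals.

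It then remains to bound the per-component lag-truncation error $\norm{\int_{-\infty}^{T_k-h/2}g(u,T_k-s)\,L(ds)}_{L^q}$, which after the substitution $w=T_k-s$ is independent of $k$. Here the centering hypothesis $\mu+\int_{\abs{x}>1}x\,\nu(dx)=0$ is essential, as it renders these integrals mean-zero. For $q=2$ the It\^o isometry for Lévy integrals gives $\Sigma_L\int_{-\infty}^{-h/2}g(u,-s)^2\,ds$; for $q=4$ the fourth-moment identity for centered Lévy integrals, $E\big[(\int\phi\,dL)^4\big]=\big(\int_\R x^4\,\nu(dx)\big)\int\phi^4+3\Sigma_L^2\big(\int\phi^2\big)^2$, produces exactly the two terms in (\ref{equation:thetacoefficientinequality}). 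Summing $\alpha_k$ over the retained indices contributes only the convergent factor $\sum_k\alpha_k$, absorbed into $C$, so the retained part yields the single $g$-tail term; combined with the depth-truncation term this gives the claimed $\hat{\theta}_{\varphi(Z_u)}(h)$. The case $h<2\Delta$ (so $K=0$) is covered by the same estimate, the coupling then being the zero sequence.

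The main obstacle, and the step deserving the most care, is the coordinated choice of the two truncation levels: the depth cutoff $K=\lfloor h/(2\Delta)\rfloor$ and the lag cutoff $h/2$ must be selected simultaneously so that the retained, lag-truncated components stay genuinely independent of the past block for \emph{every} admissible configuration with gap at least $h$, while both the number of discarded components and the residual of each retained integral are controlled by $h$ alone, giving a bound depending on $h$ but not on $v$ or the particular indices. Neither truncation alone suffices --- without depth truncation the deep components reach arbitrarily far into the past, and without lag truncation the retained full integrals still overlap the past block --- so the decoupling hinges on balancing the two against each other, which is precisely what the factor $h/2$ appearing in both the summation index and the integral tail reflects.
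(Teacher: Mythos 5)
Your proposal is correct and follows essentially the same route as the paper's proof: the same double truncation with depth cutoff $l=\lfloor h/(2\Delta)\rfloor$ and lag cutoff $m=h/2$, the same independence argument via disjoint integration intervals of the L\'evy increments, and the same $L^2$-isometry and fourth-moment identity for the tail integrals. The only cosmetic difference is that the paper splits the error into two successive covariance terms (first depth truncation, then lag truncation) rather than performing one combined coupling step, which does not change the resulting bound.
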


\begin{proof}
See Section \ref{sec7-3}.
\end{proof}

\begin{Corollary}\label{corollary:thetaweakdependenceinfinitememory}
Consider $Y_N$, $\tilde{Y}_u$ and $\Phi$ as in Theorem \ref{theorem:consistency} for $q=2$ where $\Phi$ is of infinite memory. Then, condition (c) from Theorem \ref{theorem:consistency} is implied by 
\begin{enumerate}
\item[(c$^{**}$)] $\tilde{Y}_u$ satisfies the conditions of Proposition \ref{proposition:infinitememorymovingaverage}.
\end{enumerate}
\end{Corollary}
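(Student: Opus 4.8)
The plan is to apply Proposition~\ref{proposition:infinitememorymovingaverage} twice, treating the two processes appearing in condition~(c) of Theorem~\ref{theorem:consistency} as transformations of the L\'evy-driven moving average $\tilde{Y}_u$. Under~(c$^{**}$) the approximation $\tilde{Y}_u$ is of the form~(\ref{equation:infinitememorylevydrivenmovingaverage}) with kernel $g(u,\cdot)\in L^1(\R)\cap L^2(\R)$, i.e.\ the case $q=2$ of the Proposition, and its infinite memory vector $Z_u(t)=(\tilde{Y}_u(t+\Delta),\tilde{Y}_u(t),\tilde{Y}_u(t-\Delta),\ldots)$ coincides exactly with $(\tilde{Y}_u(t+\Delta(1-k)))_{k\in\N_0}$. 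Hence it suffices to exhibit, for each of the two relevant transformations, membership in a class $\mathcal{L}_\infty^{p,2}(\cdot)$, as this is the only additional input the Proposition requires.

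For the first process I would simply invoke hypothesis~(a) of Theorem~\ref{theorem:consistency}, which (with $q=2$) states that $\Phi(\cdot,\vartheta)\in\mathcal{L}_\infty^{p,2}(\alpha)$ for every $\vartheta\in\Theta$. Applying Proposition~\ref{proposition:infinitememorymovingaverage} with $\varphi=\Phi(\cdot,\vartheta)$ then yields the $\theta$-weak dependence of $\Phi((\tilde{Y}_u(t+\Delta(1-k)))_{k\in\N_0},\vartheta)$ for each fixed $\vartheta$, which is the first half of~(c).

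The only genuinely new step is to check that $g(x)=\sum_{k=0}^\infty\beta_k|x_k|^2$ lies in $\mathcal{L}_\infty^{1,2}(\beta)$. Since $\sum_{k=0}^\infty k\beta_k<\infty$ forces $\sum_{k=0}^\infty\beta_k<\infty$, the map $g$ is finite-valued on $\ell^\infty(L^2)$. For $X,Y\in\ell^\infty(L^2)$ I would bound $g(X)-g(Y)$ term by term using $|x_k|^2-|y_k|^2=(|x_k|-|y_k|)(|x_k|+|y_k|)$, the reverse triangle inequality $\big||X_k|-|Y_k|\big|\leq|X_k-Y_k|$, and the Cauchy--Schwarz inequality applied to each summand, arriving at
$$\norm{g(X)-g(Y)}_{L^1}\leq 2\Big(\sup_{k\in\N_0}\{\norm{X_k}_{L^2}\vee\norm{Y_k}_{L^2}\}\Big)\sum_{k=0}^\infty\beta_k\norm{X_k-Y_k}_{L^2}.$$
This is exactly the inequality in Definition~\ref{definition:functionclassinfinite} with $p=1$, $q=2$, coefficients $\beta$, and growth function $f(r)=2r$, so $g\in\mathcal{L}_\infty^{1,2}(\beta)$. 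A second application of Proposition~\ref{proposition:infinitememorymovingaverage}, now with $\varphi=g$, gives the $\theta$-weak dependence of $g((\tilde{Y}_u(t+\Delta(1-k)))_{k\in\N_0})$, completing~(c).

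I do not expect a serious obstacle here: the content is elementary once the correct target class is identified. The two points to get right are choosing $p=1$ for $g$ so that the quadratic terms pair with the $L^2$-increments under Cauchy--Schwarz, and observing that the memory vector in Proposition~\ref{proposition:infinitememorymovingaverage} matches $(\tilde{Y}_u(t+\Delta(1-k)))_{k\in\N_0}$ verbatim, so that the Proposition applies without modification.
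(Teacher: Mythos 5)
Your argument is correct and is exactly the intended (and in the paper, omitted) proof: the corollary is meant as an immediate double application of Proposition \ref{proposition:infinitememorymovingaverage}, once to $\Phi(\cdot,\vartheta)\in\mathcal{L}_\infty^{p,2}(\alpha)$ and once to $g$. Your verification that $g\in\mathcal{L}_\infty^{1,2}(\beta)$ via the factorization $|x|^2-|y|^2=(|x|-|y|)(|x|+|y|)$ and Cauchy--Schwarz reproduces (for $q=2$) the same fact the paper establishes inside the proof of Theorem \ref{theorem:consistency} using the mean value theorem and H\"older's inequality.
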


\begin{Corollary}
Consider $Y_N$, $\tilde{Y}_u$ and $\Phi$ as in Theorem \ref{theorem:asymptoticnormality}, where $q,\bar{q}=2$, $\tilde{q}=4$ and $\Phi$ is of infinite memory. Then, the conditions (e) and (g) are respectively implied by
\begin{enumerate}
\item[(e$^{**}$)] $\nabla_\vartheta \Phi(t)\in L^{2+\gamma_1}$ for some $\gamma_1>0$ and $\tilde{Y}_u$ satisfies the conditions of Proposition \ref{proposition:infinitememorymovingaverage}. Moreover, $\hat{\theta}_{\nabla_\vartheta \Phi(t)}(h)$ from (\ref{equation:thetacoefficientinequality}) satisfies \hyperlink{DD}{DD($\gamma_1$)} and
\item[(g$^{**}$)] $\tilde{Y}_u$ satisfies the conditions of Proposition \ref{proposition:infinitememorymovingaverage}.
\end{enumerate}
\end{Corollary}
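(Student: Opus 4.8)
The plan is to read off both (e) and (g) from Proposition \ref{proposition:infinitememorymovingaverage}, which is designed precisely for transformations of the infinite-memory moving-average vector $Z_u(t)=\left(\tilde{Y}_u(t+\Delta(1-k))\right)_{k\in\N_0}$. In each case the recipe is the same: recognize the relevant process as $\varphi(Z_u(t))$ for a suitable transformation $\varphi$, verify that $\varphi$ belongs to an appropriate class $\mathcal{L}_\infty^{p,q}(\alpha)$, and then quote Proposition \ref{proposition:infinitememorymovingaverage} to conclude $\theta$-weak dependence.

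For (e), I would take $\varphi=\nabla_\vartheta\Phi(\cdot,\vartheta^*)$. Condition (d) of Theorem \ref{theorem:asymptoticnormality} gives $\frac{\partial}{\partial\vartheta_i}\Phi(\cdot,\vartheta^*)\in\mathcal{L}_\infty^{\tilde{p},\tilde{q}}(\tilde{\alpha})$ for every $i$ with $\tilde{q}=4$, and the triangle inequality in $L^{\tilde{p}}$ together with the elementary bound $\norm{v}\leq\sum_{i=1}^d\abs{v_i}$ for $v\in\R^d$ promotes this to $\nabla_\vartheta\Phi(\cdot,\vartheta^*)\in\mathcal{L}_\infty^{\tilde{p},\tilde{q}}(\tilde{\alpha})$ as an $\R^d$-valued map. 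Since (e$^{**}$) assumes $\tilde{Y}_u$ meets the hypotheses of Proposition \ref{proposition:infinitememorymovingaverage} with $q=\tilde{q}=4$, the proposition yields that $\nabla_\vartheta\Phi(t)=\varphi(Z_u(t))$ is $\theta$-weakly dependent with $\theta_{\nabla_\vartheta\Phi(t)}(h)\leq\hat{\theta}_{\nabla_\vartheta\Phi(t)}(h)$, the explicit bound in (\ref{equation:thetacoefficientinequality}). As \hyperlink{DD}{DD($\gamma_1$)} is a summability condition that is monotone in the coefficients, the assumption in (e$^{**}$) that $\hat{\theta}_{\nabla_\vartheta\Phi(t)}(h)$ satisfies \hyperlink{DD}{DD($\gamma_1$)} transfers to $\theta_{\nabla_\vartheta\Phi(t)}(h)$. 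Together with the moment bound $\nabla_\vartheta\Phi(t)\in L^{2+\gamma_1}$, stated outright in (e$^{**}$), this is exactly (e).

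For (g), assume \hyperref[observations:O2]{(O2)} and treat (g1) and (g2) in parallel. The contrast parts are immediate: condition (c) gives $\Phi(\cdot,\vartheta)\in\mathcal{L}_\infty^{p,q}(\alpha)$ with $q=2$ and condition (f) gives $\frac{\partial^2}{\partial\vartheta_i\partial\vartheta_j}\Phi(\cdot,\vartheta)\in\mathcal{L}_\infty^{\bar{p},\bar{q}}(\bar{\alpha})$ with $\bar{q}=2$, so Proposition \ref{proposition:infinitememorymovingaverage} with $q=2$ (its hypotheses on $\tilde{Y}_u$ being supplied by (g$^{**}$)) directly produces their $\theta$-weak dependence for every $\vartheta$ and all $i,j$. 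The only genuine computation is to check that the majorant functions $g\big((x_k)\big)=\sum_{k=0}^\infty\beta_k\abs{x_k}^2$ and $\bar{g}\big((x_k)\big)=\sum_{k=0}^\infty\bar{\beta}_k\abs{x_k}^2$ lie in $\mathcal{L}_\infty^{1,2}(\beta)$ and $\mathcal{L}_\infty^{1,2}(\bar{\beta})$, respectively. I would obtain this from the factorization $\abs{x_k}^2-\abs{y_k}^2=(\abs{x_k}-\abs{y_k})(\abs{x_k}+\abs{y_k})$, the bound $\big|\abs{x_k}-\abs{y_k}\big|\leq\abs{x_k-y_k}$, and the Cauchy--Schwarz inequality, which give $\norm{g(X)-g(Y)}_{L^1}\leq 2\sup_{k}\{\norm{X_k}_{L^2}\vee\norm{Y_k}_{L^2}\}\sum_{k=0}^\infty\beta_k\norm{X_k-Y_k}_{L^2}$; thus the defining inequality of Definition \ref{definition:functionclassinfinite} holds with $f(M)=2M$ and coefficient sequence $\beta$, which is summable because $\sum_k k\beta_k<\infty$. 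Applying Proposition \ref{proposition:infinitememorymovingaverage} once more (again with $q=2$) then gives the $\theta$-weak dependence of $g(Z_u(t))$ and $\bar{g}(Z_u(t))$, establishing (g1) and (g2).

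The one point of bookkeeping I would flag is that Proposition \ref{proposition:infinitememorymovingaverage} couples the exponent $q$ of the transformation class $\mathcal{L}_\infty^{p,q}$ to the kernel integrability $g(u,\cdot)\in L^1(\R)\cap L^q(\R)$: condition (e) uses $q=4$, whereas (g) uses $q=2$. Since the same $\tilde{Y}_u$ underlies both, I would note that a kernel with $g(u,\cdot)\in L^1(\R)\cap L^4(\R)$ also satisfies $g(u,\cdot)\in L^2(\R)$ by the interpolation inequality $\norm{g(u,\cdot)}_{L^2}\leq\norm{g(u,\cdot)}_{L^1}^{1/3}\norm{g(u,\cdot)}_{L^4}^{2/3}$, so the two requirements are compatible. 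I expect this matching of the values of $q$ across the various transformations, together with the membership check for $g$ and $\bar{g}$, to be the only places that need care; everything else is a direct invocation of the preceding results.
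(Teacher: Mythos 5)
Your proposal is correct and follows exactly the route the paper intends: the corollary is stated without a written proof precisely because it is meant to be read off from Proposition \ref{proposition:infinitememorymovingaverage} in the way you describe (apply it with $q=\tilde q=4$ to $\nabla_\vartheta\Phi(\cdot,\vartheta^*)$ for (e), and with $q=\bar q=2$ to $\Phi$, $\partial^2_{\vartheta_i\vartheta_j}\Phi$ and the majorants $g,\bar g$ for (g), the latter's membership in $\mathcal{L}_\infty^{1,2}$ being checked just as in the proof of Theorem \ref{theorem:consistency}). Your additional bookkeeping — the componentwise-to-vector promotion, the monotonicity of DD($\gamma_1$) under the bound $\theta\leq\hat\theta$, and the $L^1\cap L^4\subset L^2$ interpolation — correctly fills in the details the paper leaves implicit.
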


%
%\begin{Remark}
%In the same way as Corollary \ref{corollary:thetaweakdependenceinfinitememory} allows us to replace the condition (c) in Theorem \ref{theorem:consistency} by (B$^{**}$), one can give conditions to replace (e), (g1) and (g2) from Theorem \ref{theorem:asymptoticnormality} by conditions on $\tilde{Y}_u$. In particular, since it is necessary to control the decay of the $\theta$-coefficient in (e), we need to impose decay conditions on the decay of $\sum_{k=h}^\infty\alpha_k$ as $h\ra\infty$.
%\end{Remark}

\subsection{Time-varying L\'evy-driven Ornstein-Uhlenbeck processes}
\label{sec4-2}
We consider a sequence of time-varying L\'evy-driven Ornstein-Uhlenbeck processes 
\begin{align}\label{eq:tvcar}
\begin{aligned}
Y_N(t)&=\int_{-\infty}^{\infty}g_N(Nt,Nt-u)L(du), \text{ with kernel function}\\
g_N(Nt,Nt-u)&=\mathbb{1}_{\{Nt-u\geq0\}} e^{-\int_{u}^{Nt}a\left(\frac{s}{N}\right)ds}= \mathbb{1}_{\{Nt-u\geq0\}} e^{-\int_{-(Nt-u)}^0a\left(\frac{s+Nt}{N}\right)ds},
\end{aligned}
\end{align}
where $a:\R\rightarrow\R_0^+$ is continuous such that $u\mapsto e^{-\int_{-u}^{0}a\left(\frac{s+Nt}{N} \right)ds}\in L^1(\R^+)$ for all $t\in\R$ and $N\in\N$, which ensures the existence of (\ref{eq:tvcar}), since additionally (\ref{eq:condlevyproc}) holds. In the next proposition we review sufficient conditions under which the sequence (\ref{eq:tvcar}) possesses a locally stationary approximation $\tilde{Y}_u$ for $p=2$ and $p=4$ given by 
\begin{align}\label{eq:locapproxcar}
\begin{aligned}
\tilde{Y}_{u}(t)&=\int_\R g(u,t-s)L(ds), \text{ with kernel function}\\
g(u,t-s)&=\mathbb{1}_{\{t-s\geq0\}} e^{-a(u)(t-s)}.
\end{aligned}
\end{align}

\begin{Remark}
The process $\tilde{Y}_{u}$ from (\ref{eq:locapproxcar}) is the unique stationary solution to the stochastic differential equation
\begin{align*}
d\tilde{Y}_{u}(t)=-a(u)\tilde{Y}_{u}(t)dt+L(dt).
\end{align*}
\end{Remark}

\begin{Proposition}[{\cite[Proposition 5.3]{SS2021}}]\label{proposition:car1locstat}
Let $Y_N$ be a sequence of time-varying L\'evy-driven Ornstein-Uhlenbeck processes as given in (\ref{eq:tvcar}) such that (\ref{eq:condlevyproc}) holds. Then, $\tilde{Y}_{u}$ as given in (\ref{eq:locapproxcar}) is a locally stationary approximation of $Y_N$ for $p=2$, if
\begin{enumerate}[label={(\alph*)}]
\item the coefficient function $a$ is Lipschitz with constant L,
\item $\inf_{s\in\R}a(s)>0$.
\end{enumerate}
If additionally $\int_{\abs{x}>1}x^4\nu(dx)<\infty$, then $\tilde{Y}_{u}$ is also a locally stationary approximation of $Y_N$ for $p=4$.
\end{Proposition}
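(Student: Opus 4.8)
The plan is to verify the two conditions in Definition \ref{definition:statapproxconttime} by reducing everything to weighted Lebesgue-norm estimates of kernel differences. The crucial observation is that $Y_N(t)$ from (\ref{eq:tvcar}) and the diagonal approximation $\tilde{Y}_t(Nt)$ from (\ref{eq:locapproxcar}) are driven by the \emph{same} two-sided L\'evy process $L$. Substituting $w=Nt-s$ in the exponent of (\ref{eq:tvcar}) rewrites them as
\begin{align*}
Y_N(t)=\int_{-\infty}^{Nt}\exp\Big(-\int_0^{Nt-u}a\big(t-\tfrac{w}{N}\big)\,dw\Big)L(du),\qquad \tilde{Y}_t(Nt)=\int_{-\infty}^{Nt}e^{-a(t)(Nt-u)}L(du),
\end{align*}
so that their difference is a single stochastic integral $\int(f_1-f_2)\,dL$ with explicit kernels supported on $\{u\le Nt\}$. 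Since $L$ satisfies (\ref{eq:condlevyproc}), for $f\in L^1(\R)\cap L^2(\R)$ one has $E[\int f\,dL]=\mu_L\int_\R f\,du$ and $\mathrm{Var}(\int f\,dL)=\Sigma_L\int_\R f^2\,du$ with $\mu_L=\gamma+\int_{|x|>1}x\,\nu(dx)$ and $\Sigma_L=\Sigma+\int_\R x^2\,\nu(dx)$, whence $\|\int f\,dL\|_{L^2}^2\le \Sigma_L\|f\|_{L^2(\R)}^2+\mu_L^2\|f\|_{L^1(\R)}^2$. Thus it suffices to bound the $L^1(\R)$- and $L^2(\R)$-norms of $f_1-f_2$ and of $g(u,\cdot)-g(v,\cdot)$.

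Next I would establish the key pointwise kernel estimate. After substituting $r=Nt-u\ge0$ (respectively $r=t-s\ge0$), each kernel difference has the form $e^{-A_1}-e^{-A_2}$, and the elementary inequality $|e^{-A_1}-e^{-A_2}|\le e^{-\min(A_1,A_2)}|A_1-A_2|$ applies. Condition (b), with $a_{\min}:=\inf_s a(s)>0$, gives $\min(A_1,A_2)\ge a_{\min}r$, while the Lipschitz property (a) controls $|A_1-A_2|$: for the approximation error $|A_1-A_2|=\big|\int_0^r(a(t-\tfrac{w}{N})-a(t))\,dw\big|\le \tfrac{L}{N}\int_0^r w\,dw=\tfrac{L}{2N}r^2$, yielding $|f_1(u)-f_2(u)|\le \tfrac{L}{2N}\,r^2e^{-a_{\min}r}$; for the Lipschitz-in-$u$ bound, $|e^{-a(u)r}-e^{-a(v)r}|\le r\,e^{-a_{\min}r}|a(u)-a(v)|\le L|u-v|\,r\,e^{-a_{\min}r}$. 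In both cases the bound factorises into the small parameter ($1/N$ or $|u-v|$) times an exponentially decaying function of $r$.

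Integrating these bounds over $r\in[0,\infty)$ produces Gamma integrals $\int_0^\infty r^k e^{-cr}\,dr=k!/c^{k+1}$, which are finite and, after the substitution, independent of $t$. This gives $\|f_1-f_2\|_{L^2(\R)}=\mathcal{O}(1/N)$ and $\|f_1-f_2\|_{L^1(\R)}=\mathcal{O}(1/N)$, hence $\|Y_N(t)-\tilde{Y}_t(Nt)\|_{L^2}\le C/N$ uniformly in $t$, and analogously $\|\tilde{Y}_u(t)-\tilde{Y}_v(t)\|_{L^2}\le C|u-v|$ uniformly in $t$. The uniform moment bound $\sup_u\|\tilde{Y}_u(0)\|_{L^2}<\infty$ follows from $\int_\R g(u,-s)^2\,ds=1/(2a(u))\le 1/(2a_{\min})$ and $\int_\R|g(u,-s)|\,ds\le 1/a_{\min}$, and the ergodicity of the stationary L\'evy-driven Ornstein-Uhlenbeck process $\tilde{Y}_u$ (the stationary solution of the SDE in the preceding Remark) is classical. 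This settles the case $p=2$.

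For $p=4$ the same kernel estimates are used, but the moment formula changes: writing $X=\int f\,dL=E[X]+(X-E[X])$ and using the cumulant representation of L\'evy integrals, $E[(X-E[X])^4]=3\Sigma_L^2\big(\int_\R f^2\,du\big)^2+\kappa_4\int_\R f^4\,du$ where $\kappa_4=\int_\R x^4\,\nu(dx)<\infty$ precisely under the additional hypothesis $\int_{|x|>1}x^4\,\nu(dx)<\infty$. Hence $\|X\|_{L^4}$ is controlled by $\|f\|_{L^1(\R)}+\|f\|_{L^2(\R)}+\|f\|_{L^4(\R)}$, and one only needs, in addition, the weighted $L^4(\R)$-norms, which come from the same Gamma integrals (now with larger $k$) and are $\mathcal{O}(1/N)$ and $\mathcal{O}(|u-v|)$ respectively. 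I expect the main obstacle to be exactly this $p=4$ bookkeeping: one must invoke the fourth-order cumulant formula, where the fourth-moment condition on $\nu$ enters, and keep track of the non-centred part $E[X]=\mu_L\int_\R f\,du$ when $L$ has nonzero mean. The exponential inequality and the Gamma-integral estimates are otherwise routine, and uniformity in $t$ is automatic, since the substitution $r=Nt-u$ removes all $t$-dependence from the integrals except through $a(t)\ge a_{\min}$.
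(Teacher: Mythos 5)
Your proposal is correct, and it is essentially the intended argument: the paper does not prove this proposition itself but imports it from \cite[Proposition 5.3]{SS2021}, and the route you take --- writing $Y_N(t)-\tilde Y_t(Nt)$ and $\tilde Y_u(t)-\tilde Y_v(t)$ as single L\'evy integrals of kernel differences, controlling their $L^2$ (resp.\ $L^4$) norms through the mean/variance (resp.\ fourth-cumulant) formulas in terms of the $L^1$, $L^2$ (resp.\ $L^4$) norms of the kernels, and then bounding the kernel differences by $|e^{-A_1}-e^{-A_2}|\le e^{-\min(A_1,A_2)}|A_1-A_2|$ together with the Lipschitz and uniform-positivity assumptions on $a$ --- is exactly the mechanism used there and mirrored in the paper's own computations in Section \ref{sec4-1} and the proof of Proposition \ref{proposition:infinitememorymovingaverage}. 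Your bookkeeping of the non-centred part $E[X]=\mu_L\int f$ and of the fourth-moment condition on $\nu$ for the $p=4$ case is the right way to close the argument.
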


\subsection{Least squares estimation}
\label{sec4-3}
Let us assume that we observe a sequence of time-varying L\'evy-driven Ornstein-Uhlenbeck processes $Y_N$ as defined in (\ref{eq:tvcar}), where the characteristic triplet of the driving L\'evy process $L$ is known and the observations are sampled according to Assumption \ref{assumption:observations} such that either \hyperref[observations:O1]{(O1)} or \hyperref[observations:O2]{(O2)} hold. \\% for some $\delta>0$.\\ %We assume that the coefficient function $a$ is Lipschitz and satisfies $\inf_{s\in\R}a(s)>0$. From Proposition \ref{proposition:car1locstat} it is clear that, since (\ref{eq:condlevyproc}) holds, $\tilde{Y}_u$ as defined in (\ref{eq:locapproxcar}) is a locally stationary approximation of $Y_N$ for $p=2$. \\
Our goal is to estimate the coefficient function at a fixed point $u>0$, i.e. to estimate $a(u)$. To this aim we assume that $a(u)\in\Theta\subset\R^+$, where $\Theta$ is a compact parameter space.\\
For $\vartheta\in\Theta$ and $\Delta>0$ we define the following least squares contrast
\begin{gather}\label{equation:tvCARleastsquaresestimator}
\Phi^{LS}\Big((\tilde{Y}_u(\Delta (1-k)))_{k\in\N_0},\vartheta\Big)=\Phi^{LS}\Big(\tilde{Y}_u(\Delta),\tilde{Y}_u(0),\vartheta\Big)=\Big(\tilde{Y}_u(\Delta)-e^{-\Delta\vartheta}\tilde{Y}_u(0)\Big)^2.
\end{gather}
We show consistency and asymptotic normality of the estimator $\hat{\vartheta}_N$ from (\ref{eq:M_N}), defined with respect to the least squares contrast (\ref{equation:tvCARleastsquaresestimator}), using Theorem \ref{theorem:consistency} and \ref{theorem:asymptoticnormality}.
%If we show that $\Phi^{LS}$ satisfies the conditions of Theorem \ref{theorem:consistency} and \ref{theorem:asymptoticnormality}, then the estimator $\hat{\vartheta}_N$ from (\ref{eq:M_N}), defined with respect to the least squares contrast (\ref{equation:tvCARleastsquaresestimator}), is consistent and asymptotically normal. 

\begin{Theorem}\label{theorem:asymptoticpropertiesleastsquares}
Let $Y_N$ be a sequence of time-varying L\'evy-driven Ornstein-Uhlenbeck processes as given in (\ref{eq:tvcar}) such that (\ref{eq:condlevyproc}) and either \hyperref[observations:O1]{(O1)} or \hyperref[observations:O2]{(O2)} hold. Assume that 
\begin{enumerate}[label={(\alph*)}]
\item $\gamma+\int_{\abs{x}>1}\nu(dx)=0$,
\item the parameter space $\Theta\subset\R^+$ is compact,
\item the coefficient function $a$ is Lipschitz and satisfies $\inf_{s\in\R}a(s)>0$ and
\item if \hyperref[observations:O2]{(O2)} holds, $\int_{\abs{x}>1}\abs{x}^{2+\gamma_1}\nu(dx)<\infty$ for some $\gamma_1>0$.
%\end{enumerate}
%Now, if either
%\begin{enumerate}[label={(\alph*)}]
%\item[(A)] \hyperref[observations:O1]{(O1)} holds for some $\delta>0$ or
%\item[(B)] \hyperref[observations:O2]{(O2)} holds and $\int_{\abs{x}>1}\abs{x}^{2+\gamma_1}\nu(dx)<\infty$ for some $\gamma_1>0$,
\end{enumerate}
Then, $\hat{\vartheta}_N$ is consistent, i.e. $\hat{\vartheta}_N\overset{P}{\longrightarrow}a(u)$ as $N\rightarrow\infty$.
Moreover, if 
\begin{enumerate}[label={(\alph*)}]
\item[(d)] $\int_{\abs{x}>1}x^{4+\gamma_2}\nu(dx)<\infty$ for some $\gamma_2>0$,
\item[(e)] $a(u)$ is in the interior of $\Theta$ and $\sqrt{m_N}b_N\rightarrow0$ as $N\rightarrow\infty$ and
\item[(f)] $\hat{\vartheta}_N$ is defined with respect to the rectangular kernel (\ref{equation:rectangularkernel}),
%\item[(e)] $\tilde{Y}_u$ is $\theta$-weakly dependent with $\theta$-coefficients $\theta(h)$ satisfying $\theta(h)\in\mathcal{O}(h^{-\alpha})$ for some $\alpha>\left(1+\frac{2}{\delta_1}\right)\left(\frac{3+\delta_1}{2+\delta_1}\right)$,
\end{enumerate}
then $\sqrt{\frac{b_N}{\delta_N}} \left(\hat{\vartheta}_N-a(u) \right)\overset{d}{\longrightarrow} \mathcal{N}\left(0, \Sigma(u)\right)$ as $N\rightarrow\infty$, where
\begin{gather}\label{eq:asymptoticvarianceleastsquarestvCAR1}
\Sigma(u)=\frac{1}{2\Delta^2}\begin{cases} e^{2a(u)\Delta}+2e^{2a(u)\Delta}e^{-2a(u)\delta}\frac{1-e^{-2a(u)\delta(\left\lceil\Delta/\delta \right\rceil-1)}}{1-e^{-2a(u)\delta}}-2\left\lceil\Delta/\delta \right\rceil+1, &\text{if \hyperref[observations:O1]{(O1)} holds},\\
e^{2a(u)\Delta}-1,&\text{if \hyperref[observations:O2]{(O2)} holds}.
\end{cases}
\end{gather}
Remind that $\Delta$ denotes the step size in the contrast function (\ref{equation:contrastfunctioninfinitememory}) and $\delta=N\delta_N$ for the observation scheme \hyperref[observations:O1]{(O1)}. In particular, if $\delta=\Delta$, the asymptotic variance is given by $\Sigma(u)= \frac{1}{2\Delta^2}(e^{2a(u)\Delta}-1)$, independent of whether \hyperref[observations:O1]{(O1)} or \hyperref[observations:O2]{(O2)} holds.
\end{Theorem}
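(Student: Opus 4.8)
The plan is to obtain both assertions by verifying the hypotheses of Theorems \ref{theorem:consistency} and \ref{theorem:asymptoticnormality} for the specific contrast $\Phi^{LS}$ of (\ref{equation:tvCARleastsquaresestimator}) and then to evaluate the asymptotic variance explicitly. Since $\Phi^{LS}$ depends only on $\tilde Y_u(\Delta)$ and $\tilde Y_u(0)$, it is a finite-memory contrast with $n=1$, so I intend to invoke the finite-memory reductions in Remarks \ref{remark:consistencyfinitememory} and \ref{remark:asymptoticnormalityfinitememory} throughout. Under the stated assumptions, Proposition \ref{proposition:car1locstat} supplies the locally stationary approximation $\tilde Y_u$ for $p=2$ and, once $\int_{\abs{x}>1}x^4\nu(dx)<\infty$, for $p=4$; the $\theta$-weak dependence of $\tilde Y_u$ with exponentially decaying coefficients follows from Proposition \ref{proposition:infinitememorymovingaverage} applied to the kernel $g(u,r)=e^{-a(u)r}\mathbb{1}_{\{r\geq0\}}$, and the $\theta$-weak dependence of the polynomial transforms of $\tilde Y_u$ entering conditions (c), (g) then follows from Proposition \ref{proposition:inheritanceproperties}(c).

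First I would settle identifiability and pin down $\vartheta^*$. Writing $b=a(u)$ and $\sigma_L^2=\Sigma+\int_\R x^2\nu(dx)$, assumption (a) forces $E[\tilde Y_u(t)]=0$, and the stationary autocovariance of the Ornstein-Uhlenbeck approximation is $E[\tilde Y_u(0)\tilde Y_u(h)]=\tfrac{\sigma_L^2}{2b}e^{-b\abs{h}}$. Expanding the square gives
\begin{align*}
M(\vartheta)=E\big[(\tilde Y_u(\Delta)-e^{-\Delta\vartheta}\tilde Y_u(0))^2\big]=\tfrac{\sigma_L^2}{2b}\big(1-2e^{-\Delta\vartheta}e^{-b\Delta}+e^{-2\Delta\vartheta}\big),
\end{align*}
whose derivative vanishes exactly when $e^{-\Delta\vartheta}=e^{-b\Delta}$, i.e.\ at the unique minimiser $\vartheta^*=a(u)$, so \hyperref[assumption:M1]{(M1)} holds. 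The regularity conditions are routine: $\Phi^{LS}$ is quadratic in $(x_0,x_1)$, so $\Phi^{LS}(x,\vartheta)\leq C\norm{x}_1^2$ and $\Phi^{LS}(\cdot,\vartheta)\in\mathcal{L}_2(1,C)$, while the $\vartheta$-Lipschitz and $\mathcal{L}_\infty^{p,q}(\alpha)$ bounds follow by differentiating and using compactness of $\Theta\subset\R^+$ (so $e^{-\Delta\vartheta}$ is bounded), with $q=2$, $p=1$ and finitely supported $\alpha$. The moment hypothesis of (c$^*$) reduces to $E[\abs{\tilde Y_u}^{2+\gamma_1}]<\infty$, exactly what assumption (d) guarantees under \hyperref[observations:O2]{(O2)}; under \hyperref[observations:O1]{(O1)} only $\Phi^{LS}(0,\vartheta)=0$ is needed. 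Theorem \ref{theorem:consistency} then yields $\hat\vartheta_N\overset{P}{\to}a(u)$.

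For asymptotic normality I would compute $\frac{\partial}{\partial\vartheta}\Phi^{LS}=2\Delta e^{-\Delta\vartheta}x_1(x_0-e^{-\Delta\vartheta}x_1)$ and $\frac{\partial^2}{\partial\vartheta^2}\Phi^{LS}=2\Delta^2e^{-\Delta\vartheta}x_1(2e^{-\Delta\vartheta}x_1-x_0)$; both are quadratic in $(x_0,x_1)$ and vanish at $x=0$, so conditions (c)--(h) of Theorem \ref{theorem:asymptoticnormality} follow through Remark \ref{remark:asymptoticnormalityfinitememory} with $q=\bar q=2$, $\tilde q=4$ and $M_1=M_2=M_3=1$; in particular $\nabla_\vartheta\Phi(0,\vartheta^*)=0$ and $\frac{\partial^2}{\partial\vartheta^2}\Phi(0,\vartheta)=0$. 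The induced moment requirement $\tilde Y_u(0)\in L^{4+\gamma_1}$ is furnished by the fourth-moment assumption (d), and the exponential decay of the $\theta$-coefficients makes \hyperlink{DD}{DD($\gamma_1$)} hold for every $\gamma_1>0$. The substantive content is condition (i), whose matrices are scalars here. Introducing the innovation $\epsilon_u(t)=\tilde Y_u(t+\Delta)-e^{-b\Delta}\tilde Y_u(t)=\int_t^{t+\Delta}e^{-b(t+\Delta-s)}L(ds)$, the score factorises as $\nabla_\vartheta\Phi^{LS}|_{\vartheta^*}=2\Delta e^{-b\Delta}\,\tilde Y_u(t)\epsilon_u(t)$, and $\tilde Y_u(t)$ (carried by $L|_{(-\infty,t]}$) is independent of $\epsilon_u(t)$ (carried by $L|_{(t,t+\Delta]}$). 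A short computation then gives $V(u)=\tfrac{\Delta^2\sigma_L^2}{b}e^{-2b\Delta}>0$ and $I(u,0)=\tfrac{\Delta^2\sigma_L^4}{b^2}e^{-2b\Delta}(1-e^{-2b\Delta})$, so under \hyperref[observations:O2]{(O2)} the variance $V(u)^{-2}\tfrac12 I(u,0)=\tfrac{1}{2\Delta^2}(e^{2b\Delta}-1)$ drops out.

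The main obstacle is the \hyperref[observations:O1]{(O1)} variance, where $I(u)=\tfrac12 I(u,0)+\sum_{k\geq1}I(u,k)$ with $I(u,k)=4\Delta^2e^{-2b\Delta}\,E[\tilde Y_u(0)\epsilon_u(0)\tilde Y_u(k\delta)\epsilon_u(k\delta)]$. The key structural point is that $\epsilon_u(k\delta)$ is supported on $L|_{(k\delta,k\delta+\Delta]}$; as soon as $k\delta\geq\Delta$ this block is disjoint from the supports of the other three factors, so by independence and mean-zeroness $I(u,k)=0$ for $k\geq\lceil\Delta/\delta\rceil$. For the finitely many lags $1\leq k\leq\lceil\Delta/\delta\rceil-1$ I would decompose each of $\tilde Y_u(0),\epsilon_u(0),\tilde Y_u(k\delta),\epsilon_u(k\delta)$ into Lévy integrals over the four disjoint blocks $(-\infty,0],(0,k\delta],(k\delta,\Delta],(\Delta,k\delta+\Delta]$ determined by the ordering $0<k\delta<\Delta<k\delta+\Delta$, expand the fourfold product, and discard every term in which some block occurs an odd number of times; exactly one term survives, giving
\begin{align*}
I(u,k)=\tfrac{\Delta^2\sigma_L^4}{b^2}e^{-2b\Delta}\big(e^{-2bk\delta}-e^{-2b\Delta}\big).
\end{align*}
Summing the geometric series $\sum_{k=1}^{\lceil\Delta/\delta\rceil-1}e^{-2bk\delta}$ and multiplying $I(u)$ by $V(u)^{-2}=\tfrac{b^2}{\Delta^4\sigma_L^4}e^{4b\Delta}$ then reproduces the stated \hyperref[observations:O1]{(O1)} expression for $\Sigma(u)$, which is positive so that condition (i) holds; specialising to $\delta=\Delta$ empties the sum and recovers $\tfrac{1}{2\Delta^2}(e^{2b\Delta}-1)$, in agreement with the \hyperref[observations:O2]{(O2)} case.
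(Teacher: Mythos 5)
Your proposal is correct and, for the verification of the hypotheses of Theorems \ref{theorem:consistency} and \ref{theorem:asymptoticnormality} (identifiability via the explicit form of $M(\vartheta)$, the Lipschitz/$\mathcal{L}_\infty^{p,q}$ bounds for $\Phi^{LS}$ and its derivatives, and the reduction through Remarks \ref{remark:consistencyfinitememory} and \ref{remark:asymptoticnormalityfinitememory} with $M=1$), it follows essentially the same path as the paper; the only cosmetic difference there is that you obtain the exponential decay of the $\theta$-coefficients of $\tilde Y_u$ from Proposition \ref{proposition:infinitememorymovingaverage} rather than from the external reference the paper cites. Where you genuinely diverge is the computation of $I(u,k)$: the paper first derives a general formula for fourth-order mixed moments of $\tilde Y_u$ via joint cumulants of the infinitely divisible vector (Lemma \ref{lemma:4thmixedmoments}) and then substitutes, watching the $\int_\R x^4\nu(dx)$ terms cancel; you instead factor the score as $2\Delta e^{-a(u)\Delta}\tilde Y_u(t)\epsilon_u(t)$ with the innovation $\epsilon_u(t)=\int_t^{t+\Delta}e^{-a(u)(t+\Delta-s)}L(ds)$ and exploit independence of the L\'evy increments over the blocks $(-\infty,0]$, $(0,k\delta]$, $(k\delta,\Delta]$, $(\Delta,k\delta+\Delta]$. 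This is cleaner: it makes $I(u,k)=0$ for $k\delta\geq\Delta$ immediate and explains structurally why the fourth cumulant never enters (no block carries more than two factors, so only second moments occur). Your resulting values of $I(u,0)$, $I(u,k)$ and $V(u)$ agree with the paper's and the geometric sum reproduces (\ref{eq:asymptoticvarianceleastsquarestvCAR1}). One caution on wording: "discard every term in which some block occurs an odd number of times" is not a valid rule for general L\'evy noise (a block of multiplicity three would contribute a nonvanishing third cumulant); it works here only because the multiplicities are at most two, so "odd" means "exactly once" and those terms vanish by mean-zeroness (which itself uses assumption (a)). You should state the argument in that sharper form.
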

\begin{proof}
See Section \ref{sec7-4}.
\end{proof}

\begin{Remark}\label{remark:consistentestimator}
A consistent plug-in estimator of $\Sigma(u)$ can be readily obtained by replacing $a(u)$ in (\ref{eq:asymptoticvarianceleastsquarestvCAR1}) with $\hat\vartheta_N$.
\end{Remark}

\section{Quasi-maximum likelihood and Whittle estimation for time-varying L\'evy-driven state space models}
\label{sec5}
We consider observations of a time-varying L\'evy-driven state space model that follow the sampling scheme from Assumption \ref{assumption:observations} such that either \hyperref[observations:O1]{(O1)} or \hyperref[observations:O2]{(O2)} hold. In this section, we derive consistency results for an $M$-estimator that is based on a log-likelihood contrast. In addition, we establish consistency for a localized Whittle estimator. 

\subsection{Time-varying L\'evy-driven state space models}
\label{sec5-1}
We give a brief review of the definition and basic properties of time-varying L\'evy-driven state space models. For further details we refer to \cite{BSS2021,SS2021}.\\
Let $L=\{L(t),t\in\R\}$ be a two-sided L\'evy process with values in $\R$ satisfying (\ref{eq:condlevyproc}). For $p\in\N$ and arbitrary continuous coefficient functions $A(t)\in M_{p\times p}(\R)$ and $B(t),C(t)\in M_{p\times1}(\R)$, $t\in\R$ we consider the 
observation and state equation
\begin{align}\label{eq:tvLDstatespace}
\begin{aligned}
Y(t)=B(t)'X(t) \text{ and}\qquad
dX(t)=A(t)X(t)dt+C(t)L(dt).
\end{aligned}
\end{align}
The solution of (\ref{eq:tvLDstatespace}) is unique and given by (see \cite[Section 4]{BSS2021})
\begin{align*}
X(t)=\int_{-\infty}^t\Psi(t,s)C(s)L(ds)\text{ and }Y(t)=B(t)\int_{-\infty}^t\Psi(t,s)C(s)L(ds),
\end{align*} 
provided that the integrals exist in $L^2$. The matrix $\Psi(t,t_0)$ for $t>t_0$ is the unique solution to the homogeneous initial value problem (IVP)
\begin{align}\label{eq:ivpA}
\begin{aligned}
\frac{d}{dt}\Psi(t,t_0)&=A(t)\Psi(t,t_0),\text{ with initial condition}\quad\Psi(t_0,t_0)&=\mathbf{1}_p. 
\end{aligned}
\end{align}
A comprehensive discussion on the IVP (\ref{eq:ivpA}) can be found in \cite[Section 3 and 4]{B1970} and in the context of L\'evy-driven state space models in \cite[Section 5.2 and 5.3]{SS2021}. 

\begin{Definition}
Let $X=\{X(t),t\in\R\}$ be a solution to the state space representation (\ref{eq:tvLDstatespace}). Then, we call $X$ a time-varying L\'evy-driven state space process. If the coefficient functions $A,B$ and $C$ are time-invariant, the solution is called a L\'evy-driven state space process.
\end{Definition}

\begin{Remark}
Noticeable examples from the class of time-varying L\'evy-driven state space models are time-varying L\'evy-driven CARMA processes, for which the matrix function $C(t)$ is time-invariant, i.e. $C(t)=C$ for all $t\in\R$ and the matrix function $A(t)$ is of the form
\begin{align*}
A(t)&=\left( \begin{array}{cccc}
0 & 1 & \ldots & 0 \\
\vdots & ~ & \ddots & \vdots \\
0 & ~ & ~ & 1 \\
-a_p(t) & -a_{p-1}(t) & \ldots & -a_1(t) \\
\end{array}\right),
\end{align*} 
where $a_i(t)$, $i=1,\ldots,p$ are continuous real functions. 
\end{Remark}

Now, for continuous coefficient functions $A(t)\in M_{p\times p}(\R)$ and $B(t),C(t)\in M_{p\times1}(\R)$, $t\in\R$ and a two-sided L\'evy process $L$, we consider a sequence $Y_N$ of time-varying L\'evy-driven state space models, where
\begin{align}\label{eq:seqtvLDstatespacesolution}
\begin{aligned}
Y_N(t)&=\int_\R g_N(Nt,Nt-s)L(ds), \text{ with kernel function}\\
g_N(Nt,Nt-s)&= \mathbb{1}_{\{Nt-s\geq0\}} B(t)' \Psi_{N,t}(0,-(Nt-s))C\left(\frac{-(Nt-s)}{N}+t \right),
\end{aligned}
\end{align}
where $t\in\R$. Then, $\Psi_{N,t}(0,-(Nt-u))$ is the solution to the IVP
\begin{gather*}
\frac{d}{ds}\Psi_{N,t}(s,s_0)=A\left(\frac{s}{N}+t\right)\Psi_{N,t}(s,s_0),\text{ with initial condition}\quad\Psi_{N,t}(s_0,s_0)= \mathbf{1}_p
\end{gather*}
for $s>s_0$. %It can be expressed as 
%\begin{gather*}
%\Psi_{N,t}(s,s_0)=\mathbf{1}_p+\int_{s_0}^sA\left(\frac{\tau_1}{N}+t\right)d\tau_1+\int_{s_0}^sA\left(\frac{\tau_1}{N}+t\right)\int_{s_0}^{\tau_1}A\left(\frac{\tau_2}{N}+t\right)d\tau_2d\tau_1+\ldots\quad .
%\end{gather*}
We assume that $A(u)$, $u\in\R^+$ has eigenvalues with strictly negative real part and 
consider as corresponding locally stationary approximation $\tilde{Y}_{u}$ the process
\begin{align}\label{eq:seqlimLDstatespacesolution}
\begin{aligned}
\tilde{Y}_{u}(t)&= \int_\R g(u,t-s)L(ds), \text{ with kernel function}\\
g(u,t-s)&= \mathbb{1}_{\{t-s\geq0\}} B(u)' e^{A(u)(t-s)}C\left(u \right).
\end{aligned}
\end{align}

\begin{Proposition}[{\cite[Corollary 5.13]{SS2021}}]\label{proposition:tvLDSPhavealocstatapprox}
Let $Y_N$ be a sequence of time-varying L\'evy-driven state space models as given in (\ref{eq:seqtvLDstatespacesolution}). % such that (\ref{eq:condlevyproc}) holds. 
Then, $\tilde{Y}_{u}$ as given in (\ref{eq:seqlimLDstatespacesolution}) is a locally stationary approximation of $Y_N$ for $p=2$, if 
\begin{enumerate}[label={(\alph*)}]
\item the coefficient functions $A,B$ and $C$ are Lipschitz with constants $L_A,L_B$ and $L_C$,
%\item (\ref{eq:commcond}) holds,
\item $\sup_{s\in\R}\norm{B(s)}<\infty$ and $\sup_{s\in\R}\norm{C(s)}<\infty$,
\end{enumerate}
and either (c1)-(e1) or (c1), (d2) and (e2) hold, where
\begin{enumerate}[label={(\alph*)}]
%\item[(c1)] $\norm{  \Psi_{N,t}(0,s)}\leq F(s)$ such that $s\mapsto |s|F(s) \in L^1(\R^-)\cap L^2(\R^-)$ for all $N\in\N$ and $t\in \R$,
%\item[(d1)] $\norm{\Psi_{N,t}(0,s) -\Psi_{t}(0,s)}\leq \frac{\tilde F(s)}{N}$ such that $\tilde F(s)\in L^1(\R^-)\cap L^2(\R^-)$ for all $t\in\R$,
%\item[(e1)] $\norm{\Psi_{u}(0,s)}\leq G(s)$ such that $G(s) \in L^1(\R^-)\cap L^2(\R^-)$ for all $u\in \R^+$,
%\item[(f1)] $\norm{\Psi_u(0,s)-\Psi_v(0,s)}\leq \abs{u-v} \tilde G(s)$ such that $\tilde G(s)\in L^1(\R^-)\cap L^2(\R^-)$.
\item[(c1)] $\{A(t)\}_{t\in\R}$ commutes, i.e. $[A(t),A(s)]=0$ for all $s,t\in\R$,
\item[(d1)] $\norm{e^{\nu\int_s^0A(\frac{\tau}{N}+t)d\tau}}\leq \gamma e^{\nu s \lambda}$, with $\gamma,\lambda>0$ for all $\nu\in[0,1]$, $s<0$, $t\in\R$ and $N\in\N$ and
\item[(e1)] $\norm{e^{-\nu A(t)s}}\leq \tilde\gamma e^{\nu s \tilde\lambda}$, with $\tilde\gamma,\tilde\lambda>0$ for all $\nu\in[0,1]$, $s<0$ and $t\in\R$,
\item[(d2)] the eigenvalues $\lambda_j(t)$ of $A(t)$ for $j=1,\ldots,p$ satisfy $\sup_{t\in\R}\max_{j=1,\ldots,p}\mathfrak{Re}(\lambda_j(t))<0$.
\item[(e2)] $A(t)$ is diagonalizable for all $t\in\R$.
\end{enumerate}
If we additionally assume that $\int_{\R}x^4\nu(dx)<\infty$%and, in the case that (c1)-(f1) hold, also $|s|F(s),\tilde F(s),G(s),\tilde G(s)\in L^1(\R^-)\cap L^4(\R^-)$
, then $\tilde{Y}_{u}$ is also a locally stationary approximation of $Y_N$ for $p=4$.
\end{Proposition}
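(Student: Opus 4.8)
The plan is to verify directly the three requirements of Definition \ref{definition:statapproxconttime}: ergodicity and uniform $L^p$-boundedness of the family $\tilde Y_u$, the Lipschitz-in-$u$ bound, and the approximation bound, i.e.\ the two inequalities in (\ref{assumption:LS}). The decisive observation is that both $Y_N(t)$ and $\tilde Y_u(t)$ are stochastic integrals against the \emph{same} two-sided L\'evy process $L$, with kernels $g_N(Nt,Nt-\cdot)$ and $g(u,t-\cdot)$ from (\ref{eq:seqtvLDstatespacesolution}) and (\ref{eq:seqlimLDstatespacesolution}). Hence every $L^p$-distance between two such integrals reduces, by linearity, to one of the form $\norm{\int_\R (f_1-f_2)\,dL}_{L^p}$, and the moment formulae for L\'evy integrals recorded in Section \ref{sec4-1} bound this by a constant times $\norm{f_1-f_2}_{L^2(\R)}+\norm{f_1-f_2}_{L^1(\R)}$ when $p=2$ (as $L$ satisfies (\ref{eq:condlevyproc})), and, when $p=4$ and $\int_\R x^4\nu(dx)<\infty$, additionally by $\norm{f_1-f_2}_{L^4(\R)}$ via the fourth-cumulant identity. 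Thus the whole proof reduces to deterministic, pointwise estimates on the kernels. Ergodicity of each $\tilde Y_u$ and $\sup_u\norm{\tilde Y_u(0)}_{L^p}<\infty$ are then immediate: $\tilde Y_u$ is the stationary solution of the time-invariant state space equation with coefficients $A(u),B(u),C(u)$, hence a stationary ergodic L\'evy moving average, and $\abs{g(u,r)}\le\norm{B}_\infty\norm{C}_\infty\norm{e^{A(u)r}}$ decays uniformly exponentially by (b) together with (e1) (or (d2),(e2)).

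For the approximation bound $\norm{Y_N(t)-\tilde Y_t(Nt)}_{L^p}\le C/N$ I would substitute $r=Nt-s$, so that after the indicator restricts the range to $r\ge0$ the relevant kernel difference is
\[
g_N(Nt,r)-g(t,r)=B(t)'\big[\Psi_{N,t}(0,-r)\,C(\tfrac{-r}{N}+t)-e^{A(t)r}C(t)\big].
\]
Splitting off the $C$-factor and using the Lipschitz property (a) gives one term of size $\norm{B}_\infty\,\norm{\Psi_{N,t}(0,-r)}\,L_C\,r/N$. The genuinely new term is $B(t)'[\Psi_{N,t}(0,-r)-e^{A(t)r}]C(t)$. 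Here I would exploit the commuting assumption (c1): since the matrices $A(\tfrac{\tau}{N}+t)$ mutually commute, the fundamental solution has the closed form $\Psi_{N,t}(0,-r)=\exp\big(\int_{-r}^0 A(\tfrac{\tau}{N}+t)\,d\tau\big)$, while trivially $e^{A(t)r}=\exp\big(\int_{-r}^0 A(t)\,d\tau\big)$. Because the two exponents commute, I can write $e^{M_1}-e^{M_2}=\int_0^1 e^{\theta M_1+(1-\theta)M_2}(M_1-M_2)\,d\theta$, bound the exponent norm by $\gamma\tilde\gamma e^{-r\min(\lambda,\tilde\lambda)}$ via (d1) and (e1), and estimate $\norm{M_1-M_2}\le L_A\int_{-r}^0\abs{\tau}/N\,d\tau=L_Ar^2/(2N)$ via (a). Altogether $\abs{g_N(Nt,r)-g(t,r)}\le C\,(r+r^2)\,e^{-\lambda' r}/N$ with $\lambda'=\min(\lambda,\tilde\lambda)>0$, uniformly in $t$; raising this to the second or fourth power and integrating the convergent polynomial-times-exponential integral yields the $O(1/N)$ bound in $L^2$ and in $L^4$.

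The Lipschitz-in-$u$ bound $\norm{\tilde Y_u(t)-\tilde Y_v(t)}_{L^p}\le C\abs{u-v}$ is handled in the same spirit but is easier, since only the frozen exponentials appear. After the substitution $r=t-s$ the kernel difference is $B(u)'e^{A(u)r}C(u)-B(v)'e^{A(v)r}C(v)$; I would telescope over the three factors, use the standard identity $e^{A(u)r}-e^{A(v)r}=\int_0^1 e^{sA(u)r}(A(u)-A(v))r\,e^{(1-s)A(v)r}\,ds$, and combine the Lipschitz bounds $\norm{A(u)-A(v)}\le L_A\abs{u-v}$, $\norm{B(u)-B(v)}\le L_B\abs{u-v}$, $\norm{C(u)-C(v)}\le L_C\abs{u-v}$ with the uniform exponential decay to obtain $\abs{g(u,r)-g(v,r)}\le C\,(1+r)\,e^{-\tilde\lambda r}\abs{u-v}$; integrating gives the claim, and the $p=4$ case again follows from the fourth-cumulant identity. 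For the alternative hypothesis set (c1),(d2),(e2), the only change is in how the exponential-stability bounds are produced: commuting plus diagonalizability makes the matrices simultaneously diagonalizable, and the uniform spectral gap in (d2) furnishes bounds $\norm{e^{A(t)r}}\le\tilde\gamma e^{-\tilde\lambda r}$ and $\norm{\exp(\int_{-r}^0 A(\tfrac{\tau}{N}+t)d\tau)}\le\gamma e^{-\lambda r}$ playing the role of (e1) and (d1).

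The step I expect to be the main obstacle is controlling $\Psi_{N,t}(0,-r)-e^{A(t)r}$ \emph{uniformly in $t$ and integrably in $r$}: the pointwise error grows polynomially (like $r^2/N$), and only the uniform exponential stability keeps the integral finite, so the argument hinges both on the commuting structure (c1), which linearizes the product integral into an exponential, and on obtaining the decay constants $\lambda,\tilde\lambda$ uniformly in $t$. In the diagonalizable case this uniformity is the delicate point, since it requires the condition numbers of the diagonalizing transformations to stay bounded as $t$ ranges over $\R$, which is precisely where the combination of (c1), (d2) and (e2) is needed.
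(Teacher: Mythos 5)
The paper does not actually prove this proposition: it is imported verbatim, with a citation to \cite[Corollary 5.13]{SS2021}, so there is no in-paper argument to compare against. Your reconstruction is nonetheless correct and follows the route one would expect the cited source to take. The reduction of all three requirements of Definition \ref{definition:statapproxconttime} to deterministic kernel estimates via the $L^2$/$L^4$ moment identities for L\'evy integrals is sound (including the extra $L^1(\R)$-norm term when $L$ is not centered); the use of (c1) to collapse the product integral $\Psi_{N,t}(0,-r)$ into $\exp(\int_{-r}^0 A(\tfrac{\tau}{N}+t)\,d\tau)$ is exactly the reason that hypothesis is present, and the commuting-exponential difference identity together with (d1)--(e1) gives the uniform bound $\abs{g_N(Nt,r)-g(t,r)}\le C(r+r^2)e^{-\lambda' r}/N$, whose square (resp.\ fourth power) integrates to the required $O(1/N)$ rate. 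Your treatment of the alternative hypothesis set is also the right one: (c1) and (e2) give \emph{simultaneous} diagonalizability by a single transformation, so its condition number is a fixed constant and (d2) then yields the uniform-in-$t$ exponential stability that substitutes for (d1) and (e1) --- this is precisely the delicate uniformity point you flag, and you resolve it correctly. The only cosmetic caveat is that the Lipschitz-in-$u$ telescoping identity for $e^{A(u)r}-e^{A(v)r}$ does not require commutativity, so that part of your argument is even slightly more general than needed.
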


\subsection{Quasi-maximum likelihood estimation}
\label{sec5-2}
%We prove consistency for a quasi-maximum likelihood estimator based on observations as considered in Assumption \ref{assumption:observations} from a time-varying L\'evy-driven state space model that possesses a locally stationary approximation. This estimator is defined as a localized $M$-estimator of a log-likelihood contrast function, such that we can apply the results from Section \ref{sec3} to obtain consistency. \\ 
Let $\Theta\subset\R^d$ be a compact parameter space and $\vartheta^*=\{\vartheta^*(t), t\in\R\}$ a parameter curve in $\Theta$. Moreover, let $(A_{\vartheta})_{\vartheta\in\Theta}\subset M_{p\times p}(\R)$ and $(B_\vartheta)_{\vartheta\in\Theta},(C_\vartheta)_{\vartheta\in\Theta}\subset M_{p\times 1}(\R)$ be families of matrices, $(\Sigma_\vartheta)_{\vartheta\in\Theta}$ a family of positive numbers, and $(L_\vartheta)_{\vartheta\in\Theta}$ a family of L\'evy processes such that $Var(L_\vartheta(1))=\Sigma_\vartheta$.\\
Consider a sequence of time-varying L\'evy-driven state space models $(Y_N^{\vartheta^*}(t))_{N\in\N}$ such that $Y_N^{\vartheta^*}(t)$ is defined as in (\ref{eq:seqtvLDstatespacesolution}) with coefficient functions $A(t)=A_{\vartheta^*(t)},B(t)=B_{\vartheta^*(t)}$, $C(t)=C_{\vartheta^*(t)}$ and driving noise $L=L_{\vartheta^*(0)}$, such that $Var(L(1))=\Sigma_{\vartheta^*(0)}$.\\ %For the sake of simplicity we parameterize the driving L\'evy process $L_{\vartheta^*(0)}$ independently of $t\in\R$.\\% In addition, we assume that $\Sigma_{\vartheta^*(t)}$ is time-invariant, i.e. $\Sigma_{\vartheta^*(t)}=\Sigma_{\vartheta^*(0)}$ for all $t\in\R$.\\
%that can be parameterized independently of $t$, i.e. $L_{\vartheta^*(t)}(u)=L_{\vartheta^*(s)}(u)$ for all $s\neq t$ and $u\in\R$.\\
Based on the families $A_\vartheta,B_\vartheta$, $C_\vartheta$ and $L_\vartheta$ from above we define a family of processes $(\tilde{Y}^\vartheta(t))_{\vartheta\in\Theta}$, where% $\tilde{Y}^\vartheta(t)$ is given by 
\begin{gather}\label{equation:statparameterized}
\tilde{Y}^\vartheta(t)=\int_{-\infty}^t B_\vartheta' e^{A_\vartheta(t-s)}C_\vartheta  L_\vartheta(ds).
\end{gather}

%of continuous coefficient functions such that $A_\theta\in M_{p\times p}(\R[t])$ and $B_\theta,C_\theta\in M_{p\times 1}(\R[t])$ as well as $(L_\theta)_{\theta\in\Theta}$ a family of L\'evy processes.\\
%Consider a $(Y_N^\theta(t))_{\theta\in\Theta}$ of sequences of time-varying L\'evy-driven state space models such that $Y_N^\theta(t)$ is defined as in (\ref{eq:seqtvLDstatespacesolution}) with coefficient functions $A_\theta,B_\theta$ and $C_\theta$ and $L_\theta$ as driving noise. Let $(\tilde{Y}_u^\theta(t))_{\theta\in\Theta}$ be a family of processes such that $\tilde{Y}_u^\theta(t)$ as defined in (\ref{eq:seqlimLDstatespacesolution}) is a locally stationary approximation of $Y_N^\theta(t)$ for all $\theta\in\Theta$ and some $p\geq1$, independent of $\theta$, and the same parameterized coefficient functions $A_\theta,B_\theta$ and $C_\theta$ and driving noise $L_\theta$.\\
%We define an $M$-estimator of a log-likelihood contrast $\Phi^{LL}$ to estimate the parameter curve $\vartheta^*(u)\in\Theta$ for $u\in\R^+$. 
The following assumption is crucial for all results that we derive in the sequel.

\begin{manualassumption}{(C0)}\label{assumption:C0}
We assume that $\tilde{Y}_u(t)=\tilde{Y}^{\vartheta^*(u)}(t)$ is a locally stationary approximation of $Y_N^{\vartheta^*}(t)$ for some $p\geq1$.
\end{manualassumption}

To obtain a consistent estimator for $\vartheta^*(u)$, $u\in\R^+$, we consider an $M$-estimator of a log-likelihood contrast $\Phi^{LL}$ and use results from Section \ref{sec3}. We derive the contrast $\Phi^{LL}$ using results from \cite{SS2012b}, where the authors investigated a related estimator in a stationary setting.

\begin{manualassumption}{(C1)}\label{assumption:C1}
For each $\vartheta\in\Theta$, it holds $E[L_\vartheta(1)]=0$ and $E[L_\vartheta(1)^2]=\Sigma_\vartheta<\infty$. Additionally, we rule out the degenerate case, where $E[L_\vartheta(1)^2]=0$.
\end{manualassumption}

\begin{manualassumption}{(C2)}\label{assumption:C2}
For each $\vartheta\in\Theta$, the eigenvalues of $A_\vartheta$ have strictly negative real parts.
\end{manualassumption}

%To show consistency as described in Section \ref{sec3-1} we have to show that the corresponding contrast $\Phi^{LL}$ satisfies the regularity assumptions of Theorem \ref{theorem:consistency} and that the identifiability condition \hyperref[assumption:M1]{(M1)} holds. %While the conditions of Theorem \ref{theorem:consistency} can be shown just by imposing conditions on $\Phi^{LL}$, the identifiability condition \hyperref[assumption:M1]{(M1)} is connected to the underlying model and $\Phi^{LL}$.\\%, whereas the identifiability condition \hyperref[assumption:M1]{(M1)} is not trivial to show.\\ 
%In the following we use results from \cite{SS2012b}, where the authors derived identifiability conditions for equidistantly sampled L\'evy-driven state space models.\\ 
Under the previous assumptions $\tilde{Y}^\vartheta$ is for all $\vartheta\in\Theta$ the unique stationary centered solution to the observation and state equation 
\begin{align}
\begin{aligned}\label{equation:sampledprocessY_u}
\tilde{Y}^\vartheta(t)&=B_\vartheta' X(t)\text{ and}\qquad
dX(t)=A_\vartheta X(t)dt+C_\vartheta L_\vartheta(dt),
\end{aligned}
\end{align}
i.e. a L\'evy-driven state space process.
%, since they ensure the existence of $\tilde{Y}^\vartheta(t)$.
%Under the next assumptions their results can be applied, since then $\tilde{Y}^\vartheta$ is a centered L\'evy-driven state space model, i.e. the unique stationary solution to the observation and state equation (see \cite[Proposition 3.1]{SS2012b})
%\begin{align}
%\begin{aligned}\label{equation:sampledprocessY_u}
%dX(t)&=A_\vartheta X(t)dt+C_\vartheta L_\vartheta(dt),\\
%\tilde{Y}^\vartheta(t)&=B_\vartheta(t)X(t).
%\end{aligned}
%\end{align}	
%
%The estimation procedure we set up will be evaluated for $u\in\R^+$ fixed. To lighten notation we will suppress the dependence on $u$.
Moreover, for $\Delta>0$ it follows from \cite[Proposition 3.6]{SS2012b} that the sampled process $(\tilde{Y}^{\vartheta,\Delta}(k))_{k\in\Z}$, where $\tilde{Y}^{\vartheta,\Delta}(k)=\tilde{Y}^\vartheta(\Delta k)$ satisfies the state space representation
%
%\begin{Proposition}\label{proposition:sampledldspprocess}
%Assume that the assumptions \hyperref[assumption:C1]{(C1)} and \hyperref[assumption:C2]{(C2)} hold. For each $\vartheta\in\Theta$ the sampled process $(\tilde{Y}^{\vartheta,\Delta}(k))_{k\in\Z}$ has the state space representation
\begin{align}
\begin{aligned}\label{equation:statespacerepresentationsampled}
\tilde{Y}^{\vartheta,\Delta}(k)=B_\vartheta'X(k) \text{ and}\qquad 
X(k)&=e^{\Delta A_\vartheta}X(k-1)+N^{(\Delta)}_\vartheta(k),
\end{aligned}
\end{align}
where $N^{(\Delta)}_\vartheta(k)=\int_{(k-1)\Delta}^{k\Delta}e^{A_\vartheta(k\Delta-s)}C_\vartheta L_\vartheta(ds)$, $k\in\Z$. The sequence $(N^{(\Delta)}_\vartheta(k))_{k\in\Z}$ is i.i.d with mean zero and covariance matrix
\begin{gather}\label{eq:variancesampledprocess}
\cancel{\Sigma}^{(\Delta)}_\vartheta=\Sigma_\vartheta \int_0^\Delta e^{A_\vartheta s}C_\vartheta C_\vartheta' e^{A_\vartheta's} ds.
\end{gather}
Moreover, the spectral density of $\tilde{Y}^{\vartheta,\Delta}$, denoted by $f_{\tilde{Y}}^{(\Delta)}(\omega,\vartheta)$, is given by
\begin{gather}\label{eq:spectraldensitysampledprocess}
f_{\tilde{Y}}^{(\Delta)}(\omega,\vartheta)=\frac{1}{2\pi}B_\vartheta'(e^{i\omega}\mathbf{1}_p-e^{\Delta A_\vartheta})^{-1} \cancel{\Sigma}^{\ (\Delta)}_\vartheta(e^{-i\omega}\mathbf{1}_p-e^{\Delta A_\vartheta'})B_\vartheta, \quad \omega\in[-\pi,\pi].
\end{gather}
%where $\mathbf{1}_p$ denotes the identity matrix of dimension $p$.
%\end{Proposition}

Next, we review some aspects of Kalman filtering that are necessary to define the log-likelihood contrast $\Phi^{LL}$.

\begin{Proposition}[{\cite[Proposition 2.1]{SS2012b}}]\label{proposition:kalmanfilter}
Let $Y=\{Y_n,n\in\Z\}$ be the output process of the state space model $Y_n=B'X_n$ and $X_n=A X_{n-1}+Z_{n-1}$, $n\in\Z$, where $A\in M_{p\times p}(\R)$, $B\in M_{p\times 1}(\R)$ and $Z_n$ is an $\R^p$-valued centered i.i.d. sequence with covariance matrix $Q$. The linear innovations $\varepsilon=(\varepsilon_n)_{n\in\Z}$ of $Y$ are defined as $\varepsilon_n=Y_n-P_{n-1}Y_n$, where $P_n$ is the orthogonal projection onto $\overline{\text{span}}\{Y_k:k\leq n\}$ and the closure is taken in $L^2$. If the eigenvalues of $A$ are less than $1$ in absolute value it holds:
\begin{enumerate}[label={(\alph*)}]	
\item The Ricatti equation $\Omega=A\Omega A'+Q-(A\Omega B)(B'\Omega B)^{-1}(A\Omega B)'$ has a unique positive semidefinite solution $\Omega\in M_{p\times p}(\R)$.
\item The eigenvalues of $A-KB'\in M_{p\times p}(\R)$ have absolute value less than one, where $K=(A\Omega B)(B'\Omega B)^{-1}\in M_{p\times 1}(\R)$ is the steady-state Kalman gain matrix.
\item The linear innovations $\varepsilon$ of $Y$ are the unique stationary solution to $\tilde{X}_n=(A-KB')\tilde{X}_{n-1}+KY_{n-1}$ and $\varepsilon_n=Y_n-B'\tilde{X}_n$, $n\in\Z$. Moreover, $\varepsilon_n$ can equivalently be written as
\begin{gather}\label{equation:varepsilon}
\varepsilon_n=Y_n-B'\sum_{\nu=1}^\infty (A-KB')^{\nu-1}KY_{n-\nu}.
\end{gather}
For the covariance matrix $V=E[\varepsilon_n^2]$ we have $V=B'\Omega B$.
\end{enumerate}
\end{Proposition}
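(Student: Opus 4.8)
The plan is to proceed via the geometry of $L^2$-projections and the steady-state Kalman filter, deriving the Riccati equation and the innovations recursion constructively, and then to invoke classical discrete algebraic Riccati equation (DARE) theory for the two points that are not purely computational, namely the uniqueness in (a) and the stability in (b).

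First I would record that the assumption on the eigenvalues of $A$ makes $X$ a causal stationary process: iterating the state recursion gives $X_n=\sum_{j=0}^\infty A^j Z_{n-1-j}$, which converges in $L^2$ because the spectral radius of $A$ is strictly less than one. In particular $Y_n=B'X_n$ is stationary and square-integrable, and $Z_n$ is orthogonal to $\overline{\text{span}}\{Y_k:k\le n\}$ since $X_k$ depends only on $Z_{k-1},Z_{k-2},\ldots$ for $k\le n$. Writing $\hat{X}_n=P_{n-1}X_n$ for the one-step predictor and $e_n=X_n-\hat{X}_n$ for the prediction error, stationarity of $Y$ forces $\Omega=E[e_n e_n']$ to be independent of $n$, so one may work directly in the steady state rather than analysing the convergence of a time-varying filter.

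Next I would derive the innovations recursion. Using the orthogonal decomposition $P_n=P_{n-1}+(\text{projection onto }\varepsilon_n)$ together with $P_n Z_n=0$, one obtains $\hat{X}_{n+1}=A\,P_n X_n=A\hat{X}_n+A\,\Omega B(B'\Omega B)^{-1}\varepsilon_n$, where the coefficient of $\varepsilon_n$ uses $E[X_n\varepsilon_n]=\Omega B$ (because $\hat{X}_n\perp e_n$) and $V=E[\varepsilon_n^2]=B'\Omega B$; this simultaneously identifies $K$ and proves the formula $V=B'\Omega B$ in (c). Substituting $\varepsilon_n=Y_n-B'\hat{X}_n$ yields $\hat{X}_{n+1}=(A-KB')\hat{X}_n+KY_n$, which is the recursion of (c) with $\tilde{X}_n=\hat{X}_n$. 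Subtracting from the state recursion gives the error dynamics $e_{n+1}=(A-KB')e_n+Z_n$, and taking second moments (using $Z_n\perp e_n$) produces $\Omega=(A-KB')\Omega(A-KB')'+Q$; a short expansion, using $K=A\Omega B(B'\Omega B)^{-1}$ and the symmetry of $\Omega$, collapses this to the stated equation $\Omega=A\Omega A'+Q-(A\Omega B)(B'\Omega B)^{-1}(A\Omega B)'$. Since $\Omega$ was defined as a genuine prediction-error covariance, this already establishes existence of a positive semidefinite solution in (a).

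The remaining and most delicate points are the uniqueness in (a) and the stability in (b), which I would settle by appealing to the classical steady-state theory of the Kalman filter rather than by a hand computation. The key is that $\Omega$ is the \emph{stabilizing} solution: the error process $e_n$ is the unique stationary causal solution of $e_{n+1}=(A-KB')e_n+Z_n$ driven by the white noise $Z_n$, which is only compatible with $A-KB'$ having all eigenvalues inside the unit circle, giving (b); stability of $A-KB'$ then pins down $\Omega$ as the unique positive semidefinite solution of the Riccati equation, giving uniqueness in (a). I expect this control-theoretic step to be the main obstacle, but the helpful point is that the eigenvalue assumption on $A$ makes the relevant detectability hypothesis automatic (every unobservable mode is already stable, and no modes sit on the unit circle), so the standard DARE theorems apply directly; I would cite the usual treatments of the steady-state Kalman filter for linear state space models rather than reprove them. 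Finally, once $A-KB'$ is known to be stable, iterating $\tilde{X}_n=(A-KB')\tilde{X}_{n-1}+KY_{n-1}$ backwards gives $\tilde{X}_n=\sum_{\nu=1}^\infty(A-KB')^{\nu-1}KY_{n-\nu}$, and substituting into $\varepsilon_n=Y_n-B'\tilde{X}_n$ yields the closed-form expression in (c).
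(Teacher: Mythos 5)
The paper offers no proof of this statement: it is quoted verbatim, with attribution, from \cite[Proposition~2.1]{SS2012b}, so there is no internal argument to compare yours against. Your reconstruction is the standard steady-state Kalman-filter derivation and is sound in outline: the causal representation $X_n=\sum_{j\ge0}A^jZ_{n-1-j}$, the projection update $\hat X_{n+1}=A\hat X_n+K\varepsilon_n$ with $E[X_n\varepsilon_n]=\Omega B$ and $V=B'\Omega B$, the error recursion $e_{n+1}=(A-KB')e_n+Z_n$, and the algebraic collapse of $(A-KB')\Omega(A-KB')'+Q$ to the stated Riccati equation are all correct, and deferring uniqueness in (a) and stability in (b) to the classical DARE theory (detectability and stabilizability being automatic because $A$ is already stable) is exactly what the source reference does as well.

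Two points deserve more care than your sketch gives them. First, the whole construction presupposes $V=B'\Omega B>0$; this is a genuine hypothesis (ruled in by the non-degeneracy assumption \hyperref[assumption:C1]{(C1)} where the proposition is applied in this paper), and without it $K$ is undefined, so you should state it rather than treat the inverse as automatic. Second, your heuristic that a stationary causal solution of $e_{n+1}=(A-KB')e_n+Z_n$ ``is only compatible with'' a stable $A-KB'$ is not a proof: iterating only yields convergence of $\sum_{j\ge0}(A-KB')^jQ\big((A-KB')^j\big)'$, which does not force the spectral radius below one unless one adds a controllability-type condition on $(A-KB',Q^{1/2})$. Since you ultimately obtain (b) from the stabilizing-solution property of the DARE anyway, you should drop that heuristic or present it only as motivation. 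Finally, the uniqueness of the stationary solution claimed in (c) needs the one-line observation that the difference of two stationary solutions satisfies $d_n=(A-KB')^kd_{n-k}\to0$ once (b) is in hand.
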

 
\noindent We apply the above proposition to the state space model (\ref{equation:statespacerepresentationsampled}) and obtain the parametrized matrices $\Omega_\vartheta,K_\vartheta$ and $V_\vartheta$. In addition to the assumptions \hyperref[assumption:C1]{(C1)} and \hyperref[assumption:C2]{(C2)}, we impose the following conditions.

\begin{manualassumption}{(C3)}\label{assumption:C3}
The parameter space $\Theta\subset\R^d$ is compact.
\end{manualassumption}

\begin{manualassumption}{(C4)}\label{assumption:C4}
The functions $\vartheta\mapsto A_\vartheta$, $\vartheta\mapsto B_\vartheta$, $\vartheta\mapsto C_\vartheta$ and $\vartheta\mapsto \Sigma_\vartheta$ are continuously differentiable and $B_\vartheta\neq0$ for all $\vartheta\in\Theta$.
\end{manualassumption}

\begin{Lemma}
Assume that \hyperref[assumption:C1]{(C1)} - \hyperref[assumption:C4]{(C4)} hold. Then, $ e^{\Delta A_\vartheta}$ has eigenvalues with absolute value less than $1$ and $V_\vartheta=B_\vartheta'\Omega_\vartheta B_\vartheta>C_V$ for a constant $C_V>0$ and all $\vartheta\in\Theta$.
\end{Lemma}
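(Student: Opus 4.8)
The plan is to prove the two assertions separately, both relying on the compactness of $\Theta$ together with the continuity of the relevant matrix-valued maps. First I would treat the eigenvalue claim for $e^{\Delta A_\vartheta}$. By \hyperref[assumption:C2]{(C2)}, every eigenvalue $\lambda_j(\vartheta)$ of $A_\vartheta$ satisfies $\mathfrak{Re}(\lambda_j(\vartheta))<0$, and since the spectral mapping theorem gives that the eigenvalues of $e^{\Delta A_\vartheta}$ are exactly $e^{\Delta\lambda_j(\vartheta)}$, their absolute values equal $e^{\Delta\,\mathfrak{Re}(\lambda_j(\vartheta))}<1$ because $\Delta>0$. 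This already establishes the first part pointwise in $\vartheta$; I would note that this also verifies the hypothesis of Proposition \ref{proposition:kalmanfilter} applied to the sampled state space model (\ref{equation:statespacerepresentationsampled}) with $A=e^{\Delta A_\vartheta}$, $B=B_\vartheta$, $Q=\cancel{\Sigma}^{(\Delta)}_\vartheta$, so that the matrices $\Omega_\vartheta$, $K_\vartheta$ and $V_\vartheta=B_\vartheta'\Omega_\vartheta B_\vartheta$ are well defined for each $\vartheta\in\Theta$.

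The harder part is the uniform lower bound $V_\vartheta>C_V$. The natural strategy is a compactness argument: I would show that $\vartheta\mapsto V_\vartheta$ is continuous on $\Theta$ and strictly positive everywhere, whence it attains a positive minimum on the compact set $\Theta$, which we take as $C_V$. Continuity follows from the continuity of the ingredients: under \hyperref[assumption:C4]{(C4)} the maps $\vartheta\mapsto A_\vartheta,B_\vartheta,C_\vartheta,\Sigma_\vartheta$ are continuous, hence so are $\vartheta\mapsto e^{\Delta A_\vartheta}$ and $\vartheta\mapsto\cancel{\Sigma}^{(\Delta)}_\vartheta=\Sigma_\vartheta\int_0^\Delta e^{A_\vartheta s}C_\vartheta C_\vartheta'e^{A_\vartheta's}\,ds$; the unique solution $\Omega_\vartheta$ of the Riccati equation in Proposition \ref{proposition:kalmanfilter}(a) then depends continuously on these data, and therefore $V_\vartheta=B_\vartheta'\Omega_\vartheta B_\vartheta$ is continuous in $\vartheta$.

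It then remains to prove strict positivity $V_\vartheta>0$ for each fixed $\vartheta$. Here I would argue probabilistically using the interpretation of $V_\vartheta$ from Proposition \ref{proposition:kalmanfilter}(c): $V_\vartheta=E[\varepsilon_n^2]$, where $\varepsilon_n$ is the one-step linear innovation of the sampled stationary process $\tilde{Y}^{\vartheta,\Delta}$. If $V_\vartheta=0$ then $\tilde{Y}^{\vartheta,\Delta}(n)$ would be perfectly predictable from its own past, i.e. $\tilde{Y}^{\vartheta,\Delta}(n)=P_{n-1}\tilde{Y}^{\vartheta,\Delta}(n)$ almost surely. I would rule this out by combining \hyperref[assumption:C1]{(C1)} (the noise is genuinely non-degenerate, $\Sigma_\vartheta>0$) with \hyperref[assumption:C4]{(C4)} ($B_\vartheta\neq0$): from the representation (\ref{equation:statespacerepresentationsampled}) the innovation $\varepsilon_n$ contains the contribution $B_\vartheta'N^{(\Delta)}_\vartheta(n)$, which is independent of the past $\sigma$-field and has strictly positive variance unless $B_\vartheta'N^{(\Delta)}_\vartheta(n)\equiv0$; since $\cancel{\Sigma}^{(\Delta)}_\vartheta=\Sigma_\vartheta\int_0^\Delta e^{A_\vartheta s}C_\vartheta C_\vartheta'e^{A_\vartheta's}\,ds$ is positive definite when $C_\vartheta\neq0$ (controllability of the continuous-time system over $[0,\Delta]$), the quantity $B_\vartheta'\cancel{\Sigma}^{(\Delta)}_\vartheta B_\vartheta>0$, forcing $V_\vartheta\geq B_\vartheta'\cancel{\Sigma}^{(\Delta)}_\vartheta B_\vartheta>0$. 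The main obstacle I anticipate is making this non-degeneracy step fully rigorous, in particular verifying that $\cancel{\Sigma}^{(\Delta)}_\vartheta$ is strictly positive definite (which requires a short controllability/observability argument for the pair $(A_\vartheta,C_\vartheta)$, or else an explicit assumption that $C_\vartheta\neq0$) and confirming continuity of the Riccati solution $\Omega_\vartheta$; both can be handled by standard linear-systems results, so the essential content is the compactness-plus-strict-positivity scheme outlined above.
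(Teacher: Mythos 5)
Your argument is essentially the expanded version of what the paper outsources to the citation of \cite[Lemma 2.2 and Lemma 3.14]{SS2012b}: the spectral mapping theorem for the first claim, and continuity of $\vartheta\mapsto V_\vartheta$ (via the sensitivity of the Riccati solution, which the paper itself invokes from \cite{S1998} right after the lemma) combined with pointwise strict positivity and compactness of $\Theta$ for the second. The one substantive caveat is exactly the one you flagged yourself. Your lower bound $V_\vartheta\geq B_\vartheta'\,\cancel{\Sigma}^{(\Delta)}_\vartheta B_\vartheta$ is correct, since $N^{(\Delta)}_\vartheta(k)$ is independent of the past and therefore its contribution to the innovation variance is additive; but its strict positivity amounts to $\Sigma_\vartheta\int_0^\Delta\big(B_\vartheta'e^{A_\vartheta s}C_\vartheta\big)^2\,ds>0$, i.e.\ to the kernel $s\mapsto B_\vartheta'e^{A_\vartheta s}C_\vartheta$ not vanishing identically, equivalently to $B_\vartheta$ not being orthogonal to the controllable subspace of $(A_\vartheta,C_\vartheta)$. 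This does not follow from \hyperref[assumption:C1]{(C1)}--\hyperref[assumption:C4]{(C4)} alone: \hyperref[assumption:C4]{(C4)} only gives $B_\vartheta\neq0$ and \hyperref[assumption:C1]{(C1)} only non-degeneracy of $L_\vartheta$, so e.g.\ $C_\vartheta=0$ is not excluded and would force $V_\vartheta=0$. The missing ingredient is supplied by the minimality assumption \hyperref[assumption:C5]{(C5)} (via Proposition \ref{proposition:controllableobservableminimal}, controllability of $(A_\vartheta,C_\vartheta)$ makes $\cancel{\Sigma}^{(\Delta)}_\vartheta$ positive definite), and the cited lemmas of Schlemm and Stelzer are indeed proved under minimality. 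So the gap lies in the lemma's stated hypotheses rather than in your reasoning; to make your proof self-contained you should either invoke \hyperref[assumption:C5]{(C5)} at that step or add the hypothesis that the controllability Gramian is nonsingular.
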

\begin{proof}
Follows from the proof of \cite[Lemma 2.2 and Lemma 3.14]{SS2012b}.
\end{proof}
Following the sensitivity analysis of the Ricatti equation in \cite{S1998}, the degree of smoothness of $A_\vartheta,B_\vartheta,C_\vartheta$ and $ \Sigma_\vartheta$, namely $C^1$ carries over to the mapping $\vartheta\mapsto\Omega_\vartheta$. Therefore, from the previous assumptions it follows that the functions $\vartheta\mapsto A_\vartheta$, $\vartheta\mapsto B_\vartheta$, $\vartheta\mapsto C_\vartheta$, $\vartheta\mapsto \Sigma_\vartheta$, $\vartheta\mapsto K_\vartheta$ and $\vartheta\mapsto V_\vartheta$ are Lipschitz.\\

\noindent Under the conditions \hyperref[assumption:C1]{(C1)} - \hyperref[assumption:C4]{(C4)} we define for $u\in\R^+$, $\tilde{Y}^{\vartheta^*(u),\Delta}=(\tilde{Y}^{\vartheta^*(u),\Delta}(1-k))_{k\in\N_0}$ and $\tilde{Y}^{\vartheta^*(u),\Delta}(k)=\tilde{Y}^{\vartheta^*(u)}(\Delta k)$ as in (\ref{equation:statespacerepresentationsampled}) the log-likelihood contrast
\begin{gather}\label{equation:quasilogcontrast}
\Phi^{LL}\left( \tilde{Y}^{\vartheta^*(u),\Delta},\vartheta \right) =\log(2\pi)+\log(V_\vartheta)+\frac{\varepsilon_{\vartheta}^2\left(\tilde{Y}^{\vartheta^*(u),\Delta} \right)}{V_\vartheta},
\end{gather}
where $\varepsilon_{\vartheta}(\tilde{Y}^{\vartheta^*(u),\Delta})$ it given by the analogue of (\ref{equation:varepsilon}), i.e.
\begin{align}\label{equation:varepsilontheta}
\begin{aligned}
\varepsilon_{\vartheta}\left(\tilde{Y}^{\vartheta^*(u),\Delta}\right)&=\tilde{Y}^{\vartheta^*(u),\Delta}(1)-B_\vartheta'\sum_{n=1}^\infty \left( e^{\Delta A_\vartheta}-K_\vartheta B_\vartheta' \right)^{n-1}K_\vartheta \tilde{Y}^{\vartheta^*(u),\Delta}(1-n).
\end{aligned}
\end{align}
The localized $M$-estimator resulting from the contrast (\ref{equation:quasilogcontrast}) can be considered as a localized quasi-maximum likelihood estimator. %. Assume that $\tau_i^N$, $i=-m_N,\ldots,m_N$ are defined as in Assumption \ref{assumption:observations} such that either \hyperref[assumption:O1]{(O1)} or \hyperref[assumption:O2]{(O2)} holds. Then, the estimator is 
It is given by
\begin{align}
\begin{aligned}\label{equation:thetahatqmle}
\hat{\vartheta}_N&=\argmin_{\vartheta\in\Theta} M_N(\vartheta), \text{ where}\\
M_N(\vartheta)&=\frac{\delta_N}{b_N}\sum_{i=-m_N}^{m_N}K\left(\frac{\tau_i^N-u}{b_N} \right)\Phi^{LL}\left(\left(Y_N^{\vartheta^*}\left(\tau_i^N+\frac{\Delta (1-k)}{N}\right)\right)_{k\in\N_0},\vartheta\right),
\end{aligned}
\end{align}
where $K$ is a localizing kernel. The following proposition shows that if \hyperref[assumption:C1]{(C1)} - \hyperref[assumption:C4]{(C4)} hold, $\Phi^{LL}$ satisfies all conditions of Theorem \ref{theorem:consistency} besides the identifiability condition \hyperref[assumption:M1]{(M1)}.

\begin{Proposition}\label{proposition:qmlecontrast}
%Let $\Phi^{LL}$ be the contrast as defined in (\ref{equation:quasilogcontrast}) and 
If \hyperref[assumption:C1]{(C1)} - \hyperref[assumption:C4]{(C4)} hold, then
\begin{enumerate}[label={(\alph*)}]
\item $\Phi^{LL}(\cdot,\vartheta)\in \mathcal{L}_\infty^{1,2}(\alpha)$ for all $\vartheta\in\Theta$, where $\alpha=(\alpha_k)_{k\in\N_0}\subset\R^+$ satisfies $\sum_{k=0}^\infty k\alpha_k<\infty$,
\item $\Phi^{LL}(x,\cdot)\in\mathcal{L}_d(0,D_1(1+\sum_{k=0}^\infty \beta_kx_{1-k}^2))$ for all real sequences $x=(x_{1-k})_{k\in\N_0}$ and some $D_1\geq0$, where $(\beta_k)_{k\in\N_0}\subset\R$ satisfies $\sum_{k=0}^\infty k\beta_k<\infty$ and
\item if the observations follow the sampling scheme \hyperref[observations:O2]{(O2)}, condition (c) of Theorem \ref{theorem:consistency} is satisfied.
\end{enumerate}
\end{Proposition}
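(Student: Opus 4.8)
The plan is to exploit the explicit infinite-order moving-average representation of the prediction error. Writing $M_\vartheta:=e^{\Delta A_\vartheta}-K_\vartheta B_\vartheta'$ and $\psi_n(\vartheta):=B_\vartheta' M_\vartheta^{n-1}K_\vartheta\in\R$, the error (\ref{equation:varepsilontheta}) reads, for a real sequence $x=(x_{1-k})_{k\in\N_0}$, $\varepsilon_\vartheta(x)=x_1-\sum_{n=1}^\infty\psi_n(\vartheta)x_{1-n}$, so that $\Phi^{LL}(x,\vartheta)=\log(2\pi)+\log(V_\vartheta)+\varepsilon_\vartheta(x)^2/V_\vartheta$. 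Two structural facts drive everything, and I would isolate them as a preliminary step since both (a) and (b) rest on them. First, by Proposition \ref{proposition:kalmanfilter}(b) applied to (\ref{equation:statespacerepresentationsampled}) the spectral radius of $M_\vartheta$ is strictly below one for every $\vartheta$; since $\vartheta\mapsto M_\vartheta$ is continuous (the matrix exponential and the matrices $B_\vartheta,K_\vartheta$ are Lipschitz on the compact $\Theta$) and the spectral radius is continuous, $\sup_{\vartheta\in\Theta}\rho(M_\vartheta)<1$, whence there exist $C\geq0$ and $\rho\in(0,1)$ with $\|M_\vartheta^{n}\|\leq C\rho^{n}$ \emph{uniformly} in $\vartheta\in\Theta$, $n\in\N_0$, and thus $|\psi_n(\vartheta)|\leq C\rho^{n-1}$. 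Second, by the Lemma preceding the statement $V_\vartheta\geq C_V>0$, and by the Ricatti sensitivity argument the maps $\vartheta\mapsto A_\vartheta,B_\vartheta,C_\vartheta,K_\vartheta,V_\vartheta$ are Lipschitz on $\Theta$.

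For (a) I fix $\vartheta$ and note that $\log(2\pi)+\log(V_\vartheta)$ is independent of $x$, so only $\varepsilon_\vartheta(\cdot)^2/V_\vartheta$ matters. Using $\varepsilon_\vartheta(X)^2-\varepsilon_\vartheta(Y)^2=(\varepsilon_\vartheta(X)-\varepsilon_\vartheta(Y))(\varepsilon_\vartheta(X)+\varepsilon_\vartheta(Y))$ and Cauchy--Schwarz,
\[
\big\|\varepsilon_\vartheta(X)^2-\varepsilon_\vartheta(Y)^2\big\|_{L^1}\leq\big\|\varepsilon_\vartheta(X)-\varepsilon_\vartheta(Y)\big\|_{L^2}\big(\|\varepsilon_\vartheta(X)\|_{L^2}+\|\varepsilon_\vartheta(Y)\|_{L^2}\big).
\]
Setting $\alpha_0=1$ and $\alpha_k=C\rho^{k-1}$ for $k\geq1$ (so $\sum_k k\alpha_k<\infty$ by geometric decay), the first factor is at most $\sum_{k=0}^\infty\alpha_k\|X_{1-k}-Y_{1-k}\|_{L^2}$, while each term in the second factor is at most $(\sum_k\alpha_k)\sup_k(\|X_{1-k}\|_{L^2}\vee\|Y_{1-k}\|_{L^2})$. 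Dividing by $V_\vartheta\geq C_V$ yields the defining inequality of $\mathcal{L}_\infty^{1,2}(\alpha)$ with the linear weight $f(r)=2C_V^{-1}(\sum_k\alpha_k)\,r$, proving (a).

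For (b) I take $M=0$ and bound $|\Phi^{LL}(x,\vartheta_1)-\Phi^{LL}(x,\vartheta_2)|$ by $D_1(1+\sum_k\beta_k x_{1-k}^2)\|\vartheta_1-\vartheta_2\|_1$. The $\log(V_\vartheta)$ term is handled by Lipschitz continuity of $V$ together with $V_\vartheta\geq C_V$ and feeds into the constant ``$1$''. For the quadratic term I split $\varepsilon_{\vartheta_1}^2/V_{\vartheta_1}-\varepsilon_{\vartheta_2}^2/V_{\vartheta_2}$ into $(\varepsilon_{\vartheta_1}^2-\varepsilon_{\vartheta_2}^2)/V_{\vartheta_1}$ and $\varepsilon_{\vartheta_2}^2(V_{\vartheta_1}^{-1}-V_{\vartheta_2}^{-1})$. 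The coefficient Lipschitz bound follows from the telescoping identity $M_{\vartheta_1}^{n-1}-M_{\vartheta_2}^{n-1}=\sum_{j=0}^{n-2}M_{\vartheta_1}^{j}(M_{\vartheta_1}-M_{\vartheta_2})M_{\vartheta_2}^{n-2-j}$, which with the uniform power bound and Lipschitz continuity of $M_\vartheta,B_\vartheta,K_\vartheta$ gives $|\psi_n(\vartheta_1)-\psi_n(\vartheta_2)|\leq C'n\rho^{n-2}\|\vartheta_1-\vartheta_2\|$. Hence $|\varepsilon_{\vartheta_1}(x)-\varepsilon_{\vartheta_2}(x)|\leq\|\vartheta_1-\vartheta_2\|\sum_n C'n\rho^{n-2}|x_{1-n}|$, while $|\varepsilon_{\vartheta_i}(x)|\leq|x_1|+C\sum_n\rho^{n-1}|x_{1-n}|$; both are linear in $|x|$ with geometrically summable coefficients. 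Multiplying two such expressions and using $|x_{1-n}||x_{1-m}|\leq\tfrac12(x_{1-n}^2+x_{1-m}^2)$ converts the product into $\sum_k\beta_k x_{1-k}^2$ with geometrically decaying $\beta_k$ (so $\sum_k k\beta_k<\infty$); the second piece is treated the same way after $|V_{\vartheta_1}^{-1}-V_{\vartheta_2}^{-1}|\leq C_V^{-2}L_V\|\vartheta_1-\vartheta_2\|$ and $\varepsilon_{\vartheta_2}^2(x)\leq(\sum_n c_n)\sum_n c_n x_{1-n}^2$ by Cauchy--Schwarz. Collecting constants gives (b).

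Finally, (c) follows by recognizing $\tilde{Y}_u=\tilde{Y}^{\vartheta^*(u)}$ as a L\'evy-driven moving-average process of the form (\ref{equation:infinitememorylevydrivenmovingaverage}) with kernel $\mathbb{1}_{\{t-s\geq0\}}B(u)'e^{A(u)(t-s)}C(u)$: by \hyperref[assumption:C2]{(C2)} this kernel decays exponentially, hence lies in $L^1(\R)\cap L^2(\R)$, and \hyperref[assumption:C1]{(C1)} supplies the centering condition, so $\tilde{Y}_u$ meets the hypotheses of Proposition \ref{proposition:infinitememorymovingaverage} for $q=2$. Condition (c) of Theorem \ref{theorem:consistency} asks for $\theta$-weak dependence of $\Phi^{LL}((\tilde{Y}_u(t+\Delta(1-k)))_{k\in\N_0},\vartheta)$ and of $g((\tilde{Y}_u(t+\Delta(1-k)))_{k\in\N_0})$ with $g(x)=\sum_k\beta_k x_k^2$. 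Part (a) gives $\Phi^{LL}(\cdot,\vartheta)\in\mathcal{L}_\infty^{1,2}(\alpha)$, and $g\in\mathcal{L}_\infty^{1,2}(\beta)$ since $\|g(X)-g(Y)\|_{L^1}\leq\sum_k\beta_k\|X_k-Y_k\|_{L^2}\|X_k+Y_k\|_{L^2}$ by Cauchy--Schwarz; applying Proposition \ref{proposition:infinitememorymovingaverage} with $q=2$ to both functions (equivalently, invoking Corollary \ref{corollary:thetaweakdependenceinfinitememory}) yields their $\theta$-weak dependence. I expect the main obstacle to be the first structural fact: upgrading the pointwise spectral-radius bound of Proposition \ref{proposition:kalmanfilter}(b) to a \emph{uniform} geometric decay $\|M_\vartheta^n\|\leq C\rho^n$ and to the matching Lipschitz estimate for $\|M_{\vartheta_1}^n-M_{\vartheta_2}^n\|$, which requires compactness of $\Theta$ together with the Ricatti sensitivity results of \cite{SS2012b,S1998}; once this is in place, parts (a)--(c) are routine.
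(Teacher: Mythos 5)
Your proposal is correct and follows essentially the same route as the paper's proof: the uniform geometric bound on $\norm{(e^{\Delta A_\vartheta}-K_\vartheta B_\vartheta')^{n}}$ via Proposition \ref{proposition:kalmanfilter}(b), compactness of $\Theta$ and Gelfand's formula, the difference-of-squares plus H\"older argument for (a), the split of the quadratic term and the telescoping identity for $M_{\vartheta_1}^{n-1}-M_{\vartheta_2}^{n-1}$ (the paper's modification of \cite[Lemma B.4]{H2008}) for (b), and Corollary \ref{corollary:thetaweakdependenceinfinitememory} for (c). The only cosmetic deviations are your single uniform bound $\norm{M_\vartheta^n}\leq C\rho^n$ in place of the paper's two-regime Gelfand estimate, and the use of $|ab|\leq\tfrac12(a^2+b^2)$ where the paper uses H\"older's inequality for sequences; neither changes the substance.
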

\begin{proof}
See Section \ref{sec7-5}.
\end{proof}

The following conditions \hyperref[assumption:C5]{(C5)}-\hyperref[assumption:C7]{(C7)} will help to verify the identifiability condition \hyperref[assumption:M1]{(M1)}.\\
First, it is necessary to ensure that the sampled process $\tilde{Y}^{\vartheta,\Delta}$ is not the output process of any state space representation of lower dimension than $p$ for all $\vartheta\in\Theta$. The concept of minimality is suitable for this purpose.

\begin{Definition}
Let $H:\R\rightarrow\R$ be a rational function and $A\in M_{p\times p}(\R)$ and $B,C\in M_{p\times 1}(\R)$ such that $H(x)=B'(x \mathbf{1}_p-A)^{-1}C$. We then call the triplet $(A,B,C)$ an algebraic realization of $H$ of dimension $p$. If $(A,B,C)$ is an algebraic realization whose dimension is smaller than or equal to the dimension of any other algebraic realization of $H$ we call it minimal. The corresponding dimension of such a minimal algebraic realization is called McMillan degree.
\end{Definition}

\begin{manualassumption}{(C5)}\label{assumption:C5}
For each $\vartheta\in\Theta$ the triplet $(A_\vartheta,B_\vartheta,C_\vartheta)$ is minimal with McMillan degree $p$.
\end{manualassumption}

\begin{Definition}
An algebraic realization $(A,B,C)$ of dimension $p$ is called controllable if the matrix $[C~~ AC ~\hdots ~A^{p-1}C]\in\R_{p\times p}$ has full rank. Moreover, it is called observable if the matrix $[B~~ BA' ~\hdots~ B(A^{p-1})']\in\R_{p\times p}$ has full rank.
\end{Definition}

\begin{Proposition}\label{proposition:controllableobservableminimal}
An algebraic realization $(A,B,C)$ of dimension $p$ is controllable and observable if and only if it is minimal.
\end{Proposition}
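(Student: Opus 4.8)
The plan is to reduce everything to the Hankel matrix of Markov parameters. Expanding $(x\mathbf{1}_p-A)^{-1}=\sum_{k\ge0}A^k x^{-(k+1)}$ for large $\abs{x}$ shows that $H(x)=\sum_{k\ge0}h_k x^{-(k+1)}$ with Markov parameters $h_k=B'A^kC$; since these are the Laurent coefficients of $H$, they are identical for every algebraic realization of $H$. I then form the $p\times p$ Hankel matrix $\mathcal{H}=(h_{i+j})_{0\le i,j\le p-1}$ and record the factorization $\mathcal{H}=\mathcal{O}\mathcal{C}$, where $\mathcal{C}=[\,C\ AC\ \cdots\ A^{p-1}C\,]$ is the controllability matrix and $\mathcal{O}$ is the observability matrix with rows $B'A^i$, $i=0,\dots,p-1$; indeed the $(i,j)$ entry of $\mathcal{O}\mathcal{C}$ equals $B'A^{i+j}C=h_{i+j}$. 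This identity is the engine of both implications.

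For the direction ``controllable and observable $\Rightarrow$ minimal'', full rank of $\mathcal{C}$ and $\mathcal{O}$ makes these two square matrices invertible, so $\mathcal{H}=\mathcal{O}\mathcal{C}$ has rank $p$. Given any other realization $(\bar A,\bar B,\bar C)$ of $H$ of dimension $\bar p$, the Markov parameters coincide, so the same Hankel matrix factors as $\mathcal{H}=\bar{\mathcal{O}}\bar{\mathcal{C}}$ with $\bar{\mathcal{O}}\in M_{p\times\bar p}(\R)$ and $\bar{\mathcal{C}}\in M_{\bar p\times p}(\R)$. Then $p=\mathrm{rank}\,\mathcal{H}\le\mathrm{rank}\,\bar{\mathcal{C}}\le\bar p$, so no realization can have dimension below $p$, and $(A,B,C)$ is minimal.

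For the converse I argue by contraposition, constructing a strictly smaller realization whenever controllability or observability fails. If $(A,B,C)$ is not controllable, the reachable subspace $\mathcal{R}=\mathrm{span}\{A^kC:k\ge0\}=\mathrm{Im}\,\mathcal{C}$ has dimension $r<p$; by the Cayley--Hamilton theorem every $A^kC$ lies in $\mathcal{R}$, so $\mathcal{R}$ is $A$-invariant. Choosing a basis whose first $r$ vectors span $\mathcal{R}$ transforms the realization into block form
\begin{align*}
\tilde A=\begin{pmatrix}A_{11}&A_{12}\\0&A_{22}\end{pmatrix},\qquad \tilde C=\begin{pmatrix}C_1\\0\end{pmatrix},\qquad \tilde B'=(B_1'\ \ B_2'),
\end{align*}
and reading off the transfer function gives $H(x)=B_1'(x\mathbf{1}_r-A_{11})^{-1}C_1$, a realization of dimension $r<p$, so $(A,B,C)$ is not minimal. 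The non-observable case is identical after passing to the dual: since $H$ is scalar, $H(x)=H(x)'=C'(x\mathbf{1}_p-A')^{-1}B$, so $(A',C,B)$ is a realization of $H$ whose controllability matrix is exactly the observability matrix $[\,B\ A'B\ \cdots\ (A')^{p-1}B\,]$ of $(A,B,C)$; applying the controllability argument to the dual yields a smaller realization, and transposing back gives a smaller realization of the original $H$. Hence minimality forces both controllability and observability.

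The routine bookkeeping---expanding the resolvent, checking the Hankel factorization entrywise, and verifying that the block-triangular change of basis preserves the transfer function---is straightforward linear algebra. The one step that will deserve genuine care is the Kalman decomposition in the last paragraph: confirming that $\mathcal{R}$ is truly $A$-invariant (so that the lower-left block of $\tilde A$ vanishes) and that $C$ is supported in $\mathcal{R}$, which is precisely what collapses the realization to dimension $r$. The scalar-transfer-function duality is what spares me from repeating this construction from scratch in the observable case.
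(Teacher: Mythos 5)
Your proof is correct. Note that the paper does not actually prove this proposition at all: it simply cites \cite[Theorem 2.3.3]{HD1988} (Hannan and Deistler), so there is no in-paper argument to compare against. What you have written is a complete, self-contained version of the standard linear-systems proof: the factorization of the $p\times p$ Hankel matrix of Markov parameters as $\mathcal{O}\mathcal{C}$ gives the lower bound $\mathrm{rank}\,\mathcal{H}\le\bar p$ for any competing realization, which handles ``controllable and observable $\Rightarrow$ minimal''; and the contrapositive of the converse is handled by the Kalman decomposition along the $A$-invariant reachable subspace, with the observability case reduced to the controllability case via the scalar duality $H(x)=C'(x\mathbf{1}_p-A')^{-1}B$. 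All the steps check out: the Markov parameters are the Laurent coefficients of the strictly proper rational function $H$ at infinity and hence realization-independent; Cayley--Hamilton gives both the $A$-invariance of $\mathcal{R}=\mathrm{Im}\,\mathcal{C}$ and the identification of $\mathcal{R}$ with $\mathrm{span}\{A^kC:k\ge0\}$; and the block upper-triangular form of $(x\mathbf{1}_p-\tilde A)^{-1}$ collapses the transfer function to the $r$-dimensional realization $(A_{11},B_1,C_1)$. This is essentially the argument one finds in the cited reference, so you have supplied the proof the paper outsources.
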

\begin{proof}
See \cite[Theorem 2.3.3]{HD1988}.
\end{proof}

\begin{manualassumption}{(C6)}\label{assumption:C6}
Let $(\tilde{Y}^\vartheta)_{\vartheta\in\Theta}$ be the family of output processes of the observation and state equation (\ref{equation:sampledprocessY_u}). For all $\vartheta_1\neq\vartheta_2$ the spectral densities of the two processes $\tilde{Y}^{\vartheta_1}$ and $\tilde{Y}^{\vartheta_2}$ are different.
\end{manualassumption}

\begin{Proposition}
For $\vartheta\in\Theta$ let $\tilde{Y}^\vartheta$ be the output process of (\ref{equation:sampledprocessY_u}). Then, its spectral density $f_{\tilde{Y}^\vartheta}$ is given by
\begin{gather*}
f_{\tilde{Y}^\vartheta}(\omega)=\frac{1}{2\pi}H_{\vartheta}(i\omega)\Sigma_\vartheta H_\vartheta(-i\omega), ~ \omega\in\R,
\end{gather*}
where $H_\vartheta:\R\rightarrow\R$ is the transfer function of $\tilde{Y}^\vartheta$ and defined as $H_\vartheta(x)=B_\vartheta'(x \mathbf{1}_p-A_\vartheta)^{-1}C_\vartheta$.
\end{Proposition}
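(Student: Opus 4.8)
The plan is to reduce everything to the continuous-time moving-average representation of $\tilde{Y}^\vartheta$ and then compute the spectral density as the (normalized) Fourier transform of the autocovariance function. Under \hyperref[assumption:C1]{(C1)} and \hyperref[assumption:C2]{(C2)}, the unique stationary solution of (\ref{equation:sampledprocessY_u}) admits the representation $\tilde{Y}^\vartheta(t)=\int_{-\infty}^t B_\vartheta' e^{A_\vartheta(t-s)}C_\vartheta\,L_\vartheta(ds)=\int_\R g_\vartheta(t-s)L_\vartheta(ds)$ with real-valued kernel $g_\vartheta(r)=\mathbb{1}_{\{r\geq0\}}B_\vartheta'e^{A_\vartheta r}C_\vartheta$. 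Since the eigenvalues of $A_\vartheta$ have strictly negative real part by \hyperref[assumption:C2]{(C2)}, the matrix exponential decays exponentially, so $g_\vartheta\in L^1(\R)\cap L^2(\R)$ and the integral exists in $L^2$ by the integrability criterion following (\ref{eq:stochint}).

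Next I would compute the autocovariance function. Because $L_\vartheta$ is a centered L\'evy process (by \hyperref[assumption:C1]{(C1)}) with $Var(L_\vartheta(1))=\Sigma_\vartheta$, it has orthogonal increments in $L^2$, so the second-moment structure mirrors that of Brownian motion: for deterministic $f,g\in L^2(\R)$ one has $Cov\big(\int_\R f\,dL_\vartheta,\int_\R g\,dL_\vartheta\big)=\Sigma_\vartheta\int_\R f(s)g(s)\,ds$. Applying this to the time-shifted kernels yields
\begin{gather*}
\gamma_\vartheta(h):=Cov\big(\tilde{Y}^\vartheta(h),\tilde{Y}^\vartheta(0)\big)=\Sigma_\vartheta\int_\R g_\vartheta(h+s)g_\vartheta(s)\,ds ,
\end{gather*}
which is well defined and integrable in $h$ thanks to $g_\vartheta\in L^1\cap L^2$.

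Finally I would pass to the spectral density via $f_{\tilde{Y}^\vartheta}(\omega)=\frac{1}{2\pi}\int_\R e^{-i\omega h}\gamma_\vartheta(h)\,dh$. Inserting the convolution expression for $\gamma_\vartheta$ and interchanging the order of integration (justified by Fubini, using $g_\vartheta\in L^1\cap L^2$), then substituting $r=h+s$, factorizes the double integral as $f_{\tilde{Y}^\vartheta}(\omega)=\frac{\Sigma_\vartheta}{2\pi}\,\hat g_\vartheta(\omega)\,\hat g_\vartheta(-\omega)$, where $\hat g_\vartheta(\omega)=\int_\R e^{-i\omega r}g_\vartheta(r)\,dr$. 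The key identification is the Fourier transform of the kernel: using $\int_0^\infty e^{(A_\vartheta-i\omega\mathbf{1}_p)r}\,dr=(i\omega\mathbf{1}_p-A_\vartheta)^{-1}$ (again relying on the spectral assumption \hyperref[assumption:C2]{(C2)} for convergence) gives $\hat g_\vartheta(\omega)=B_\vartheta'(i\omega\mathbf{1}_p-A_\vartheta)^{-1}C_\vartheta=H_\vartheta(i\omega)$. Since $g_\vartheta$ is real, $\hat g_\vartheta(-\omega)=\overline{\hat g_\vartheta(\omega)}=H_\vartheta(-i\omega)$, and as $\Sigma_\vartheta$ is scalar the asserted formula follows.

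The computation is routine; the only points requiring genuine care are the L\'evy $L^2$-isometry (so that the covariance is exactly $\Sigma_\vartheta\int g_\vartheta(h+s)g_\vartheta(s)\,ds$ rather than carrying additional jump-measure terms, which is where the centering in \hyperref[assumption:C1]{(C1)} is essential) and the convergence of the matrix-exponential Fourier integral, which is guaranteed by \hyperref[assumption:C2]{(C2)}. I expect the main obstacle to be nothing deep but rather the bookkeeping of justifying the Fubini interchange and recognizing $\hat g_\vartheta(\omega)=H_\vartheta(i\omega)$; alternatively the statement can be obtained directly by invoking the standard spectral-density formula for continuous-time L\'evy-driven state space (CARMA) processes as in \cite{MS2007}.
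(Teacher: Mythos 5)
Your proof is correct. The paper does not give an argument of its own here --- it simply cites \cite[Proposition 3.4]{SS2012b} --- and your computation is precisely the standard self-contained derivation underlying that reference: the moving-average representation with kernel $g_\vartheta(r)=\mathbb{1}_{\{r\geq0\}}B_\vartheta'e^{A_\vartheta r}C_\vartheta$, the $L^2$-isometry for integrals against a centered square-integrable L\'evy process (which the paper itself uses, e.g.\ in (\ref{eq:truncationinequality})), the Fubini factorization of the Fourier-transformed autocovariance, and the identification $\hat g_\vartheta(\omega)=B_\vartheta'(i\omega\mathbf{1}_p-A_\vartheta)^{-1}C_\vartheta$ via \hyperref[assumption:C2]{(C2)}. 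All the steps you flag as needing care (centering from \hyperref[assumption:C1]{(C1)}, integrability of the kernel, convergence of $\int_0^\infty e^{(A_\vartheta-i\omega\mathbf{1}_p)r}dr$) are indeed the right ones and are correctly justified.
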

\begin{proof}
See \cite[Proposition 3.4]{SS2012b}.
\end{proof}

Under the following assumption one can show that \hyperref[assumption:C6]{(C6)} also holds for the sampled process $\tilde{Y}^{\vartheta,\Delta}$.

\begin{manualassumption}{(C7)}\label{assumption:C7}
For each $\vartheta\in\Theta$ the spectrum of $A_\vartheta$ is a subset of $\{z\in\C, \frac{-\pi}{\Delta}< \mathfrak{Im}(z) < \frac{\pi}{\Delta}\}$.
\end{manualassumption}

\begin{Proposition}\label{proposition:qmlem1}
Let $\Phi^{LL}$ be as defined in (\ref{equation:quasilogcontrast}) and assume that \hyperref[assumption:C1]{(C1)} - \hyperref[assumption:C7]{(C7)} hold. Then, \hyperref[assumption:M1]{(M1)} holds.
\end{Proposition}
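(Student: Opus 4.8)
The plan is to establish that $\vartheta^*(u)$ is the unique minimizer of $M(\vartheta)=E[\Phi^{LL}(\tilde{Y}^{\vartheta^*(u),\Delta},\vartheta)]$ by exploiting the well-known identification argument for Gaussian quasi-likelihoods, which hinges on matching the spectral densities of the sampled state space processes. First I would recall that, since $\tilde{Y}^{\vartheta^*(u),\Delta}$ is a stationary centered process with the state space representation \eqref{equation:statespacerepresentationsampled}, the quantity $\varepsilon_\vartheta(\tilde{Y}^{\vartheta^*(u),\Delta})$ defined in \eqref{equation:varepsilontheta} is the output of filtering the observed process $\tilde{Y}^{\vartheta^*(u),\Delta}$ through the (mis-specified) Kalman filter associated with parameter $\vartheta$. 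The key computation is to express $E[\varepsilon_\vartheta^2]$ in the frequency domain: the squared innovation has mean $\frac{1}{2\pi}\int_{-\pi}^\pi \frac{|e^{i\omega}\mathbf{1}_p - e^{\Delta A_\vartheta}|\text{-type factor}|^2}{\cdots}\, f_{\tilde{Y}}^{(\Delta)}(\omega,\vartheta^*(u))\,d\omega$, where the filter transfer function together with $1/V_\vartheta$ normalizes the spectral density $f_{\tilde{Y}}^{(\Delta)}(\omega,\vartheta)$ of the model at parameter $\vartheta$ to a constant. Concretely, I would write $M(\vartheta)$ as $\log(2\pi)+\log V_\vartheta + \frac{1}{V_\vartheta}\cdot\frac{1}{2\pi}\int_{-\pi}^\pi \frac{f_{\tilde{Y}}^{(\Delta)}(\omega,\vartheta^*(u))}{g_\vartheta(\omega)}\,d\omega$, where $g_\vartheta$ is the spectral density of the innovations-whitened model so that $f_{\tilde{Y}}^{(\Delta)}(\omega,\vartheta)=V_\vartheta g_\vartheta(\omega)/(2\pi)$ with $g_\vartheta$ integrating appropriately.

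The core of the argument is then an application of the Kolmogorov–Szeg\H{o}/Whittle-type inequality for spectral densities. After rewriting, the problem reduces to minimizing, over $\vartheta$, an expression of the form
\begin{gather*}
M(\vartheta) = \log(2\pi) + \frac{1}{2\pi}\int_{-\pi}^\pi\left(\log\bigl(2\pi f_{\tilde{Y}}^{(\Delta)}(\omega,\vartheta)\bigr) + \frac{f_{\tilde{Y}}^{(\Delta)}(\omega,\vartheta^*(u))}{f_{\tilde{Y}}^{(\Delta)}(\omega,\vartheta)}\right)d\omega,
\end{gather*}
where I have used that $\log V_\vartheta$ equals the integral of $\log(2\pi f_{\tilde{Y}}^{(\Delta)}(\omega,\vartheta))$ up to the constant prescribed by the Szeg\H{o} formula for the one-step prediction variance. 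For fixed denominator spectral density, the integrand $\log x + c/x$ viewed pointwise in $x=f_{\tilde{Y}}^{(\Delta)}(\omega,\vartheta)$ with $c = f_{\tilde{Y}}^{(\Delta)}(\omega,\vartheta^*(u))$ is uniquely minimized at $x=c$, i.e.\ when the two spectral densities agree for almost every $\omega\in[-\pi,\pi]$. This yields that any minimizer must satisfy $f_{\tilde{Y}}^{(\Delta)}(\cdot,\vartheta)=f_{\tilde{Y}}^{(\Delta)}(\cdot,\vartheta^*(u))$ almost everywhere, and $\vartheta^*(u)$ itself attains the minimum. I would follow the structure of \cite[Theorem 3.15]{SS2012b} closely here, since the same contrast and the same Kalman-filter machinery are in play.

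The remaining and genuinely identification-specific step is to deduce that equality of the sampled spectral densities forces $\vartheta=\vartheta^*(u)$. For this I would chain together the standing assumptions: by \hyperref[assumption:C5]{(C5)} each triplet $(A_\vartheta,B_\vartheta,C_\vartheta)$ is minimal of McMillan degree $p$, so by Proposition \ref{proposition:controllableobservableminimal} it is controllable and observable; \hyperref[assumption:C6]{(C6)} guarantees that the continuous-time spectral densities $f_{\tilde{Y}^{\vartheta_1}}\neq f_{\tilde{Y}^{\vartheta_2}}$ for $\vartheta_1\neq\vartheta_2$; and \hyperref[assumption:C7]{(C7)} confining the spectrum of $A_\vartheta$ to the strip $\{|\mathfrak{Im}(z)|<\pi/\Delta\}$ is precisely the aliasing condition that makes the discrete-time (sampled) spectral density $f_{\tilde{Y}}^{(\Delta)}(\cdot,\vartheta)$ determine the continuous-time spectral density uniquely, so that equality of sampled spectral densities lifts to equality of continuous-time spectral densities. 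Combining these, $f_{\tilde{Y}}^{(\Delta)}(\cdot,\vartheta)=f_{\tilde{Y}}^{(\Delta)}(\cdot,\vartheta^*(u))$ implies $f_{\tilde{Y}^\vartheta}=f_{\tilde{Y}^{\vartheta^*(u)}}$, whence $\vartheta=\vartheta^*(u)$ by \hyperref[assumption:C6]{(C6)}.

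The main obstacle I anticipate is the sampling/aliasing step: one must argue carefully that the no-aliasing condition \hyperref[assumption:C7]{(C7)} indeed yields an injective passage from the continuous-time to the sampled spectral density, which is where the minimality and the strip restriction interact, and where I would lean most heavily on the corresponding arguments in \cite{SS2012b}. The frequency-domain rewriting of $E[\varepsilon_\vartheta^2]$ and the pointwise minimization of $\log x + c/x$ are essentially routine once the Szeg\H{o} identity for $\log V_\vartheta$ is in hand, so the analytic heart of the proof is entirely standard; the delicate bookkeeping is in transferring identifiability through the sampling operation.
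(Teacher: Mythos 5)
Your proposal is correct and follows essentially the same route as the paper, which simply defers to the frequency-domain identifiability argument of Schlemm and Stelzer (\cite[Lemma 2.10 and proof of Theorem 3.16]{SS2012b}): rewrite $M(\vartheta)$ as a Whittle-type functional via the Kolmogorov--Szeg\H{o} formula, minimize $\log x + c/x$ pointwise to force equality of the sampled spectral densities, and then lift this to $\vartheta=\vartheta^*(u)$ using minimality \hyperref[assumption:C5]{(C5)}, the distinctness of continuous-time spectral densities \hyperref[assumption:C6]{(C6)}, and the no-aliasing condition \hyperref[assumption:C7]{(C7)}. You have in effect reconstructed the content of the cited results rather than found a different proof.
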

\begin{proof}
Follows from \cite[Lemma 2.10 and the proof of Theorem 3.16]{SS2012b}.
\end{proof}

\begin{Theorem}\label{theorem:qmleconsistent}
Assume that \hyperref[assumption:C0]{(C0)} is satisfied for p=2 and that either \hyperref[observations:O1]{(O1)} or \hyperref[observations:O2]{(O2)} holds. If \hyperref[assumption:C1]{(C1)} - \hyperref[assumption:C7]{(C7)} hold, then $\hat{\vartheta}_N\!\overset{P}{\longrightarrow}\vartheta^*(u)$ as $N\rightarrow\infty$ for all $u\in\R^+$ with $\hat{\vartheta}_N$ as defined in (\ref{equation:thetahatqmle}).
\end{Theorem}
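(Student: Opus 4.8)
The plan is to verify that the log-likelihood contrast $\Phi^{LL}$, together with the sequence $Y_N^{\vartheta^*}$ and its locally stationary approximation $\tilde{Y}_u=\tilde{Y}^{\vartheta^*(u)}$, satisfies every hypothesis of Theorem \ref{theorem:consistency}, and then to read off the conclusion. Since \hyperref[assumption:C0]{(C0)} is assumed for $p=2$, the process $\tilde{Y}_u$ is a locally stationary approximation of $Y_N^{\vartheta^*}$ for $q=2$, which supplies the integrability index used throughout. Most of the structural conditions have already been prepared: under \hyperref[assumption:C1]{(C1)}--\hyperref[assumption:C4]{(C4)}, Proposition \ref{proposition:qmlecontrast} establishes that $\Phi^{LL}(\cdot,\vartheta)\in\mathcal{L}_\infty^{1,2}(\alpha)$ with $\sum_{k=0}^\infty k\alpha_k<\infty$ (this is condition (a) of Theorem \ref{theorem:consistency} with $p=1$, $q=2$), that $\Phi^{LL}(x,\cdot)\in\mathcal{L}_d(0,D_1(1+\sum_{k=0}^\infty\beta_k x_{1-k}^2))$ with $\sum_{k=0}^\infty k\beta_k<\infty$ (condition (b)), and---when the sampling follows \hyperref[observations:O2]{(O2)}---the required $\theta$-weak dependence of both the contrast process and its dominating process $g$ (condition (c)). The identifiability requirement \hyperref[assumption:M1]{(M1)}, i.e. condition (e) of Theorem \ref{theorem:consistency}, is precisely the content of Proposition \ref{proposition:qmlem1}, which holds under the full set \hyperref[assumption:C1]{(C1)}--\hyperref[assumption:C7]{(C7)}.

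The only hypothesis of Theorem \ref{theorem:consistency} not yet on record is (d), namely $\sup_{\vartheta\in\Theta}|\Phi^{LL}(0,\vartheta)|<\infty$. To check it I would evaluate the contrast at the zero input sequence. Because the innovation $\varepsilon_\vartheta$ in (\ref{equation:varepsilontheta}) is a convergent linear functional of its argument---the series converging since the eigenvalues of $e^{\Delta A_\vartheta}-K_\vartheta B_\vartheta'$ have modulus less than one by Proposition \ref{proposition:kalmanfilter}(b)---one has $\varepsilon_\vartheta(0)=0$, so that $\Phi^{LL}(0,\vartheta)=\log(2\pi)+\log(V_\vartheta)$. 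By the Lemma ensuring $V_\vartheta=B_\vartheta'\Omega_\vartheta B_\vartheta>C_V>0$ uniformly in $\vartheta$, together with continuity of $\vartheta\mapsto V_\vartheta$ on the compact set $\Theta$ (from \hyperref[assumption:C3]{(C3)}, \hyperref[assumption:C4]{(C4)} and the sensitivity analysis of the Ricatti equation), the quantity $\log(V_\vartheta)$ is bounded above and below on $\Theta$, and (d) follows.

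With (a)--(e) of Theorem \ref{theorem:consistency} in place, that theorem yields $\hat{\vartheta}_N\overset{P}{\longrightarrow}\vartheta^*(u)$ as $N\to\infty$ for every $u\in\R^+$, which is the assertion. I do not anticipate a genuine obstacle within the present argument: the theorem is essentially an assembly result, and the substantive analytic work---the hereditary $\mathcal{L}_\infty^{1,2}$ and $\mathcal{L}_d$ estimates and the weak-dependence transfer---has been outsourced to Proposition \ref{proposition:qmlecontrast}, whose own proof (deferred to Section \ref{sec7-5}) is where the technical difficulty lives. The most delicate point here is merely the uniform two-sided control of $V_\vartheta$, which rests on the positive-definiteness built into the Kalman-filter Proposition \ref{proposition:kalmanfilter} and the compactness of $\Theta$.
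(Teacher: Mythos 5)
Your proposal is correct and follows essentially the same route as the paper: the paper's proof of this theorem is exactly the assembly you describe, citing Proposition \ref{proposition:qmlecontrast} for conditions (a)--(c) and Proposition \ref{proposition:qmlem1} for \hyperref[assumption:M1]{(M1)}, and then invoking Theorem \ref{theorem:consistency}. Your explicit verification of condition (d) via $\varepsilon_\vartheta(0)=0$ and the uniform two-sided bounds on $V_\vartheta$ is sound and matches the remark with which the paper opens the proof of Proposition \ref{proposition:qmlecontrast} in Section \ref{sec7-5}.
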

\begin{proof}
According to Proposition \ref{proposition:qmlecontrast} and \ref{proposition:qmlem1} the contrast $\Phi^{LL}$ as defined in (\ref{equation:quasilogcontrast}) satisfies the conditions of Theorem \ref{theorem:consistency}. 
\end{proof}

\begin{Remark}
Theorem \ref{theorem:qmleconsistent} provides an important first result for the statistical inference of time-varying L\'evy-driven state space models, specifically including time-varying CARMA processes.\\
In addition, it is desirable to have results on the estimator's (asymptotic) distribution at hand to construct for instance confidence intervals. In fact, Theorem \ref{theorem:asymptoticnormality} should pave the way to prove asymptotic normality of $\hat{\vartheta}_N$ as defined in (\ref{equation:thetahatqmle}). However, this requires an in-depth analysis of the regularity properties of the contrast function's first and second order partial derivatives, which is beyond the scope of this work.
\end{Remark}

\begin{Remark}\label{remark:qmleconsistent}
%\begin{enumerate}[label={(\alph*)}]
%\item In fact, the true parameter curve $\vartheta^*(t)$ is usually unknown. Therefore, it is also not possible to show \hyperref[assumption:C0]{(C0)} directly. In order to avoid this circumstance we can assume that the true unknown parameter curve $\vartheta^*(t)$ comes from a family of parameter curves $R_\Theta$. Then, if $\tilde{Y}^{\tilde{\vartheta}(u)}(t)$ is a locally stationary approximation of $Y_N^{\tilde{\vartheta}}(t)$ for all parameter curves $\tilde{\vartheta}(t)\in R_\Theta$, \hyperref[assumption:C0]{(C0)} holds immediately. Note that the sequence of time-varying L\'evy-driven state space models $Y_N^{\tilde\vartheta}$ is defined analogous to $Y_N^{\vartheta^*}$.\\
%Following this approach we can additionally assume that the family $R_\Theta$ can be parameterized by $\tau\in\mathcal{T}$, for a parameter space $\mathcal{T}\subset\R^m$. Then, under suitable identifiability properties of $R_\Theta$, one can use a set of estimations $(\vartheta^*(u_1),\vartheta^*(u_2),\ldots)$, where $0<u_1<u_2<\ldots$ to identify the true parametrization $\tau^*$ of $\vartheta^*(t)$.
%Theoretically, Theorem \ref{theorem:qmleconsistent} already holds if $\tilde{Y}^\theta_u(t)$ is a locally stationary approximation of $Y^\theta_N(t)$ just for $\theta=\theta^*$. However, $\theta^*$ has to be estimated and is therefore usually unknown such that one needs that $\tilde{Y}^\theta_u(t)$ is a locally stationary approximation of $Y^\theta_N(t)$ for all $\theta\in\Theta$.
If \hyperref[observations:O1]{(O1)} holds for $N\delta_N=\Delta$, the estimator $\hat{\vartheta}_N$ is given by the simpler expression
\begin{gather*}
\hat{\vartheta}_N=\argmin_{\vartheta\in\Theta}\frac{\delta_N}{b_N}\sum_{i=-m_N}^{m_N}K\left(\frac{i\delta_N}{b_N} \right)\Phi^{LL}\left(\left(Y_N^{\vartheta^*}(\tau_{i+1-k})\right)_{k\in\N_0},\vartheta\right).
\end{gather*}
%\end{enumerate}
\end{Remark}

In the next example we briefly present an estimation setting that satisfies the conditions \hyperref[assumption:C0]{(C0)}-\hyperref[assumption:C7]{(C7)}.
%comment on possible parametrizations of L\'evy-driven state space models that set of 

\begin{Example}\label{example:timevaryingstatespacemodel}
%For a 
%In the following we give examples of parameterizations of sequences of time-varying L\'evy-driven state space models $Y^\theta_N(t)$ in the form of (\ref{eq:seqtvLDstatespacesolution}) that possess a locally stationary approximation and satisfy the assumptions \hyperref[assumption:C0]{(C0)}-\hyperref[assumption:C7]{(C7)}.\\
%Let $\Theta\subset\R^d$ be a parameter space and $L$ a two sided parameterized L\'evy process such that \hyperref[assumption:C1]{(C1)} holds. First, we give parameterizations $A_\theta(t),B_\theta(t)$ and $C_\theta(t)$ that satisfy the conditions (a2)-(d2) of Corollary \ref{corollary:LStvldsp} for all $\theta\in\Theta$. Then, we show that the corresponding locally stationary approximations $\tilde{Y}^\theta_u(t)$, that are of the form (\ref{eq:seqlimLDstatespacesolution}), satisfy the conditions \hyperref[assumption:C4]{(C4)}-\hyperref[assumption:C6]{(C6)}. If one restricts in addition the parameter space $\Theta$ such that \hyperref[assumption:C2]{(C2)},\hyperref[assumption:C3]{(C3)} and \hyperref[assumption:C7]{(C7)} holds, the parameterized model can be estimated consistently.\\
%Following the results from \cite[Section 4]{SS2012b} the following satisfy \hyperref[assumption:C4]{(C4)}-\hyperref[assumption:C6].
%\begin{enumerate}[label={(\alph*)}]
%\item 
Assume that $\Theta\subset\R^3$ is a properly restricted compact parameter space. For $\vartheta=(\vartheta_1,\vartheta_2,\vartheta_3)\in\Theta$ we consider the following families of matrices and real numbers
\begin{align*}
A_\vartheta=\left(\begin{array}{cc}\vartheta_1 & 0 \\0 & \vartheta_2 \\\end{array}\right),~
B_\vartheta=\left(\begin{array}{c}\frac{1}{\vartheta_2-\vartheta_1} \\ \frac{-1}{\vartheta_2-\vartheta_1}\\ \end{array}\right),~
C_\vartheta=\left(\begin{array}{c}-\vartheta_1(1+\vartheta_2) \\ -\vartheta_2(1+\vartheta_1)\\ \end{array}\right) \text{ and }\Sigma_\vartheta=\vartheta_3>0.
\end{align*}
Moreover, $L_\vartheta$ denotes a family of two sided L\'evy process that satisfies \hyperref[assumption:C1]{(C1)} for all $\vartheta\in\Theta$. %, where $E[L_\vartheta(1)^2]=\Sigma_\vartheta$.
For a properly restricted compact parameter space $\mathcal{T}\subset\R^5$, we define the family $R_\Theta=\{\tilde{\vartheta}(t)=\left(\tau_1+\tau_2|\sin(t)|,\tau_1+\tau_3|\cos(x)|,\tau_4\right) \}_{\tau=(\tau_1,\tau_2,\tau_3,\tau_4)\in\mathcal{T}}$ of curves in $\Theta$, such that $(\tilde{\vartheta}(t))_{t\in\R}\subset\Theta$ for all $\tilde{\vartheta}\in R_\Theta$.\\
Following Remark \ref{remark:qmleconsistent}, we define the family $(Y_N^{\tilde{\vartheta}}(t))_{\tilde{\vartheta}\in R_\Theta}$ of sequences of time-varying L\'evy-driven state space models such that $Y_N^{\tilde{\vartheta}}(t)$ is defined as in (\ref{eq:seqtvLDstatespacesolution}) with coefficient functions $A(t)=A_{\tilde{\vartheta}(t)},~B(t)=B_{\tilde{\vartheta}(t)},~C(t)=C_{\tilde{\vartheta}(t)}\text{ and driving noise }L_{\tilde{\vartheta}(t)},~\tilde{\vartheta}\in R_\Theta$. All of the above functions are uniformly bounded and Lipschitz in $t$ for all $\tau\in\mathcal{T}$. In addition, it holds $[A(t),A(s)]=0$ for all $s,t\in\R$ and $\tau\in\mathcal{T}$ (see e.g. \cite{WS1976}). All eigenvalues of $A(t)$ are real and strictly negative for a properly restricted $\mathcal{T}$. Overall, (a), (b), (c1), (d2) and (e2) of Proposition \ref{proposition:tvLDSPhavealocstatapprox} hold and $\tilde{Y}^{\tilde{\vartheta}(u)}(t)$ as given in (\ref{equation:statparameterized}) is a locally stationary approximation of $Y^{\tilde{\vartheta}}_N(t)$ for $p=2$ and all $\tau\in\mathcal{T}$. Now, assume that we observed $Y^{\vartheta^*}_N(t)$ for some true parameter curve $\vartheta^*(t)\in R_\Theta$. Then, \hyperref[assumption:C0]{(C0)} holds.\\
By construction, the functions $\vartheta\mapsto A_\vartheta$, $\vartheta\mapsto B_\vartheta$, $\vartheta\mapsto C_\vartheta$ and $\vartheta\mapsto \Sigma_\vartheta$ are continuously differentiable. It is easy to see that the triplet $(A_\vartheta,B_\vartheta,C_\vartheta)$ is controllable and observable and therefore, according to Proposition \ref{proposition:controllableobservableminimal} also minimal for all $\vartheta\in\Theta$. Overall, \hyperref[assumption:C0]{(C0)}-\hyperref[assumption:C5]{(C5)} and \hyperref[assumption:C7]{(C7)} hold. In view of \hyperref[assumption:C6]{(C6)} it is enough to observe that $f_{\tilde{Y}^\vartheta}(\omega)=\frac{\vartheta_3}{2\pi}\frac{(\omega-i \vartheta_1\vartheta_2)(\omega+i \vartheta_1\vartheta_2)}{(\omega+i \vartheta_1)(\omega+i\vartheta_2)(\omega-i \vartheta_1)(\omega-i\vartheta_2)}$, which satisfies \hyperref[assumption:C6]{(C6)} whenever $\vartheta_1\neq\vartheta_2$, $\vartheta_i\neq\overline\vartheta_i$ and $\mathfrak{Re}(\vartheta_i)<0$ for $i=1,2$.
Finally, for all $u\in\R^+$ Theorem \ref{theorem:qmleconsistent} implies that $\hat{\vartheta}_N$ is a consistent estimator of $\vartheta^*(u)$. If the true parameter curve $\vartheta^*$ can be estimated at $0<u_1<u_2$, one can solve a system of equations to obtain estimators for $\tau$, such that the whole curve $\vartheta^*$ can be reconstructed.\\
It is interesting to note that \cite[Theorem 3.3]{SS2012a} immediately shows that $\tilde{Y}^{\vartheta}$ is a CARMA(2,1) process with AR polynomial $p(z)=(z-\vartheta_1)(z-\vartheta_2)$ and 
MA polynomial $q(z)=z-\vartheta_1\vartheta_2$.\\
%Let us briefly comment on the particular choice of the families $A,B,C$ and $R_\Theta$. In order to apply Theorem \ref{theorem:qmleconsistent} one of the main restrictions is \hyperref[assumption:C0]{(C0)}. To show this condition we would like that $Y_N^{\tilde{\vartheta}}(t)$ possesses a locally stationary approximation for all $\tilde{\vartheta}\in R_\Theta$. The easiest way to achieve this is to construct a family of matrices $A_{\vartheta(t)}$ that commute in $t$ and use part $2$ of Corollary \ref{corollary:LStvldsp}. This constraints heavily the structure of the matrix $A$. In particular it seems impossible to construct interesting time dependent families $A_\vartheta$ in dimension $2$ that commute in $t$ and are transposed companion matrices (which is the typical form used for the representation of CARMA processes). We therefore consider $A$ to be of a different form and use a method from \cite{WS1976} to show that the particular choices of $A$ and $R_\Theta$ are sufficient for our purposes.\\
We note that the form of $B$ and $C$ ensures the transfer function of $\tilde{Y}^{\vartheta}$ to be properly normed and the AR and MA polynomials to be monic, which helps to obtain the identifiability condition from the spectral density.

\end{Example}

\subsection{Truncated quasi-maximum likelihood estimation}
\label{sec5-3}
We consider observations as described in Assumption \ref{assumption:observations} that follow the sampling scheme \hyperref[observations:O1]{(O1)} for $N\delta_N=\Delta$.\\
It is clear that in practice one does not observe the full history of $Y_N^{\vartheta^*}$ as assumed in (\ref{equation:thetahatqmle}). As unobserved sampling points must not contribute to the estimator, we set $Y_N^{\vartheta^*}(\tau)$ to $0$ if $\tau$ is not included in the observation window $[u-b_N,u+b_N]$.
%For this reason we  that  unobserved values to $0$. %, i.e. mathematically we replace 
%\begin{gather*}
%\Phi^{LL}\left(\left(Y_N\left(\tau_i^N+\frac{\Delta (1-k)}{N}\right)\right)_{k\in\N_0},\theta\right)\text{ by }
%\Phi^{LL}\left(Y_N\left(\tau_i^N+\frac{\Delta}{N}\right),Y_N\left(\tau_i^N\right),Y_N\left(\tau_i^N+\frac{-\Delta}{N}\right),\ldots,0,\ldots ,\theta\right)
%\end{gather*}
%Comparable results from the literature in the field of quasi-maximum likelihood estimation suggest that the asymptotic convergence in (\ref{equation:consitencyqmle}) remains valid under this setting (see e.g. \cite{BW2009,SS2012b}). \\%Nevertheless, the exact mathematical details need to be carried out in detail and can be topic of future research.
%It is clear that in practice one does not observe the full history of $Y_N$ as assumed in (\ref{equation:thetahatqmle}). 
%In the following we briefly discuss the mathematical details for the case where \hyperref[observations:O1]{(O1)} holds, {\color{red} DO THIS FOR O1 AND O2, BECAUSE THE CLT IS JUST WORKING FOR O2}i.e. $N\delta_N=\Delta$ and all observation points are inside the observation window $[u-b_N,u+b_N]$. 
Thus, for $Y_N^{\vartheta^*}=(Y_N^{\vartheta^*}(\tau_{i+1-k}))_{k\in\N_0}$ we define
\begin{align*}
\tilde\Phi^{LL}_{i,m_N}\left(Y_N^{\vartheta^*},\vartheta\right)&=\bigg(\log(2\pi)+\log(V_\vartheta)+\frac{\tilde{\varepsilon}_{\vartheta,i,m_N}^2\big(Y_N^{\vartheta^*}\big)}{V_\vartheta}\bigg)
\text{ and}\\
\tilde{\varepsilon}_{\vartheta,i,m_N}\left(Y_N^{\vartheta^*}\right)&=Y_N^{\vartheta^*}\left(\tau_{i+1}^N\right)-B_\vartheta'\sum_{n=1}^{m_N+i+1} \left(e^{\Delta A_\vartheta}-K_\vartheta B_\vartheta' \right)^{n-1}K_\vartheta Y_N^{\vartheta^*}\left(\tau_{i+1-n}^N\right),i\in\Z.
\end{align*}
This leads to the truncated estimator
\begin{align}\label{equation:modifiedestimator}
\begin{aligned}
\hat{\vartheta}^{mod}_N&=\argmin_{\vartheta\in\Theta}\tilde{M}_N(\vartheta),\\
\end{aligned}
\end{align}
where $\tilde{M}_N(\vartheta)=\frac{\delta_N}{b_N}\sum_{i=-m_N}^{m_N-1}K\left(\frac{\tau_i^N-u}{b_N} \right)\tilde\Phi^{LL}_{i,m_N}\left(Y_N^{\vartheta^*},\vartheta\right)$. The following theorem extends the consistency result from Theorem \ref{theorem:qmleconsistent} to the truncated estimator $\hat{\vartheta}^{mod}_N$.

\begin{Theorem}\label{equation:consistencymodifiedQMLE}
Assume that \hyperref[assumption:C0]{(C0)} is satisfied for p=2. If \hyperref[observations:O1]{(O1)} as well as \hyperref[assumption:C1]{(C1)} - \hyperref[assumption:C7]{(C7)} hold, then $\hat{\vartheta}^{mod}_N\overset{P}{\longrightarrow}\vartheta^*(u)$ as $N\rightarrow\infty$ for all $u\in\R^+$ with $\hat{\vartheta}^{mod}_N$ as defined in (\ref{equation:modifiedestimator}).
\end{Theorem}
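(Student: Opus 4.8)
The plan is to deduce consistency of $\hat{\vartheta}^{mod}_N$ from that of the full-history estimator $\hat{\vartheta}_N$ (Theorem \ref{theorem:qmleconsistent}) by proving that the two objective functions are uniformly close in probability, namely
\begin{gather*}
\sup_{\vartheta\in\Theta}\abs{M_N(\vartheta)-\tilde{M}_N(\vartheta)}\overset{P}{\underset{N\ra\infty}{\longrightarrow}}0,
\end{gather*}
with $M_N$ as in (\ref{equation:thetahatqmle}). Both the pointwise convergence $M_N(\vartheta)\overset{P}{\to}M(\vartheta)$ and the stochastic equicontinuity established in the proof of Theorem \ref{theorem:qmleconsistent} are preserved under adding a term that vanishes uniformly in probability, so the uniform closeness transfers both properties to $\tilde{M}_N$; the usual $M$-estimation argument on the compact $\Theta$ with unique minimizer $\vartheta^*(u)$ then yields $\hat{\vartheta}^{mod}_N\overset{P}{\to}\vartheta^*(u)$.

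First I would compare the contrasts termwise. Using $\tau_i^N+\tfrac{\Delta(1-k)}{N}=\tau_{i+1-k}^N$ (valid under \hyperref[observations:O1]{(O1)} with $N\delta_N=\Delta$), the $i$-th summand of $M_N$ is $\Phi^{LL}$ evaluated with the exact innovation $\varepsilon_\vartheta$ from (\ref{equation:varepsilontheta}), whereas $\tilde{M}_N$ uses $\tilde\varepsilon_{\vartheta,i,m_N}$. Since $\log(2\pi)+\log(V_\vartheta)$ cancels and $V_\vartheta\geq C_V>0$, the difference reduces to a weighted sum of $\tfrac{1}{V_\vartheta}(\varepsilon_\vartheta^2-\tilde\varepsilon_{\vartheta,i,m_N}^2)$ plus a single boundary term at $i=m_N$ of weight $\tfrac{\delta_N}{b_N}=\mathcal{O}(m_N^{-1})$, which is negligible. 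Writing $d_i=\varepsilon_\vartheta-\tilde\varepsilon_{\vartheta,i,m_N}$ and $s_i=\varepsilon_\vartheta+\tilde\varepsilon_{\vartheta,i,m_N}$, the two innovations differ only by the tail
\begin{gather*}
d_i=-B_\vartheta'\sum_{n=m_N+i+2}^\infty\left(e^{\Delta A_\vartheta}-K_\vartheta B_\vartheta'\right)^{n-1}K_\vartheta\, Y_N^{\vartheta^*}\left(\tau_{i+1-n}^N\right),
\end{gather*}
which collects exactly the contributions of the unobserved sampling points (indices below $-m_N$) set to zero in $\tilde\varepsilon_{\vartheta,i,m_N}$. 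By Proposition \ref{proposition:kalmanfilter}(b) the matrix $e^{\Delta A_\vartheta}-K_\vartheta B_\vartheta'$ has spectral radius strictly below one; as $\Theta$ is compact and $\vartheta\mapsto A_\vartheta,B_\vartheta,K_\vartheta$ are continuous, there are $C>0$ and $\rho\in(0,1)$, uniform in $\vartheta$, with $\norm{(e^{\Delta A_\vartheta}-K_\vartheta B_\vartheta')^{n-1}}\leq C\rho^{n-1}$, whence $\sup_{\vartheta}\abs{d_i}\leq C\rho^{m_N+i+1}\sum_{j\geq0}\rho^{j}\abs{Y_N^{\vartheta^*}(\tau_{-m_N-1-j}^N)}$. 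The uniform second-moment bound $\sup_{N,t}\norm{Y_N^{\vartheta^*}(t)}_{L^2}<\infty$, which follows from (\ref{eq:condlevyproc}) and \hyperref[assumption:C0]{(C0)} for $p=2$ via (\ref{assumption:LS}), then controls every $L^2$ norm below.

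The decisive feature is the boundary effect: for indices $i$ near $-m_N$ only a few past observations are available, so $d_i$ is not small there. I would fix a cut-off $r_N\to\infty$ with $r_N=o(m_N)$ (e.g.\ $r_N=\lfloor\sqrt{m_N}\rfloor$) and split the range into $I_1=\{i:i+m_N\geq r_N\}$ and $I_2=\{i:i+m_N<r_N\}$, with $\abs{I_2}\leq r_N$. On $I_1$ the tail starts beyond $n=r_N$, so $\sup_\vartheta\norm{d_i}_{L^2}\leq C\rho^{r_N}$; by Cauchy--Schwarz $E[\sup_\vartheta\abs{d_i s_i}]\leq C\rho^{r_N}\sup_\vartheta\norm{s_i}_{L^2}$, and summing the $\mathcal{O}(m_N)$ terms of weight $\mathcal{O}(m_N^{-1})$ gives a contribution of order $\rho^{r_N}\to0$. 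On $I_2$ the per-term difference cannot be made small, but $\sup_\vartheta\norm{d_i}_{L^2}$ and $\sup_\vartheta\norm{s_i}_{L^2}$ are merely bounded, so each term is $\mathcal{O}(1)$; with only $r_N$ indices of weight $\mathcal{O}(m_N^{-1})$ this contributes $\mathcal{O}(r_N/m_N)\to0$. As all bounds are uniform in $\vartheta$, taking the supremum inside the expectation yields $E[\sup_\vartheta\abs{M_N(\vartheta)-\tilde{M}_N(\vartheta)}]\to0$, and Markov's inequality delivers the required convergence.

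The main obstacle is precisely this initialization effect near the left edge of the observation window, and the argument hinges on balancing it through $r_N$: large enough that the geometric tail $\rho^{r_N}$ controls the bulk indices $I_1$, yet small enough relative to $m_N$ that the $r_N$ uncontrollable edge indices $I_2$, each of vanishing weight, still sum to $o(1)$. The remaining ingredients — uniform geometric decay of the Kalman coefficients, the uniform lower bound $V_\vartheta\geq C_V$, and the uniform second-moment bound on $Y_N^{\vartheta^*}$ — are all available, so the proof is a quantitative, uniform-in-$\vartheta$ version of the standard truncation estimate for the stationary likelihood, carried out across the localization window.
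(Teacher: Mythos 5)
Your proposal is correct and follows essentially the same route as the paper's proof: reduce the problem to comparing $\tilde{M}_N$ with the full-history $M_N$, identify the discrepancy in the $i$-th summand as the tail $-B_\vartheta'\sum_{n\geq m_N+i+2}(e^{\Delta A_\vartheta}-K_\vartheta B_\vartheta')^{n-1}K_\vartheta Y_N^{\vartheta^*}(\tau^N_{i+1-n})$, and control it by the uniform-in-$\vartheta$ geometric decay of the Kalman coefficients (Gelfand's formula plus compactness of $\Theta$) together with the uniform $L^2$ bound $\sup_{N,t}\lVert Y_N^{\vartheta^*}(t)\rVert_{L^2}<\infty$ from (LS), treating the single leftover boundary summand at $i=m_N$ separately. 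The only inessential differences are that your cut-off $r_N$ is not needed --- the paper just sums the geometric tails directly, $\sum_{i=-m_N}^{m_N}\sum_{n\geq m_N+i+2}\rho^{n-1}\leq \rho(1-\rho)^{-2}$, so the whole correction is already $\mathcal{O}(\delta_N/b_N)$ --- and that the paper verifies pointwise $L^1$ convergence and stochastic equicontinuity of $\tilde{M}_N$ separately rather than through a single uniform sup bound.
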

\begin{proof}
See Section \ref{sec7-6}.
\end{proof}

\subsection{Whittle estimation}
\label{sec5-4}

%The Whittle estimator is a frequency domain estimator that is going back to Whittle \cite{W1951, W1953} and Walker \cite{W1964}. 
%Recently, a Whittle estimator has been successfully applied to L\'evy-driven state space models in \cite{FHM2020}, where the authors derived sufficient conditions for the estimator to be consistent and asymptotically normal. 
%In the following we will show consistency of a localized version of the Whittle estimator introduced in \cite{FHM2020} for a sequence of time-varying L\'evy-driven state space models under similar assumptions as in Section \ref{sec5-3}.\\
%Before we discuss the aforementioned estimator, we briefly review the Whittle estimator in a stationary setting. \\
In this section, we investigate under the same setting as in Section \ref{sec5-2} a localized Whittle estimator for $\vartheta^*$. Before we prove consistency of this estimator, we briefly review the Whittle estimator in a stationary setting \cite{FHM2020}.\\
Let $X=\{X(t),t\in\R\}$ be a real-valued centered square integrable L\'evy-driven state space model given by $X(t)=\int_{-\infty}^t B' e^{A(t-s)}CL(ds)$, where $A\in M_{p\times p}(\R)$ and $B,C\in M_{p\times 1}(\R)$. %state space model as defined in (\ref{eq:tvCARMAstatespace}) 
For some $\Delta>0$ we consider the sampled process $X^{\Delta}=\{ X^{\Delta}(k), k\in \N_0\}$, where $X^{\Delta}(k)=X(\Delta k)$. %observations $X(\Delta),\ldots,X(n\Delta)$ for some $\Delta>0$ and $n\in\N$.
%$X(\Delta),\ldots,X(n\Delta)$, $n\in\N$ observations of the sampled process $X^{\Delta}=(X(\Delta k))_{k\in\N_0}$.
%$Y^{(\Delta)}=(Y(\Delta k))_{k\in\Z}$ we denote the sampled process at x$\Delta,\ldots,{n\Delta}$ for some $\Delta>0$ fixed. 
The spectral density $f_X^{(\Delta)}$ is defined as the Fourier transform of the autocovariance function $\Gamma^{(\Delta)}_X(h)=E[X^\Delta(h)X^\Delta(0)]$, $h\in\Z$, i.e.
\begin{gather*}
f_X^{(\Delta)}(\omega)=\frac{1}{2\pi}\sum_{h\in\Z}\Gamma^{(\Delta)}_X(h)e^{-ih\omega}, \qquad \omega\in[-\pi,\pi],
\end{gather*}
and conversely, using the inverse Fourier transform, $\Gamma_X^{(\Delta)}(h)=\int_{-\pi}^\pi f_X^{(\Delta)}(\omega) e^{ih\omega}$, $h\in\Z$, where we make the additional convention that $\Gamma^{(\Delta)}_X(-h)=\Gamma^{(\Delta)}_X(h)$. Based on the sample autocovariance $\overline{\Gamma}_n(h)=\frac{1}{n}\sum_{k=1}^{n-h}X^\Delta(k+h)X^\Delta(k)$, $h\in\Z$, where $\overline{\Gamma}_n(-h):=\overline{\Gamma}_n(h)$, we define the periodogram $I_n:[-\pi,\pi]\rightarrow [0,\infty)$ as
\begin{gather}\label{eq:periodogram}
I_n(\omega)=\frac{1}{2\pi n} \left( \sum_{j=1}^n X^{\Delta}(j)e^{-ij\omega} \right)\left( \sum_{k=1}^n X^{\Delta}(k)e^{ik\omega} \right)=\frac{1}{2\pi} \sum_{h=-n+1}^{n-1} \overline{\Gamma}_n(h)e^{-ih\omega},\ \omega\in [-\pi,\pi].
\end{gather}
The periodogram $I_n(\omega)$ can be considered as the empirical version of the spectral density and is the main part of the Whittle estimator. \\
Now, let $\Theta\subset\R^d$ be a compact parameter space. For $\vartheta\in\Theta$ let $X^\vartheta$ be a real valued centered square integrable state space model of the form (\ref{equation:statparameterized}) and $f_X^{(\Delta)}(\omega,\vartheta)$ the corresponding spectral density. In this stationary setting the Whittle function is defined as
\begin{gather}\label{eq:whittlefunction}
W_n^{stat}(\vartheta)=\frac{1}{2n}\sum_{j=-n+1}^n\left(\frac{I_n(\omega_j)}{f_X^{(\Delta)}(\omega_j,\vartheta)}+\log\left(f_X^{(\Delta)}(\omega_j,\vartheta)\right)\right), \qquad \vartheta\in\Theta,
\end{gather}
where $\omega_j=\frac{\pi j}{n}$ for $j=1,\ldots,n$. Based on this Whittle function, the Whittle estimator is defined as $\hat{\vartheta}_n^{stat}=\argmin_{\vartheta\in\Theta}W_n^{stat}(\vartheta)$. For more information on the Whittle estimator including conditions that ensure consistency, we refer to \cite{FHM2020}.\\

Let $(Y_N^{\vartheta^*}(t))_{N\in\N}$ denote a sequence of time-varying L\'evy-driven state space models as considered in Section \ref{sec5-2} and $(\tilde{Y}^\vartheta(t))_{\vartheta\in\Theta}$ a family of L\'evy-driven state space models in the form of (\ref{equation:statparameterized}). Based on observations of $Y_N^{\vartheta^*}$, we now give a localized version of the Whittle estimator.\\% as defined in (\ref{eq:whittleestimator}). \\
We fix $\Delta>0$ and assume that the available observations follow the sampling scheme introduced in Assumption \ref{assumption:observations} such that \hyperref[observations:O1]{(O1)} holds for $N\delta_N=\Delta$. % or \hyperref[observations:O2]{(O2)} holds. 
For a positive localizing kernel $K$ we consider a localized version $I_N^{loc}:[-\pi,\pi]\rightarrow [0,\infty)$ of the periodogram (\ref{eq:periodogram}) which is given by
\begin{align}\label{eq:localperiodogram}
\begin{aligned}
\!\! I_N^{loc}(\omega)\!&= \!\frac{\delta_N}{2\pi b_N} \!\! \left(\sum_{j=-m_N}^{m_N} \!\!\! \sqrt{K\left(\frac{\tau_j^N-u}{b_N}\right)}Y_N^{\vartheta^*}(\tau_j^N)e^{-ij\omega} \! \right) \!\! \left(\sum_{j=-m_N}^{m_N} \!\!\! \sqrt{K\left(\frac{\tau_j^N-u}{b_N}\right)}Y_N^{\vartheta^*}(\tau_j^N)e^{ij\omega} \! \right)\\
&=\frac{1}{2\pi} \sum_{h=-2m_N}^{2m_N}\hat{\Gamma}_N^{loc}(h)e^{-ih\omega},
\end{aligned}
\end{align}
where $\omega\in[-\pi,\pi]$ and 
\begin{gather*}
\hat{\Gamma}_N^{loc}(h)=\frac{\delta_N}{b_N}\sum_{j=-m_N}^{m_N-h}\sqrt{K\left(\frac{\tau_{j+h}^N-u}{b_N}\right)K\left(\frac{\tau_j^N-u}{b_N}\right)}Y_N^{\vartheta^*}(\tau_j^N)Y_N^{\vartheta^*}(\tau_{j+h}^N),\qquad h\in\Z,
\end{gather*}
with the convention $\hat{\Gamma}_N^{loc}(-h)=\hat{\Gamma}_N^{loc}(h)$. Based on the localized periodogram we define the localized Whittle function $W_N(\vartheta)$ as
\begin{gather*}
W_N(\vartheta)=\frac{1}{4m_N+2}\sum_{j=-2m_N}^{2m_N+1} \left( \frac{I_N^{loc}(\omega_j)}{f_{\tilde{Y}}^{(\Delta)}(\omega_j,\vartheta)}+\log\left(f_{\tilde{Y}}^{(\Delta)}(\omega_j,\vartheta)\right)\right), \qquad \vartheta\in\Theta,
\end{gather*}
where $\omega_j=\frac{\pi j}{2m_N+1}$ for $j=-2m_N,\ldots,2m_N+1$ and $f_{\tilde{Y}}^{(\Delta)}(\cdot,\vartheta)$ denotes the spectral density of $\tilde{Y}^{\vartheta,\Delta}$ given by (\ref{eq:spectraldensitysampledprocess}). Then, the localized Whittle estimator is defined as
\begin{gather}\label{eq:localizedwhittleestimator}
\hat{\vartheta}_N=\argmin_{\vartheta\in\Theta} W_N(\vartheta).
\end{gather}

%To lighten notation we will suppretss the true parameter $\vartheta^*(u)$ whenever possible. In particular we write $f_{\tilde{Y}}^{(\Delta)}(\omega)$ instead of $f_{\tilde{Y}}^{(\Delta)}(\omega,\vartheta^*(u))$.
%
Under the same conditions as in Section \ref{sec5-2} we obtain consistency.% of the localized Whittle estimator (\ref{eq:localizedwhittleestimator}).
\begin{Theorem}\label{theorem:whittleconsistent}
Assume that \hyperref[assumption:C0]{(C0)} is satisfied for p=2, \hyperref[observations:O1]{(O1)} holds for $N\delta_N=\Delta$ and that the localizing kernel $K$ is continuous and positive or the non-continuous rectangular kernel (\ref{equation:rectangularkernel}). If additionally \hyperref[assumption:C1]{(C1)} - \hyperref[assumption:C7]{(C7)} hold, then $\hat{\vartheta}_N\overset{P}{\longrightarrow}\vartheta^*(u)$ as $N\rightarrow\infty$ for all $u\in\R^+$ with $\hat{\vartheta}_N$ as defined in (\ref{eq:localizedwhittleestimator}).
\end{Theorem}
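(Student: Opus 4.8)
The plan is to establish consistency of the localized Whittle estimator $\hat{\vartheta}_N$ by reducing it to the consistency argument already developed for $M$-estimators in Section \ref{sec3}, exploiting the fact that the localized Whittle function $W_N(\vartheta)$ is built from the localized sample autocovariances $\hat{\Gamma}_N^{loc}(h)$, which are themselves $M$-estimator-type averages of products $Y_N^{\vartheta^*}(\tau_j^N)Y_N^{\vartheta^*}(\tau_{j+h}^N)$. Under \hyperref[assumption:C0]{(C0)} for $p=2$ together with a fourth-moment assumption, Proposition \ref{proposition:inheritanceproperties}(a),(d) guarantees that the bivariate product map (a function in the class $\mathcal{L}_2(M,C)$ with $M=1$) inherits a locally stationary approximation from $\tilde{Y}_u$. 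The stationary approximation of each $\hat{\Gamma}_N^{loc}(h)$ is then the true autocovariance $\Gamma_{\tilde{Y}}^{(\Delta)}(h,\vartheta^*(u))=E[\tilde{Y}^{\vartheta^*(u),\Delta}(h)\tilde{Y}^{\vartheta^*(u),\Delta}(0)]$.

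First I would show that for each fixed lag $h\in\Z$ one has $\hat{\Gamma}_N^{loc}(h)\overset{P}{\longrightarrow}\Gamma_{\tilde{Y}}^{(\Delta)}(h,\vartheta^*(u))$ as $N\to\infty$. This is a direct application of the localized law of large numbers from \cite[Theorem 3.5]{SS2021}, using that $\tilde{Y}_u$ is $\theta$-weakly dependent (which under \hyperref[observations:O1]{(O1)} suffices without the extra $\theta$-weak dependence hypotheses needed only for \hyperref[observations:O2]{(O2)}), that the product function lies in the relevant hereditary classes, and that the kernel weights $\sqrt{K(\cdot)K(\cdot)}$ are handled exactly as the single kernel factor is in Section \ref{sec3}. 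Next I would pass from pointwise (in $h$) convergence of the autocovariances to convergence of the localized periodogram $I_N^{loc}(\omega)$ integrated against $1/f_{\tilde{Y}}^{(\Delta)}(\omega,\vartheta)$. Writing the Riemann sum defining $W_N(\vartheta)$ and substituting the representation $I_N^{loc}(\omega)=\frac{1}{2\pi}\sum_{|h|\le 2m_N}\hat{\Gamma}_N^{loc}(h)e^{-ih\omega}$, the estimator's objective function becomes a weighted sum of the $\hat{\Gamma}_N^{loc}(h)$ whose limit is the corresponding integral against the true spectral density, i.e. $W_N(\vartheta)\overset{P}{\longrightarrow}\mathcal{W}(\vartheta):=\frac{1}{4\pi}\int_{-\pi}^\pi\big(\frac{f_{\tilde{Y}}^{(\Delta)}(\omega,\vartheta^*(u))}{f_{\tilde{Y}}^{(\Delta)}(\omega,\vartheta)}+\log f_{\tilde{Y}}^{(\Delta)}(\omega,\vartheta)\big)d\omega$.

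The limiting contrast $\mathcal{W}(\vartheta)$ is the classical stationary Whittle contrast, and I would identify $\vartheta^*(u)$ as its unique minimizer by invoking the identifiability results from the stationary theory: the spectral densities of distinct parameters differ by \hyperref[assumption:C6]{(C6)} and \hyperref[assumption:C7]{(C7)} (via the sampled-process analogue established in \cite{SS2012b}), so the Kullback--Leibler-type argument underlying \cite[Theorem 3.16 and Lemma 2.10]{SS2012b} yields $\arg\min_{\vartheta}\mathcal{W}(\vartheta)=\{\vartheta^*(u)\}$. Finally, to upgrade pointwise convergence to consistency of the argmin, I would prove uniform convergence $\sup_{\vartheta\in\Theta}|W_N(\vartheta)-\mathcal{W}(\vartheta)|\overset{P}{\longrightarrow}0$ over the compact $\Theta$; this follows from equicontinuity, using that $\vartheta\mapsto f_{\tilde{Y}}^{(\Delta)}(\omega,\vartheta)^{-1}$ and $\vartheta\mapsto\log f_{\tilde{Y}}^{(\Delta)}(\omega,\vartheta)$ are continuous and uniformly bounded (the spectral density is bounded away from zero on $\Theta$, which is where \hyperref[assumption:C5]{(C5)} minimality and the lower bound $V_\vartheta>C_V$ enter), together with the Lipschitz control on the coefficient families from \hyperref[assumption:C4]{(C4)}. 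A standard argmin-continuous-mapping argument then delivers $\hat{\vartheta}_N\overset{P}{\longrightarrow}\vartheta^*(u)$.

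The main obstacle I anticipate is controlling the contribution of the growing number of lags: unlike the fixed-memory $M$-estimation in Section \ref{sec3}, the number of terms $2m_N+1$ in $I_N^{loc}$ grows with $N$, so one cannot merely apply the lagwise law of large numbers and sum finitely many limits. I would handle this by a truncation argument analogous to the one used for the truncated quasi-maximum likelihood estimator in Section \ref{sec5-3}: the high-lag autocovariances $\Gamma_{\tilde{Y}}^{(\Delta)}(h,\vartheta^*(u))$ decay geometrically (because $e^{\Delta A_\vartheta}$ has spectral radius less than one), the weights $1/f_{\tilde{Y}}^{(\Delta)}$ are Fourier coefficients of a smooth function and hence also decay, and the estimation error in $\hat{\Gamma}_N^{loc}(h)$ is controlled uniformly in $h$ via the $L^2$ bound supplied by the locally stationary approximation and the fourth-moment hypothesis. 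This lets the tail sum be made negligible uniformly in $\vartheta$, after which the finite-lag convergence established above closes the argument.
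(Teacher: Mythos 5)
Your proposal is correct in outline and follows the same skeleton as the paper's proof: (i) convergence of the localized sample autocovariances $\hat{\Gamma}_N^{loc}(h)$ to $E[\tilde{Y}_u(0)\tilde{Y}_u(h)]$ for each fixed lag (the paper's Lemma \ref{lemma:convergencegamma}, which is exactly where the hypothesis that $K$ be continuous and positive or rectangular is consumed, in order to replace $\sqrt{K(\cdot)K(\cdot)}$ by $K(\cdot)$); (ii) pointwise convergence of $W_N$ to the stationary Whittle contrast, whose unique minimizer is $\vartheta^*(u)$; (iii) stochastic equicontinuity via the uniform Lipschitz property and lower bound of $\vartheta\mapsto f_{\tilde{Y}}^{(\Delta)}(\omega,\vartheta)$ (the paper's Lemma \ref{lemma:f_Yislipschitz}); (iv) the standard argmin argument. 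The genuine difference is in how the growing number of lags is tamed. You propose a direct truncation, which requires geometric decay of the Fourier coefficients $b_k(\vartheta)$ of $f_{\tilde{Y}}^{(\Delta)}(\cdot,\vartheta)^{-1}$ \emph{uniformly in} $\vartheta\in\Theta$ together with a uniform-in-$h$ bound on $\|\hat{\Gamma}_N^{loc}(h)\|_{L^1}$; both are true (the latter by Cauchy--Schwarz, the former because $f^{-1}$ extends analytically to a uniform annulus), but the first needs its own argument. The paper instead approximates $f_{\tilde{Y}}^{(\Delta)}(\cdot,\vartheta)^{-1}$ by the Ces\`aro mean $q_M$ of its Fourier series, exploits the non-negativity of $I_N^{loc}$ to bound the approximation error by $\varepsilon\,E[\hat{\Gamma}_N^{loc}(0)]$, and uses the orthogonality identity (\ref{eq:fourierseriesisindicator}) to collapse the frequency sum to finitely many lags $|k|<M$ --- so only uniform sup-norm approximation (Fej\'er plus compactness of $\Theta$) is needed, not decay rates. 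Your identification of the minimizer goes through the spectral Kullback--Leibler argument directly, while the paper routes through \cite[Lemma 1]{FHM2020} to identify the limit with the quasi-maximum-likelihood limit $L(\vartheta)$ and then reuses Proposition \ref{proposition:qmlem1}; these are equivalent. Two minor slips: the fourth-moment assumption you invoke is not among the theorem's hypotheses and is not needed, since $g(x,y)=xy\in\mathcal{L}_2(1,C)$ together with \hyperref[assumption:C0]{(C0)} for $p=2$ already yields a locally stationary approximation of the products for $\tilde{p}=1$, which suffices for the law of large numbers under \hyperref[observations:O1]{(O1)}; and the normalizing constant of the limit is $\tfrac{1}{2\pi}$, not $\tfrac{1}{4\pi}$ (irrelevant for the argmin).
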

\begin{proof}
See Section \ref{sec7-7}.
\end{proof}

\begin{Remark}
In contrast to Theorem \ref{theorem:qmleconsistent}, we cannot readily adapt the above theorem for observations following the sampling scheme \hyperref[observations:O2]{(O2)} as $\hat{\Gamma}_N^{loc}(h)$ does not necessarily convergence to $E[\tilde{Y}_u(0)\tilde{Y}_u(h)]$ in this setting (see also Lemma \ref{lemma:convergencegamma}).
\end{Remark}

\begin{Remark}
For time-invariant L\'evy-driven state space models, asymptotic normality of the Whittle estimator has been shown in \cite[Theorem 2]{FHM2020} following an approach that is similar to the proof of Theorem \ref{theorem:asymptoticnormality}. More detailed, the authors approximated the periodogram (\ref{eq:periodogram}) in the score function (\ref{eq:whittlefunction}) by the corresponding periodogram with respect to the process $N^{(\Delta)}_\vartheta$ as defined in (\ref{equation:statespacerepresentationsampled}) (see \cite[Lemma 3]{FHM2020}). It is worth noting that this technique does not immediately carry over to our non-stationary setting.
\end{Remark}

\section{Simulation study}
\label{sec6}
In this section, we study the finite sample behavior and the convergence of the estimators introduced in Section \ref{sec4-3}, Section \ref{sec5-3} and Section \ref{sec5-4} in a simulation study.\\
More precisely, we perform a Monte Carlo study for each estimator and different data generating processes. Using an Euler-Maruyama scheme on the interval $[0,2000]$ we simulate $400$ independent paths of different time-varying L\'evy-driven state space models that start in $0$. For each simulated path we estimate the coefficient function at equidistant points $(u_i)_{i=1,\ldots,101}$, where $u_1=400$ and $u_{101}=1600$.\\
The driving L\'evy process is either a centered Gaussian L\'evy process or a centered normal-inverse Gaussian (NIG) L\'evy process (see e.g. \cite{BN1997} and \cite{R1997} for more details). The distribution of the increments $L(t)-L(t-1)$ of an NIG L\'evy process $L$ is characterized by the density
\begin{align*}
f_{NIG}(x,\mu,\alpha,\beta,\delta_{NIG})=\frac{\delta_{NIG}}{2\pi}\frac{(1+\alpha g(x))}{g(x)^3}e^{\kappa+\beta x -\alpha g(x)},\qquad x\in\R
\end{align*}
with $g(x)=\sqrt{\delta_{NIG}^2+(x-\mu)^2}$ and $\kappa=\sqrt{\alpha^2-\beta^2}$,
where $\mu\in\R$ is a location parameter, $\alpha\geq0$ is a shape parameter, $\beta\in\R$ is a symmetry parameter and $\delta_{NIG}\geq0$ is a scale parameter.\\
It is clear that the step size used for the Euler-Maruyama scheme has a strong impact on the accuracy of the simulated solution of the L\'evy-driven state space model and hence also on a sample taken from this. % and thus also on the sampled process for an equidistant sampling grid. 
Even for a constant step size, a sample becomes inaccurate if the distance between two observations shrinks, as we observe it for $\delta_N$ when $N$ increases. To overcome possible distortions of our simulation study caused by this issue, we adapt the step size to the sampling scheme by considering a ratio of $1$:$1000$ between the sampled and simulated points, i.e. every $1000$th simulated point is sampled.\\
The observations are sampled according to \hyperref[observations:O1]{(O1)}, where
\begin{gather*}
\delta_N=\frac{1}{N},\qquad b_N=\frac{400}{\sqrt{N}}, \text{ such that}\qquad\Delta=1 \text{ and }\qquad m_N=\lfloor 400\sqrt{N}\rfloor.
\end{gather*}
The bandwidth parameter $b_N$ has to be chosen. Our choice of $b_N$ satisfies the conditions imposed in Theorem \ref{theorem:consistency} and \ref{theorem:asymptoticnormality}. Moreover, the constant $400$ aims to establish a sample size known to provide good results in a stationary setting (see \cite{FHM2020, SS2012b}). An investigation on the choice of the bandwidth is an interesting topic of further research.\\ 
In our simulation study we investigate the values $N=1,4,16,64,256$. If the driving L\'evy process is Gaussian (i.e. $L(1)\sim\mathcal{N}(\mu,\sigma^2)$), we assume that $\mu=0$ and $\sigma^2=0.2$. In the case of an NIG L\'evy process as driving noise we consider the parameters $\alpha=3,\beta=1, \delta_{NIG}=2$ and $\mu=-2/\sqrt{8}$, which implies that $E[L(1)]=0$ and $\Sigma_L=9\sqrt{2}/16\approx 0.7955$. As localizing kernel we consider the rectangular kernel (\ref{equation:rectangularkernel}) or the Epanechnikov kernel 
\begin{align}\label{equation:epankernel}
K_{epan}(x)=\frac{3}{4}(1-x^2) \mathbb{1}_{\{x\in[-1,1]\}}.
\end{align}
To measure the coefficient function estimate's quality, we use the mean integrated square error (MISE), where the integral in the MISE over the interval $[400,1600]$ is replaced by a Riemann sum over the equidistant partition that is based on the estimation points $(u_i)_{i=1,\ldots,101}$.\\
All simulations have been conducted in MATLAB on the BwUniCluster. For numerical optimization, a differential evolution optimization routine has been used.

\subsection{Simulation Study: least squares estimation}
\label{sec6-1}
We simulate a sequence of time-varying Ornstein-Uhlenbeck processes as defined in (\ref{eq:tvcar}) for three different coefficient functions
\begin{align*}
a^{(1)}(t)=\frac{1}{10}+\frac{1}{2} \abs{\cos\left(\frac{t}{500}\right)} ,\quad a^{(2)}(t)=1+\frac{1}{10}\sin\left(\frac{t}{150}\right)\text{ and} \quad a^{(3)}(t)=\frac{1}{2}-\frac{t}{5000},
\end{align*}
$t\in[0,2000]$. The characteristic triplet of the driving L\'evy process is assumed to be known. Using the rectangular kernel as localizing kernel, we compute for the above coefficient functions the localized least squares estimators $\hat{a}^{(1)}(u_i),\hat{a}^{(2)}(u_i)$ and $\hat{a}^{(3)}(u_i)$, $i=1,\ldots,101$. Since the conditions of Theorem \ref{theorem:asymptoticpropertiesleastsquares} are satisfied, all estimators are consistent and asymptotically normal.\\
Indeed, Figure \ref{figure:CAR_NIG} reflects the consistency of $\hat{a}^{(1)}$ and $\hat{a}^{(2)}$. As $N$ grows the mean over $400$ estimations recovers the respective coefficient function with increasing accuracy. Moreover, the MISE of the estimated coefficient functions decreases across all coefficient functions and driving noises as $N$ increases (see Table \ref{table:1}).\\
Qualitatively, we observe a higher bias for estimates conducted near to extreme points of the coefficient functions (see Figure \ref{figure:CAR_NIG}). This arises from the fact that the localizing kernel smoothes the estimation at each fixed estimation point $u_i$ over the window $[u_i-b_N,u_i+b_N]$. If the average of the respective coefficient function on the estimation window deviates from the value of the coefficient function at $u_i$, we observe a comparably high bias (see $N=1,16$ in Figure \ref{figure:CAR_NIG}). For our coefficient functions, the peak effect occurs at extreme points. Since $b_N \downarrow0$, the smoothing window $[u_i-b_N,u_i+b_N]$ shrinks which eventually ensures a low bias also at extreme points (see $N=256$ in Figure \ref{figure:CAR_NIG}).\\
Exemplary, we investigate the performance of the estimates $\hat{a}^{(1)},\hat{a}^{(2)}$ and $\hat{a}^{(3)}$ at fixed estimation points $u_i$, $i=25,50,75$ in Table \ref{table:2}. The MSE presented in Table \ref{table:2} decrease across all estimation points, coefficient functions and noises as $N$ increases.\\ 
It is not surprising that we find high differences in the MSE across the estimation points for each of the coefficient curves $a^{(1)}$ and $a^{(2)}$ at low values of $N$. Again, the main driver for this effect is an increased bias at estimation points close to extreme points of the coefficient functions.
As $a^{(3)}$ is a linear function, we do not find the same effect for this coefficient function.\\
Moreover, in Figure \ref{figure:CAR_QQ}, we compare the empirical distribution of the standardized estimation error of the estimates $\hat{a}^{(1)}$, $\hat{a}^{(2)}$ and $\hat{a}^{(3)}$ at $u_{25}$ for $N=256$ with a standard normal distribution through a Q-Q plot. For standardization we use the consistent estimator from Remark \ref{remark:consistentestimator}. All Q-Q plots show that the sample quantiles of the standardized estimation error are close to those of a standard normal law.\\
Overall, the investigated least squares estimators perform well across all coefficient functions and noises and the finite sample behavior is very well described by the asymptotic results established in Theorem \ref{theorem:asymptoticpropertiesleastsquares}.

\begin{table}
\center
\begin{tabular}{ |c||cc|cc|cc|}
\hline
&\multicolumn{2}{ c| }{$\hat{a}^{(1)}$}& \multicolumn{2}{c|}{$\hat{a}^{(2)}$}& \multicolumn{2}{c|}{$\hat{a}^{(3)}$} \\
\cline{2-3}\cline{4-5}\cline{6-7}
$N$ &Gaussian &
\multicolumn{1}{c|}{NIG} &
Gaussian&
\multicolumn{1}{c|}{NIG} &
\multicolumn{1}{c}{Gaussian}&
\multicolumn{1}{c|}{NIG} \\
\hline
1 & 6.2142 & 6.4279 & 7.5200 & 8.9841 & 1.0293 & 1.1089 \\
4 & 1.3816 & 1.3120 & 2.2083 & 2.8325 & 0.4889 & 0.5416 \\
16  & 0.3550 & 0.3868 & 0.9882 & 1.2945 & 0.2407  & 0.2562  \\
64  & 0.1400   & 0.1543 & 0.4837 & 0.6278 & 0.1203  & 0.1290  \\
256  & 0.0651   & 0.0722 & 0.2402 & 0.3085 & 0.0588  & 0.0650  \\
\hline
\end{tabular}
\vspace*{-2mm}
\caption{\small{MISE of $\hat{a}^{(1)},\hat{a}^{(2)},\hat{a}^{(3)}$ for $N=1,4,16,64,256$ using the rectangular kernel (\ref{equation:rectangularkernel}). As driving noise we use either a Gaussian or NIG L\'evy process.}\label{table:1}}
\end{table}

\begin{figure}
\includegraphics[width=16cm]{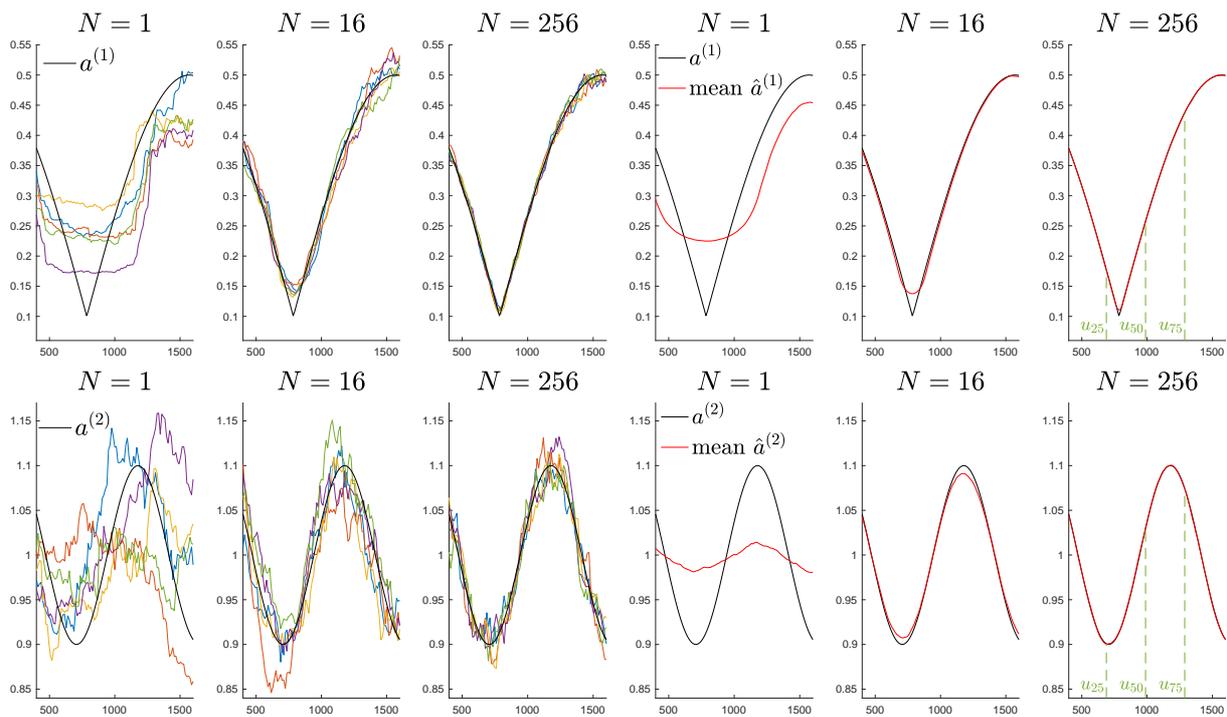}
\vspace*{-7mm}
\caption{\small{First row: coefficient function $a^{(1)}$ with five realizations of $\hat{a}^{(1)}$ and the mean over $400$ realizations of $\hat{a}^{(1)}$ respectively for $N=1,16,256$. Second row: coefficient function $a^{(2)}$ with five realizations of $\hat{a}^{(2)}$ and the mean over $400$ realizations of $\hat{a}^{(2)}$ respectively for $N=1,16,256$. For simulation we considered an NIG L\'evy process. In addition, we indicate $u_{25},u_{50}$ and $u_{75}$ for $N=256$.}\label{figure:CAR_NIG}}
\end{figure}
\vspace{-11pt}
\begin{figure}[H]
\includegraphics[width=16cm]{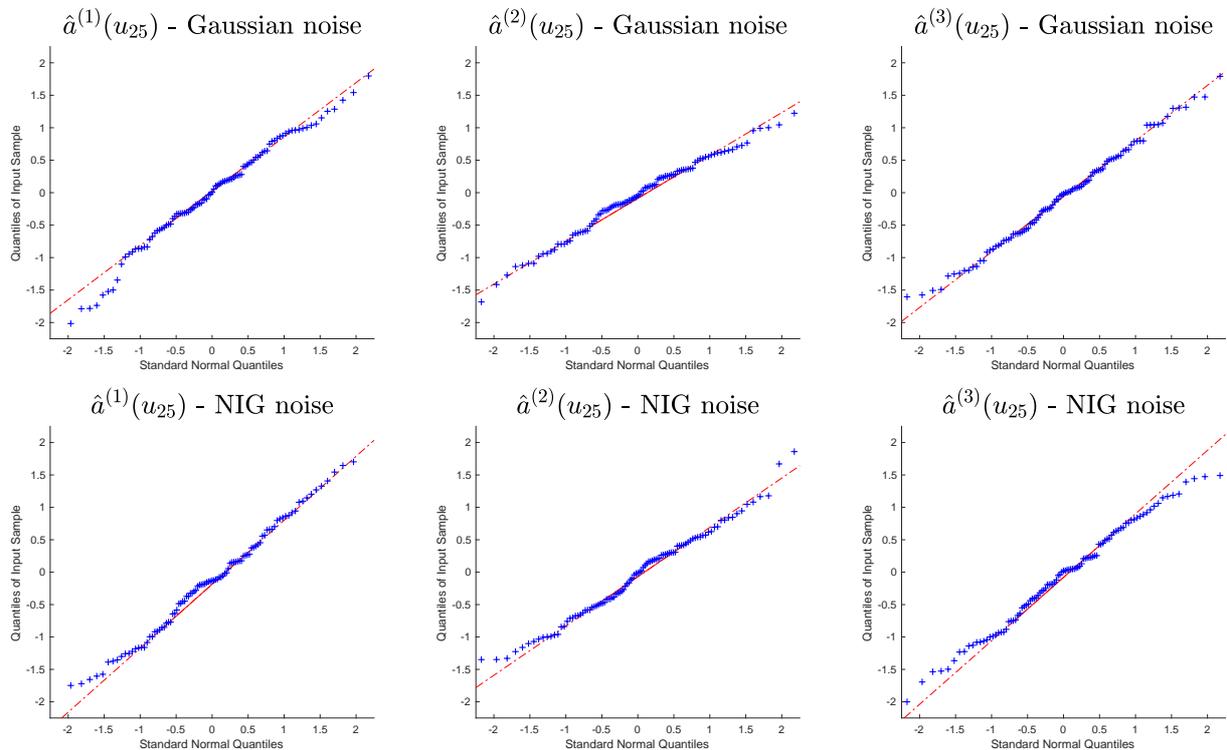}
\vspace*{-7mm}
\caption{\small{Normal Q-Q plots of the standardized estimation error of $\hat{a}^{(1)}(u_{25})$, $\hat{a}^{(2)}(u_{25})$ and $\hat{a}^{(3)}(u_{25})$ for $N=256$, where we considered either a Gaussian or an NIG L\'evy process as driving noise.}\label{figure:CAR_QQ}}
\end{figure}

\begin{table}
\center
\begin{tabular}{ |c||ccc|ccc|ccc|}
\hline
&\multicolumn{3}{ c| }{$\hat{a}^{(1)}$}& \multicolumn{3}{c|}{$\hat{a}^{(2)}$}& \multicolumn{3}{c|}{$\hat{a}^{(3)}$} \\
\cline{2-4}\cline{5-7}\cline{8-10}
$N$ & $u_{25}$ & $u_{50}$ & $u_{75}$ & $u_{25}$ & $u_{50}$ & $u_{75}$ & $u_{25}$ & $u_{50}$ & $u_{75}$  \\
\hline
1 & 3.5077 & 1.5266 & 5.6332 & 10.6488 & 5.1043 & 8.4159 & 1.0586 & 0.8761 & 0.7687 \\
4 & 0.3127 & 1.5355 & 0.7543 & 2.4044 & 2.1300 & 2.6313 & 0.4783 & 0.4464 & 0.3483 \\
16 & 0.2698 & 0.2551 & 0.3058 & 0.9397 & 1.0692 & 1.1790 & 0.2917 & 0.2272 & 0.1739 \\
64  & 0.0597 & 0.0868 & 0.1576 & 0.4116 & 0.5218 & 0.5996 & 0.1205 & 0.1134 & 0.0767 \\
256  & 0.0338 & 0.0470 & 0.0776 & 0.2163 & 0.2569 & 0.2948 & 0.0662 & 0.0591 & 0.0417 \\
\hline
\end{tabular}
\vspace*{-2mm}
\caption{\small{MSE$\times10^{3}$  of the estimators $\hat{a}^{(1)}(u_{i}),\hat{a}^{(2)}(u_{i})$ and $\hat{a}^{(3)}(u_{i})$ for $N=1,4,16,64,256$ and $i=25,50,75$ using the rectangular kernel (\ref{equation:rectangularkernel}). As driving noise we consider an NIG L\'evy process.}\label{table:2}}
\end{table}

\subsection{Simulation study: quasi-maximum likelihood and Whittle estimation}
\label{sec6-2}
We simulate a sequence of time-varying L\'evy-driven state space models as defined in (\ref{eq:seqtvLDstatespacesolution}) for the matrix functions
\begin{align*}
A(t)=\left(\begin{array}{cc}\vartheta_1(t) & 0 \\0 & \vartheta_2(t) \\\end{array}\right),~
B(t)=\left(\begin{array}{c}\frac{1}{\vartheta_2(t)-\vartheta_1(t)} \\ \frac{-1}{\vartheta_2(t)-\vartheta_1(t)}\\ \end{array}\right),
C(t)=\left(\begin{array}{c}-\vartheta_1(t)(1+\vartheta_2(t)) \\ -\vartheta_2(t)(1+\vartheta_1(t))\\ \end{array}\right),
\end{align*}
and $\Sigma_L=\vartheta_3(t)$, where
\begin{align*}
\vartheta_1(t)=-\tfrac{1}{2}+0.1\abs{\sin(\tfrac{t}{500})}\text{, and}\quad
\vartheta_2(t)=-3-0.2\abs{\cos(\tfrac{t}{500})},\quad t\in[0,2000].
\end{align*}
In the Gaussian case, we consider $\vartheta_3(t)=0.2$ and $\vartheta_3(t)=\frac{9\sqrt{2}}{16}\approx 0.7955$ in the NIG case. Using either the rectangular kernel (\ref{equation:rectangularkernel}) or the Epanechnikov kernel (\ref{equation:epankernel}) as localizing kernel, we compute for the aforementioned coefficient functions the
\begin{alignat*}{5}
&\text{quasi-maximum likelihood estimators }\qquad &&(\hat{\vartheta}_1^{QML}(u_i),&&\hat{\vartheta}_2^{QML}(u_i),&&\hat{\vartheta}_3^{QML}(u_i))\qquad &&\text{and the}\\
&\text{Whittle estimators }&&(\hat{\vartheta}_1^{W}(u_i),&&\hat{\vartheta}_2^{W}(u_i),&&\hat{\vartheta}_3^{W}(u_i)),\qquad &&i=1,\ldots,101,
\end{alignat*}
from Section \ref{sec5-3} and Section \ref{sec5-4}, respectively. All conditions of Theorem \ref{equation:consistencymodifiedQMLE} and Theorem \ref{theorem:whittleconsistent} are satisfied (see also Example \ref{example:timevaryingstatespacemodel}) such that both estimators are consistent.\\
Figure \ref{figure:CARMA_NIG_QML} and \ref{figure:CARMA_NIG_W} are in line with our theoretical findings and reflect the estimators' consistency. For both estimators, we observe that the mean over $400$ estimates recovers the respective true coefficient function more precisely as $N$ increases, independently of the driving noise and the localizing kernel. We note that the Epanechnikov kernel has a stronger smoothing effect compared to the rectangular kernel (see Figure \ref{figure:CARMA_NIG_QML} and \ref{figure:CARMA_NIG_W}).\\
Table \ref{table:3} and Table \ref{table:4} show that also the MISE of all estimated coefficient functions decreases as $N$ increases for both estimators, all localizing kernels and driving noises. However, for the rectangular kernel, we observe lower MISE.\\
Exemplary, we compare in Figure \ref{figure:CARMA_QQ_QML} and Figure \ref{figure:CARMA_QQ_W} for fixed estimation points $u_i$, $i=25,75$ the empirical distribution of the estimation error with a standard normal distribution, where we consider different localizing kernels and driving noises for the quasi-maximum likelihood and Whittle estimator. The results show that the estimation error's distribution of both estimators can be well approximated by a normal distribution and strengthen the hypothesis that the estimators are asymptotically normal.\\% Indeed, an application of Theorem \ref{theorem:asymptoticnormality} should pave the way to establish asymptotic normality of both estimators.\\
Overall, the performances of the quasi-maximum likelihood and the Whittle estimator are very similar, and neither of the estimators is preferable.
%Similarly to the least-squares estimator investigated in the previous subsection, we expect the asymptotic variance of the estimators to depend on the magnitude of the corresponding coefficient function. This explains the comparatively high MISE $\hat{\vartheta}_2(t)$, which we observe for both estimators.
%
%Conspicuously, the MISE of $\hat{\vartheta}_2(t)$ is for both estimators comparatively high. It seems that the main driver of this effect is a relatively high variance of the estimator caused by the overall magnitude of the true parameter curve $\vartheta_2(t)$. Indeed, this effect has already been observed 

\begin{figure}[H]
\center
\includegraphics[width=16cm]{}
\vspace*{-7mm}
\caption{\small{Coefficient functions $\vartheta_1, \vartheta_2, \vartheta_3$ and the mean over $400$ realizations of $\hat{\vartheta}_1^{QML}, \hat{\vartheta}_2^{QML}$ and $\hat{\vartheta}_3^{QML}$ for $N=16,256$ using either the rectangular or the Epanechnikov kernel.
For the simulation we considered an NIG L\'evy process.}\label{figure:CARMA_NIG_QML}}
\end{figure}
\vspace{-6pt}
\begin{figure}[H]
\center
\includegraphics[width=16cm]{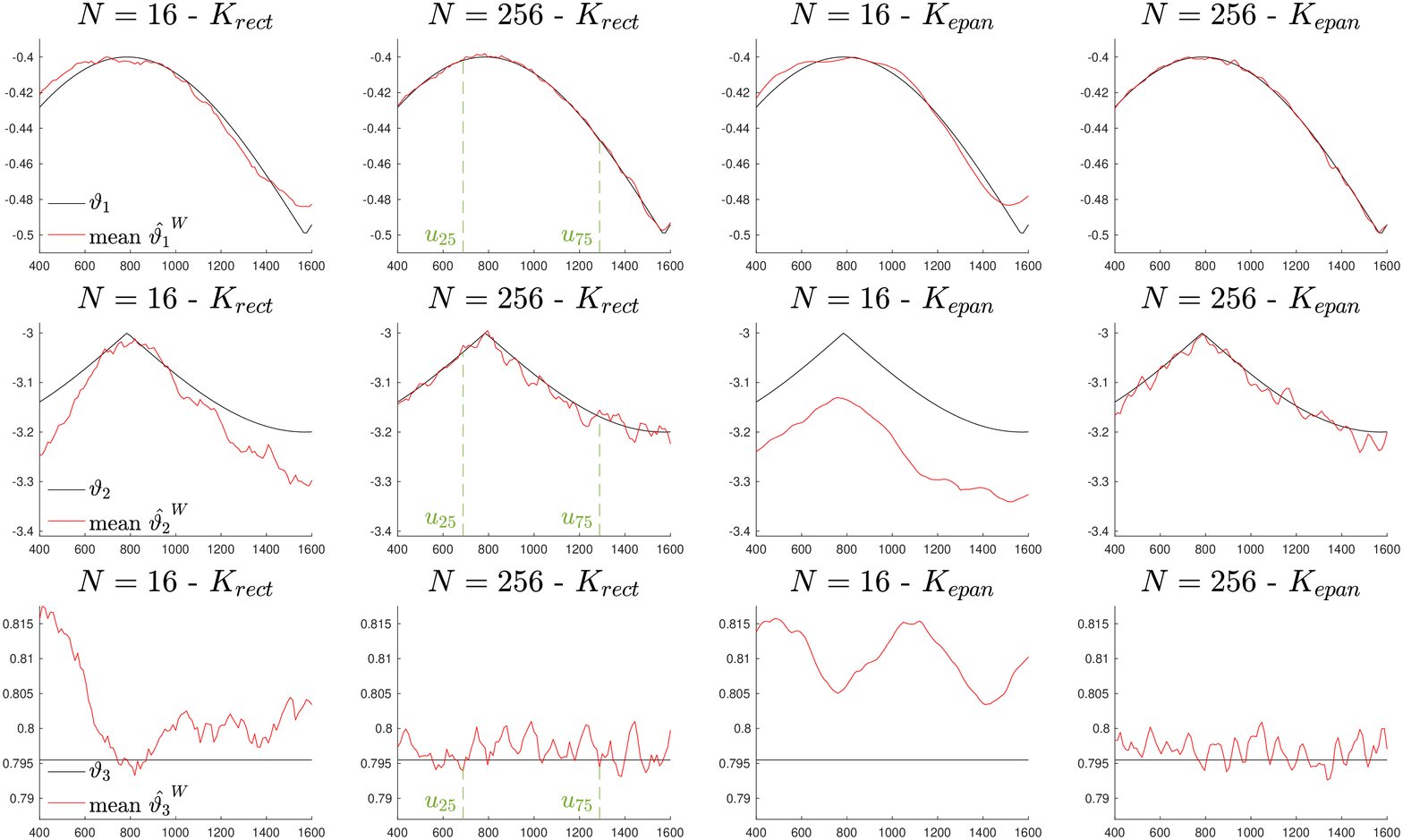}
\vspace*{-7mm}
\caption{\small{Coefficient functions $\vartheta_1, \vartheta_2, \vartheta_3$ and the mean over $400$ realizations of $\hat{\vartheta}_1^{W}, \hat{\vartheta}_2^{W}$ and $\hat{\vartheta}_3^{W}$ for $N=16,256$ using either the rectangular or the Epanechnikov kernel.
For the simulation we considered an NIG L\'evy process.}\label{figure:CARMA_NIG_W}}
\end{figure}

\begin{table}[H]
\center
\begin{tabular}{ |cc||ccc|ccc|}
\hline
& &\multicolumn{3}{ c| }{$K_{rect}$}& \multicolumn{3}{c|}{$K_{epan}$}\\
\cline{1-3}\cline{3-5}\cline{6-8}
 & \multicolumn{1}{|c||}{} &&& &&& \\[-11pt]
 Noise& \multicolumn{1}{|c||}{$N$} & $\hat{\vartheta}_1^{QML}$ & $\hat{\vartheta}_2^{QML}$ & $\hat{\vartheta}_3^{QML}$ &$\hat{\vartheta}_1^{QML}$ & $\hat{\vartheta}_2^{QML}$ & $\hat{\vartheta}_3^{QML}$   \\
\hline
\multirow{5}{*}{Gaussian}& \multicolumn{1}{|c||}{1} & 12.2178 & 954.9114 & 1.5844 & 15.9248 & 1074.9837 & 1.6684 \\
&\multicolumn{1}{|c||}{4} & 5.6206 & 527.5252 & 0.8054 & 6.7165 &  623.1291 & 0.9417 \\
&\multicolumn{1}{|c||}{16} & 2.7979 & 257.3131 & 0.3921 & 3.4222 & 314.5225 & 0.4845 \\
&\multicolumn{1}{|c||}{64}  & 1.3190 & 125.3640 & 0.1887 & 1.6603 & 150.1066 & 0.2357 \\
&\multicolumn{1}{|c||}{256}  & 0.6793 & 62.2253 & 0.0965 & 0.7965 & 74.7023 & 0.1159 \\
\hline
\multirow{5}{*}{NIG}&  \multicolumn{1}{|c||}{1} & 13.3915 &962.2520  & 25.2487 & 15.1466 & 1081.8759 & 28.8395  \\
& \multicolumn{1}{|c||}{4} & 5.8316 & 528.2882 & 13.0114 & 6.8370 & 632.4238 & 15.7627  \\
& \multicolumn{1}{|c||}{16} & 2.8093 & 267.8274 & 6.5422 & 3.2770 & 310.6784 & 7.5447  \\
& \multicolumn{1}{|c||}{64}  & 1.3502 & 125.6270 & 3.1771 & 1.6305 & 155.1777 & 3.8812 \\
& \multicolumn{1}{|c||}{256}  & 0.6743 & 63.4687 & 1.6086 & 0.8241 & 76.5732 & 1.9886  \\
\hline
\end{tabular}
\vspace*{-2mm}
\caption{\small{MISE of the estimators $\hat{\vartheta}_1^{QML}, \hat{\vartheta}_2^{QML}$ and $\hat{\vartheta}_3^{QML}$ for $N=1,4,16,64,256$ using either the rectangular kernel (\ref{equation:rectangularkernel}) or the Epanechnikov kernel (\ref{equation:epankernel}). As driving noise we consider either a Gaussian or an NIG L\'evy process.}\label{table:3}}
\end{table}

\begin{figure}[H]
\includegraphics[width=16cm]{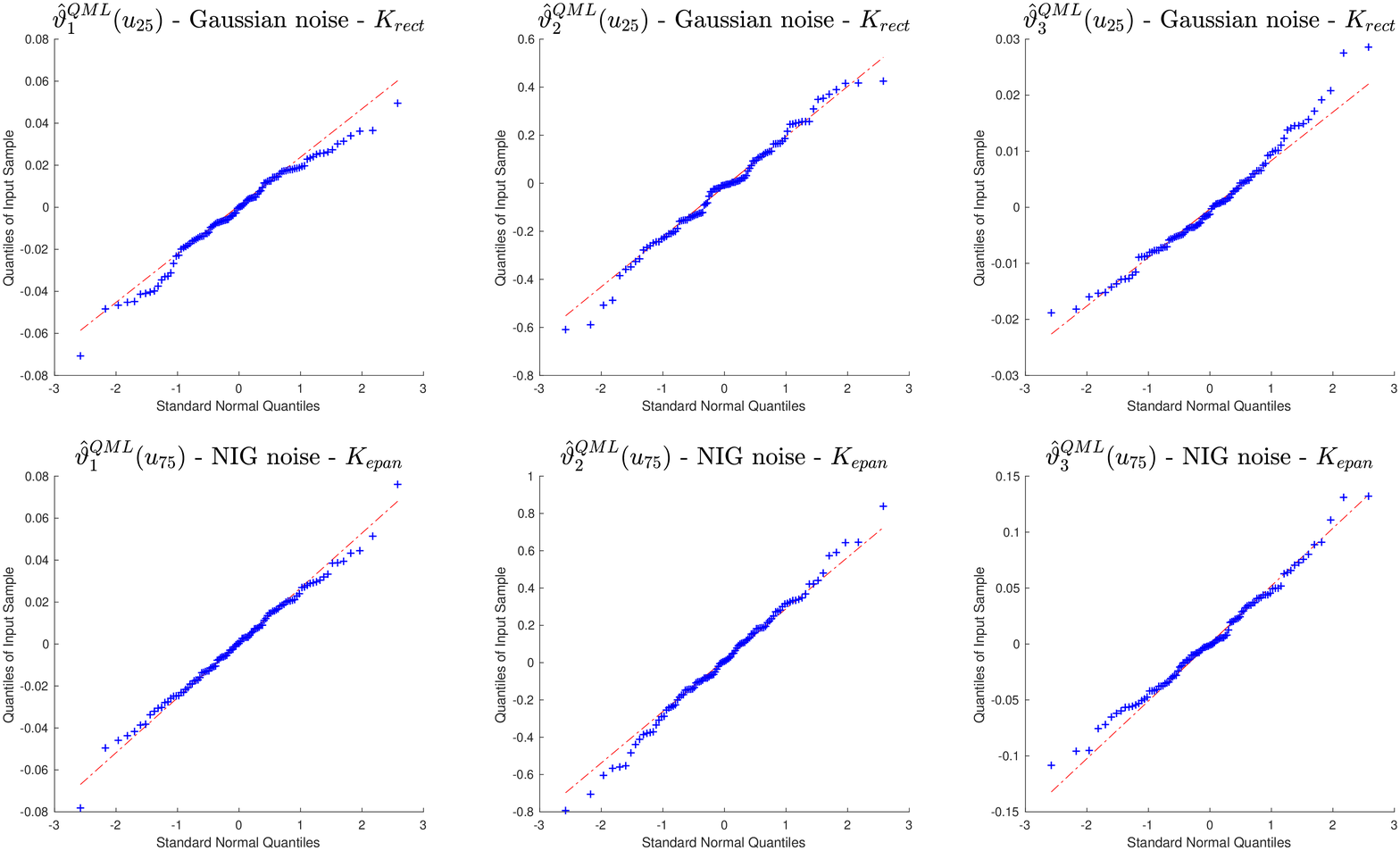}
\vspace*{-6mm}
\caption{\small{First row: normal Q-Q plots of the estimation error of the estimates $\hat{\vartheta}_1^{QML}(u_{25}),\hat{\vartheta}_2^{QML}(u_{25})$ and $\hat{\vartheta}_3^{QML}(u_{25})$ for $N=256$ using the rectangular kernel (\ref{equation:rectangularkernel}) and a Gaussian L\'evy process as driving noise.
Second row: normal Q-Q plots of the estimation error of the estimates $\hat{\vartheta}_1^{QML}(u_{75}),\hat{\vartheta}_2^{QML}(u_{75})$ and $\hat{\vartheta}_3^{QML}(u_{75})$ for $N=256$ using the Epanechnikov kernel (\ref{equation:epankernel}) and an NIG L\'evy process as driving noise.}\label{figure:CARMA_QQ_QML}}
\end{figure}

\begin{table}[H]
\center
\begin{tabular}{ |cc||ccc|ccc|}
\hline
& &\multicolumn{3}{ c| }{$K_{rect}$}& \multicolumn{3}{c|}{$K_{epan}$}\\
\cline{1-3}\cline{3-5}\cline{6-8}
 & \multicolumn{1}{|c||}{} &&& &&& \\[-11pt]
 Noise& \multicolumn{1}{|c||}{$N$} & $\hat{\vartheta}_1^{W}$ & $\hat{\vartheta}_2^{W}$ & $\hat{\vartheta}_3^{W}$ &$\hat{\vartheta}_1^{W}$ & $\hat{\vartheta}_2^{W}$ & $\hat{\vartheta}_3^{W}$   \\
\hline
\multirow{5}{*}{Gaussian}& \multicolumn{1}{|c||}{1} & 12.8997 & 1082.6087 & 1.5627 & 14.3332 & 1143.9525 & 1.8873 \\
&\multicolumn{1}{|c||}{4} & 5.2176 & 526.1243 & 0.7166 & 6.9049 & 626.2685 & 0.9188 \\
&\multicolumn{1}{|c||}{16} & 2.8774 & 256.1971 & 0.4000 & 3.3788 & 295.5396 & 0.4658 \\
&\multicolumn{1}{|c||}{64}  & 1.3734 & 125.1810 & 0.1950 & 1.6296 & 153.6250 & 0.2351 \\
&\multicolumn{1}{|c||}{256}  & 0.6737 & 61.8317 & 0.0960 & 0.8060 & 75.4662 & 0.1164 \\
\hline
\multirow{5}{*}{NIG}&  \multicolumn{1}{|c||}{1} & 12.1261 & 960.6616 & 22.9088 & 14.0046 & 1127.7369 & 27.4745 \\
& \multicolumn{1}{|c||}{4} & 5.6328 & 485.4932 & 12.8253 & 6.8520 & 639.1451 & 15.1066 \\
& \multicolumn{1}{|c||}{16} & 2.7545 & 255.6482 & 6.4252 & 3.4720 & 313.6670 & 7.8922 \\
& \multicolumn{1}{|c||}{64}  & 1.3238 & 124.9270 & 3.1781 & 1.5885 & 155.3078 & 3.8583 \\
& \multicolumn{1}{|c||}{256}  & 0.6855 & 63.3847 & 1.6103 & 0.8083 & 77.1361 & 1.9350 \\
\hline
\end{tabular}
\vspace*{-2mm}
\caption{\small{MISE of the estimators $\hat{\vartheta}_1^{W}, \hat{\vartheta}_2^{W}$ and $\hat{\vartheta}_3^{W}$ for $N=1,4,16,64,256$ using either the rectangular kernel (\ref{equation:rectangularkernel}) or the Epanechnikov kernel (\ref{equation:epankernel}). As driving noise we consider a Gaussian or an NIG L\'evy process.}\label{table:4}}
\end{table}

\begin{figure}[H]
\includegraphics[width=16cm]{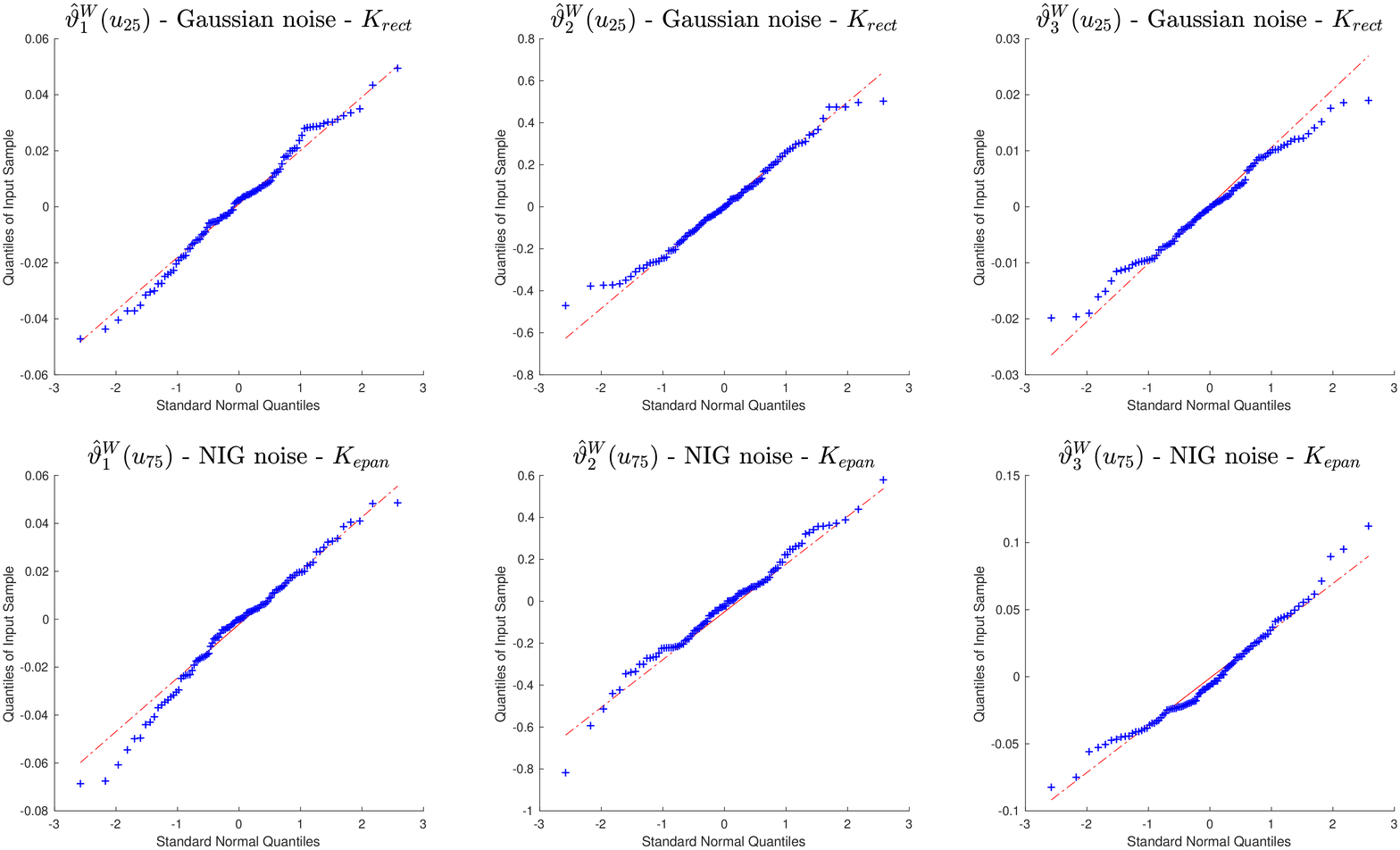}
\vspace*{-6mm}
\caption{\small{First row: normal Q-Q plots of the estimation error of the estimates $\hat{\vartheta}_1^{W}(u_{25}),\hat{\vartheta}_2^{W}(u_{25})$ and $\hat{\vartheta}_3^{W}(u_{25})$ for $N=256$ using the rectangular kernel (\ref{equation:rectangularkernel}) and a Gaussian L\'evy process as driving noise.
Second row: normal Q-Q plots of the estimation error of the estimates $\hat{\vartheta}_1^{W}(u_{75}),\hat{\vartheta}_2^{W}(u_{75})$ and $\hat{\vartheta}_3^{W}(u_{75})$ for $N=256$ using the Epanechnikov kernel (\ref{equation:epankernel}) and an NIG L\'evy process as driving noise.}\label{figure:CARMA_QQ_W}}
\end{figure}

%\subsection{Simulation study: Whittle estimation}
%\label{sec6-3}

\section{Proofs}
\label{sec7}
\subsection{Proof for Section \ref{sec3-1}}
\label{sec7-1}

\begin{proof}[Proof of Theorem \ref{theorem:consistency}]
Clearly, Lemma \ref{lemma:integrabilityphi} implies that $\Phi$ is integrable. In the following we show the sufficient conditions of \cite[Theorem 5.7]{V1998}.\\
First, we note that $M(\vartheta)$ is continuous, since for $\vartheta_1,\vartheta_2\in\Theta$ we have
\begin{align}\label{eq:continuityMforlater}
\begin{aligned}
\abs{M(\vartheta_1)-M(\vartheta_2)}&\leq\norm{\Phi(\tilde{Y},\vartheta_1)-\Phi(\tilde{Y},\vartheta_2)}_{L^1}\\
&\leq \norm{\vartheta_1-\vartheta_2} 3D_1\left(1+\sum_{k=0}^\infty \beta_k E[|\tilde{Y}_u(\Delta (1-k))|^q]\right).
\end{aligned}
\end{align} 
To show uniform convergence in probability of $M_N(\vartheta)$ we use \cite{N1991}. For all $\vartheta \in\Theta$ Proposition \ref{proposition:inheritanceproperties} and Lemma \ref{lemma:integrabilityphi} imply that $\Phi((\tilde{Y}_u(t+\Delta (1-k)))_{k\in\N_0},\vartheta)$ is a locally stationary approximation of $\Phi\big(\big(Y_N\big(t+\Delta \frac{(1-k)}{N}\big)\big)_{k\in\N_0},\vartheta\big)$ for $p\geq1$. An application of \cite[Theorem 3.5]{SS2021} in the case of \hyperref[observations:O1]{(O1)} and \cite[Theorem 3.6]{SS2021} if \hyperref[observations:O2]{(O2)} holds, gives
\begin{gather*}
\norm{M_N(\vartheta)-M(\vartheta)}_{L^1}\underset{N\rightarrow\infty}{\longrightarrow}0, \text{ for all } \vartheta\in\Theta.
\end{gather*} 
It is left to show stochastic equicontinuity of the family $(M_N(\vartheta))_{N\in\N}$. Define $g:\R^\infty\rightarrow\R$ where $g(x)=\sum_{k=0}^\infty \beta_k|x_k|^q$. Using the mean value theorem, we obtain $||y|^q-|z|^q|\leq q|y-z|(1+|y|^{q-1}+|z|^{q-1})$ for all $y,z\in\R$. An application of Hoelder's inequality ensures $g\in\mathcal{L}_\infty^{1,q}(\beta)$. Since $\sum_{k=0}^\infty k\beta_k<\infty$, Proposition \ref{proposition:inheritanceproperties} implies that $g((\tilde{Y}_u(t+\Delta (1-k)))_{k\in\N_0})$ is a locally stationary approximation of $g\big(\big(Y_N\big(t+\frac{\Delta (1-k)}{N}\big)\big)_{k\in\N_0}\big)$ for $p=1$. Noting that $\frac{|K(x)|}{\int |K(x)|dx}$ is again a localizing kernel, we obtain from either \cite[Theorem 3.5]{SS2021} or \cite[Theorem 3.6]{SS2021} that \begin{align}
\begin{aligned}\label{equation:stochequiproof1}
&\frac{\delta_N}{b_N}\sum_{i=-m_N}^{m_N} \left|K\left(\frac{\tau_i^N-u}{b_N}\right)\right| \left(1+g\left(\left(Y_N\left(\tau_i^N+\frac{\Delta (1-k)}{N}\right)\right)_{k\in\N_0}\right)\right)\\
&\quad\underset{N\rightarrow\infty}{\overset{P}{\longrightarrow}}E\left[1+g\left(\left(\tilde{Y}_u\left(\Delta (1-k)\right)\right)_{k\in\N_0}\right)\right]\int_\R |K(x)|dx
=:E.
\end{aligned}
\end{align}
Then, for $\lambda=\frac{\eta}{6D_1 E}$ it holds
\begin{align}
\begin{aligned}\label{equation:stochequiproof2}
&P\left( \sup_{\norm{\vartheta_1-\vartheta_2}<\lambda}\left|M_N(\vartheta_1)-M_N(\vartheta_2) \right|>\eta\right)\\
&\leq P\left( \left| \frac{\delta_N}{b_N}\!\!\sum_{i=-m_N}^{m_N} \left|K\left(\frac{\tau_i^N-u}{b_N}\right)\right| \left(\!1\!+\!g\left(\!\left(Y_N\left(\tau_i^N\!+\!\frac{\Delta(1- k)}{N}\right)\right)_{k\in\N_0}\right)\!\right)\!-\!E \right| \!>\!E\!\right)\!\!\underset{N\rightarrow\infty}{\longrightarrow}\!0.
\end{aligned}
\end{align}
It follows that $\sup_{\vartheta\in\Theta}\norm{M_N(\vartheta)-M(\vartheta)}\underset{N\rightarrow\infty}{\overset{P}{\longrightarrow}}0$. Finally, we conclude with \cite[Theorem 5.7]{V1998}.
\end{proof}

\subsection{Proof for Section \ref{sec3-2}}
\label{sec7-2}

\begin{proof}[Proof of Theorem \ref{theorem:asymptoticnormality}]
For $M_N$ as defined in (\ref{eq:M_N}) we investigate the Taylor expansion of $\nabla_\vartheta M_N$ at $\vartheta^*$, which is given by
\begin{gather*}
\sqrt{\frac{b_N}{\delta_N}}\Big( \nabla_\vartheta M_N\left(\vartheta^*\right)\Big)=\sqrt{\frac{b_N}{\delta_N}} \left(\nabla_\vartheta M_N(\hat{\vartheta}_N) \right) - \sqrt{\frac{b_N}{\delta_N}} \left(\hat{\vartheta}_N-\vartheta^* \right) \left(\nabla_\vartheta^2 M_N(\tilde{\vartheta}) \right)
\end{gather*}
for some $\tilde{\vartheta}\in\Theta$ satisfying $\lVert\tilde{\vartheta}-\vartheta^*\rVert\leq\lVert\hat{\vartheta}_N-\vartheta^*\rVert$. From the definition of $\hat{\vartheta}_N$ it follows for sufficiently large $N$ that $\nabla_\vartheta M_N(\hat{\vartheta}_N)=0$. Hence, for $\bar{Y}_N=\big(Y_N\big(\tau_i^N+\Delta\frac{(1-k)}{N}\big)\big)_{k\in\N_0}$ and $\bar{Y}_u=(\tilde{Y}_u(N\tau_i^N+\Delta(1-k)))_{k\in\N_0}$ we obtain
\begin{align*}
\sqrt{\frac{b_N}{\delta_N}}\left(\hat{\vartheta}_N-\vartheta^* \right)&= -\sqrt{\frac{b_N}{\delta_N}}\left( \nabla_\vartheta M_N\left(\vartheta^*\right)\right)\left(\nabla_\vartheta^2 M_N(\tilde{\vartheta}) \right)^{-1}\\
&=-\Bigg(\sqrt{\frac{b_N}{\delta_N}} \sum_{i=-m_N}^{m_N} K_{rect}\left( \frac{\tau_i^N-u}{b_N}\right) \left( \nabla_\vartheta \Phi\left(\bar{Y}_N,\vartheta^*\right)-E\left[\nabla_\vartheta \Phi\left(\bar{Y}_N,\vartheta^*\right)\right]\right)\\
&\qquad \qquad +\sqrt{\frac{b_N}{\delta_N}} \sum_{i=-m_N}^{m_N} K_{rect}\left( \frac{\tau_i^N-u}{b_N}\right) E\left[\nabla_\vartheta \Phi\left(\bar{Y}_N,\vartheta^*\right)\right]\Bigg)\left(\nabla_\vartheta^2 M_N(\tilde{\vartheta}) \right)^{-1}\\
&=:-(P_1+P_2)\left(\nabla_\vartheta^2 M_N(\tilde{\vartheta}) \right)^{-1}.
\end{align*}
As first step, we show asymptotic normality of $P_1$, which mainly follows from \cite[Theorem 3.7]{SS2021} and the Cramer-Wold device. \\
Let $a\in\R^d$. Then, since $\frac{\partial}{\partial\vartheta_i}\Phi(\cdot,\vartheta^*)\in \mathcal{L}_{\infty}^{\tilde{p},\tilde{q}}(\alpha)$ for all $i=1,\ldots,d$ and some $\tilde{p}\geq2$, we have $a'\nabla_\vartheta\Phi(\cdot,\vartheta^*)\in \mathcal{L}_{\infty}^{\tilde{p},\tilde{q}}(\alpha)$. Thus, due to Proposition \ref{proposition:inheritanceproperties}, $a' \nabla_\vartheta\Phi(\bar{Y}_u,\vartheta^*)$ is a locally stationary approximation of $a' \nabla_\vartheta\Phi(\bar{Y}_N,\vartheta^*)$ for $\tilde{p}$. Furthermore, since linear functions are Lipschitz, the locally stationary approximation $a' \nabla_\vartheta\Phi(\bar{Y}_u,\vartheta^*)$ is $\theta$-weakly dependent with $\theta$-coefficients, that decay with the same rate as the $\theta$-coefficients of $\nabla_\vartheta\Phi(\bar{Y}_u,\vartheta^*)$. Overall, all assumptions of \cite[Theorem 3.7]{SS2021} are satisfied and
\begin{gather*}
\sqrt{\frac{b_N}{\delta_N}} \sum_{i=-m_N}^{m_N} K_{rect}\left( \frac{\tau_i^N-u}{b_N}\right) a'\left(\nabla_\vartheta\Phi(\bar{Y}_N,\vartheta^*)- E\left[\nabla_\vartheta\Phi(\bar{Y}_N,\vartheta^*)\right]\right)
\overset{d}{\underset{N\rightarrow\infty}{\longrightarrow}} \mathcal{N}\left(0,a'I(u)a\right).
\end{gather*}
From the Cramer-Wold device we obtain immediately
\begin{gather}\label{eq:cramerwoldp1}
P_1\overset{d}{\underset{N\rightarrow\infty}{\longrightarrow}} \mathcal{N}\left(0, I(u)\right).
\end{gather}
Since $E\left[\nabla_\vartheta \Phi\left(\bar{Y}_u,\vartheta^*\right)\right]=0$, we obtain
\begin{align}\label{eq:convergencep2}
P_2&=\sqrt{\frac{b_N}{\delta_N}} \sum_{i=-m_N}^{m_N} K_{rect}\left( \frac{\tau_i^N-u}{b_N}\right) \left(E\left[\nabla_\vartheta \Phi\left(\bar{Y}_N,\vartheta^*\right)\right]- E\left[\nabla_\vartheta \Phi\left(\bar{Y}_u,\vartheta^*\right)\right]\right)\notag\\
%&\leq \sqrt{\frac{b_N}{\delta_N}} \sum_{i=-m_N}^{m_N} \abs{K\left( \frac{\tau_i^N-u}{b_N}\right)}\norm{\nabla_\vartheta \Phi\left(\bar{Y}_N,\vartheta^*\right)-\nabla_\vartheta \Phi\left(\bar{Y}_u,\vartheta^*\right)}_{L^1}\notag\\
&\leq d\sqrt{\frac{b_N}{\delta_N}}(2m_N+1)\abs{K}_\infty  C\left( \frac{1}{N}+b_N \right) \underset{N\rightarrow\infty}{\longrightarrow}0
\end{align}
for some constant $C>0$. Regarding $\nabla_\vartheta^2 M_N(\tilde{\vartheta})$, we first note that
\begin{align*}
\norm{\nabla_\vartheta^2 M_N(\tilde{\vartheta})-V(u)}\leq& \norm{\sqrt{\frac{b_N}{\delta_N}} \sum_{i=-m_N}^{m_N} K\left( \frac{\tau_i^N-u}{b_N}\right)\nabla_\vartheta^2 \Phi\left(\bar{Y}_N, \tilde{\vartheta}\right)-E\left[\nabla_\vartheta^2 \Phi\left(\bar{Y}_u, \tilde{\vartheta}\right) \right]}\\
&+ \norm{E\left[\nabla_\vartheta^2 \Phi\left(\bar{Y}_u, \tilde{\vartheta}\right)\right]- E\left[\nabla_\vartheta^2\Phi\left(\bar{Y}_u,\vartheta^*\right)\right] }=:\lVert R_1(\tilde{\vartheta})\rVert +\norm{R_2}.
\end{align*}
From the conditions on $\frac{\partial^2}{\partial\vartheta_i\partial\vartheta_j}\Phi$ it follows that the map $\vartheta\mapsto E\left[\nabla_\vartheta^2\Phi\left(\bar{Y}_u,\vartheta\right)\right]$ is continuous. Additionally noting that $\tilde{\vartheta}\overset{P}{\longrightarrow}\vartheta^*$ as $\hat{\vartheta}_N\overset{P}{\longrightarrow}\vartheta^*$, we obtain $\norm{R_2}\longrightarrow0$ as $N\rightarrow\infty$.\\ 
To show that $\lVert R_1(\tilde{\vartheta})\rVert$ tends to $0$ in probability, we show that $\sup_{\vartheta\in\Theta}\norm{R_1(\vartheta)}$ converges to $0$ in probability as $N\rightarrow\infty$. To this end, it is sufficient to investigate the asymptotic behavior of $R_1^{(i,j)}(\vartheta)$ for all $i,j=1,\ldots,d$, $\vartheta\in\Theta$, where $R_1^{(i,j)}(\vartheta)$ denotes the $i,j$-th entry of $R_1(\vartheta)$. More precisely,
\begin{gather*}
R_1^{(i,j)}(\vartheta)=\sqrt{\frac{b_N}{\delta_N}} \sum_{i=-m_N}^{m_N} K\left( \frac{\tau_i^N-u}{b_N}\right) \frac{\partial^2 }{\partial\vartheta_i\partial\vartheta_j}\Phi\left(\bar{Y}_N, \vartheta\right) - E\left[  \frac{\partial^2}{\partial\vartheta_i\partial\vartheta_j}\Phi\left(\bar{Y}_u, \vartheta\right) \right].
\end{gather*}
Noting that $\frac{\partial^2\Phi}{\partial\vartheta_i\partial\vartheta_j}$ satisfies all assumptions on the contrast function from Theorem \ref{theorem:consistency}, we can follow the same steps as in the proof of Theorem \ref{theorem:consistency} to obtain
\begin{gather}\label{eq:ucpR_1}
\sup_{\vartheta\in\Theta}\norm{R_1^{(i,j)}(\vartheta)}\underset{N\rightarrow\infty}{\overset{P}{\longrightarrow}}0.
\end{gather}
Finally, combining (\ref{eq:cramerwoldp1}), (\ref{eq:convergencep2}), (\ref{eq:ucpR_1}) and Slutsky's theorem we obtain the convergence as stated in (\ref{eq:asymptoticnormalitythetahat}).
\end{proof}

\subsection{Proof for Section \ref{sec4-1}}
\label{sec7-3}

\begin{proof}[Proof of Proposition \ref{proposition:infinitememorymovingaverage}]
In the following we suppress the index $u\in\R^+$ and assume $q=2$. We fix $h\geq0$ and define, for all $t\in\R$ and $m\in\R^+$ the truncated process $X^{(m)}(t)=\int_{t-m}^{t}g(u,t-s)L(ds)$ and for $l\in\N$
\begin{alignat*}{3}
Z^{(l)}(t)&=\left(X\left(t+\Delta\right),\ldots,X\left(t-\Delta(l-2)\right)\right),\quad  &&\varphi^{(l)}(t) & &=\varphi(Z^{(l)}(t),0,\ldots)\text{ as well as }\\
Z^{(l,m)}(t)&=\left(X^{(m)}\left(t+\Delta\right),\ldots,X^{(m)}\left(t-\Delta(l-2)\right)\right),\quad&&\varphi^{(l,m)}(t) & &=\varphi(Z^{(l,m)}(t),0,\ldots).
\end{alignat*}
Then, for $\Sigma_L=\Sigma + \int_{\R}x^2\nu(dx)$
\begin{align}\label{eq:truncationinequality}
\norm{X(t)-X^{(m)}(t)}_{L^2}=\norm{\int_{-\infty}^{-m}g(u,-s)L(ds)}_{L^2}=\left(\Sigma_L\int_{-\infty}^{-m}g(u,-s)^2ds\right)^{\frac{1}{2}}.
\end{align}
Let $F\in\mathcal{G}^*_u$, $G\in\mathcal{G}_1$ and $i_1\leq\ldots\leq i_u\leq i_u+h\leq j$, $u\in\N$. In view of Definition \ref{thetaweaklydependent} we define $\varphi^*=\left(\varphi(Z(i_1),\ldots,\varphi(Z(i_u)))\right)$ and obtain
\begin{align*}
\abs{Cov(F(\varphi^*),G(\varphi(Z(j))))}&\leq \abs{Cov\left(F(\varphi^*),G\left(\varphi(Z(j))\right)-G\left(\varphi^{(l)}(j)\right)\right)}\\
&\quad+\abs{Cov\left(F(\varphi^*),G\left(\varphi^{(l)}(j)\right)-G\left(\varphi^{(l,m)}(j)\right)\right)}\\
&\quad+\abs{Cov\left(F(\varphi^*),G\left(\varphi^{(l,m)}(j)\right)\right)}=:P_1+P_2+P_3.
\end{align*}
We show that $\varphi^*$ and $\varphi^{(l,m)}(j)$ are independent which implies $P_3=0$. Due to to the independence of $L$ on non-overlapping intervals it is sufficient to truncate the stochastic integrals entering in $\varphi^{(l,m)}(j)$ with suitable choices of $l$ and $m$ such that the involved integration sets are disjoint from any integration set entering in $g^*$.\\
Since $i_u\geq i_a$ for all $a=1,\ldots,u$, it is enough to investigate $Z(i_u)$ and $Z^{(l,m)}(j)$. In turn, this requires to analyze the stochastic integrals in $Z(i_u)$ and $Z^{(l,m)}(j)$ whose integration sets have the smallest distance, i.e.
\begin{align*}
X^{(m)}(j-\Delta(l-2))&=\int_{j-\Delta(l-2)-m}^{j-\Delta(l-2)} g(u,j-\Delta(l-2)-s)L(ds)\text{ and}\\
X(i_u+\Delta)&=\int_{-\infty}^{i_u+\Delta} g(u,i_u+\Delta-s)L(ds).
\end{align*}  
We set $m=\frac{h}{2}$ and $l=\left\lfloor \frac{h}{2\Delta}\right\rfloor$. Then, $i_u+\Delta< j-\Delta(l-2)-m$ and $P_3=0$.\\
For $P_1$ we have for some function $f:\R_0^+\rightarrow\R_0^+$ due to the stationarity of $X$
\begin{align*}
P_1&\leq 2 \norm{F}_\infty \norm{G\left(\varphi(Z(j))\right)-G\left(\varphi^{(l)}(j)\right)}_{L^1}\\
&\leq 2 \norm{F}_\infty Lip(G)f\left(\norm{X(0)}_{L^2} \right) \norm{X(0)}_{L^2}\sum_{k=l}^\infty \alpha_k\underset{h\ra\infty}{\longrightarrow} 0,
\end{align*}
by the dominated convergence theorem, since we set $l=\left\lfloor \frac{h}{2\Delta}\right\rfloor$. Moreover, using (\ref{eq:truncationinequality})
\begin{align*}
P_2&\leq 2 \norm{F}_\infty \norm{G\left(\varphi^{(l)}(j)\right)-G\left(\varphi^{(l,m)}(j)\right)}_{L^1}\\
&\leq2 \norm{F}_\infty Lip(G) f\left(\norm{X(0)}_{L^2}\vee \norm{X^{(m)}(0)}_{L^2} \right)\\
&\qquad\times\sum_{k=0}^{l-1} \alpha_k\norm{X(j-\Delta(k-1))-X^{(m)}(j-\Delta(k-1))}_{L^2}\\
&\leq 2\norm{F}_\infty Lip(G) f\left(\norm{X(0)}_{L^2}\vee \norm{X^{(m)}(0)}_{L^2} \right)\left(\sum_{k=0}^{\infty} \alpha_k\right) \left(\Sigma_L \int_{-\infty}^{-m}g(u,-s)^2ds\right)^{\frac{1}{2}} \underset{m\ra\infty}{\longrightarrow} 0,
\end{align*}
again by the dominated convergence theorem for $m=\frac{h}{2}$. For $q=4$ we follow similar steps and use that $\lVert X(t)-X^{(m)}(t)\rVert_{L^4}=((\int_{-\infty}^{-m}g(u,-s)^4ds)(\int_\R x^4\nu(dx))+3\Sigma_L^2 (\int_{-\infty}^{-m}g(u,-s)^2ds)^2)^{\frac{1}{4}}$ (see \cite[Lemma 5.2]{SS2021}).
\end{proof}

\subsection{Proof for Section \ref{sec4-3}}
\label{sec7-4}

To calculate the asymptotic variance in Theorem \ref{theorem:asymptoticpropertiesleastsquares}, it is necessary to evaluate $4$-th order mixed moments of $\tilde{Y}_u$.

\begin{Lemma}\label{lemma:4thmixedmoments}
Let $\tilde{Y}_u$ be a L\'evy-driven Ornstein-Uhlenbeck process as given in (\ref{eq:locapproxcar}) such that $\gamma+\int_{\abs{x}>1}x\nu(dx)=0$ and $\int_{\abs{x}>1}x^{4}\nu(dx)<\infty$. Then, for any real numbers $t_1\leq t_2, t_3, t_4$
\begin{align*}
E[\tilde{Y}_u(t_1)\tilde{Y}_u(t_2)\tilde{Y}_u(t_3)\tilde{Y}_u(t_4)]=&\frac{\Sigma_L^2}{4a(u)^2} \Big(e^{a(u)(t_1-t_2-\abs{t_3-t_4})}\!+\!e^{a(u)(t_1-t_3-\abs{t_2-t_4})}\!+\!e^{a(u)(t_1-t_4-\abs{t_2-t_3})}\Big)\\
&+\frac{e^{a(u)(3t_1-t_2-t_3-t_4)}}{4a(u)}\int_\R x^4\nu(dx),
\end{align*}
where $\Sigma_L=\Sigma+\int_\R x^2\nu(dx)$.
\end{Lemma}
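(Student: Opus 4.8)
The plan is to obtain the fourth mixed moment from the \emph{moment--cumulant relation} together with the explicit cumulant structure of stochastic integrals against a centered L\'evy process. Write $\tilde Y_u(t_i)=\int_\R f_i(s)\,L(ds)$ with $f_i(s)=\mathbb{1}_{\{s\le t_i\}}e^{-a(u)(t_i-s)}$. The hypothesis $\int_{\abs{x}>1}x^4\nu(dx)<\infty$ together with the L\'evy-measure condition $\int_\R(1\wedge x^2)\nu(dx)<\infty$ guarantees $\int_\R x^4\nu(dx)<\infty$, so each $\tilde Y_u(t_i)\in L^4$ and all fourth-order cumulants exist. The condition $\gamma+\int_{\abs{x}>1}x\nu(dx)=0$ says precisely that $E[L(1)]=0$, hence $E[\tilde Y_u(t_i)]=0$ and every cumulant containing a singleton block vanishes; in particular all partitions of $\{1,2,3,4\}$ with a block of size three (which would produce a $\int_\R x^3\nu(dx)$ term) drop out.

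First I would establish the joint cumulant formula. Since the four variables are integrals against the same $L$, the vector $(\tilde Y_u(t_1),\dots,\tilde Y_u(t_4))$ is infinitely divisible, and the same computation that yields the characteristic triplet of the single integral (\ref{eq:stochint}) shows that its joint cumulant generating function is $K(z)=\int_\R \psi\big(\sum_{j}z_j f_j(s)\big)\,ds$, where $\psi=\Psi_L$ is the characteristic exponent. Because the inner argument is linear in $z$, differentiating under the integral sign (justified by the fourth-moment bound) gives
\begin{align*}
\mathrm{cum}\big(\tilde Y_u(t_{j_1}),\dots,\tilde Y_u(t_{j_n})\big)=c_n\int_\R f_{j_1}(s)\cdots f_{j_n}(s)\,ds,
\end{align*}
where $c_n=i^{-n}\psi^{(n)}(0)$ is the $n$-th cumulant of $L(1)$. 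Reading off from $\Psi_L$ yields $c_2=\Sigma+\int_\R x^2\nu(dx)=\Sigma_L$ and $c_4=\int_\R x^4\nu(dx)$, the Gaussian part contributing only at order two.

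Next I would evaluate the two elementary integrals. For $t_i\le t_j$ the binding constraint is $s\le t_i$, so $\int_\R f_i f_j\,ds=\tfrac{1}{2a(u)}e^{-a(u)\abs{t_i-t_j}}$, giving the Ornstein--Uhlenbeck covariance $\mathrm{cum}(\tilde Y_u(t_i),\tilde Y_u(t_j))=\tfrac{\Sigma_L}{2a(u)}e^{-a(u)\abs{t_i-t_j}}$; since $t_1$ is the minimum, $\int_\R f_1 f_2 f_3 f_4\,ds=\tfrac{1}{4a(u)}e^{a(u)(3t_1-t_2-t_3-t_4)}$, which reproduces the $c_4$ term verbatim. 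Finally I would assemble everything via the moment--cumulant relation for centered variables,
\begin{align*}
E\Big[\textstyle\prod_{i=1}^4\tilde Y_u(t_i)\Big]=\mathrm{cum}_4+\sum_{\text{pairings}}\mathrm{cum}_2\,\mathrm{cum}_2,
\end{align*}
where the three pairings $\{12|34\}$, $\{13|24\}$, $\{14|23\}$ furnish the three exponential terms on the first line; using $t_1\le t_j$ to write $\abs{t_1-t_j}=t_j-t_1$ collapses each product into the stated form. The only genuinely delicate point is the rigorous derivation of the joint cumulant formula, that is, justifying the differentiation under the integral and the vanishing of the drift/truncation terms at orders $\ge 2$, but this is standard given the fourth-moment assumption; the remainder is bookkeeping of the three pairings.
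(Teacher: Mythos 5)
Your proposal is correct and follows essentially the same route as the paper: the paper likewise exploits the infinite divisibility of the vector $(\tilde{Y}_u(t_1),\ldots,\tilde{Y}_u(t_4))$ to read off the fourth joint cumulant as $\int_\R x^4\nu(dx)\int_\R g(u,t_1-s)\cdots g(u,t_4-s)\,ds$ and then assembles the moment via the moment--cumulant relation with the three covariance pairings. The only difference is one of emphasis — you spell out the differentiation under the integral sign, whereas the paper cites the cumulant identities from the literature — but the argument is the same.
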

\begin{proof}
Define the random vector
\begin{align*}
X=\int_\R (g(u,t_1-s),g(u,t_2-s),g(u,t_3-s),g(u,t_4-s))'L(ds).
\end{align*}
From Section \ref{sec4-1} it is known that $X$ is infinitely divisible. Since $X$ is centered, we obtain for the joint cumulant $\kappa(\tilde{Y}_u(t_1),\tilde{Y}_u(t_2),\tilde{Y}_u(t_3),\tilde{Y}_u(t_4))$, as defined in \cite[Definition 4.2.1.]{GKS2012}, % of $(\tilde{Y}_u(t_1),\tilde{Y}_u(t_2),\tilde{Y}_u(t_3),\tilde{Y}_u(t_4))$ 
\begin{align*}
\kappa(\tilde{Y}_u(t_1),\tilde{Y}_u(t_2),\tilde{Y}_u(t_3),\tilde{Y}_u(t_4))&=\frac{\partial^4}{\partial z_1\partial z_2\partial z_3\partial z_4}\log\left(E\left[e^{i (z_1,z_2,z_3,z_4)X}\right]\right)\!\!\bigg|_{z_1,\ldots,z_4=0}\\
&=\int_\R x^4\nu(dx) \int_\R g(u,t_1-s)g(u,t_2-s)g(u,t_3-s)g(u,t_4-s)ds\\
&=\frac{e^{a(u)(3t_1-t_2-t_3-t_4)}}{4a(u)} \int_\R x^4\nu(dx) .
\end{align*}
On the other hand, due to \cite[Proposition 4.2.2.]{GKS2012},
\begin{align*}
&E[\tilde{Y}_u(t_1)\tilde{Y}_u(t_2)\tilde{Y}_u(t_3)\tilde{Y}_u(t_4)]\\
&=\kappa(\tilde{Y}_u(t_1),\tilde{Y}_u(t_2),\tilde{Y}_u(t_3),\tilde{Y}_u(t_4))+E\left[\tilde{Y}_u(t_1)\tilde{Y}_u(t_2)\right]E\left[\tilde{Y}_u(t_3)\tilde{Y}_u(t_4)\right]\\
&\quad +E\left[\tilde{Y}_u(t_1)\tilde{Y}_u(t_3)\right]E\left[\tilde{Y}_u(t_2)\tilde{Y}_u(t_4)\right]+E\left[\tilde{Y}_u(t_1)\tilde{Y}_u(t_4)\right]E\left[\tilde{Y}_u(t_2)\tilde{Y}_u(t_3)\right].
\end{align*}
We conclude by noting that $E[\tilde{Y}_u(x)\tilde{Y}_u(y)]=\frac{\Sigma_L}{2a(u)}e^{-a(u)\abs{x-y}}$ for any $x,y\in\R$.
\end{proof}

\begin{proof}[Proof of Theorem \ref{theorem:asymptoticpropertiesleastsquares}]
First, we note that $\Phi^{LS}(0,0,\vartheta)=0$. We use Theorem \ref{theorem:consistency} and Remark \ref{remark:consistencyfinitememory}. From Proposition \ref{proposition:car1locstat} it follows that $\tilde{Y}_u$ is a locally stationary approximation of $Y_N$ for $q=2$.
Basic calculations show that for $\Sigma_L=\Sigma+\int_\R x^2\nu(dx)$ we have
\begin{align*}
E\left[\Phi^{LS}\left(\left(\tilde{Y}_u(\Delta (1-k))\right)_{k\in\N_0},\vartheta\right)\right]
&=(1+e^{-2\Delta\vartheta})E[\tilde{Y}_u(0)^2]-2e^{-\Delta\vartheta}E[\tilde{Y}_u(\Delta)\tilde{Y}_u(0)]\\
&= \frac{\Sigma_L}{2a(u)}(1+e^{-2\Delta\vartheta}-2e^{-\Delta(a(u)+\vartheta)}),
\end{align*} 
which has a unique minimum in $\vartheta=a(u)$ such that \hyperref[assumption:M1]{(M1)} holds for $\vartheta^*=a(u)$. We show that the conditions (a) and (b) of Theorem \ref{theorem:consistency} hold. In view of (a) we have for $x_0,x_1,y_0,y_1\in\R$ and $\vartheta\in\Theta$ 
\begin{align}\label{equation:phiinequalityinx}
\begin{aligned}
\abs{\Phi^{LS}(x_0,x_1,\vartheta)-\Phi^{LS}(y_0,y_1,\vartheta)}&\leq \abs{x_0^2-y_0^2}+2e^{-\Delta\vartheta}\abs{y_0y_1-x_0x_1}+ e^{-2\Delta\vartheta}\abs{x_1^2-y_1^2}\\
&\leq\left(\abs{x_0-y_0}+\abs{x_1-y_1}\right)\\
&\quad\times (1+2e^{-\Delta\vartheta}+e^{-2\Delta\vartheta})(\abs{x_0}+\abs{x_1}+\abs{y_0}+\abs{y_1}). 
\end{aligned}
\end{align}
Using Hoelder's inequality and (\ref{equation:phiinequalityinx}), we obtain for $X_0,X_1,Y_0,Y_1\in L^2$ and $\vartheta\in\Theta$
\begin{align*}
&\norm{\Phi^{LS}(X_0,X_1,\vartheta)-\Phi^{LS}(Y_0,Y_1,\vartheta)}_{L^1}\\
&\leq 16\max_{k=0,1}\{\norm{X_k}_{L^2}\vee\norm{Y_k}_{L^2}\}\left(\norm{X_0-Y_0}_{L^2}+\norm{X_1-Y_1}_{L^2}\right),
\end{align*}
which shows $\Phi^{LS}(\cdot,\vartheta)\in\mathcal{L}_{\infty}^{1,2}(\alpha)$, where $\alpha_k=1$ for $k=0,1$ and $\alpha_k=0$ for $k\geq2$. To show (b) let $x_0,x_1\in\R$ and $\vartheta_1,\vartheta_2\in\Theta$. Then,
\begin{align*}
\abs{\Phi^{LS}(x_0,x_1,\vartheta_1)-\Phi^{LS}(x_0,x_1,\vartheta_2)}&\leq 2\abs{x_0}\abs{x_1}\abs{e^{-\Delta\vartheta_2}-e^{-\Delta\vartheta_1}}+x_1^2 \abs{e^{-2\Delta\vartheta_1}-e^{-2\Delta\vartheta_2}}\\
&\leq \Delta\left(2x_0^2+4x_1^2\right) \abs{\vartheta_1-\vartheta_2},
\end{align*}
such that $\Phi^{LS}(x,\cdot)\in\mathcal{L}_{2}(0,D_1(1+\sum_{k=0}^\infty\beta_k\abs{x_k}^2))$ for $D_1=\Delta$ and $\beta_0=2$, $\beta_1=4$ and $\beta_k=0$ for $k\geq2$. If \hyperref[observations:O1]{(O1)} holds, the stated convergence follows immediately from Theorem \ref{theorem:consistency}.\\ 
Using Remark \ref{remark:consistencyfinitememory} we also obtain consistency if \hyperref[observations:O2]{(O2)} holds. Indeed (c$^*$) holds, since the contrast $\Phi^{LS}$ is of finite memory (for $n=1$), $|\Phi^{LS}(x_0,x_1,\vartheta)|\leq(\abs{x_0}+\abs{x_1})^2$ and (\ref{equation:phiinequalityinx}) gives that $\Phi^{LS}(\cdot,\vartheta)\in\mathcal{L}_{2}(1,4)$ for all $\vartheta\in\Theta$ and $x_0,x_1\in\R$. Since $\int_{\abs{x}>1}x^{2+\gamma_1}\nu(dx)<\infty$, we have $\tilde{Y}_u\in L^{2+\gamma_1}$. Analogous to \cite[Theorem 3.36]{CSS2021} one can show that $\tilde{Y}_u$ is $\theta$-weakly dependent with exponentially decaying $\theta$-coefficients $\theta(h)$. Then, the stated result follows from Remark \ref{remark:consistencyfinitememory}.\\
To show asymptotic normality we apply Theorem \ref{theorem:asymptoticnormality} and Remark \ref{remark:asymptoticnormalityfinitememory}. Proposition \ref{proposition:car1locstat} implies that $\tilde{Y}_u$ is a locally stationary approximation of $Y_N$ for $q=4$. The conditions (a)-(c) of Theorem \ref{theorem:asymptoticnormality} are immediately satisfied. For all $x_0,x_1\in\R$ and $\vartheta\in\Theta$ it holds
\begin{align*}
\frac{d}{d\vartheta}\Phi^{LS}(x_0,x_1,\vartheta)&=2\Delta e^{-\Delta\vartheta} \left(x_0x_1-e^{-\Delta \vartheta}x_1^2\right)\text{ and}\\
\frac{d^2}{d\vartheta^2}\Phi^{LS}(x_0,x_1,\vartheta)&=2\Delta^2e^{-\Delta\vartheta}\left(2e^{-\Delta\vartheta}x_1^2-x_0x_1\right).
\end{align*}
Note that $\frac{d^2}{d\vartheta^2}\Phi^{LS}(0,\vartheta)=0$ for all $\vartheta\in\Theta$. Then, for $x_0,x_1,y_0,y_1\in\R$ and $\vartheta\in\Theta$ we have
\begin{align}
\abs{\frac{d}{d\vartheta}\Phi^{LS}(x_0,x_1,\vartheta)-\frac{d}{d\vartheta}\Phi^{LS}(y_0,y_1,\vartheta)}&\leq \left(\abs{x_0-y_0}+\abs{x_1-y_1}\right)2\Delta e^{-\Delta\vartheta}\left(1+e^{-\Delta\vartheta}\right)\notag\\ 
&\quad \times \left(\abs{x_0}+\abs{x_1}+\abs{y_0}+\abs{y_1}\right)\text{ and}\label{equation:firstphiinequalityinx}\\
\abs{\frac{d^2}{d\vartheta^2}\Phi^{LS}(x_0,x_1,\vartheta)-\frac{d^2}{d\vartheta^2}\Phi^{LS}(y_0,y_1,\vartheta)}&\leq\left(\abs{x_0-y_0}+\abs{x_1-y_1}\right)2\Delta^2 e^{-\Delta\vartheta}\left(1+2e^{-\Delta\vartheta}\right)\notag\\ 
&\quad \times \left(\abs{x_0}+\abs{x_1}+\abs{y_0}+\abs{y_1}\right).\label{equation:secondphiinequalityinx}
\end{align}
In view of (d) we apply Hoelder's inequality to (\ref{equation:firstphiinequalityinx}) and obtain for $X_0,X_1,Y_0,Y_1\in L^2$ and $\vartheta\in\Theta$
\begin{align*}
\norm{\frac{d}{d\vartheta}\Phi^{LS}(X_0,X_1,\vartheta)-\frac{d}{d\vartheta}\Phi^{LS}(Y_0,Y_1,\vartheta)}_{L^2}&\leq 16\Delta \max_{k=0,1}\{\norm{X_k}_{L^4}\vee\norm{Y_k}_{L^4}\}\\
&\qquad \left(\norm{X_0-Y_0}_{L^4}+\norm{X_1-Y_1}_{L^4}\right),
\end{align*}
which shows $\frac{d}{d\vartheta}\Phi^{LS}(\cdot,\vartheta^*)\in\mathcal{L}_{\infty}^{2,4}(\tilde{\alpha})$, where $\tilde{\alpha}_k=1$ for $k=0,1$ and $\tilde{\alpha}_k=0$ for $k\geq2$.\\ 
Note that $\frac{d}{d\vartheta}\Phi^{LS}(0,0,\vartheta)=0$ and, due to (\ref{equation:firstphiinequalityinx}), also $\frac{d}{d\vartheta}\Phi^{LS}(\cdot,\vartheta)\in \mathcal{L}_2(1,4\Delta)$. Moreover, since $\int_{\abs{x}>1}x^{4+\gamma_2}\nu(dx)<\infty$, it holds $\tilde{Y}_u\in L^{4+\gamma_2}$. As explained above, $\tilde{Y}_u$ is $\theta$-weakly dependent with exponentially decaying $\theta$-coefficients. Hence (e$^*$) from Remark \ref{remark:asymptoticnormalityfinitememory} holds.\\
%Noting that $\abs{\frac{d}{d\vartheta}\Phi^{LS}(x_0,x_1,\vartheta)}\leq 4\Delta \left(\abs{x_0}+\abs{x_1}\right)^2$ and, since $\int_{\abs{x}>1}x^{4+\delta_1}\nu(dx)<\infty$, also $\tilde{Y}_u\in L^{4+\delta_1}$, it holds $\frac{d}{d\vartheta}\Phi^{LS}\left(\tilde{Y}_u(t+\Delta),\tilde{Y}_u(t),\vartheta\right)\in L^{2+\delta_1/2}$. Moreover, due to (\ref{equation:secondphiinequalityinx}), $\frac{d}{d\vartheta}\Phi^{LS}(\cdot,\vartheta)\in \mathcal{L}_2(1,4\Delta)$ such that Proposition \ref{proposition:inheritanceproperties} implies that $\frac{d}{d\vartheta}\Phi^{LS}\left(\tilde{Y}_u(t+\Delta),\tilde{Y}_u(t),\vartheta\right)$ is $\theta$-weakly dependent with $\theta$-coefficients $\theta(h)\in\mathcal{O}(h^{-\alpha})$ for $\alpha>(1+\frac{2}{\delta_1})$.\\
In view of (f) we first apply Hoelder's inequality to (\ref{equation:secondphiinequalityinx}) and obtain for $X_0,X_1,Y_0,Y_1\in L^2$ and $\vartheta\in\Theta$  
\begin{align*}
\norm{\frac{d^2}{d\vartheta^2}\Phi^{LS}(X_0,X_1,\vartheta)-\frac{d^2}{d\vartheta^2}\Phi^{LS}(Y_0,Y_1,\vartheta)}_{L^1}&\leq 24\Delta^2 \max_{k=0,1}\{\norm{X_k}_{L^2}\vee\norm{Y_k}_{L^2}\}\\
&\qquad \left(\norm{X_0-Y_0}_{L^2}+\norm{X_1-Y_1}_{L^2}\right),
\end{align*}
such that $\frac{d^2}{d\vartheta^2}\Phi^{LS}(\cdot,\vartheta)\in\mathcal{L}_{\infty}^{1,2}(\bar{\alpha})$, where $\bar{\alpha}_k=1$ for $k=0,1$ and $\bar{\alpha}_k=0$ for $k\geq2$. Now, let $x_0,x_1\in\R$ and $\vartheta_1,\vartheta_2\in\Theta$. Then,
\begin{align*}
&\abs{\frac{d^2}{d\vartheta^2}\Phi^{LS}(x_0,x_1,\vartheta_1)-\frac{d^2}{d\vartheta^2}\Phi^{LS}(x_0,x_1,\vartheta_2)}\\
&\leq 2\Delta^2\abs{x_0x_1}\abs{e^{-\Delta\vartheta_2}-e^{-\Delta\vartheta_1}}+4\Delta^2x_1^2 \abs{e^{-2\Delta\vartheta_1}-e^{-2\Delta\vartheta_2}}\leq \Delta^3\left(4x_0^2+12x_1^2\right) \abs{\vartheta_1-\vartheta_2},
\end{align*}
such that $\frac{d^2}{d\vartheta^2}\Phi^{LS}(x,\cdot)\in\mathcal{L}_{1}(0,D_2(1+\sum_{k=0}^\infty\bar{\beta}_k\abs{x_k}^2))$ for $D_2=\Delta^3$ and $\bar{\beta}_0=4$, $\bar{\beta}_1=12$ and $\bar{\beta}_k=0$ for $k\geq2$. Overall, (f) holds.\\
If \hyperref[observations:O2]{(O2)} holds we show (g$1^*$) and (g$2^*$) from Remark \ref{remark:asymptoticnormalityfinitememory}. Indeed, we have already shown (g$1^*$). Moreover $|\frac{d^2}{d\vartheta^2}\Phi^{LS}(x_0,x_1,\vartheta)|\leq6\Delta^2(\abs{x_0}+\abs{x_1})^2$ and (\ref{equation:secondphiinequalityinx}) implies that $\frac{d^2}{d\vartheta^2}\Phi^{LS}(\cdot,\vartheta)\in\mathcal{L}_{2}(1,6\Delta^2)$ for all $\vartheta\in\Theta$ and $x_0,x_1\in\R$. From $\int_{\abs{x}>1}x^{2+\gamma_1}\nu(dx)<\infty$, it follows $\tilde{Y}_u\in L^{2+\gamma_1}$. Since $\tilde{Y}_u$ is $\theta$-weakly dependent, the assertion of Remark \ref{remark:asymptoticnormalityfinitememory} holds. It is left to investigate the asymptotic variance $\Sigma(u)$. We obtain
\begin{align*}
I(u,k)=4\Delta^2e^{-2 a(u)\Delta}\Big(&E\left[\tilde{Y}_u(0)\tilde{Y}_u(\Delta)\tilde{Y}_u(k\delta+\Delta)\tilde{Y}_u(k\delta)\right]-e^{-a(u)\Delta}E\left[\tilde{Y}_u(0)\tilde{Y}_u(\Delta)\tilde{Y}_u(k\delta)^2\right]\\
&-e^{-a(u)\Delta}E\left[\tilde{Y}_u(0)^2\tilde{Y}_u(k\delta+\Delta)\tilde{Y}_u(k\delta)\right]+e^{-2a(u)\Delta}E\left[\tilde{Y}_u(0)^2\tilde{Y}_u(k\delta)^2\right]\Big).
\end{align*}
An application of Lemma \ref{lemma:4thmixedmoments} gives
\begin{align*}
I(u,k)=&\frac{\int_\R x^4\nu(dx)\Delta^2e^{-2 a(u)\Delta}}{a(u)}\left(e^{-2a(u)(k\delta+\Delta)}-2e^{-a(u)\Delta}e^{-a(u)(2k\delta+\Delta)}+e^{-2a(u)\Delta}e^{-2a(u)k\delta)} \right)\\
&+\frac{\Delta^2\Sigma_L^2e^{-2 a(u)\Delta}}{a(u)^2}\Big(e^{-2a(u)\Delta}+e^{-a(u)(k\delta+\Delta+\abs{k\delta-\Delta})}+e^{-2a(u)k\delta}-e^{-2a(u)\Delta}\\
&\qquad-2e^{-a(u)(k\delta+\Delta+\abs{k\delta-\Delta})}-e^{-2a(u)\Delta}-2e^{-2a(u)(k\delta+\Delta)}+ e^{-2a(u)\Delta}+2e^{-2a(u)(k\delta+\Delta)}\Big)\\
=&\frac{\Delta^2\Sigma_L^2e^{-2 a(u)\Delta}}{a(u)^2}\left(e^{-2a(u)k\delta}-e^{-a(u)(k\delta+\Delta+\abs{k\delta-\Delta})}\right).
\end{align*}
First, we note that 
\begin{align*}
I(u,0)=\frac{\Delta^2\Sigma_L^2e^{-2 a(u)\Delta}}{a(u)^2}\left(1-e^{-2a(u)\Delta}\right).
\end{align*}
Moreover, straightforward calculations give
\begin{align}
\sum_{k=1}^\infty I(u,k)&=\frac{\Delta^2\Sigma_L^2e^{-2 a(u)\Delta}}{a(u)^2}\left(\sum_{k=1}^\infty e^{-2a(u)k\delta}- \sum_{k=1}^{\left\lceil\Delta/\delta \right\rceil-1}e^{-2a(u)\Delta}-\sum_{k=\left\lceil\Delta/\delta \right\rceil}^\infty e^{-2a(u)k\delta} \right)\notag\\
&=\frac{\Delta^2\Sigma_L^2e^{-2 a(u)\Delta}}{a(u)^2} \sum_{k=1}^{\left\lceil\Delta/\delta \right\rceil-1}\left(e^{-2a(u)k\delta}-e^{-2a(u)\Delta}\right)\label{eq:startpointforIpositive}\\
&=\frac{\Delta^2\Sigma_L^2e^{-2 a(u)\Delta}}{a(u)^2}\left(e^{-2a(u)\delta}\frac{1-e^{-2a(u)\delta(\left\lceil\Delta/\delta \right\rceil-1)}}{1-e^{-2a(u)\delta}}-e^{-2a(u)\Delta}(\left\lceil\Delta/\delta \right\rceil-1) \right).\notag
\end{align}
Hence,
\begin{align*}
I(u)
%=\frac{\Delta^2\Sigma_L^2e^{-2 a(u)\Delta}}{2a(u)^2}\begin{cases}\left(1-e^{-2a(u)\Delta}\right)+\left(2\frac{1-e^{-2a(u)\delta(\left\lceil\Delta/\delta \right\rceil)}}{1-e^{-2a(u)\delta}}-2\frac{\left\lceil\Delta/\delta \right\rceil(\left\lceil\Delta/\delta \right\rceil-1)}{2}e^{-2a(u)\Delta}-2 \right),& \text{if \hyperref[observations:O1]{(O1)} holds,}\\
%\left(1-e^{-2a(u)\Delta}\right),& \text{if \hyperref[observations:O2]{(O2)} holds,}
%\end{cases}\\
=\frac{\Delta^2\Sigma_L^2e^{-2 a(u)\Delta}}{2a(u)^2}\begin{cases}1+2e^{-2a(u)\delta}\frac{1-e^{-2a(u)\delta(\left\lceil\Delta/\delta \right\rceil-1)}}{1-e^{-2a(u)\delta}}-e^{-2a(u)\Delta}(2\left\lceil\Delta/\delta \right\rceil-1),& \text{if \hyperref[observations:O1]{(O1)} holds,}\\
1-e^{-2a(u)\Delta},& \text{if \hyperref[observations:O2]{(O2)} holds.}
\end{cases}
\end{align*}
If \hyperref[observations:O2]{(O2)} holds, it is easy to see that $I(u)>0$. If \hyperref[observations:O1]{(O1)} holds, it is enough to additionally observe that all summands in (\ref{eq:startpointforIpositive}) are non-negative since $k\delta\leq \Delta$ for all $k=1,\ldots,\left\lceil\Delta/\delta \right\rceil-1$.
%\begin{align*}
%I(u)=&\frac{\Delta^2\Sigma_L^2e^{-2 a(u)\Delta}}{2a(u)^2}\left(2\frac{1-e^{-2a(u)\delta(\left\lceil\Delta/\delta \right\rceil)}}{1-e^{-2a(u)\delta}}-2\frac{\left\lceil\Delta/\delta \right\rceil(\left\lceil\Delta/\delta \right\rceil-1)+1}{2}e^{-2a(u)\Delta}-1\right)\\
%&=\frac{\Delta^2\Sigma_L^2e^{-2 a(u)\Delta}}{a(u)^2}\left(2\sum_{k=1}^{\left\lceil\Delta/\delta \right\rceil-1}\left(e^{-2a(u)k\delta}-e^{-2a(u)\Delta}\right)+\left(1-e^{-2a(u)\Delta}\right)\right)>0.
%\end{align*}
Finally,
\begin{align*}
V(u)=4\Delta^2e^{-2a(u)\Delta}E[\tilde{Y}_u(0)^2]-2\Delta^2e^{-a(u)\Delta}E[\tilde{Y}_u(\Delta)\tilde{Y}_u(0)]=\frac{\Delta^2e^{-2a(u)\Delta}\Sigma_L}{a(u)}>0.
\end{align*}
\end{proof}

\subsection{Proof for Section \ref{sec5-2}}
\label{sec7-5}

\begin{proof}[Proof of Proposition \ref{proposition:qmlecontrast}]
First, we note that $\sup_{\vartheta\in\Theta}|\Phi^{LL}(0,\vartheta)|<\infty$. For all $\vartheta\in\Theta$ and sequences $X=(X_{1-k})_{k\in\N_0}\in\ell^\infty(L^2)$ and $Y=(Y_{1-k})_{k\in\N_0}\in\ell^\infty(L^2)$, %be two sequences of square integrable random variables, where $\sup_{k\in\N_0}\Lnorm{X_{1-k}},\sup_{k\in\N_0}\Lnorm{Y_{1-k}}<\infty$. 
we have
\begin{gather*}
\norm{\Phi^{LL}(X,\vartheta)-\Phi^{LL}(Y,\vartheta)}_{L^1}=\frac{1}{V_\vartheta}\norm{\varepsilon_{\vartheta}(X)^2-\varepsilon_{\vartheta}(Y)^2}_{L^1},
\end{gather*}
where $\varepsilon_{\vartheta}(X)$ and $\varepsilon_{\vartheta}(Y)$ are the analogues of (\ref{equation:varepsilontheta}), defined in terms of $X$ and $Y$. From Hoelder's inequality we obtain
\begin{align*}
%=\frac{1}{V_\theta} \Bigg\lVert \left(X_1- B_\theta'(u)\sum_{n=1}^\infty \left( e^{\Delta A_\theta(u)}-K_\theta(u)B_\theta'(u) \right)^{n-1}K_\theta(u)X_{1-n}\right)^2\\
%-\left(Y_1- B_\theta'(u)\sum_{n=1}^\infty \left( e^{\Delta A_\theta(u)}-K_\theta(u)B_\theta'(u) \right)^{n-1}K_\theta(u)Y_{1-n}\right)^2\Bigg\rVert_{L^1}\\
\leq& \frac{1}{V_\vartheta} \Bigg(\norm{X_1}_{L^2}+ \Bigg\lVert  B_\vartheta'\sum_{n=1}^\infty \left( e^{\Delta A_\vartheta}-K_\vartheta B_\vartheta' \right)^{n-1}K_\vartheta X_{1-n}\Bigg\rVert_{L^2}\\
&+\norm{Y_1}_{L^2}+ \Bigg\lVert  B_\vartheta'\sum_{n=1}^\infty \left(e^{\Delta A_\vartheta}-K_\vartheta B_\vartheta'\right)^{n-1}K_\vartheta Y_{1-n}\Bigg\rVert_{L^2}\Bigg)\\
&\times\left( \Lnorm{X_1-Y_1} +\Lnorm{B_\vartheta'\sum_{n=1}^\infty \left( e^{\Delta A_\vartheta}-K_\vartheta B_\vartheta' \right)^{n-1}K_\vartheta\left(Y_{1-n}-X_{1-n}\right) }\right)\\
\leq&  \frac{1}{V_\vartheta} \Bigg(4 \sup_{k\in\N_0}\Big\{\Lnorm{X_{1-k}}\vee\Lnorm{Y_{1-k}}\Big\}C_1 \sum_{n=1}^\infty \left\lVert\left( e^{\Delta A_\vartheta}-K_\vartheta B_\vartheta' \right)^{n-1}\right\rVert\Bigg)\\
&\times \left( \Lnorm{X_1-Y_1}+C_1 \sum_{n=1}^\infty \left\lVert\left( e^{\Delta A_\vartheta}-K_\vartheta B_\vartheta' \right)^{n-1}\right\rVert \Lnorm{X_{1-n}-Y_{1-n}}\right)
\end{align*} 
for some constant $C_1>0$. In the following we bound the expression $( e^{\Delta A_\vartheta}-K_\vartheta B_\vartheta' )^{n-1}$, $n\in\N$. From Proposition \ref{proposition:kalmanfilter} (b), \hyperref[assumption:C3]{(C3)} and since eigenvalues are continuous functions of the entries of a matrix (see \cite[Fact 10.11.2]{B2009}), we obtain
\begin{align*}
\max\left\{\abs{\lambda},\lambda\in\sigma\left(e^{\Delta A_\vartheta}-K_\vartheta B_\vartheta'\right)\right\}\leq \rho
\end{align*}
for some positive number $\rho<1$ and all $\vartheta\in\Theta$, where $\sigma(\cdot)$ denotes the spectrum of a matrix. Let $\varepsilon>0$ be small enough, such that $\rho+\varepsilon<1$. Then, using Gelfand's formula there exists $N_0\in\N$ such that 
\begin{align}\label{equation:lemmafromgelfand}
\norm{\left( e^{\Delta A_\vartheta}-K_\vartheta B_\vartheta' \right)^{n-1}}\leq\begin{cases}(\rho+\varepsilon)^{n-1},&\text{ for all }n\geq N_0\\ 
S_\rho^{n-1},&\text{ for all } n<N_0.
\end{cases}
\end{align}
for some constant $S_\rho\geq1$. We set
\begin{align}\label{equation:alphan}
\alpha_n=\max(1,C_1)\begin{cases}1&, n=0,\\
S_\rho^{n-1}&,1\leq n< N_0,\\
(\rho+\varepsilon)^{n-1}&, N_0\leq n.
\end{cases}
\end{align}
Then, we readily obtain $\sum_{n=0}^\infty n \alpha_n<\infty$ and $\Phi^{LL}(\cdot,\vartheta)\in \mathcal{L}_\infty^{1,2}(\alpha)$ for all $\vartheta\in\Theta$.\\
It remains to show $\Phi^{LL}(x,\cdot)\in\mathcal{L}_d(0,D_1(1+\sum_{k=0}^\infty \beta_kx_{1-k}^2))$ for any real sequence $x=(x_{1-k})_{k\in\N_0}$. Define $S_B=\sup_{\vartheta\in\Theta}\norm{B_\vartheta}$, $S_V=\sup_{\vartheta\in\Theta}\norm{V_\vartheta}$ and $S_K=\sup_{\vartheta\in\Theta}\norm{K_\vartheta}$. First,
\begin{gather*}
\abs{\Phi^{LL}(x,\vartheta_1)-\Phi^{LL}(x,\vartheta_2)}\leq\abs{\log(V_{\vartheta_1})-\log(V_{\vartheta_2})}+\abs{\frac{\varepsilon_{\vartheta_1}^2(x)}{V_{\vartheta_1}}-\frac{\varepsilon_{\vartheta_2}^2(x)}{V_{\vartheta_2}}}=:P_1+P_2.
\end{gather*}
Since, $\vartheta\mapsto V_{\vartheta}$ is Lipschitz with constant $L_V$ and bounded from below by $C_V>0$ we obtain
\begin{gather*}
P_1\leq \frac{L_V}{C_V} \norm{\vartheta_1-\vartheta_2}.
\end{gather*}
For $P_2$ it holds
\begin{align*}
P_2\leq& \frac{1}{C_V^2} \left(\abs{V_{\vartheta_2}\varepsilon_{\vartheta_1}^2(x)-V_{\vartheta_1}\varepsilon_{\vartheta_1}^2(x)}+\abs{V_{\vartheta_1}\varepsilon_{\vartheta_1}^2(x)-V_{\vartheta_1}\varepsilon_{\vartheta_2}^2(x)}\right)\\
\leq&  \frac{1}{C_V^2} \left(\abs{V_{\vartheta_1}-V_{\vartheta_2}}\varepsilon_{\vartheta_1}^2(x)+2S_V\left(\sup_{\vartheta\in\Theta}\abs{\varepsilon_{\vartheta}(x)}\right)\abs{\varepsilon_{\vartheta_1}(x)-\varepsilon_{\vartheta_2}(x)}\right)=:\frac{1}{C_V^2}(R_1+R_2).
% \abs{V_{\theta_1}-V_{\theta_2}}\abs{V_{\theta_2} \varepsilon_{\theta_1,1}^2(X)-V_{\theta_1} \varepsilon_{\theta_2,1}^2(X)}\leq L_V\abs{\theta_1-\theta_2} \left(\sup_{\theta\in\Theta} \abs{V_\theta}\right)\left(\abs{\varepsilon_{\theta_1,1}^2(X)}+\abs{\varepsilon_{\theta_2,1}^2(X)} \right).
\end{align*}
%For $n\geq1$, define $\tilde{\beta}_n= \sup_{\vartheta\in\Theta}\norm{\left(e^{\Delta A_\vartheta}-K_\vartheta B_\vartheta\right)^{n-1}}$. Then
Again, due to (\ref{equation:lemmafromgelfand}) we obtain
\begin{align}
\abs{\varepsilon_{\vartheta}(x)}&\leq \abs{x_1}+S_BS_K\left(\sum_{n=1}^\infty \norm{ \left(e^{\Delta A_\vartheta}-K_\vartheta B_\vartheta \right)^{n-1}} \abs{x_{1-n}}\right)\notag\\
&\leq\abs{x_1}+S_BS_K\left(\sum_{n=1}^\infty \alpha_n \abs{x_{1-n}}\right) \text{ and}\label{equation:varepsilonineq}\\
\varepsilon_{\vartheta}^2(x)&%=\left( X_1-\sum_{n=1}^\infty B_\theta' \left(\exp(\Delta A_\theta)-K_\theta B_\theta \right)^{n-1}K_\theta X_{1-n} \right)^2\\
\leq2 x_1^2+2S_B^2S_K^2\left( \sum_{n=1}^\infty \alpha_n \abs{x_{1-n}}\right)^2\notag.
\end{align}
From Hoelder's inequality for sequences we obtain
\begin{gather}
\leq2 x_1^2+2S_B^2S_K^2\left( \sum_{n=1}^\infty\alpha_n x_{1-n}^2\right)\left( \sum_{n=1}^\infty\alpha_n \right).\label{equation:varepsilonsquaredineq}
\end{gather}
Because $\vartheta\mapsto V_\vartheta$ is Lipschitz with constant $L_V$ we get from (\ref{equation:varepsilonsquaredineq}) 
\begin{gather*}
R_1\leq \norm{\vartheta_1-\vartheta_2}\sum_{n=0}^\infty \beta^{(1)}_n x_{1-n}^2,
\end{gather*}
where $\beta^{(1)}_0=2L_V$ and $\beta^{(1)}_n= 2L_VS_B^2S_K^2\left(\sum_{k=1}^\infty \alpha_k \right)\alpha_n$ for $n\geq1$. Moreover,
\begin{align*}
\abs{\varepsilon_{\vartheta_1}(x)-\varepsilon_{\vartheta_2}(x)}\leq&S_K\norm{B_{\vartheta_1}'-B_{\vartheta_2}'} \left(\sum_{n=1}^\infty \norm{\left(e^{\Delta A_{\vartheta_1}}-K_{\vartheta_1} B_{\vartheta_1}\right)^{n-1} }\abs{x_{1-n}}\right)\\
&+S_BS_K\left( \sum_{n=2}^\infty \norm{\left(e^{\Delta A_{\vartheta_1}}-K_{\vartheta_1} B_{\vartheta_1} \right)^{n-1}- \left(e^{\Delta A_{\vartheta_2}}-K_{\vartheta_2} B_{\vartheta_2} \right)^{n-1}}\abs{x_{1-n}}\right)\\
&+S_B\norm{K_{\vartheta_1}-K_{\vartheta_2}}\left( \sum_{n=1}^\infty\norm{\left(e^{\Delta A_{\vartheta_2}}-K_{\vartheta_2} B_{\vartheta_2}\right)^{n-1}}\abs{x_{1-n}}\right)=:S_1+S_2+S_3.
%\Big| B_{\theta_1}'  \sum_{n=1}^\infty \left(\exp(\Delta A_{\theta_1})-K_{\theta_1} B_{\theta_1} \right)^{n-1}K_{\theta_1}X_{1-n}\\
%&-B_{\theta_2}' \sum_{n=1}^\infty \left(\exp(\Delta A_{\theta_2})-K_{\theta_2} B_{\theta_2} \right)^{n-1}K_{\theta_2} X_{1-n}\Big|
\end{align*}
Since $\vartheta\mapsto B_\vartheta$ and $\vartheta\mapsto K_\vartheta$ are Lipschitz with constants $L_B$ and $L_K$ we obtain 
\begin{align*}
S_1+S_3 \leq  (L_BS_K+S_B L_K) \norm{\vartheta_1-\vartheta_2} \left( \sum_{n=1}^\infty \alpha_n \abs{x_{1-n}}\right).%\text{ and}\\
%S_3\leq& S_B L_K \norm{\theta_1-\theta_2}\left( \sum_{n=1}^\infty \tilde{\beta}_n \abs{X_{1-n}}\right).
\end{align*}
In view of $S_2$, a modification of \cite[Lemma B.4]{H2008} and an application of (\ref{equation:lemmafromgelfand}) give
\begin{align*}
&\norm{\left(e^{\Delta A_{\vartheta_1}}-K_{\vartheta_1} B_{\vartheta_1} \right)^{n-1}- \left(e^{\Delta A_{\vartheta_2}}-K_{\vartheta_2} B_{\vartheta_2} \right)^{n-1}}\\
&\leq \left(\norm{e^{\Delta A_{\vartheta_1}}-e^{\Delta A_{\vartheta_2}}}+\norm{K_{\vartheta_2} B_{\vartheta_2}-K_{\vartheta_1} B_{\vartheta_1}}\right)\\
&\quad\times \sum_{i=0}^{n-2} \norm{\left(e^{\Delta A_{\vartheta_1}}-K_{\vartheta_1} B_{\vartheta_1} \right)^{i}}\norm{\left(e^{\Delta A_{\vartheta_2}}-K_{\vartheta_2} B_{\vartheta_2} \right)^{n-i-2}}\\
&\leq \left(\norm{e^{\Delta A_{\vartheta_1}}-e^{\Delta A_{\vartheta_2}}}+\norm{K_{\vartheta_2} B_{\vartheta_2}-K_{\vartheta_1} B_{\vartheta_1}}\right)\\
&\quad\times \sum_{i=0}^{n-2}\left(S_\rho^i \mathbb{1}_{\{i-1<N_0\}} +\left(\rho+\varepsilon\right)^i  \mathbb{1}_{\{i-1\geq N_0\}}\right)\left(S_\rho^{n-i-2} \mathbb{1}_{\{n-i-1<N_0\}} +\left(\rho+\varepsilon\right)^{n-i-2}  \mathbb{1}_{\{n-i-1\geq N_0\}}\right)\\
&\leq \left(\norm{e^{\Delta A_{\vartheta_1}}\!-\!e^{\Delta A_{\vartheta_2}}}\!+\!\norm{K_{\vartheta_2} B_{\vartheta_2}\!-\!K_{\vartheta_1} B_{\vartheta_1}}\right)\\
&\quad\times\Bigg((N_0-1)S_\rho^{2N_0-2} \mathbb{1}_{\{n<2N_0\}} \!+\!\left(\rho+\varepsilon\right)^{n-1} \left(\frac{S_\rho}{\rho\!+\!\varepsilon}\right)^{\!N_0} \sum_{i=0}^{N_0-2}\left(\rho\!+\!\varepsilon\right)^{N_0-i-1}\\&\qquad\quad\!+\!S_\rho^{N_0-1}\sum_{i=\left\lceil \frac{n}{2}\right\rceil-1}^{n-2}\left(\rho\!+\!\varepsilon\right)^{i}\!+\!(n\!-\!1)\left(\rho\!+\!\varepsilon\right)^{n-2} \Bigg)
\end{align*}
Since  $\vartheta\mapsto A_{\vartheta}$, $\vartheta\mapsto K_{\vartheta} B_{\vartheta}$ and the matrix exponential on a compact subset of $M_{p\times p}(\R)$ are Lipschitz with constants $L_A$, $L_{KB}$ and $L_{exp}$, respectively, we can bound the above expression by $\norm{\vartheta_1-\vartheta_2 }\left(\Delta L_AL_{exp}+L_{KB}\right)\beta^{(2)}_{n}$, where for $n\geq2$
\begin{align*}
\beta^{(2)}_n=&(N_0-1) S_\rho^{2N_0-2} \mathbb{1}_{\{n<2N_0\}}\!+\!\left(\!\!\left(\frac{S_\rho}{\rho+\varepsilon}\right)^{N_0}\!\left(\frac{\left(\rho+\varepsilon\right)-\left(\rho+\varepsilon\right)^{N_0}}{1-\left(\rho+\varepsilon\right)}\right)\!+\!\frac{S_\rho^{N_0}}{1-\left(\rho+\varepsilon\right)}\!+\!(n-1)\!\!\right)\\
&\qquad\qquad\qquad\qquad\qquad\times \left(\rho+\varepsilon\right)^{ \frac{n}{2}-1}.
\end{align*}
It is clear that $\sum_{n=2}^\infty n\beta^{(2)}_n<\infty$. Thus
\begin{align*}
S_2\leq& \norm{\vartheta_1-\vartheta_2}S_BS_K\left(\Delta L_AL_{exp}+L_{KB}\right)\left(\sum_{n=2}^\infty \beta^{(2)}_{n} \abs{x_{1-n}}\right).
\end{align*}
Using (\ref{equation:varepsilonineq})
\begin{align*}
R_2\leq& \norm{\vartheta_1-\vartheta_2} C_2  \left( x_1^2+\alpha_1 x_0^2 + \sum_{n=2}^\infty \left( \alpha_n+\beta^{(2)}_{n}\right) x_{1-n}^2\right)= \norm{\vartheta_1-\vartheta_2} C_2 \sum_{n=0}^\infty \beta^{(3)}_n x_{1-n}^2,
\end{align*}
where 
\begin{gather*}
\beta^{(3)}_0=1,\quad \beta^{(3)}_1= \alpha_1,\quad \beta^{(3)}_n= \alpha_n+\beta^{(2)}_{n },\quad n\geq2
\end{gather*}
and the finite constant $C_2$ is given by
\begin{align*}
C_2=&4S_V\Bigg(1+2\left(S_B^2S_K^2+\left(L_BS_K+S_BL_K\right)^2\right)\left(\sum_{n=1}^\infty \alpha_n\right)\\
&\qquad\quad+S_B^2S_K^2\left(\Delta L_AL_{exp}+L_{KB} \right)^2\left(\sum_{n=2}^\infty\beta^{(2)}_{n}\right) \Bigg).
\end{align*}
%Using (\ref{equation:varepsilonineq})
%\begin{align*}
%R_2\leq& \norm{\vartheta_1-\vartheta_2}2S_V\Bigg(\left(L_BS_K+L_KS_B\right) \left(\sum_{n=1}^\infty \beta^{(2)}_nX_{1-n}^2\right)\\
%&+S_BS_K(\Delta L_{exp}L_A+L_{KB})   \left(\sum_{n=1}^\infty \beta^{(3)}_n X_{1-n}^2\right)\Bigg),
%\end{align*}
%where
%\begin{alignat*}{3}
% \beta^{(2)}_0&=2,\quad &&\beta^{(2)}_k& &= \max(S_BS_K,1)^2\left(\sum_{n=1}^\infty \tilde{\beta}_n \right)\tilde{\beta}_n, n\geq1\text{ and}\\
%\beta^{(3)}_0&=2,\quad  &&\beta^{(3)}_n& &= \max(S_BS_K\tilde{\beta}_n,(n-1)\tilde{\beta}_{n-1})\\
%& && & & \quad\times\left(\sum_{n=1}^\infty \max(S_BS_K\tilde{\beta}_n,(n-1)\tilde{\beta}_{n-1}) \tilde{\beta}_n \right)\tilde{\beta}_n, n\geq1.
%\end{alignat*}
%Overall,
%\begin{gather*}
%R_2\leq \norm{\vartheta_1-\vartheta_2} \left(\sum_{n=1}^\infty \beta^{(5)}_n X_{1-n}^2\right),
%\end{gather*}
%where $\beta^{(5)}_n= 2S_V\max((L_BS_K+L_KS_B),S_B (\Delta L_{exp}L_A+L_{KB}))(\beta^{(3)}_n+\beta^{(4)}_n)$, for all $n\geq0$. 
Finally,
\begin{gather*}
\abs{\Phi^{LL}(x,\vartheta_1)-\Phi^{LL}(x,\vartheta_2)}\leq \norm{\vartheta_1-\vartheta_2} D_1\left(1+\sum_{n=0}^\infty\beta_n x_{1-n}^2 \right),
\end{gather*}
where $D_1=\frac{L_V}{C_V}+\frac{C_2}{C_V^2}$ and $\beta_n= \beta^{(1)}_n+\beta^{(3)}_n$, for all $n\geq0$, such that $\sum_{n=0}^\infty n\beta_n<\infty$ and $\Phi^{LL}(x,\cdot)\in\mathcal{L}_d(0,D_1(1+\sum_{k=0}^\infty \beta_k x_{1-k}^2))$.
%Since eigenvalues of a matrix are continuous functions of its entries (\ref{equation:alphasummable}) holds uniformly in $\Theta$. By similar arguments we obtain $\sum_{n=0}^\infty n \beta_n^{(6)}<\infty$.\\
To show part (c) we note that
\begin{align*}
\tilde{Y}_u(t)=\tilde{Y}^{\vartheta^*(u)}(t)=\int_{-\infty}^t B_{\vartheta^*(u)}'e^{A_{\vartheta^*(u)}}C_{\vartheta^*(u)}L(ds)
\end{align*} 
is of the form (\ref{equation:infinitememorylevydrivenmovingaverage}). From \hyperref[assumption:C2]{(C2)} it follows, that the kernel is of exponential decay and therefore also in $L^1\cap L^2$. As shown above $\Phi^{LL}(\cdot,\vartheta)\in\mathcal{L}_{\infty}^{1,2}(\alpha)$ for all $\vartheta\in\Theta$. Moreover, $\gamma+\int_{\abs{x}>1}x\nu(dx)=0$ and (\ref{eq:condlevyproc}) holds. Therefore, condition (c) from Theorem \ref{theorem:consistency} follows from Corollary  \ref{corollary:thetaweakdependenceinfinitememory}.
\end{proof}

\subsection{Proof for Section \ref{sec5-3}}
\label{sec7-6}

\begin{proof}[Proof of Theorem \ref{equation:consistencymodifiedQMLE}]
We use the same notation as in the proof of Proposition \ref{proposition:qmlecontrast}. First, we note that $\sup_{\vartheta\in\Theta}|\tilde\Phi^{LL}(0,\vartheta)|<\infty$. Following the proof of Theorem \ref{theorem:consistency} it is sufficient to show that for any $\eta>0$ there exists $\lambda>0$ such that
\begin{align}
\norm{\tilde{M}_N(\vartheta)-M(\vartheta)}_{L^1}&\underset{N\rightarrow\infty}{\longrightarrow}0 \text{ for all }\vartheta\in\Theta\text{ and}\label{equation:truncatedQMLEproof1}\\
P\left( \sup_{\norm{\vartheta_1-\vartheta_2}<\lambda}\left|\tilde{M}_N(\vartheta_1)-\tilde{M}_N(\vartheta_2) \right|>\eta\right) &\underset{N\rightarrow\infty}{\longrightarrow}0.\label{equation:truncatedQMLEproof2}
\end{align}
We start by proving (\ref{equation:truncatedQMLEproof1}). From Proposition \ref{proposition:qmlecontrast} it follows that $\norm{M_N(\vartheta)-M(\vartheta)}_{L^1}\underset{N\rightarrow\infty}{\longrightarrow}0$, such that it is left to show that $\lVert\tilde{M}_N(\vartheta)-M_N(\vartheta)\rVert_{L^1}\underset{N\rightarrow\infty}{\longrightarrow}0$. Now
\begin{align*}
\norm{\tilde{M}_N(\vartheta)-M_N(\vartheta)}_{L^1}\leq&\norm{\frac{\delta_N}{b_N}\sum_{i=-m_N}^{m_N-1}K\left(\frac{\tau_i^N-u}{b_N} \right)\frac{1}{V_\vartheta}\left(\varepsilon_{\vartheta}^2(Y_N^{\vartheta^*})-\tilde{\varepsilon}_{\vartheta,i,m_N}^2(Y_N^{\vartheta^*})\right)}_{L^1}\\
&+ \Bigg\lVert\frac{\delta_N}{b_N} K\left(\frac{\tau_{m_N}^N-u}{b_N}\right)\Bigg( \log(2\pi)-\log(V_\vartheta)-\frac{1}{V_\vartheta}\Bigg(Y_N^{\vartheta^*}(\tau_{m_N+1}^N)\\
&-B_\vartheta'\sum_{n=1}^\infty \left(e^{\Delta A_\vartheta}-K_\vartheta B_\vartheta' \right)^{n-1}K_\vartheta Y_N^{\vartheta^*}(\tau_{m_N+1-n}) \Bigg)^2\Bigg)\Bigg\rVert_{L^1}=:P_1+P_2.
\end{align*}
From \hyperref[assumption:LS]{(LS)} we obtain $\sup_{t\in\R}\lVert Y^{\vartheta^*}_N(t)\rVert_{L^2}\leq S_Y$ for some constant $S_Y>0$ and all $N\in\N$. Thus, using Hoelder's inequality for sequences
\begin{gather*}
P_2\leq \frac{\delta_N}{b_N}|K|_\infty \left(\log(2\pi)+\abs{\log(V_\vartheta)}+\frac{2S_Y^2}{C_V}+\frac{2S_B^2S_K^2S_Y^2}{C_V}\left(\sum_{n=1}^\infty \alpha_{n}\right)^2\right)\underset{N\rightarrow\infty}{\longrightarrow}0,
\end{gather*}
where $\alpha_n$ is defined in (\ref{equation:alphan}). For $P_1$ we obtain from Hoelder's inequality
\begin{align*}
P_1\leq&\frac{\delta_N}{b_N}\sum_{i=-m_N}^{m_N-1}\frac{|K|_\infty}{C_V}\left(\Lnorm{\varepsilon_{\vartheta}(Y_N^{\vartheta^*})+\tilde{\varepsilon}_{\vartheta,i,m_N}(Y_N^{\vartheta^*})}\Lnorm{\varepsilon_{\vartheta}(Y_N^{\vartheta^*})-\tilde{\varepsilon}_{\vartheta,i,m_N}(Y_N^{\vartheta^*})}\right).
\end{align*}
Similar arguments as in the first part of the proof of Proposition \ref{proposition:qmlecontrast} give 
\begin{align*}
\Lnorm{\varepsilon_{\vartheta}(Y_N^{\vartheta^*})+\tilde{\varepsilon}_{\vartheta,i,m_N}(Y_N^{\vartheta^*})}\leq&2S_Y^2\left(1+S_BS_K\sum_{n=1}^\infty \alpha_n\right)=C_1<\infty.
\end{align*}
Thus
\begin{align*}
P_1\leq& \frac{C_1|K|_\infty}{C_V}\frac{\delta_N}{b_N}\sum_{i=-m_N}^{m_N}\Lnorm{\varepsilon_{\vartheta}(Y_N^{\vartheta^*})-\tilde{\varepsilon}_{\vartheta,i,m_N}(Y_N^{\vartheta^*})}\\
\leq& \frac{C_1S_Y^2|K|_\infty S_BS_K}{C_V}\frac{\delta_N}{b_N}\sum_{i=-m_N}^{m_N}\sum_{n=m_N+i+2}^\infty \norm{(e^{\Delta A_\theta}-K_\theta B_\theta')^{n-1}}\\
\leq& \frac{C_1S_Y^2|K|_\infty S_BS_K}{C_V}\frac{\delta_N}{b_N}\sum_{i=0}^{2m_N}\sum_{n=1}^\infty\norm{(e^{\Delta A_\theta}-K_\theta B_\theta')^{n+i}}\\
\leq& \frac{C_1S_Y^2|K|_\infty S_BS_K}{C_V}\frac{\delta_N}{b_N}\left(\sum_{n=0}^\infty\alpha_n\right)^2\underset{N\rightarrow\infty}{\longrightarrow}0.
%\leq& \frac{C_1S_Y|K|_\infty}{V_\theta}\frac{\delta_N}{b_N}\sum_{n=1}^\infty\norm{(\exp(\Delta A_\theta)-K_\theta B_\theta')^{n}}\sum_{i=1}^{2m_N+1}\norm{(\exp(\Delta A_\theta)-K_\theta B_\theta')^{i}}
\end{align*}
To show (\ref{equation:truncatedQMLEproof2}) it is enough to observe that analogous to the second part of the proof of Proposition \ref{proposition:qmlecontrast} it holds for $\vartheta_1\neq \vartheta_2$ and any real sequence $x=(x_{1-k})_{k\in\N_0}$
\begin{align*}
\abs{\tilde{\Phi}_{i,m_N}^{LL}\left(x,\vartheta_1\right)-\tilde{\Phi}_{i,m_N}^{LL}\left(x,\vartheta_2\right)}\leq \norm{\vartheta_1-\vartheta_2}D_1\left(1+\sum_{n=0}^\infty\beta_n x_{1-n}^2 \right).
\end{align*}
Finally, similar steps as in (\ref{equation:stochequiproof1}) and (\ref{equation:stochequiproof2}) ensure that (\ref{equation:truncatedQMLEproof2}) holds.
\end{proof}

\subsection{Proof for Section \ref{sec5-4}}
\label{sec7-7}

The following auxiliary lemmata will be essential for the proof of Theorem \ref{theorem:whittleconsistent}.

\begin{Lemma}\label{lemma:convergencegamma}
Assume that \hyperref[assumption:C0]{(C0)} is satisfied for p=2, \hyperref[observations:O1]{(O1)} holds for $N\delta_N=\Delta$ and that the localizing kernel $K$ is continuous and positive or the non-continuous rectangular kernel (\ref{equation:rectangularkernel}). Then, for all $h\in\N_0$
\begin{align*}
\hat{\Gamma}_N^{loc}(h)%&=\frac{\delta_N}{b_N}\sum_{j=-m_N}^{m_N-h-1}\sqrt{K\left(\frac{\tau_{j}^N+\tfrac{\Delta h}{N}-u}{b_N}\right)K\left(\frac{\tau_j^N-u}{b_N}\right)}Y_N^{\vartheta^*}(\tau_j^N)Y_N^{\vartheta^*}(\tau_{j}^N+\tfrac{\Delta h}{N})\\
&\overset{L^1}{\underset{N\rightarrow\infty}{\longrightarrow}}E[\tilde{Y}_u(0)\tilde{Y}_u(h)].
\end{align*}
\end{Lemma}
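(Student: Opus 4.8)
The plan is to recognise $\hat{\Gamma}_N^{loc}(h)$ as a localized average of a finite–memory functional of the observations and to apply the localized law of large numbers \cite[Theorem 3.5]{SS2021}, after first disposing of the mismatch between the symmetric weight $\sqrt{K(\cdot)K(\cdot)}$ and the single–kernel weight used there. Throughout I would use that, under \hyperref[assumption:C0]{(C0)} for $p=2$, \hyperref[assumption:LS]{(LS)} together with the stationarity of $\tilde{Y}_u$ yields a constant $S_Y<\infty$ with $\sup_{t\in\R}\Lnorm{Y_N^{\vartheta^*}(t)}\leq S_Y$ for all $N\in\N$, exactly as in the proof of Theorem \ref{equation:consistencymodifiedQMLE}.

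The first step is a kernel replacement. Since \hyperref[observations:O1]{(O1)} holds with $N\delta_N=\Delta$, the two kernel arguments in the weight of $\hat{\Gamma}_N^{loc}(h)$ differ by $\frac{\tau_{j+h}^N-\tau_j^N}{b_N}=\frac{h\Delta}{Nb_N}\to0$, because $Nb_N\to\infty$. I would therefore compare $\hat{\Gamma}_N^{loc}(h)$ with the single–kernel average
\begin{align*}
A_N=\frac{\delta_N}{b_N}\sum_{j=-m_N}^{m_N-h} K\left(\frac{\tau_{j+h}^N-u}{b_N}\right) Y_N^{\vartheta^*}(\tau_j^N)\,Y_N^{\vartheta^*}(\tau_{j+h}^N),
\end{align*}
bounding the $L^1$ distance by Cauchy--Schwarz and the uniform bound $S_Y$:
\begin{align*}
\Lonenorm{\hat{\Gamma}_N^{loc}(h)-A_N}\leq S_Y^2\,\frac{\delta_N}{b_N}\sum_{j=-m_N}^{m_N-h}\left|\sqrt{K\left(\tfrac{\tau_{j+h}^N-u}{b_N}\right)K\left(\tfrac{\tau_j^N-u}{b_N}\right)}-K\left(\tfrac{\tau_{j+h}^N-u}{b_N}\right)\right|.
\end{align*}
For a continuous positive $K$ the map $\sqrt{K}$ is uniformly continuous, so the bracket is $o(1)$ uniformly in $j$, while $\frac{\delta_N}{b_N}(2m_N+1)\to2$; hence the bound vanishes. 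For the rectangular kernel the bracket is nonzero only for the at most $h$ indices $j$ with $\tau_{j+h}^N$ outside but $\tau_j^N$ inside the window, so the bound is $O(h\,\delta_N/b_N)\to0$. This treats the continuous and the rectangular case simultaneously.

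The second step identifies $A_N$ as a finite–memory localized average. Reindexing by $i=j+h$ and using $\tau_{i-h}^N=\tau_i^N-h\Delta/N=\tau_i^N+\frac{\Delta(1-(h+1))}{N}$, together with $\tau_i^N=\tau_i^N+\frac{\Delta(1-1)}{N}$, the summand of $A_N$ becomes $K\!\left(\frac{\tau_i^N-u}{b_N}\right) g\big((Y_N^{\vartheta^*}(\tau_i^N+\tfrac{\Delta(1-k)}{N}))_{k\in\N_0}\big)$ with the finite–memory functional $g((x_k)_{k\in\N_0})=x_1 x_{h+1}$, the summation range $-m_N+h\leq i\leq m_N$ differing from the standard one by only $h$ terms (negligible in $L^1$ by $|K|_\infty S_Y^2\,\delta_N/b_N\to0$). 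A direct estimate gives $|g(x)-g(y)|\leq\norm{x-y}_1(\norm{x}_1+\norm{y}_1)$ and $|g(x)|\leq\norm{x}_1^{2}$, so $g\in\mathcal{L}_{h+2}(1,1)$ with $M=1$. By Proposition \ref{proposition:inheritanceproperties}(a) the process $g(\tilde{Z}_u^{(h+1)}(t))$ is a locally stationary approximation of $g(Z_N^{(h+1)}(t))$ for $\tilde{p}=2/(M+1)=1$; it is stationary, ergodic and in $L^1$ since $E|\tilde{Y}_u(0)\tilde{Y}_u(\Delta h)|\leq\Lnorm{\tilde{Y}_u(0)}^2<\infty$. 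Thus \cite[Theorem 3.5]{SS2021} applies and yields
\begin{align*}
A_N\overset{L^1}{\underset{N\rightarrow\infty}{\longrightarrow}}E\left[g\big((\tilde{Y}_u(\Delta(1-k)))_{k\in\N_0}\big)\right]=E[\tilde{Y}_u(0)\tilde{Y}_u(\Delta h)],
\end{align*}
the last equality by stationarity, which is the lag-$h$ autocovariance of the sampled process $\tilde{Y}^{\vartheta^*(u),\Delta}$ abbreviated $E[\tilde{Y}_u(0)\tilde{Y}_u(h)]$ in the statement. Combining the two steps gives the claim.

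The only genuinely delicate point is the first step, the passage from the symmetric weight $\sqrt{K(\cdot)K(\cdot)}$ to a single kernel evaluation, since the localized limit theorems of \cite{SS2021} are stated only for weights $K(\frac{\tau_i^N-u}{b_N})$. For a continuous kernel this is routine uniform continuity, but the admissible rectangular kernel is discontinuous, and there one must instead exploit that the mismatch is confined to a boundary layer of width $h\delta_N$ containing only $O(h)$ sampling points, whose contribution is annihilated by the prefactor $\delta_N/b_N\to0$. Everything else reduces, via the reindexing $i=j+h$, to machinery already in place: Proposition \ref{proposition:inheritanceproperties} for the hereditary locally stationary approximation of the product functional $g$, and \cite[Theorem 3.5]{SS2021} for the $L^1$ law of large numbers.
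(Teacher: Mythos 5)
Your proposal is correct and follows essentially the same route as the paper: the heart of both arguments is the $L^1$ reduction of the symmetric weight $\sqrt{K(\cdot)K(\cdot)}$ to a single kernel evaluation (handled by uniform continuity of $\sqrt{K}$ in the continuous case and by a trivial boundary count for the rectangular kernel), after which one invokes a localized law of large numbers from \cite{SS2021}. The only difference is cosmetic: the paper cites the ready-made sample-autocovariance result \cite[Theorem 5.14]{SS2021} for the single-kernel average, whereas you rebuild that convergence from Proposition \ref{proposition:inheritanceproperties}(a) applied to $g(x)=x_1x_{h+1}\in\mathcal{L}_{h+2}(1,1)$ together with \cite[Theorem 3.5]{SS2021} — both are valid, and your version is slightly more self-contained given what is quoted in this paper.
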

\begin{proof}
In view of \cite[Theorem 5.14]{SS2021} it is enough to observe
\begin{align*}
&\norm{\hat{\Gamma}_N^{loc}(h)-\frac{\delta_N}{b_N} \sum_{j=-m_N}^{m_N}K\left(\frac{\tau_j^N-u}{b_N} \right)Y_N^{\vartheta^*}(\tau_j^N)Y_N^{\vartheta^*}(\tau_{j+h}^N)}_{L^1}\\%\underset{N\rightarrow\infty}{\longrightarrow}0.
&\leq \frac{\delta_N}{b_N}  \sum_{j=-m_N}^{m_N}\abs{\sqrt{K\left(\frac{\tau_{j+h}^N-u}{b_N}\right)K\left(\frac{\tau_j^N-u}{b_N}\right)}-K\left(\frac{\tau_j^N-u}{b_N}\right)} \norm{Y_N^{\vartheta^*}(\tau_j^N)Y_N^{\vartheta^*}(\tau_{j+h}^N)}_{L^1}\\
&+ \frac{\delta_N}{b_N}  \sum_{j=m_N-h+1}^{m_N}\sqrt{K\left(\frac{\tau_{j+h}^N-u}{b_N}\right)K\left(\frac{\tau_j^N-u}{b_N}\right)}  \norm{Y_N^{\vartheta^*}(\tau_j^N)Y_N^{\vartheta^*}(\tau_{j+h}^N)}_{L^1} =:P_1+P_2.
\end{align*}
Since $\lVert Y_N^{\vartheta^*}(\tau_j^N)Y_N^{\vartheta^*}(\tau_{j+h}^N)\rVert_{L^1} \leq \lVert Y_N^{\vartheta^*}(\tau_j^N)\rVert_{L^2} \lVert Y_N^{\vartheta^*}(\tau_{j+h}^N)\rVert_{L^2}<S_Y^2$ for some constant $S_Y>0$, we obtain
\begin{gather*}
P_2\leq  \frac{hS_Y^2 \abs{K}_\infty\delta_N}{b_N}\underset{N\rightarrow\infty}{\longrightarrow}0.
\end{gather*}
From the continuity of $K$ it follows that, given any $\varepsilon>0$, $\Big|K\Big(\tfrac{\tau_{j+h}^N-u}{b_N}\Big)-K\Big(\tfrac{\tau_j^N-u}{b_N}\Big)\Big|<\varepsilon$ for sufficiently large $N$. Thus it holds
\begin{align*}
P_1&\leq  \frac{S_Y^2\delta_N}{b_N}  \sum_{j=-m_N}^{m_N}\sqrt{\abs{K\left(\frac{\tau_{j+h}^N-u}{b_N}\right)-K\left(\frac{\tau_j^N-u}{b_N}\right)}K\left(\frac{\tau_j^N-u}{b_N}\right)}\\
&\leq  \sqrt{\abs{K}_\infty}S_Y^2 (2m_N+1)\frac{ \delta_N}{b_N}  \sqrt{\varepsilon}
\underset{N\rightarrow\infty}{\longrightarrow}  \sqrt{\abs{K}_\infty}2 S_Y^2 \sqrt{\varepsilon}.
\end{align*}
For the non-continuous rectangular kernel (\ref{equation:rectangularkernel}) we can directly bound $P_1$ and obtain
\begin{align*}
P_1\leq S_Y^2 \frac{\delta_N}{b_N}\frac{1}{2} \sum_{j=-m_N}^{m_N}\abs{\mathbb{1}_{\left\{(j+h)\frac{\delta_N}{b_N}\in[-1,1]\right\}}-\mathbb{1}_{\left\{j\frac{\delta_N}{b_N}\in[-1,1]\right\}}}\mathbb{1}_{\left\{j\frac{\delta_N}{b_N}\in[-1,1]\right\}} \leq S_Y^2 \frac{\delta_N}{b_N} \frac{h}{2}\underset{N\rightarrow\infty}{\longrightarrow}0.
\end{align*}
%where the last inequality follows from the mean value theorem and $\abs{K'}_\infty=\sup_{x\in\R}\abs{K'(x)}$.
\end{proof}

\begin{Lemma}\label{lemma:f_Yislipschitz}
Assume that \hyperref[assumption:C1]{(C1)} - \hyperref[assumption:C7]{(C7)} hold. Then, there exists a constant $f_{inf}$ such that $f_{\tilde{Y}}^{(\Delta)}(\omega,\vartheta)>f_{inf}>0$ for all $\omega\in[-\pi,\pi]$ and $\vartheta\in\Theta$. Moreover, there exists a constant $L_f>0$ such that
\begin{gather}\label{eq:f_Yislipschitz}
\sup_{\omega\in[-\pi,\pi]}\abs{f_{\tilde{Y}}^{(\Delta)}(\omega,\vartheta_1)-f_{\tilde{Y}}^{(\Delta)}(\omega,\vartheta_2)}\leq L_f\norm{\vartheta_1-\vartheta_2} \text{ for all } \vartheta_1,\vartheta_2\in\Theta.
\end{gather}
\end{Lemma}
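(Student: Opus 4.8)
The plan is to read both claims off the closed-form spectral density (\ref{eq:spectraldensitysampledprocess}), which I would rewrite as the Hermitian form
\[
f_{\tilde{Y}}^{(\Delta)}(\omega,\vartheta)=\frac{1}{2\pi}\,v_\vartheta(\omega)^*\,\cancel{\Sigma}^{(\Delta)}_\vartheta\,v_\vartheta(\omega),\qquad v_\vartheta(\omega)=\bigl(e^{-i\omega}\mathbf{1}_p-e^{\Delta A_\vartheta'}\bigr)^{-1}B_\vartheta,
\]
and then exploit that all ingredients are jointly continuous on the compact set $[-\pi,\pi]\times\Theta$. For the lower bound I would establish pointwise strict positivity and pass to the infimum by compactness; for the Lipschitz estimate I would note that $f_{\tilde{Y}}^{(\Delta)}$ is a finite product of factors that are bounded and Lipschitz in $\vartheta$, uniformly in $\omega$.

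For the lower bound, the first step is to control $v_\vartheta(\omega)$. By the Lemma following \hyperref[assumption:C4]{(C4)}, $e^{\Delta A_\vartheta}$ (hence $e^{\Delta A_\vartheta'}$) has all eigenvalues of modulus strictly less than one, so $e^{-i\omega}\mathbf{1}_p-e^{\Delta A_\vartheta'}$ is invertible for every $\omega\in[-\pi,\pi]$; its determinant is continuous and nonvanishing on the compact set $[-\pi,\pi]\times\Theta$, hence bounded away from zero, so $(\omega,\vartheta)\mapsto v_\vartheta(\omega)$ is continuous and, since $B_\vartheta\neq0$ by \hyperref[assumption:C4]{(C4)}, $\inf_{\omega,\vartheta}\norm{v_\vartheta(\omega)}>0$. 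The crucial step is uniform positive definiteness of $\cancel{\Sigma}^{(\Delta)}_\vartheta$: from (\ref{eq:variancesampledprocess}), $\cancel{\Sigma}^{(\Delta)}_\vartheta=\Sigma_\vartheta\int_0^\Delta e^{A_\vartheta s}C_\vartheta C_\vartheta' e^{A_\vartheta's}\,ds$ is $\Sigma_\vartheta$ times the controllability Gramian of $(A_\vartheta,C_\vartheta)$ on $[0,\Delta]$, which is positive definite precisely when $(A_\vartheta,C_\vartheta)$ is controllable. By \hyperref[assumption:C5]{(C5)} the triplet $(A_\vartheta,B_\vartheta,C_\vartheta)$ is minimal, hence controllable by Proposition \ref{proposition:controllableobservableminimal}, and $\Sigma_\vartheta>0$ by \hyperref[assumption:C1]{(C1)}; thus $\cancel{\Sigma}^{(\Delta)}_\vartheta\succ0$ for each $\vartheta$. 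Continuity of eigenvalues and compactness of $\Theta$ give $\inf_{\vartheta\in\Theta}\lambda_{\min}(\cancel{\Sigma}^{(\Delta)}_\vartheta)>0$, and combining the two bounds yields
\[
f_{\tilde{Y}}^{(\Delta)}(\omega,\vartheta)\geq\frac{1}{2\pi}\,\lambda_{\min}\bigl(\cancel{\Sigma}^{(\Delta)}_\vartheta\bigr)\,\norm{v_\vartheta(\omega)}^2\geq f_{inf}>0.
\]

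For the Lipschitz estimate, I would collect the regularity of the building blocks and then telescope. By \hyperref[assumption:C4]{(C4)} the maps $\vartheta\mapsto A_\vartheta,B_\vartheta,C_\vartheta,\Sigma_\vartheta$ are $C^1$ on the compact $\Theta$, hence bounded and Lipschitz; since the matrix exponential is Lipschitz on compact subsets of $M_{p\times p}(\R)$ (as already used in the proof of Proposition \ref{proposition:qmlecontrast}), $\vartheta\mapsto e^{\Delta A_\vartheta}$ and $\vartheta\mapsto\cancel{\Sigma}^{(\Delta)}_\vartheta$ are bounded and Lipschitz too. The resolvent $(\omega,\vartheta)\mapsto(e^{i\omega}\mathbf{1}_p-e^{\Delta A_\vartheta})^{-1}$ is uniformly bounded by the determinant bound above and Lipschitz in $\vartheta$ uniformly in $\omega$ via $X^{-1}-Y^{-1}=X^{-1}(Y-X)Y^{-1}$ with $X-Y=e^{\Delta A_{\vartheta_2}}-e^{\Delta A_{\vartheta_1}}$, using the uniform bounds on $X^{-1},Y^{-1}$ and the Lipschitz continuity of $\vartheta\mapsto e^{\Delta A_\vartheta}$. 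Thus, for each fixed $\omega$, $f_{\tilde{Y}}^{(\Delta)}$ is a finite product of matrix factors that are uniformly bounded in $(\omega,\vartheta)$ and Lipschitz in $\vartheta$ with constants independent of $\omega$, and a standard telescoping estimate for products of bounded Lipschitz factors gives (\ref{eq:f_Yislipschitz}) with $L_f$ independent of $\omega$.

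The main obstacle is the uniform strict positivity: identifying $\cancel{\Sigma}^{(\Delta)}_\vartheta$ with a controllability Gramian and invoking minimality together with Proposition \ref{proposition:controllableobservableminimal} to secure positive definiteness at each $\vartheta$, and then upgrading this pointwise positivity to a uniform lower bound through compactness and continuity of eigenvalues. The Lipschitz part is routine once the resolvent is controlled uniformly in $\omega$.
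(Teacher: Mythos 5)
Your proposal is correct, and both halves take a somewhat different route from the paper. For the lower bound, the paper simply invokes continuity of $(\omega,\vartheta)\mapsto f_{\tilde{Y}}^{(\Delta)}(\omega,\vartheta)^{-1}$ by reference to the proof of \cite[Proposition 2]{FHM2020} and concludes by compactness; you instead give a self-contained argument, writing $f_{\tilde{Y}}^{(\Delta)}$ as a Hermitian form, identifying $\cancel{\Sigma}^{(\Delta)}_\vartheta$ as $\Sigma_\vartheta$ times a finite-horizon controllability Gramian, and deducing its uniform positive definiteness from minimality via Proposition \ref{proposition:controllableobservableminimal} together with \hyperref[assumption:C1]{(C1)} and compactness of $\Theta$ --- this makes explicit exactly where \hyperref[assumption:C5]{(C5)} enters, which the paper's citation leaves implicit. (You correctly read the second resolvent in (\ref{eq:spectraldensitysampledprocess}) as carrying an inverse; the displayed formula omits it, but the paper's own proof confirms this is a typo.) For the Lipschitz bound, the paper expands both resolvents as Neumann series $\sum_{j\ge0}e^{A_\vartheta\Delta j}e^{\mp ij\omega}$ and telescopes into five terms, controlling $\lVert e^{A_{\vartheta_1}\Delta j}-e^{A_{\vartheta_2}\Delta j}\rVert\le C j e^{-\tilde\alpha\Delta j}\lVert\vartheta_1-\vartheta_2\rVert$ termwise; you keep the resolvents intact and use the identity $X^{-1}-Y^{-1}=X^{-1}(Y-X)Y^{-1}$ with a uniform resolvent bound from the determinant being bounded away from zero. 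The two are equivalent in substance; your version is shorter and avoids the series bookkeeping, while the paper's series bounds mirror the machinery already set up for $\alpha_n$ in the proof of Proposition \ref{proposition:qmlecontrast} and are reused there. Either way the conclusion (\ref{eq:f_Yislipschitz}) follows with $L_f$ independent of $\omega$.
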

\begin{proof}
Since $(\omega,\vartheta)\mapsto f_{\tilde{Y}}^{(\Delta)}(\omega,\vartheta)^{-1}$ is continuous (see e.g. the proof of \cite[Proposition 2]{FHM2020}) and $\Theta \times [-\pi,\pi]$ is compact, there exists a constant $f_{inf}>0$ such that $f_{\tilde{Y}}^{(\Delta)}(\omega,\vartheta)>f_{inf}>0$.\\
In order to show (\ref{eq:f_Yislipschitz}) we first note that, using the representation of $f_{\tilde{Y}}^{(\Delta)}(\omega,\vartheta)$ from (\ref{eq:spectraldensitysampledprocess}), we obtain
\begin{align}\label{eq:decompositionf_Y}
\abs{f_{\tilde{Y}}^{(\Delta)}(\omega,\vartheta_1)-f_{\tilde{Y}}^{(\Delta)}(\omega,\vartheta_2)}
&=\frac{1}{2\pi} \Bigg| \ \cancel{\Sigma}^{(\Delta)}_{\vartheta_1}B_{\vartheta_1}'\left(\sum_{j=0}^\infty e^{A_{\vartheta_1}\Delta j}e^{-ij\omega}\right)\left(\sum_{j=0}^\infty e^{A_{\vartheta_1}'\Delta j}e^{ij\omega}\right)B_{\vartheta_1}\notag\\
&\quad-\cancel{\Sigma}^{(\Delta)}_{\vartheta_2}B_{\vartheta_2}'\left(\sum_{j=0}^\infty e^{A_{\vartheta_2}\Delta j}e^{-ij\omega}\right)\left(\sum_{j=0}^\infty e^{A_{\vartheta_2}'\Delta j}e^{ij\omega}\right)B_{\vartheta_2}\Bigg|\notag\\
&\leq R_1+R_2+R_3+R_4+R_5,
\end{align}
where 
\begin{align*}
R_1&=\frac{1}{2\pi}\abs{\ \cancel{\Sigma}^{(\Delta)}_{\vartheta_1}- \cancel{\Sigma}^{(\Delta)}_{\vartheta_2}}\norm{B_{\vartheta_1}'}\sum_{j=0}^\infty \norm{e^{A_{\vartheta_1}\Delta j}}\sum_{j=0}^\infty \norm{e^{A_{\vartheta_1}'\Delta j}}\norm{B_{\vartheta_1}}\\
R_2&=\frac{1}{2\pi}\abs{\ \cancel{\Sigma}^{(\Delta)}_{\vartheta_2}} \norm{B_{\vartheta_1}'-B_{\vartheta_2}'}\sum_{j=0}^\infty \norm{e^{A_{\vartheta_1}\Delta j}}\sum_{j=0}^\infty \norm{e^{A_{\vartheta_1}'\Delta j}}\norm{B_{\vartheta_1}}\\
R_3&=\frac{1}{2\pi}\abs{\ \cancel{\Sigma}^{(\Delta)}_{\vartheta_2}} \norm{B_{\vartheta_2}'}\norm{\sum_{j=0}^\infty \left(e^{A_{\vartheta_1}\Delta j}-e^{A_{\vartheta_2}\Delta j}\right)e^{-ij\omega}}\sum_{j=0}^\infty \norm{e^{A_{\vartheta_1}'\Delta j}}\norm{B_{\vartheta_1}}\\
R_4&=\frac{1}{2\pi}\abs{\ \cancel{\Sigma}^{(\Delta)}_{\vartheta_2}} \norm{B_{\vartheta_2}'}\sum_{j=0}^\infty \norm{e^{A_{\vartheta_2}\Delta j}}\norm{\sum_{j=0}^\infty \left(e^{A_{\vartheta_1}'\Delta j}-e^{A_{\vartheta_2}'\Delta j}\right)e^{ij\omega}}\norm{B_{\vartheta_1}}\\
R_5&=\frac{1}{2\pi}\abs{\ \cancel{\Sigma}^{(\Delta)}_{\vartheta_2}} \norm{B_{\vartheta_2}'}\sum_{j=0}^\infty \norm{e^{A_{\vartheta_2}\Delta j}}\sum_{j=0}^\infty \norm{e^{A_{\vartheta_2}'\Delta j}}\norm{B_{\vartheta_1}-B_{\vartheta_2}}.
\end{align*}
It is sufficient to show that each summand $R_1$-$R_5$ can be bounded by $C_3\norm{\vartheta_1-\vartheta_2}$ for some constant $C_3>0$. To avoid repeating calculations we just give the main ideas.\\
Since $\Theta$ is compact and \hyperref[assumption:C4]{(C4)} holds, the functions $\vartheta\mapsto B_\vartheta$, $\vartheta\mapsto A_\vartheta$ are bounded and Lipschitz with constants $L_A$ and $L_B$. Moreover, from \hyperref[assumption:C2]{(C2)} we can follow $\lVert e^{A_\vartheta t}\rVert\leq De^{-\alpha t}$ for some constants $D,\alpha>0$ and all $t\in\R_0^+$ such that
\begin{gather*}
\norm{\sum_{j=0}^\infty e^{A_{\vartheta}\Delta j}}\leq \sum_{j=0}^\infty \norm{ e^{A_{\vartheta}\Delta j}}\leq D\sum_{j=0}^\infty e^{-\alpha \Delta j}<C_1, \text{ as well as }\norm{\sum_{j=0}^\infty e^{A_{\vartheta}'\Delta j}}<C_1
\end{gather*}
for some $C_1>0$. Note that since the eigenvalues of a matrix are continuous functions of its entries the above bounds hold uniformly in $\vartheta$. In addition, due to \cite[page 238]{H2008}, we have for some constants $C_2,\tilde{\alpha}>0$ and all $j\in\Z$
\begin{align*}
\norm{ e^{A_{\vartheta_1} \Delta j}- e^{A_{\vartheta_2} \Delta j}}&
\leq\norm{A_{\vartheta_1}-A_{\vartheta_2}}\Delta j \int_{0}^1 \norm{e^{\nu A_{\vartheta_1} \Delta j}} \norm{e^{(1-\nu)A_{\vartheta_2}\Delta j}}d\nu\\
&\leq C_2 L_A\Delta j \norm{\vartheta_1-\vartheta_2} e^{-\tilde{\alpha} \Delta j}.
\end{align*}
Thus, the functions $\vartheta\mapsto e^{A_\vartheta \Delta j}$ and $\vartheta\mapsto e^{A_\vartheta' \Delta j}$ are Lipschitz. Next, the representation (\ref{eq:variancesampledprocess}) of $\cancel{\Sigma}^{(\Delta)}_{\vartheta}$ implies that $\vartheta\mapsto \cancel{\Sigma}^{(\Delta)}_{\vartheta}$ is bounded. By using a similar decomposition as in (\ref{eq:decompositionf_Y}) for $\cancel{\Sigma}^{(\Delta)}_{\vartheta}$ one can show that there exists a constant $L_{\cancel{\Sigma}}>0$, such that
\begin{gather*}
\abs{\ \cancel{\Sigma}^{\ (\Delta)}_{\vartheta_1}-\cancel{\ \Sigma}^{\ (\Delta)}_{\vartheta_2}}\leq L_{\cancel{\Sigma}}\norm{\vartheta_1-\vartheta_2} \text{ for all } \vartheta_1,\vartheta_2\in\Theta. 
\end{gather*}
Overall, each summand $R_1$-$R_5$ in (\ref{eq:decompositionf_Y}) can be bounded by $C_3 \norm{\vartheta_1-\vartheta_2}$ for some constant $C_3$, which in turn implies that (\ref{eq:f_Yislipschitz}) holds.
\end{proof}

\begin{proof}[Proof of Theorem \ref{theorem:whittleconsistent}]
In order to show consistency of $\hat{\vartheta}_N$ we follow the same steps as in the proof of Theorem \ref{theorem:consistency}. As limiting function we consider
\begin{gather*}
W(\vartheta)=\frac{1}{2\pi} \int_{-\pi}^\pi \frac{f_{\tilde{Y}}^{(\Delta)}(\omega,\vartheta^*(u))}{f_{\tilde{Y}}^{(\Delta)}(\omega,\vartheta)}+\log\left(f_{\tilde{Y}}^{(\Delta)}(\omega,\vartheta)\right)d\omega,\qquad \vartheta\in\Theta.
\end{gather*}
Then, \cite[Lemma 1]{FHM2020} yields that 
\begin{align*}
W(\vartheta)=L(\vartheta)=-\log(2\pi)+\log(V_\vartheta)+\frac{E\left[\varepsilon_{\vartheta}^2\left(\tilde{Y}^{\vartheta^*(u),\Delta} \right)\right]}{V_\vartheta},
\end{align*}
where $V_\vartheta$, $\tilde{Y}^{\vartheta^*(u),\Delta}$ and $\varepsilon_{\vartheta}$ are defined as in Section \ref{sec5-2}. Note that $L(\vartheta)$ is closely related to the limiting function of the quasi maximum likelihood estimator from Section \ref{sec5-2} such that Proposition \ref{proposition:qmlecontrast} along with the inequalities in (\ref{eq:continuityMforlater}) ensure that $W$ is continuous. Moreover, Proposition \ref{proposition:qmlem1} ensures that $W(\vartheta)$ has a unique minimum. Therefore, it is left to show $\norm{W_N(\vartheta)-W(\vartheta))}_{L^1}\underset{N\rightarrow\infty}{\longrightarrow}0$ and that the sequence $(W_N(\vartheta))_{N\in\N}$ is stochastically equicontinuous. Indeed,
\begin{align*}
\norm{W_N(\vartheta)-W(\vartheta))}_{L^1}\leq&\abs{\frac{1}{4m_N+2}\sum_{j=-2m_N}^{2m_N+1} \log\left(f_{\tilde{Y}}^{(\Delta)}(\omega_j,\vartheta)\right) -\frac{1}{2\pi}\int_{-\pi}^{\pi}\log\left(f_{\tilde{Y}}^{(\Delta)}(\omega,\vartheta)\right)d\omega}\\
&+ \norm{\frac{1}{4m_N+2}\sum_{j=-2m_N}^{2m_N+1}\frac{I_N^{loc}(\omega_j)}{f_{\tilde{Y}}^{(\Delta)}(\omega_j,\vartheta)}-\frac{1}{2\pi}\int_{-\pi}^{\pi}\frac{f_{\tilde{Y}}^{(\Delta)}(\omega,\vartheta^*(u))}{f_{\tilde{Y}}^{(\Delta)}(\omega,\vartheta)}d\omega}_{L^1}\\
&=:P_1+P_2.
\end{align*}
From \cite[equation (15)]{FHM2020} we obtain $P_1\longrightarrow0$ as $N\rightarrow\infty$. To show $P_2\longrightarrow0$ as $N\rightarrow\infty$ we follow the approach from \cite[Proposition 2]{FHM2020} and approximate $f_{\tilde{Y}}^{(\Delta)}(\omega,\vartheta)^{-1}$ by the Ces\`aro mean of its Fourier series of size $M$ for $M$ sufficiently large. Define 
\begin{align*}
q_M(\omega,\vartheta)&=\frac{1}{M}\sum_{j=0}^{M-1}\sum_{\abs{k}\leq j}b_k(\vartheta)e^{-ik\omega}=\sum_{\abs{k}< M}\left(1-\frac{\abs{k}}{M} \right)b_k(\vartheta)e^{-ik\omega}, where\\
b_k(\vartheta)&=\frac{1}{2\pi}\int_{-\pi}^{\pi}\frac{1}{f_{\tilde{Y}}^{(\Delta)}(\omega,\vartheta)}e^{ik\omega}d\omega.
\end{align*}
Similar arguments as in the proof of \cite[Proposition 2]{FHM2020} yield that for all $\varepsilon>0$ and $M$ sufficiently large
\begin{gather}\label{eq:q_Mvsf}
\sup_{\omega\in[-\pi,\pi]}\sup_{\vartheta\in\Theta} \abs{q_M(\omega,\vartheta)-\frac{1}{f_{\tilde{Y}}^{(\Delta)}(\omega,\vartheta)}}<\varepsilon.
\end{gather}
From the definition of $I_N^{loc}$ in (\ref{eq:localperiodogram}) it is clear that it is non-negative, such that
\begin{align*}
&\norm{\frac{1}{4m_N+2}\sum_{j=-2m_N}^{2m_N+1}\frac{I_N^{loc}(\omega_j)}{f_{\tilde{Y}}^{(\Delta)}(\omega_j,\vartheta)}-\frac{1}{4m_N+2}\sum_{j=-2m_N}^{2m_N+1}q_M(\omega_j,\vartheta) I_N^{loc}(\omega_j) }_{L^1}\\ 
& \leq \frac{\varepsilon}{4m_N+2} E\left[ \sum_{j=-2m_N}^{2m_N+1}I_N^{loc}(\omega_j)\right]= \frac{\varepsilon}{4m_N+2} E\left[ \sum_{j=-2m_N}^{2m_N+1} \frac{1}{2\pi} \sum_{h=-2m_N}^{2m_N}\hat{\Gamma}_N^{loc}(h)e^{-ih\omega_j}  \right]
\\
&= \frac{\varepsilon}{2\pi} E\left[\hat{\Gamma}_N^{loc}(0) \right],
\end{align*}
where the last equality follows from the identity (see \cite[Lemma 4]{FHM2020})
\begin{gather}\label{eq:fourierseriesisindicator}
\frac{1}{4m_N+2}\sum_{j=-2m_N}^{2m_N+1}e^{-ih\omega_j}=
\begin{cases}1\quad , \text{there exists } z\in\Z: h=z(4m_N+2),\\
0\quad, \text{else}.
\end{cases}
\end{gather}
Now, from Lemma \ref{lemma:convergencegamma} it follows that
\begin{gather*}
E\left[\hat{\Gamma}_N^{loc}(0) \right] \underset{N\rightarrow\infty}{\longrightarrow} E[\tilde{Y}_u(0)^2]<\infty,
\end{gather*}
such that overall
\begin{gather*}
\norm{\frac{1}{4m_N+2}\sum_{j=-2m_N}^{2m_N+1}\frac{I_N^{loc}(\omega_j)}{f_{\tilde{Y}}^{(\Delta)}(\omega_j,\vartheta)}-\frac{1}{4m_N+2}\sum_{j=-2m_N}^{2m_N+1}q_M(\omega_j,\vartheta) I_N^{loc}(\omega_j) }_{L^1}\leq C_1 \varepsilon,
\end{gather*}
for some constant $C_1>0$. In view of $P_2$ it is therefore enough to show
\begin{align*}
\norm{\frac{1}{4m_N+2}\sum_{j=-2m_N}^{2m_N+1}q_M(\omega_j,\vartheta) I_N^{loc}(\omega_j) -\frac{1}{2\pi}\int_{-\pi}^{\pi}\frac{f_{\tilde{Y}}^{(\Delta)}(\omega,\vartheta^*(u))}{f_{\tilde{Y}}^{(\Delta)}(\omega,\vartheta)}d\omega}_{L^1} \underset{N\rightarrow\infty}{\longrightarrow}0.
\end{align*}
On the one hand, using (\ref{eq:fourierseriesisindicator}), we obtain 
\begin{align*}
&\frac{1}{4m_N+2}\sum_{j=-2m_N}^{2m_N+1}q_M(\omega_j,\vartheta) I_N^{loc}(\omega_j)\\
&=\frac{1}{2\pi} \sum_{\abs{k}< M}\sum_{\abs{h}\leq 2m_N} \left(1-\frac{\abs{k}}{M} \right)b_k(\vartheta)\hat{\Gamma}_N^{loc}(h) \left( \frac{1}{4m_N+2}\sum_{j=-2m_N}^{2m_N+1}e^{-i(k+h)\omega_j}\right)\\
&= \frac{1}{2\pi} \sum_{\abs{k}< M} \left(1-\frac{\abs{k}}{M} \right)b_k(\vartheta)\hat{\Gamma}_N^{loc}(-k)
\overset{L^1}{\underset{N\rightarrow\infty}{\longrightarrow}} \frac{1}{2\pi} \sum_{\abs{k}< M} \left(1-\frac{\abs{k}}{M} \right)b_k(\vartheta) E[\tilde{Y}_u(0)\tilde{Y}_u(k)],
\end{align*}
where the convergence follows from Lemma \ref{lemma:convergencegamma} and 
\begin{gather*}
\sup_{k\in\Z}\sup_{\vartheta\in\Theta} \abs{b_k(\vartheta)}\leq \max_{\vartheta\in\Theta}\max_{\omega\in[-\pi,\pi]}\abs{f_Y^{(\Delta)}(\omega,\vartheta)^{-1}}<\infty, 
\end{gather*}
since $(\omega,\vartheta)\mapsto f_{\tilde{Y}}^{(\Delta)}(\omega,\vartheta)^{-1}$ is continuous (see e.g. the proof of \cite[Proposition 2]{FHM2020}). On the other hand we obtain from (\ref{eq:q_Mvsf})
\begin{align*}
&\abs{ \frac{1}{2\pi} \sum_{\abs{k}< M} \left(1-\frac{\abs{k}}{M} \right)b_k(\vartheta) E[\tilde{Y}_u(0)\tilde{Y}_u(k)]-\frac{1}{2\pi}\int_{-\pi}^{\pi}\frac{f_{\tilde{Y}}^{(\Delta)}(\omega,\vartheta^*(u))}{f_{\tilde{Y}}^{(\Delta)}(\omega,\vartheta)}d\omega}\\
&=\abs{ \frac{1}{2\pi} \sum_{\abs{k}< M} \left(1-\frac{\abs{k}}{M} \right)b_k(\vartheta) \int_{-\pi}^\pi f_{\tilde{Y}}^{(\Delta)}(\omega,\vartheta^*(u))e^{ik\omega}d\omega  -\frac{1}{2\pi}\int_{-\pi}^{\pi}\frac{f_{\tilde{Y}}^{(\Delta)}(\omega,\vartheta^*(u))}{f_{\tilde{Y}}^{(\Delta)}(\omega,\vartheta)}d\omega}\\
&=\abs{  \frac{1}{2\pi}\int_{-\pi}^\pi f_{\tilde{Y}}^{(\Delta)}(\omega,\vartheta^*(u)) \left(q_M(\omega,\vartheta)-\frac{1}{f_{\tilde{Y}}^{(\Delta)}(\omega,\vartheta)} \right)d\omega}\leq C_2 \varepsilon,
\end{align*}
for some constant $C_2>0$, since $\omega\mapsto f_{\tilde{Y}}^{(\Delta)}(\omega,\vartheta^*(u))$ is continuous. Overall, it holds $P_2\underset{N\rightarrow\infty}{\longrightarrow}0$.\\
It is left to show stochastic equicontinuity of the sequence $(W_N(\vartheta))_{N\in\N}$. For $\eta,\lambda>0$ we have
\begin{align*}
&P\left( \sup_{\norm{\vartheta_1-\vartheta_2}<\lambda}\left|W_N(\vartheta_1)-W_N(\vartheta_2) \right|>\eta\right)\\
&\leq P\left(\sup_{\norm{\vartheta_1-\vartheta_2}\leq\lambda}\abs{\frac{1}{4m_N+2}\sum_{j=-2m_N}^{2m_N+1}I_N^{loc}(\omega_j) \left(\frac{1}{f_{\tilde{Y}}^{(\Delta)}(\omega_j,\vartheta_1)}-\frac{1}{f_{\tilde{Y}}^{(\Delta)}(\omega_j,\vartheta_2)}\right)} >\frac{\eta}{2}\right)\\
&+P\left(\sup_{\norm{\vartheta_1-\vartheta_2}\leq\lambda}\abs{\frac{1}{4m_N+2}\sum_{j=-2m_N}^{2m_N+1}\left(\log\left(f_{\tilde{Y}}^{(\Delta)}(\omega_j,\vartheta_1)\right)-\log\left(f_{\tilde{Y}}^{(\Delta)}(\omega_j,\vartheta_2)\right)\right) }>\frac{\eta}{2} \right)=:T_1+T_2.
\end{align*}
From Lemma \ref{lemma:f_Yislipschitz} it is clear, that 
\begin{align}
\sup_{\omega\in[-\pi,\pi]}\abs{\frac{1}{f_{\tilde{Y}}^{(\Delta)}(\omega,\vartheta_1)}-\frac{1}{f_{\tilde{Y}}^{(\Delta)}(\omega,\vartheta_2)}}&\leq \frac{L_f}{f_{inf}^2}\norm{\vartheta_1-\vartheta_2}\text{ as well as}\label{eq:1/xlipschitz}\\
\sup_{\omega\in[-\pi,\pi]}\abs{\log\left(f_{\tilde{Y}}^{(\Delta)}(\omega,\vartheta_1)\right)-\log\left(f_{\tilde{Y}}^{(\Delta)}(\omega,\vartheta_2)\right)}&\leq \frac{L_f}{f_{inf}}\norm{\vartheta_1-\vartheta_2}\label{eq:loglipschitz}.
\end{align}
We set  
\begin{gather*}
\lambda=\min\left(\frac{\eta\pi f_{inf}^2}{2L_fE[\tilde{Y}_u(0)^2]},\frac{\eta f_{inf}}{4L_f}\right).
\end{gather*}
Since $I_N^{loc}$ is non-negative we can follow from (\ref{eq:1/xlipschitz}) that 
\begin{gather*}
T_1\leq P\left(\frac{\lambda L_f}{f_{inf}^2}\abs{\frac{1}{4m_N+2}\sum_{j=-2m_N}^{2m_N+1}I_N^{loc}(\omega_j)}>\frac{\eta}{2} \right)=P\left(\frac{\lambda L_f}{2\pi f_{inf}^2} \hat{\Gamma}_N^{loc}(0) >\frac{\eta}{2} \right).
\end{gather*}
From Lemma \ref{lemma:convergencegamma} we have $\hat{\Gamma}_N^{loc}(0)\overset{P}{\underset{N\rightarrow\infty}{\longrightarrow}}E[\tilde{Y}_u(0)^2]$, such that
\begin{gather*}
P\left(\frac{\lambda L_f}{2\pi f_{inf}^2} \hat{\Gamma}_N^{loc}(0) >\frac{\eta}{2} \right)\leq P\left(\abs{ \hat{\Gamma}_N^{loc}(0)-E[\tilde{Y}_u(0)^2]}>E[\tilde{Y}_u(0)^2] \right)\underset{N\rightarrow\infty}{\longrightarrow}0.
\end{gather*}
Using (\ref{eq:loglipschitz}) it holds for $T_2$
\begin{gather*}
T_2\leq P\left(\frac{\lambda L_f}{ f_{inf}}>\frac{\eta}{2} \right)\leq P\left(\frac{1}{2}>1 \right)=0,
\end{gather*}
which concludes the proof.
\end{proof}

\section*{Acknowledgements}
The author would like to thank Robert Stelzer for several inspiring discussions and careful reading. Moreover, the author thanks Vicky Fasen-Hartmann and Celeste Mayer for providing their R-code from \cite{FHM2020} and Imma Valentina Curato for several helpful comments. This research was supported by the scholarship program of the Hanns-Seidel Foundation, funded by the Federal Ministry of Education and Research and by the state of Baden-Württemberg through bwHPC.

\bibliographystyle{acm}

\end{document}